\tikzset{
	partition/.style={line width=6mm,draw=red!20,line cap=round,line join=round},
    pairing/.style={thick,draw=black,*-*,shorten >=-2.9pt,shorten <=-2.9pt},
  }
\newcommand{\globalcolor}[1]{%
  \color{#1}\global\let\default@color\current@color
}
\newif\ifdark
\definecolor{darkred}{rgb}{0.9,0.2,0.2}
\definecolor{darkblue}{rgb}{0.7,0.3,1}
\definecolor{darkgreen}{rgb}{0.1,0.9,0.1}
\definecolor{pagebackground}{rgb}{.15,.21,.18}
\definecolor{pageforeground}{rgb}{.84,.84,.85}
\definecolor{darkred}{rgb}{0.7,0.1,0.1}
\definecolor{darkblue}{rgb}{0.4,0.1,0.8}
\definecolor{darkgreen}{rgb}{0.1,0.7,0.1}
\definecolor{pagebackground}{rgb}{1,1,1}
\definecolor{pageforeground}{rgb}{0,0,0}
\DeclareSymbolFont{timesoperators}{T1}{ptm}{m}{n}
\renewcommand{\operator@font}{\mathgroup\symtimesoperators}
\def\emptyset{{\centernot\Circle}}
\DeclareMathAlphabet{\mathbbm}{U}{bbm}{m}{n}
\DeclareFontFamily{U}{BOONDOX-calo}{\skewchar\font=45 }
\DeclareFontShape{U}{BOONDOX-calo}{m}{n}{
  <-> s*[1.05] BOONDOX-r-calo}{}
\DeclareFontShape{U}{BOONDOX-calo}{b}{n}{
  <-> s*[1.05] BOONDOX-b-calo}{}
\DeclareMathAlphabet{\mcb}{U}{BOONDOX-calo}{m}{n}
\SetMathAlphabet{\mcb}{bold}{U}{BOONDOX-calo}{b}{n}
\let\epsilon\varepsilon
\definecolor{LB}{rgb}{0.29, 0.63, 0.73}
\def\f{\frac}
\def\1{\mathbf{1}}
\def\E{{\symb E}}
\def\L{{\mathbb L}}
\def\bbB{{\mathbb B}}
\def\Lip{{\mathrm{Lip}}}
\def\${|\!|\!|}
\def\<{\langle}
\def\>{\rangle}
\setlist{noitemsep,topsep=4pt}
\def\para_#1{/\!\!/_{\!#1}}
\def\WW{\mathbb{W}}
\def\ZZ{\mathbb{Z}}
\def\Cr{\mathscr{C}}
\def\Dom{\mathrm{Dom}}
\def\XX{{\mathbb X}}
\def\Z{{\mathbf Z}}
\def\ZZ{{\mathbb Z}}
\def\JJ{{\mathbb J}}
\def\A{\mathcal A}
\def\mft{\mathfrak{t}}
\def\K{\mathrm{K}}
\def\Ito{{\textnormal{\tiny Itô}}}
\def\slash{\leavevmode\unskip\kern0.18em/\penalty\exhyphenpenalty\kern0.18em}
\def\dash{\leavevmode\unskip\kern0.18em--\penalty\exhyphenpenalty\kern0.18em}
\newcommand*{\fat}{}
\DeclareRobustCommand*{\fat}{%
\mathbin{\mathpalette\bigcdot@{}}}
\newcommand*{\bigcdot@scalefactor}{.5}
\newcommand*{\bigcdot@widthfactor}{1.15}
\newcommand*{\bigcdot@}[2]{%
  \sbox0{$#1\vcenter{}$}
  \sbox2{$#1\cdot\m@th$}%
  \hbox to \bigcdot@widthfactor\wd2{%
    \hfil
    \raise\ht0\hbox{%
      \scalebox{\bigcdot@scalefactor}{%
        \lower\ht0\hbox{$#1\bullet\m@th$}%
      }%
    }%
    \hfil
  }%
}
\DeclareRobustCommand{\TitleEquation}[2]{\texorpdfstring{\StrLeft{\f@series}{1}[\@firstchar]$\if%
b\@firstchar\boldsymbol{#1}\else#1\fi$}{#2}}
\def\newoptest#1{\expandafter\DeclareMathOperator\csname#1\endcsname{#1}}
\newtheorem{assumption}[lemma]{Assumption}
\title{Generating diffusions with fractional Brownian motion}
\author{Martin~Hairer and Xue-Mei~Li}
\institute{Imperial College London, UK\\
\email{m.hairer@imperial.ac.uk, xue-mei.li@imperial.ac.uk}}
\begin{document}
\maketitle
\begin{abstract}
We study fast\slash slow systems  driven by  a fractional Brownian
motion $B$ with Hurst parameter $H\in (\f 13, 1]$.
Surprisingly, the slow dynamic converges on
suitable timescales to a limiting Markov process and we describe its generator.
More precisely, if $Y^\eps$ denotes a Markov process with sufficiently good mixing properties 
evolving on a fast
timescale $\eps \ll 1$, the solutions of the equation
\begin{equ}
dX^\eps = \epsilon^{\f 12-H} F(X^\eps,Y^\eps)\,dB+F_0(X^\eps,Y^\epsilon)\,dt\;
\end{equ}
converge to a regular diffusion without having to assume that $F$ averages to $0$, provided that $H< \f12$.
For $H > \f12$, a similar result holds, but this time it does require $F$ to average to $0$.
We also prove that the $n$-point motions converge to those of a Kunita type SDE. 

One nice interpretation of this result is that it provides a continuous interpolation between
the time homogenisation theorem for random ODEs with rapidly oscillating right-hand sides ($H=1$) 
and the averaging of diffusion processes ($H= \f12$). \\[.4em]
\noindent {\scriptsize \textit{Keywords:} Fractional Brownian motion, averaging, fast\slash slow system, Green--Kubo formula}\\
\noindent {\scriptsize\textit{MSC classification:} 60G22, 60L20, 60H10} \\
\noindent {\scriptsize The authors have no conflict of interest to disclose.}
\end{abstract}
\setcounter{tocdepth}{2}

\tableofcontents

\section{Introduction}

The setting considered in this article is as follows.
Consider a particle in a rapidly evolving random medium, so that it is 
governed by a stochastic differential equation of the type 
$dx_t= A(x_t, t/\eps)\,dt + \sigma(x_t,t/\eps)\,dB$\footnote{Following the tradition of probability theory, the subscript $t$ denotes dependence on the time parameter, not differentiation.} for a small
parameter $\eps>0$.
The situation we are interested in is where, in the ``static'' case (i.e.\ when $A$ and $\sigma$
have no explicit time dependence), the system is either super- or subdiffusive.
This is the case if the driving noise $B$ is modelled by fractional Brownian motion (fBM)
with Hurst parameter $H \neq \f12$.
Recall that fractional noises (i.e.\ the time derivative of fBM) can be obtained as scaling 
limits in statistical mechanics models \cite{Jona-Lasinio,Dobrushin,Sinai} and that 
fBM with Hurst parameter $H$ is a Gaussian 
process with stationary increments and self-similarity exponent $H$. It is therefore 
characterised (up to an irrelevant global shift) by the fact that 
$\E(B_t-B_s)^2=|t-s|^{2H}$, so that it is superdiffusive for $H > \f12$ and subdiffusive for
$H < \f12$. 
The covariance of its increments, $\E(B_{t+1}-B_{t})(B_{s+1}-B_s)$,  
decays at rate  $|t-s|^{2H-2}$ for large $|t-s|$ and therefore exhibits long-range dependence when
$H > \f12$.

We furthermore assume that the rapid time evolution of the environment is described by a hidden
Markov variable, thus leading to the model
\begin{equ}
\label{1}
x_t^\eps =x_0+ \int_0^t F(x_s^\eps,y_s^\eps)\,dB_s + \int_0^t F_0(x_s^\eps,y_s^\eps)\,ds,
\end{equ}
with $B$ an fBM with Hurst parameter $H\in (0,1)$ in $\R^m$ and $F(x,y)\in \L(\R^m, \R^d)$.
The stochastic integral appearing in the first term is problematic when $H < \f12$: one
should really interpret this equation as $x_t^\eps = \lim_{\delta \to 0} x_t^{\eps,\delta}$ with
 $x_t^{\eps,\delta}$ driven by a smooth approximation $B^\delta$ to $B$ with relevant timescale
 $\delta \ll \eps \ll 1$, see Section~\ref{sec:description} below.
Regarding the fast Markov variable, a prototypical situation is that of a system of the type
\begin{equ}[e:startingSDE]
dy_t^\eps =  \sigma(x_t^\eps,y_t^\eps)\,{dW_t \over\sqrt\eps} + b(x_t^\eps,y_t^\eps)\,{dt\over\eps},  \;
\end{equ}
where  $W$ is a Wiener processes independent of the fBM $B$ appearing in (\ref{1}). 
This allows for the case where the variable $x$ feeds back into the evolution of $y$, but for most
of this article we assume that there is no $x$-dependence in \eqref{e:startingSDE}.
We also assume that $y_t$ admits a unique invariant probability measure~$\mu$. In the case
with feedback, we have a family of invariant measure $\mu_x$ obtained by ``freezing'' the value of the 
variable $x$ in \eqref{e:startingSDE}.
 
It was recently shown by the authors in \cite{AveragingHigh} that in the case $H > \f12$
the process $x_\eps$ converges \textit{in probability}  to the solution to
\begin{equ}[e:limitODE]
dx = \bar F(x)\,dB + \bar F_0(x)\,dt \;,
\end{equ}
where the average of any function $h$ is given by $\bar h(x) = \int h(x,y)\,\mu_x(dy)$.
The aim of the present article is to investigate the two cases left out by the aforementioned analysis,
namely what happens when either $H < \f12$ or when $H > \f12$ but $\bar F = 0$ in \eqref{e:limitODE}?

\subsection{Description of the model}
\label{sec:description}

It turns out that the effect of the rapid oscillatory motion described by the fast variable $y$
is to slow down the motion of $x$ in the superdiffusive case and to speed it up in the 
subdiffusive case. This can be explained by the following heuristics. 
For times of order $t \lesssim \eps$, the process $Y$ doesn't evolve much so that,
by the scaling property of the driving fBM, one expects the 
process $x$ to move by about $\eps^H$ in a time of order $\eps$. On large times $t \gtrsim \eps$ 
on the other hand
we will see that the limiting process is actually Markovian, even in the case with long-range
dependence. This suggests that over times of order $t$ the process $x$ performs about $t/\eps$
steps of a random walk with step size $\eps^H$ and therefore moves by about
$\eps^H \sqrt{t/\eps}$. This suggests that one should multiply $F$ by $\eps^{\f12-H}$
in order to obtain a non-trivial limit.

As a consequence, the equations we actually study in this article are of the form:
\begin{equ}[e:SDEbasic]
dX^\eps = \eps^{\f12 - H} F_i(X^\eps,Y^\eps)\,dB_i + F_0(X^\eps,Y^\eps)\,dt,
 \;, \qquad Y^\eps(t) = Y(t/\eps)\;,
\end{equ}
(summation over $i$ is implied),  where $B$ is a fractional Brownian motion with 
Hurst parameter $H$ ranging from $\f 13$ to $1$\footnote{We could probably deal with $H \in (\f14,\f13]$ with our techniques, but this would obscure some of the arguments for relatively little gain. For $H \le \f14$, there exists no solution theory even in the absence of~$Y$.}, and $Y$ is an independent stationary Markov process with values in some Polish space $\CY$, invariant measure $\mu$
 and generator $-\CL$\footnote{The convention of adding a minus sign to 
 the generator simplifies our expressions later on.}. At the moment, we are unfortunately 
 unable to cover the
case when $X$ feeds back into the dynamics of $Y$. When $H > \f12$, we furthermore assume that 
$\int F_i(x,y)\,\mu(dy) = 0$ for every $i \neq 0$ and every~$x$.

Our main result is that, as $\eps \to 0$, solutions to \eqref{e:SDEbasic} converge in law
to a limiting Markov process and we provide an expression for its generator. In fact, we 
have an even stronger form of convergence, namely we
show that the \textit{flow} generated by \eqref{e:SDEbasic} converges to the one generated by
a limiting stochastic differential equation of Kunita type (i.e.\ driven by an infinite-dimensional
noise).

\begin{remark}
Of course, (\ref{e:SDEbasic}) is not quite the same as \eqref{e:startingSDE} which was our starting point. 
One way of relating them more directly is to perform a time change and set $X^\eps = x_\eps(\eps^{1-2H}t)$ with $x_\eps$ solving \eqref{e:startingSDE}.
Then $X^\eps$ solves the equation
$dX^\eps =\tilde \eps^{{1\over 2}-H} F_i(X^\eps,Y^{\tilde\eps})\,dB_i+  \tilde \eps^{\f 1{2H}-1}F_0(X^\eps,Y^{\tilde \epsilon})\,dt$
where we have set $\tilde \epsilon=\eps^{2H}$. 
When $H < \f12$, this then converges to the same limit as \eqref{e:SDEbasic}, but of course 
with $F_0$ in  \eqref{e:SDEbasic} set to $0$. When $H > \f12$, then one would need to take $F_i$ centred in 
\eqref{e:startingSDE} in order to obtain a non-trivial limit and our results imply that one
again converges to the same limit as \eqref{e:SDEbasic}, at least in the case $F_0 = 0$.
\end{remark}

The special case when $F_0 = 0$ and the $F_i$ are independent of the $x$-variable yields a functional
central limit theorem for stochastic integrals against fractional Brownian motion. This
already appears to be new by itself and might be of independent interest.

As already hinted at, the map $t \mapsto F_i(\cdot, Y^\eps_t)$ is 
too irregular to fit into the standard 
theory of differential equations driven by a fractional Brownian motion, especially when $H < \f12$, 
so that it is not even completely clear a priori how to  interpret \eqref{e:SDEbasic} for fixed $\eps > 0$.
These questions will be addressed in more detail in Section~\ref{sec:precise} below.
Let us put these aside for the moment and consider  the following ordinary differential equation
\begin{equ}[e:deltaeps] 
\dot X^{\eps,\delta} _s = \f {\delta^{H-1}}{ \sqrt \eps}  v\Big(\f s{\eps \delta}\Big) F(X_s^{\eps,\delta},Y_s^\eps) + F_0(X_s^{\eps,\delta},Y_s^\epsilon)\;,
\end{equ}
where $v$ is a smooth stationary Gaussian random process with covariance $C$ such that
$C(t) \sim |t|^{2H-2}$ for $|t|$ large. When $H < \f12$ we furthermore assume that 
$\int C(t)\,dt = 0$ and, when $H = \f12$, we assume that $C$ decays exponentially and
satisfies $\int C(t)\,dt = 1$.
One way of obtaining such a process $v$ is to set 
$v = \phi * \dot B$ for $\phi$ a Schwartz test function integrating to $1$
 (and $*$ denoting convolution in time).
This in particular shows that, at least in law, one has
$(\eps\delta)^{H-1}v\big({t \over \eps\delta}\big)
=( \phi_{\eps\delta}* \dot B)(t)$, where we set $\phi_\eps(t) = \eps^{-1}\phi(t/\eps)$.
Since this converges in law to $\dot B$ as $\eps \delta \to 0$,
we can view \eqref{e:deltaeps} as an approximation to \eqref{e:SDEbasic}.

It is then possible to show that the limit $X^\eps = \lim_{\delta \to 0} X^{\eps,\delta}$
exists and our results hold with $X^\eps$ interpreted in this way. Furthermore, we will see
that all our results hold uniformly over $\delta \in (0,1]$ as $\eps \to 0$.
This in particular shows that the converse limit obtained by first sending $\eps \to 0$
and then $\delta \to 0$ is the same, as are all limits obtained by other ways
of jointly sending $\eps,\delta \to 0$.


\subsection{Description of the main results}

We now give a precise formulation of our main results, albeit with a simplified 
set of assumptions. The reason is that while the simplified assumptions are straightforward
to state, they are very stringent regarding the Markov process $Y$. The more realistic
set of assumptions used in the remainder of the article however is quite technical to 
formulate. We first recall the following standard definition of the fractional powers 
of the generator of the process $Y$.



\begin{definition}\label{def:fracPower}
We write $\CH = L^2(\mu)$ with $\mu$ the invariant measure of $Y$ and 
$\scal{\cdot,\cdot}_\mu$ for its scalar product.
For $\alpha \in (0,1)$, we then say that $f\in \Dom(\CL^{\alpha} )$ if, for 
every $g \in \CH$, the integral
$${1\over \Gamma(-\alpha)}\int_0^\infty t^{-\alpha-1} \<P_tf-f, g\>_{\mu} dt<\infty\;,$$
converges and determines a bounded functional on $\CH$ (which we then call $\CL^\alpha f$).
Recall that the generator of the process $Y$ is $-\CL$, so that $\CL$ is indeed a positive
operator in the reversible case and $\CL^\alpha$ does then coincide with the definition
using functional calculus.

Similarly, for $\alpha \in (-1,0)$, we write $\CL^\alpha$ for the operator given by
$$\CL^\alpha f = {1\over \Gamma(-\alpha)}\int_0^\infty t^{-\alpha-1} P_tf\, dt\;.
$$
Since $t \mapsto t^{-\alpha-1}$ is locally integrable, 
it follows from the first point of Assumption~\ref{simple} below that 
$\CL^\alpha$ is a bounded operator on the 
subspace of $\Lip(\CY)$ consisting of mean zero functions. 
\end{definition}

Assuming that $X^\eps$ takes values in $\R^d$, we then define the $d\times d$ matrix-valued function
\begin{equ}[e:defSigma]
\Sigma(x,\bar x) = {1\over 2}\Gamma(2H+1)\sum_{k=1}^m\int F_k(x,y) \otimes \big(\CL^{1-2H} F_k\big)(\bar x,y)\,\mu(dy)\;,
\end{equ}
where $\CL$ acts on the second argument of $F_k$. As we will see in Remark~\ref{rem:main}, 
the expression  (\ref{e:defSigma})  is naturally interpreted as the limit $\delta \to 0$ of a 
``local'' Green--Kubo formula associated to the fluctuations of \eqref{e:deltaeps}.
 
 Note that the condition $\bar F_k = 0$ is necessary in the case $H > \f12$ since the negative power of $\CL$ appearing in this expression does not make sense otherwise, see also Remark~\ref{rem:fracL} below.
We shall assume mixing conditions and H\"older continuity of the $Y$ variable, see Assumptions~\ref{ass:ergodic}--\ref{ass:integrability} below, as well as a regularity condition on  
$x \mapsto F(x,\cdot)$ (and also $F_0$) as spelled out in Assumption~\ref{ass:C3}. 
A simpler set of conditions is as follows, the first  of which  is a strengthening of Assumptions~\ref{ass:ergodic} and~\ref{ass:integrability}, the second  is a strengthening of Assumption~\ref{ass:continuous}, and the last is just a restatement of Assumption~\ref{ass:C3}
in this context.

 \begin{assumption}[Simplified Assumptions]\label{simple}
 The functions $F_i$ appearing in \eqref{e:SDEbasic} as well as the Markov process $Y$ satisfy the following.
 \begin{enumerate}
 \item 
The Markov semigroup associated to the 
process $Y$ is strongly continuous and has a spectral gap in $\Lip(\CY)$, 
the space of bounded Lipschitz continuous functions on $\CY$.
\item In the case $H < 1/2$ we assume that, for any $\alpha < H$, 
the process $t \mapsto Y_t$ admits $\alpha$-Hölder continuous trajectories 
and its Hölder seminorm (over intervals of length $1$ say) has bounded moments of all orders.
\item When $H > \f12$, we also assume that 
$\int F_i(x,y)\,\mu(dy) = 0$ for every $i \neq 0$ and every~$x$.
\item There exists $\kappa > 0$ such that, for every $i \ge 0$, $x\mapsto F_i(x,\cdot)$ is $\CC^4$ with values in
$\Lip(\CY)$ and its derivatives of order at most $4$ are bounded by $C (1+|x|)^{-\kappa}$
for some $C>0$. 
 \end{enumerate} \end{assumption}

\begin{remark}
Recall that a Markov semigroup $(P_t)_{t \ge 0}$ admits a spectral gap in any given Banach space 
$E \subset L^2(\mu)$ if $P_t\colon E \to E$ is a bounded linear operator for every $t$ and if there exist 
constants $c,C > 0$ such that $\|P_t f - \mu(f)\|_E\le Ce^{-ct}\|f\|_E$ for all $f \in E$. For this 
definition to make sense, $E$ of course needs to contain all constant functions.
\end{remark}

The reason why we are aiming for a more general result at the expense of a much more technical set
of assumptions is that having a spectral gap in $\Lip(\CY)$
is a very restrictive condition which is not even satisfied for the Ornstein--Uhlenbeck process.\footnote{But on the other hand it \textit{is} satisfied for systems with superlinear dissipation. 
This even includes the 
Allen--Cahn equation on the torus in dimension $d \le 3$ driven by space-time white 
noise, as can be deduced from the results in \cite{Michael,Mourrat,Jonathan,Pavlos,Philipp}!}

\begin{theorem}\label{theo:main}
Let $H\in (\f 13, 1)$ and let Assumption~\ref{simple} hold.
For fixed $\eps > 0$, $\alpha < H$, and $T>0$,  the process $X^{\eps,\delta}$ converges in 
law in $\CC^\alpha([0,T])$ as $\delta \to 0$
to a limit $X^\eps$ which we interpret as the solution to \eqref{e:SDEbasic}.

The solution flow of \eqref{e:SDEbasic} converges in law to that of the
Kunita-type stochastic differential equation written in It\^o form as
\begin{equ}[e:KunitaSDE]
dX_t=W(X_t, dt)+ G(X_t)\,dt + \bar F_0(X_t)dt\;,
\end{equ}
where $\bar F_0(x)=\int F_0(x,y)\mu(dy)$, $G_i(x) =  (\d^{(2)}_j\Sigma_{ji})(x,x)$,
 $W$ is a Gaussian random field with correlation
\begin{equ}[e:defWGaussian]
\E  (W_i(x,t)W_j(\bar x,\bar t) )= (t \wedge \bar t) \bigl(\Sigma_{ij}(x,\bar x) + \Sigma_{ji}(\bar x,x)\bigr)\;,
\end{equ}
and where $\d^{(2)}_j$ denotes differentiation in the $j$th direction of the second argument.
\end{theorem}
 \begin{proof}
As already suggested, this is a special case of our main result, Theorem~\ref{theo:mainReal} 
below. The fact that Assumptions~\ref{ass:ergodic}--\ref{ass:integrability} and \ref{ass:C3} are
implied by Assumption~\ref{simple} is immediate. (Take $E_n = \Lip(\CY)$ for every $n$.)
 \end{proof}
 
As a consequence, we also have the following functional CLT.
 
 \begin{corollary}
 Let $H\in (\f 13, 1)$ and let  Assumption~\ref{simple} hold (or let Assumptions~\ref{ass:ergodic}--\ref{ass:integrability} hold
 and when $H > \f12$, let  $\int F_i(y)\,\mu(dy) = 0$ for every $i \geq 1$.)

Then the stochastic process $Z^\eps_t = \sqrt \eps \int_0^{t/\eps} F(Y_r)\, dB_r$ 
 converges to a Wiener process $W$, weakly in  $\CC^\alpha([0,T])$ for any ${\alpha<\f 12\wedge H}$.
Furthermore, defining the random smooth function 
$Z^{\eps,\delta}_t = \sqrt \eps \int_0^{t/\eps} F(Y_r)\, dB_r^\delta$ with $B^\delta = \phi_\delta * B$, its iterated integral satisfies
$$
\lim_{\eps \to 0}\lim_{\delta \to 0} \int_s^t \bigl(Z^{\eps,\delta}_r-Z^{\eps,\delta}_s\bigr)\otimes dZ^{\eps,\delta}_r =  \int_s^t  (W_r-W_s)\otimes dW_r+\Sigma (t-s)\;,
$$
where the matrix $\Sigma$ is given by \eqref{e:defSigma} (which is independent of $x,\bar x$ 
in this case).
\end{corollary}

\begin{remark}\label{rem:main}
\Cref{theo:main}
characterises $\lim_{\eps \to 0}\lim_{\delta \to 0}X^{\eps,\delta}$
and shows that it is a Markov process with generator $\A$ given by 
\begin{equ}[e:defLimit]
(\A g)(x) = \sum_{i,j=1}^d \d_j \big(\Sigma_{ji}(x,\cdot)\d_i g\big)(x) + \sum_{i=1}^d\bar F_0^i(x)\d_i g(x) \;.
\end{equ}
Our proof actually carries over with minor modifications to the case when $\eps\to 0$ for 
fixed $\delta$ (but with convergence bounds that are uniform in $\delta$!), in which case the limit 
is given by the same expression \eqref{e:KunitaSDE},
but with the matrix $\Sigma$ given by
\begin{equ}[e:generalSigma]
\Sigma_\delta(x,\bar x) = \sum_{k=1}^m \int_0^\infty R_\delta(t) \int F_k(x,y) \otimes \big(P_t F_k\big)(\bar x,y)\,\mu(dy)\,dt\;,
\end{equ}
where $R_\delta(t) = \delta^{2H-2} \E v(0)v(t/\delta)$ and $P_t = e^{-\CL t}$ denotes  the Markov semigroup for $Y$. We will derive this formula in Section~\ref{sec:Heuristics} where we will also see that, 
for frozen values of $x$, it is a special case of 
the Green--Kubo formula \cite{Kubo,PapaKo,KestenPap}. Note that 
\eqref{e:KunitaSDE}--\eqref{e:generalSigma} (in particular the convergence of the flow)
is also consistent with \cite[Theorem~4.3]{Gautam} where a somewhat analogous
situation is considered.
It follows from \Cref{def:fracPower} that $\Sigma_\delta \to \Sigma$ as
$\delta \to 0$, so that the two limits commute (in law).
\end{remark}

\begin{remark}\label{rem:refs} 
There has recently been a surge in interest in the study of slow\slash fast systems involving
fractional Brownian motion. We already mentioned the averaging result \cite{AveragingHigh} which
considers the case $H > \f12$ but with $\bar F \neq 0$. The work \cite{Pei-Inahama-Xu} considers the case $H \in (\f13,1)$ like the present article,
but with the very strong assumption that $F$ is independent of the fast variable, in which case 
only $F_0$ exhibits rapid fluctuations and one essentially recovers classical averaging results.
 In \cite{Bourguin-Gailus-Spiliopoulos-typical}, the authors consider the case $H > \f12$, but 
with $F$ independent of the slow variable $x$ and, as in \cite{AveragingHigh}, not necessarily 
averaging to zero.
They obtain a description of the fluctuations for (a generalisation of) such systems in the regime where there is an 
additional small parameter in front of $F$.
\end{remark}
  
Formula (\ref{e:defSigma}) holds for the continuum of parameters $H\in (\f 13, 1)$. There are two
special cases that were previously known. The case $H = \f12$ reduces of course to the
classical stochastic averaging results \cite{Stratonovich-rhs,Has68,Freidlin-76,Skorohod} which
state that the generator of the limiting diffusion is obtained by averaging the generator for the
slow diffusion with the $x$ variable frozen against the invariant measure for the fast process.
Note that for this to match \eqref{e:defLimit} one needs to interpret the stochastic integral in
\eqref{e:SDEbasic} in the Stratonovich sense. This is natural given that this is the interpretation that
one obtains when replacing $B$ by a smooth approximation, which is consistent with Remark~\ref{rem:main}. 
The fact that one also has convergence of flows however (in the case without feedback considered here) appears to be 
new even in this case.

Another set of closely related classical results deals with ``time homogenisation'',
also known as the Kramers--Smoluchowski limit or diffusion creation \cite{Kramers,PapaKo}.
There, one considers random ODEs of the type
\begin{equ}[e:homog]
	\frac{dX_t^\epsilon}{dt}=\f 1 {\sqrt \epsilon} F(X_t^\epsilon, Y_t^\epsilon)
	+ F_0(X_t^\epsilon, Y_t^\epsilon)\;, 
\end{equ}
with
 $F$ averaging to zero against the stationary measure $\mu$ for the fast process $Y$.  
In this case, one also obtains a Markov process in the limit $\eps \to 0$ and its generator
coincides with \eqref{e:defLimit} if one sets $H=1$.
This can be understood by noting that, at least formally, fractional Brownian motion with
Hurst parameter $H=1$ is given by $B(t) = ct$ with $c$ a normal random variable,
so that \eqref{e:SDEbasic} reduces to \eqref{e:homog}, except for the random constant $c$,
which then appears quadratically in \eqref{e:defLimit} and therefore disappears when
averaged out.

The standard proofs of averaging \slash homogenisation results found in the literature
tend to fall roughly into two groups. The first contains functional analytic proofs based on
general methods for studying singular limits of the form $\exp(t \CL_\eps)$ for
$\CL_\eps = \eps^{-1} \CL_0 + \CL_1$. This of course requires the full process
(slow plus fast) to be Markovian and completely breaks down in our situation. 
The second group consists of more probabilistic arguments, which typically rely on
using corrector techniques to construct sufficiently many martingales to be able to
exploit the well-posedness of the martingale problem for the limiting Markov process.
The latter are in principle more promising in our situation since the limiting process is still
Markovian, but the lack of Markov property makes it unclear how to construct martingales 
from our process. (But see \Cref{subsection-convergence} for a construction that does
go in this direction.)

Instead, our proof relies on rough paths theory \cite{Lyons,Book}, which has recently 
been used to recover homogenisation
results (formally corresponding to the case $H=1$), for example in \cite{KellyMelb}. See also 
\cite{Bail-Cat, Chevyrev-Friz-Korepanov-Melbourne-Zhang,Deu-Ore-Per, Friz-Gassiat-Lyons} for more recent results with a similar flavour.
In the case when the \textit{fast} dynamics is non-Markovian and solves an equation driven by a fractional Brownian motion,
a collection of homogenisation results were obtained in \cite{Gehringer-Li-homo, Gehringer-Li-tagged, Gehringer-Li-fOU, Gehringer-Li-Sieber}, while stochastic averaging results with non-Markovian fast motions are  obtained 
in \cite{Li-Sieber, Li-Sieber-2} for the case $H>\f 12$. The former group of results are proved 
using rough path techniques, but there is of course an extensive literature on functional limit 
theorems based on  either central 
or non-central limit theorems, see for example \cite{Bai-Taqqu, BenHariz, Breuer-Major,  Maejima-Ciprian, Dobrushin-Major,Pipiras-Taqqu, Rosenblatt1956}.

Finally,  note that many physical systems can be regarded as a slow \slash fast systems, this includes second order Langevin 
equations and tagged particles in a turbulent  random field \cite{
 Cogburn-Hersh, PapaKo,   KestenPap,   Komorowski-Novikov-Ryzhik-14, Sethuraman-Varadhan-Yau,  Birrell-Hottovy-Volpe-Wehr, Fannjiang-Komorowski}. 
They also arise in the context of perturbed completely integrable Hamiltonian systems \cite{FW, averaging} and geometric stochastic systems  \cite{geodesics,  LB, GIR,  Perruchaud}. See also \cite{Kurtz92, E,Veretennikov} for some review articles \slash monographs.


%
%

\begin{remark}
It may be surprising that, when $H < \f12$, even though $X_\eps$ is driven by a \textit{fractional} Brownian motion
and $F(x,y)$ isn't assumed to be centred in the $y$ variable,
the limit $\bar X$ is a regular diffusion. This is unlike the case $H > \f12$ \cite{AveragingHigh, Li-Sieber}
where a non-centred $F$ leads to an averaging result with a process driven by fBm in the limit.
 This change in behaviour can be understood heuristically as follows. With $\eta$ as in \eqref{e:defEta},
the covariance of $t\mapsto f(Y_t^\eps) \dot B_t$ is given by 
$\zeta_\eps(t-s) = \eta ''(t-s) g((t-s)/\eps)$ for some  function $g(t)=\<f, P_{t}f\>$,
 that would typically converge quite
fast to a non-zero limit. The scaling properties of $\eta$ then show that 
\begin{equ}
\int_\R \eta ''(t) g(t/\eps)\,dt = \eps^{2H-1} \int_\R \eta ''(t) g(t)\,dt = C_g \eps^{2H-1}\;,
\end{equ}
for some constant $C_g$ which has no reason to vanish in general. As a consequence, $\eps^{1-2H} \zeta_\eps$
converges pointwise to $0$ while its integral remains constant, suggesting that $\eps^{\f12-H} f(Y_t^\eps) \dot B_t$ 
indeed converges to a white noise. When $H > \f12$ however, $\eta''$ is not absolutely integrable at infinity
and one needs to assume that $g$ vanishes there, which leads to a centering condition.
A similar transition from diffusive to super-diffusive behaviour at $H = \f12$ was observed
in a different context in \cite{Komorowski-Novikov-Ryzhik-14}.
\end{remark}

\begin{remark} 
As explained, our result implies more,  namely that the (random) flow induced by
the SDE \eqref{e:SDEbasic} converges in law to that induced by the Kunita-type SDE \cite{Kunita}
\begin{equ}[e:limitKunita]
dx_i = W_i(x,dt) +  (\d^{(2)}_j\Sigma_{ji})(x,x)\,dt + \bar F_0^i(x)\,dt\;.
\end{equ}
In other words, the flows $\psi_{s,t}^\epsilon$, where  $\psi_{s,t}^\eps (x)$ denotes the solution at time $t$ to the 
$x$-component of (\ref{e:SDEbasic}) with initial condition $x$ at time $s$, converge to a 
limit $\psi_{s,t}$ which is Markovian in the sense that $\psi_{s,t}$ and $\psi_{u,v}$ are independent
whenever $[s,t) \cap [u,v) = \emptyset$. 
This remark appears to be novel even when $H=\f12$, but it is unclear
whether it extends to the case when $x$ feeds back into the dynamic of $y$ as in \eqref{e:startingSDE}.
\end{remark}

\begin{remark}\label{rem:Strat}
The term $\d^{(2)}_j\Sigma_{ji}$ appearing in \eqref{e:limitKunita} looks ``almost'' like an
Itô-Strato\-no\-vich correction.
In fact, when $\CL$ is self-adjoint on $L^2(\mu)$, one has
$\Sigma_{ij}(x,\bar x) = \Sigma_{ji}(\bar x,x)$ in which case \eqref{e:limitKunita}
is equivalent to $dx_i = W_i(x,{\circ}\, dt)+ \bar F_0^i(x)\,dt$.
\end{remark}

%

\subsection{Heuristics for general slow\slash fast random ODEs} 
\label{sec:Heuristics}

We now show how to heuristically derive \eqref{e:generalSigma}.
Consider a random ODE of the form 
\begin{equ}[e:genODE]
	\frac{dX_t^\epsilon}{dt}=\f 1 {\sqrt \epsilon} \hat F(X_t^\epsilon, Z_t^\epsilon)\;, 
\end{equ}
where $Z_t^\eps = Z(t/\eps)$ for some stationary (but not necessarily Markovian!)
stochastic process $Z$ and $\hat F(x,\cdot)$ is assumed to be centred with respect to the
stationary measure of $Z$. In the case when $\hat F(x,z) = \hat F(z)$ does not depend on $x$, 
it follows from the Green--Kubo formula \cite{Kubo,PapaKo,KestenPap} that, at least 
when $Z$ has sufficiently nice mixing properties, $X^\eps$ converges as $\eps \to 0$ 
to a Wiener process with covariance $\Sigma + \Sigma^\top$, where
\begin{equ}
\Sigma = \int_0^\infty \E\big( \hat F(Z_0)\otimes \hat F(Z_t)\big)\,dt\;.
\end{equ}
This suggests that a natural quantity to consider in the general case is
\begin{equ}[e:SigmaGen]
\Sigma(x,\bar x) = \int_0^\infty \E \big(\hat F(x,Z_0)\otimes \hat F(\bar x,Z_t)\big)\,dt\;,
\end{equ}
and that the limit of $X^\eps$ as $\eps \to 0$ is a diffusion with generator
of the form 
\begin{equ}[e:Agen]
\big(\CA g\big)(x) = \Sigma_{ij}(x,x)\d^2_{ij} g(x) + b_i(x)\d_i g(x)\;,
\end{equ}
for some drift term $b$. 

To derive the correct expression for the drift $b$, we note that one expects
\begin{equ}
\E \big(X^\eps_{t+\delta t} - X^\eps_t\,|\, \CF_t\big)
\approx \delta t\, b(X^\eps_t)\;,
\end{equ}
in the regime $\eps \ll \delta t \ll 1$. The left-hand side of this expression is given by
\begin{equ}[e:LHS]
\f 1 {\sqrt \epsilon} \int_t^{t+\delta t} \E \big(\hat F(X_s^\epsilon, Z_s^\epsilon)\,|\,\CF_t\big)\,ds\;.
\end{equ}
To lowest order, one can approximate this expression by replacing $X_s^\epsilon$ by $X_t^\epsilon$,
but the resulting expression vanishes rapidly for $s \gtrsim t+\eps$ due to the 
centering condition on $\hat F$.
To the next order, one has
\begin{equs}
\E \big(\hat F(X_s^\epsilon&, Z_s^\epsilon)\,|\,\CF_t\big)
\approx \E\Big(\hat F\Big(X_t^\epsilon + \f1{\sqrt\eps}\int_t^s \hat F(X_t^\epsilon, Z_r^\epsilon)\,dr , Z_s^\epsilon\Big)\,\Big|\,\CF_t\Big)\label{e:Taylor} \\
&\approx \E\big(\hat F(X_t^\epsilon, Z_s^\eps)\,|\,\CF_t\big) + \f1{\sqrt\eps}\int_t^s \E \big(D\hat F(X_t^\epsilon,Z_s^\epsilon) \hat F(X_t^\epsilon, Z_r^\epsilon)\,|\,\CF_t\big)\,dr\\
&\approx \sqrt\eps\int_{0}^\infty \E \big(D\hat F(X_t^\epsilon,Z_u) \hat F(X_t^\epsilon, Z_0)\,|\,\CF_t\big)\,du\;,
\end{equs}
where the last identity follows from the substitution $u = (s-r)/\eps$ combined with
the fact that, provided that $Z$ is sufficiently rapidly mixing, 
we expect the main contribution from
this integral to come from $|u| \approx 1$, while typical values of 
$s$ are such that $(s-t)/\eps \approx \delta t/\eps \gg 1$.
Combining this with \eqref{e:LHS} eventually yields the expression
\begin{equ}
b(x) = \int_0^\infty D\hat F(x,Z_s) \hat F(x, Z_0)\,ds\;.
\end{equ}
Comparing this with \eqref{e:SigmaGen}, we conclude that
\begin{equ}
b_i(x) = \big(\d_j\Sigma_{ji}(x,\cdot)\big)(x)\;,
\end{equ}
(summation over repeated indices is implied) so that \eqref{e:Agen} can be written as
\begin{equ}
\big(\CA g\big)(x) = \d_j \big(\Sigma_{ji}(x,\cdot)\d_i g\big)(x)\;,
\end{equ}
which does coincide with the expression \eqref{e:defLimit} as desired.

In order to link this calculation with the setting of the previous section, 
we note that \eqref{e:deltaeps} (with $F_0 = 0$ for simplicity) can be coerced 
into the form \eqref{e:genODE} by setting
$Z_t = (\delta^{H-1}v(t/\delta), Y_t)$ as well as $\hat F(x,(v,y)) = F(x,y) v$. In this case, one has
\begin{equs}
\E \big(\hat F(x,Z_0)\otimes \hat F(\bar x,Z_t)\big)
&= \delta^{2H-2} R(t/\delta)\sum_{k=1}^m\E \big(F_k(x,Y_0)\otimes F_k(\bar x,Y_t)\big)\\
&= R_\delta(t)\sum_{k=1}^m\int F_k(x,y)\otimes \big(P_t F_k\big)(\bar x,y)\,\mu(dy)\;,
\end{equs}
so that one does indeed recover the expression \eqref{e:generalSigma} for any
fixed $\delta$.

\begin{remark}
The eagle-eyed reader will have spotted that since the stationary measure of $Z$ is
$\CN(0,C) \otimes \mu$ for some multiple $C$ of the identity matrix and since $\hat F(x,(v,y))$
is linear in $v$, the centering condition for $\hat F$ is \textit{always} satisfied, independently of
the choice of $F$. This explains why our main result does not require any centering condition
when $H \le \f12$. When $H > \f12$ however, the covariance function $R$ decays too slowly for
the heuristic derivation just given to apply. The centering condition for $F$ then guarantees
that correlations decay sufficiently fast to justify the second step in \eqref{e:Taylor}.
\end{remark}

The remainder of this article is structured as follows.  In Section~\ref{sec:precise} 
we introduce the assumptions on the nonlinearities $F_i$ as well as the fast process $Y$, 
we discuss a few examples, and we give provide the statement of our main result.
In Section~\ref{sec:convSmooth}
we then show that solutions to \eqref{e:deltaeps} converge as $\delta \to 0$, which yields in particular
a precise interpretation of what we mean by \eqref{e:SDEbasic} when $H < \f12$. The strategy of
proof is as follows. Given a smooth mollification $B^\delta$ of $B$,
we first show convergence of $\int_s^t \int_s^r   f(u)\, \dot B^\delta(u) du\, g(r)\, \dot B^\delta(r)dr$
as $\delta \to 0$
for any deterministic $H$-H\"older continuous functions $f,g$. While we are able to 
reduce this to existing criteria
for canonical rough path lifts of Gaussian processes \cite{FVGaussian,Coutin-Qian} 
in the case where the two fractional Brownian motions appearing in 
this expression are independent, the case where they are equal requires a bit
more care and relies on a simple trick given in Proposition~\ref{prop:contractWiener}, which is of independent interest.
This then allows us to build an infinite-dimensional rough path $\Z^\eps$  (taking values in a space of vector fields
on $\R^d$) associated to \eqref{e:SDEbasic} in a similar way as in \cite[Sec.~1.5]{KellyMelb} (see also
the ``nonlinear rough paths'' of \cite{NonLin} and \cite{Gehringer-Li-Sieber}) and to reformulate 
\eqref{e:SDEbasic} as an RDE driven by $\Z^\eps$ with 
nonlinearity given by point evaluation. Section~\ref{sec:RD} provides details of the construction of $\Z^\eps$,
while Section~\ref{sec:formMain} then uses it to 
formulate our main technical result, namely Theorem~\ref{theo:mainRP} which shows that $\Z^\eps$
converges to a certain rough path lift of an infinite-dimensional Wiener process 
with covariance function given by $\Sigma$. The remainder of the article is devoted to the
proof of this convergence statement.
Section~\ref{sec:tight} shows tightness of the family $\{\Z^\eps\}_{\eps \le 1}$,
while we identify its limit in Section~\ref{sec:limit}. In both sections, the cases $H < \f12$ and
$H > \f12$ are treated in a completely different way.

The fact that we have convergence of the full 
infinite-dimensional rough path allows us to conclude that we do not just have convergence of solutions for fixed initial
conditions, but of the full solution flow.
One point of note is that there are two separate sources
of randomness, namely the Markov process $Y$ and the fractional Brownian motion $B$. Our convergence result
is ``annealed'' in the sense that our convergence in law requires both sources, but a number of intermediate results 
are ``quenched'' in the sense that they hold for almost every realisation of $Y$. It is an open question whether
our final convergence result also holds in the quenched sense.

\subsection*{Acknowledgements}

{\small
XML acknowledges partial support from the EPSRC (EP/S023925/1 and EP/V026100/1), while MH gratefully acknowledges support from the Royal Society through a research professorship.
}


\section{Precise formulation and results}
\label{sec:precise}

In this section, we collect the precise assumptions on the functions $F_i$ as well as
the Markov process $Y$. 

\noindent{\bf Convention.}
We write $A \lesssim B$ as shorthand for $A \le KB$ with a constant $K$
that will differ from statement to statement.

\subsection{Technical assumptions on the fast variable \TitleEquation{Y}{Y}}
\label{sec:processY}
 
Throughout the article we fix $H \in (\f13,1)$ as well as a sequence $(E_n)_{n \ge 0}$ of Banach spaces
such that $E_n \subset E_{n+1}$ and $E_n \subset L^1(\CY,\mu)$ for every $n \ge 0$, and such that
pointwise multiplication is a continuous operation from $E_0 \times E_n$ into $E_{n+1}$ for every $n\ge 0$.
We also write simply $E$ instead of $E_0$ and assume $E$ contains constant functions.
See Section~\ref{sec:class} below for two classes of examples showing what type of spaces
we have in mind here.

 First, we impose that $Y$ has ``nice'' ergodic properties in the following sense, which in particular
implies that $\mu$ is its unique invariant measure on $\CY$.
\begin{assumption}\label{ass:ergodic}
Let $N = \infty$ for $H > \f12$ and $N=2$ for $H \in (\f13,\f12]$.
For every $n \in [1,N)$, the semigroup $P_t$ extends to a strongly continuous semigroup on $E_n$ and there
exist constants $C$ and $c> 0$ (possibly depending on $n$) such that, 
for every $f \in E_n$ with $\int_\CY f d\mu=0$, one has
\begin{equ}[e:spectralGap]
\|P_t f\|_{E_n} \le C e^{-ct} \|f\|_{E_n}\;.
\end{equ}
\end{assumption}

In the low regularity case, we also assume that the process $Y$ has some sample path continuity
when composed with a function in $E_2$.
\begin{assumption}\label{ass:continuous}
For $H \in (\f13,\f12)$ there exists $p_\star > \max\{4d,12/(3H-1)\}$ such that 
for every $f \in E_2$
\begin{equ}[e:boundContY]
\|f(Y_t)-f(Y_0)\|_{L^{p_\star}} \le c \|f\|_{E_2}(t^H \wedge 1) \,\qquad \forall t \ge 0\;,
\end{equ}
for some constant $c > 0$.
\end{assumption}

We also need some integrability.

\begin{assumption}\label{ass:integrability}
For $H \ge \f12$, one has $E_n \subset L^2(\CY,\mu)$ for every $n \ge 0$. For
$H < \f12$, one has $E_2 \subset L^2(\CY,\mu)$ and $E \subset L^{p_\star}(\CY,\mu)$.
\end{assumption}
\begin{remark}
When combining it with the inclusion of the product, Assumption~\ref{ass:integrability}  implies that $E \subset \bigcap_{p \ge 1} L^p(\CY,\mu)$  for $H\ge \f 12$.
\end{remark}
Another consequence of these assumptions is as follows. 

\begin{remark}\label{rem:fracL}
As a consequence of Assumption~\ref{ass:continuous}, we conclude that if $f \in E_2$ and $H < \f12$, then
\begin{equs}
\|P_t f - f\|_\mu^2 &= \E \bigl|\E \bigl(f(Y_t) - f(Y_0)\,|\,Y_0\bigr)\bigr|^2 
\le \E |f(Y_t) - f(Y_0)|^2\\ &\le \|f\|_{E_2}^2  \big(t^{2H} \wedge 1\bigr) \;.
\end{equs}
Recalling the definition of $\CL^\alpha$ from Definition~\ref{def:fracPower}, it
 follows that $E_2 \subset \Dom(\CL^\alpha)$ for every $\alpha < H$ so that
\eqref{e:defSigma} is indeed well defined provided that $F_k(x,\cdot) \in E_2$ for
every $x$. This will be guaranteed by Assumption~\ref{ass:C3} below.
\end{remark}

\subsection{Examples of fast variables }
\label{sec:class}

One possible concrete framework is as follows. Fix  two weights $V\colon \CY \to [1,\infty]$ and $W \colon \CY \to (0,\infty)$ 
and a metric $d$ on $\CY$ generating its topology
with the property that there exists $C>0$ such that, for all $x,y \in \CY$ with 
$d(x,y) \le 1$, one has
\begin{equ}[e:boundRatio]
V(x) \le C V(y)\;,\qquad W(x) \le C W(y)\;.
\end{equ}
For $n \ge 1$, we then let $\CB_{V,W}$ be the Banach space of functions 
$f \colon \CY \to \R$ such that $$\|f\|_{V,W} \eqdef \sup_{x \in \CY}{|f(x)|\over V(x)} + \sup_{x,y \in \CY \atop d(x,y) \le 1}{|f(x) - f(y)|\over d(x,y) W(x) V(x)} < \infty\;.$$
One choice of scale of function spaces that is suitable for a large class of Markov processes is
to take $E_n = \CB_{V,W}$ for every $n \ge 1$ (and suitably chosen $V$ and $W$), while 
$E_0$ is chosen be the space of bounded Lipschitz continuous functions, namely $\CB_{1,1}$.

This framework is relatively general since it allows for a wide variety of choices of 
$V$, $W$, and of distance functions on $\CY$, see \cite{Michael,Andy}. 
For example, it was shown in \cite[Thm.~1.4]{HM08} that 
the 2D stochastic Navier--Stokes equations exhibit a spectral gap in such spaces 
under extremely weak conditions on the driving noise.
More precisely, for every $\eta$ small enough there exist constants $C$ and $\gamma$ such that
$$\Big\|P_tf-\int f d\mu\Big\|_\eta\le C e^{-\gamma t}\|f\|_\eta\;,$$
for every Fr\'echet differentiable function  $f$ for every $t\ge 0$, where
\begin{equ}[e:normJonathan]
\|f\|_\eta = \sup_x e^{-\eta |x|^2} \bigl(|f(x)| + |Df(x)|\bigr)\;.
\end{equ}
This at first sight appears to fall outside our framework, but one notices that if
one sets
\begin{equ}[e:funnyd]
d(x,y) = \inf_{\gamma: x \to y} \int_0^1 (1+|\gamma(t)|)|\dot\gamma(t)|\,dt\;,
\end{equ} 
then the norm $\|\cdot\|_{V,W}$ with $V(x) = \exp(\eta |x|^2)$ and $W(x) = 1/(1+|x|)$ is equivalent
to the norm \eqref{e:normJonathan}. The reason for the choice of $d$ as
in \eqref{e:funnyd}, which is then ``undone'' by our choice of $W$, is to guarantee that \eqref{e:boundRatio} holds for $V$,
which would not be the case for $|x-y| \le 1$ in the Euclidean distance.

To verify Assumption~\ref{ass:continuous} one can then for example make use of the following.
\begin{lemma}
Suppose that $\int \big(V(x)(1+W(x))\big)^{p_\star}\,\mu(dx) < \infty$  
 and there exists a constant $c$ such that, for some $\alpha_0>1-2H$,
  \begin{equ}[e:boundContY1]
\|d(Y_t, Y_0)\|_{L^{p_\star}} \le c\bigl(t^{\alpha_0} \wedge 1\bigr)\,\qquad \forall t \ge 0\;.
\end{equ} 
Let $f \in \CB_{V,W}$ and  $2p \le p_\star$, then
$$\|f(Y_t) - f(Y_0)\|_{L^p}\lesssim \|f\|_{V,W}(1\wedge t^{\alpha_0})\;.$$
In particular, on any fixed time interval, we have $\E \|f(Y_\cdot)\|_{\CC^\alpha}^p < \infty$
provided that $p(\alpha_0-\alpha) > 1$.
\end{lemma}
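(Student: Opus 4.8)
The plan is to prove the two bounds by combining the defining seminorm of $\CB_{V,W}$ with the hypothesised $L^{p_\star}$-modulus of continuity of the path. First I would split $f(Y_t)-f(Y_0)$ according to whether $Y_t$ and $Y_0$ are close or far. On the event $\{d(Y_t,Y_0)\le 1\}$ the definition of $\|f\|_{V,W}$ gives the pointwise bound $|f(Y_t)-f(Y_0)|\le \|f\|_{V,W}\,d(Y_t,Y_0)\,W(Y_0)V(Y_0)$, so that an application of Hölder's inequality with exponents chosen so that $2p\le p_\star$ separates a factor $\|d(Y_t,Y_0)\|_{L^{q}}$ (with $q\le p_\star$, controlled by \eqref{e:boundContY1}) from a factor $\|W(Y_0)V(Y_0)\|_{L^{q'}}$ (finite by the integrability hypothesis $\int (V(1+W))^{p_\star}\,d\mu<\infty$ together with stationarity of $Y$). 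On the complementary event $\{d(Y_t,Y_0)>1\}$ one uses the crude bound $|f(Y_t)-f(Y_0)|\le \|f\|_{V,W}(V(Y_t)+V(Y_0))$ and estimates its $L^p$ norm by $\|f\|_{V,W}\,\|V(Y_0)\|_{L^{p_\star/2}}\cdot \P(d(Y_t,Y_0)>1)^{1/r}$ for a suitable $r$; by Markov's inequality applied to \eqref{e:boundContY1} this probability is $\lesssim (t^{\alpha_0}\wedge 1)^{p_\star}$, which is more than enough to absorb it into the same $(1\wedge t^{\alpha_0})$ bound. Adding the two contributions yields $\|f(Y_t)-f(Y_0)\|_{L^p}\lesssim \|f\|_{V,W}(1\wedge t^{\alpha_0})$.

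For the second assertion, the Hölder norm, I would apply the Kolmogorov–Chentsov continuity criterion. The bound just established gives, for $s,t$ in the fixed time interval,
\begin{equ}
\E\,|f(Y_t)-f(Y_s)|^p \lesssim \|f\|_{V,W}^p\,|t-s|^{\alpha_0 p}\;,
\end{equ}
using stationarity to reduce to increments from time $0$. Kolmogorov's criterion then produces a modification with $\E\|f(Y_\cdot)\|_{\CC^\alpha}^p<\infty$ for every $\alpha<\alpha_0-\tfrac1p$, i.e. precisely when $p(\alpha_0-\alpha)>1$; since $f(Y_\cdot)$ already has a continuous version (as $Y$ does, by the continuity hypothesis feeding into \eqref{e:boundContY1}, and $f$ is continuous off a $\mu$-null set), this modification is indistinguishable from the original.

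The only genuinely delicate point is bookkeeping the exponents in the Hölder step: one must check that the integrability exponent demanded of $V(1+W)$ is consistent with $2p\le p_\star$ and with the exponent available from \eqref{e:boundContY1}. Taking, say, $q=p_\star$ in the near-diagonal term forces the conjugate exponent on $W(Y_0)V(Y_0)$ to be governed by $p$ with $2p\le p_\star$, and one checks $p\cdot(p_\star/(p_\star-p))\le p_\star$ is not what is needed — rather one splits the product $d\cdot WV$ with three exponents via the generalised Hölder inequality, putting $d(Y_t,Y_0)$ in $L^{p_\star}$, $V(Y_0)$ in $L^{p_\star}$, and $W(Y_0)V(Y_0)/V(Y_0)$—equivalently $W(Y_0)$—again against a high power, all of which are finite because $2p\le p_\star$ leaves enough room. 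This is routine once set up, but it is the step where all the hypotheses are actually used, and it is where I would be most careful; everything else is a direct application of the $\CB_{V,W}$ seminorm and Kolmogorov's theorem.
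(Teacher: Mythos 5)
Your proposal is correct and follows essentially the same route as the paper: split on the event $\{d(Y_t,Y_0)\le 1\}$ using the $\|\cdot\|_{V,W}$ seminorm there and the crude bound $V(Y_0)+V(Y_t)$ on the complement, then Hölder plus Markov's inequality with stationarity and $2p\le p_\star$, and Kolmogorov's criterion for the Hölder-norm statement. The only difference is cosmetic: the paper handles the exponents by a single Cauchy--Schwarz step placing both $d(Y_t,Y_0)$ (resp.\ the indicator) and $(1+W(Y_0))V(Y_0)$ in $L^{2p}$, which is a bit simpler than your three-exponent bookkeeping but yields the same bound.
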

\begin{proof}
For $f \in E$ with $\|f\|_E \le 1$ and for $p \le \f 12 p_\star$, one has
\begin{equs}
\big\|f(Y_t) - f(Y_0)\|_{L^p} &\le \|W(Y_0) V(Y_0) d(Y_t,Y_0) +\one_{d(Y_t,Y_0) > 1}\|f\|_E \big(V(Y_0)+V(Y_t)\big)\big\|_{L^p} \\
&\lesssim \|(1+W(Y_0)) V(Y_0) \|_{L^{2p}} \Bigl((1\wedge t^{\alpha}) + \P(d(Y_t,Y_0) > 1)^{\f1{2p}}\Bigr) \\
&\lesssim 1\wedge t^{\alpha}\;,
\end{equs}
where we combined \eqref{e:boundContY1} with Markov's inequality in the last step.
\end{proof}

When $H \ge \f12$, Assumption~\ref{ass:continuous} is empty, so only integrability conditions 
are required on the spaces $E_n$. This allows for example to use Harris's theorem \cite{HarrisOrig,MeynTweedie,Harris} to verify
Assumption~\ref{ass:ergodic} for spaces of functions with weighted supremum norms.
More precisely, one would then take $E$ to be the space of all bounded Borel measurable 
functions and $E_n = \CB_V$, the Banach space of functions 
$f \colon \CY \to \R$ such that $$\|f\|_{V} \eqdef \sup_{x \in \CY}{|f(x)|\over V(x)} < \infty\;.$$
In order to verify our assumptions, it then suffices that $V$ is 
a square integrable Lyapunov function for the Markov process $Y$ and that 
the sublevel sets of $V$ satisfy a `small set' condition for the transition probabilities
of $Y$ \cite{MeynTweedie}.

\subsection{Main results}

One final assumption we need is
that the nonlinearities $F$ and $F_0$ appearing in \eqref{e:SDEbasic} are 
sufficiently nice $E$-valued functions of their first argument. More precisely, we assume the following.

\begin{assumption}\label{ass:C3}
The map $x \mapsto F(x,\cdot)$ is of class $\CC^4$ with values in $E$ and there exists
an exponent $\kappa > \f{16 d}{p_\star}$ with $p_\star$ as in \Cref{ass:continuous} (and simply $\kappa > 0$ when $H > \f12$) such that, for every multi-index $\ell$ of length at most $4$, 
$$\|D_x^\ell F(x,\cdot)\|_{E} \lesssim (1+|x|)^{-\kappa}.$$
The same is assumed to hold true for $F_0$.
When $H > \f12$, we further assume that 
$\int F_i(x,y)\,\mu(dy) = 0$ for every $i \neq 0$ and every~$x$.
\end{assumption}

The condition $F \in \CC^4$ is of course suboptimal and could probably be 
lowered to $F \in \CC^\beta$ for $\beta > \max\{H^{-1},2\}$ and $F_0 \in \CC^\beta$
for $\beta > 1$, at least if enough integrability is assumed in Assumption~\ref{ass:integrability}.
We also now fix  a Schwartz function $\phi$ integrating to $1$ and
set $\phi_\delta(t)=\f 1\delta \phi(t/ \delta)$.
We then write $B^\delta$ for the convolution of $B$ with this mollifier, namely
$$B^\delta(t) = \f 1\delta (\phi_\delta* B)(t)= \f 1\delta \int_\R  \phi\Big(\f{t-s} \delta \Big) B(s)\, ds\;.$$
With this notation, the solutions to \eqref{e:deltaeps} are equal in law to the process given by
\begin{equ}[e:deltaeps2] 
\dot X^{\eps,\delta} _t = \eps^{\f12-H} F(X_t^{\eps,\delta},Y_t^\eps)\,\dot B^{\eps\delta} + F_0(X_t^{\eps,\delta},Y_t^\epsilon)\;.
\end{equ}
Since $B^{\eps\delta}$ is smooth, this equation should be interpreted as an ordinary differential
equation that just happens to have random coefficients. With all these preliminaries at hand, our
main result is the following.

\begin{theorem}\label{theo:mainReal}
For $H \in (\f13,1]$ and under Assumptions~\ref{ass:ergodic}--\ref{ass:integrability}, 
and~\ref{ass:C3}, the conclusions of Theorem~\ref{theo:main} hold. 
With $X^{\eps,\delta}$ defined in \eqref{e:deltaeps2}, the convergence $X^{\eps,\delta} \to X^\eps$
furthermore holds in probability.
\end{theorem}

\begin{proof}
The convergence in probability of the flow 
$X^{\eps,\delta} \to X^{\eps}$ is the content of \Cref{prop:convDeltaConsequence} below.
The proof of the conclusion of Theorem~\ref{theo:main}, namely the convergence in law of the 
flow for \eqref{e:SDEbasic} as $\eps \to 0$ is the content of \Cref{cor:main}.
\end{proof}

\section{Convergence of smooth approximations}
\label{sec:convSmooth}


We first address the question of the convergence in probability of solutions to \eqref{e:deltaeps}
to those of \eqref{e:SDEbasic} as $\delta \to 0$ for $\eps > 0$ fixed.
In fact, we will directly provide an interpretation of \eqref{e:SDEbasic} and show that this
interpretation is sufficiently stable to allow for the approximation \eqref{e:deltaeps}.

Our convergence proof relies on the theory of rough paths; we refer to \cite{Book}
for an introduction. The main insight of this theory is that even though, for $H \le \f12$, the solution map $B \mapsto X$
for equations of the type \eqref{e:SDEbasic} isn't continuous when viewing $B$ as an element of
any classical function space large enough to contain typical sample paths of fractional Brownian motion,
it does become continuous when enhancing $B$ with its iterated integrals $\bbB = \int B\otimes dB$
and endowing the space of pairs $(B,\bbB)$ with a suitable topology.

For this, consider for any $x \in \R^d$ the processes
\begin{equ}[e:defZ]
Z^{\eps,\delta}_{s,t}(x) = \epsilon^{\f 12-H}\int_s^t F(x,Y^\eps_r)\,dB^\delta(r)\;,\quad \bar Z^{\eps}_{s,t}(x) = \int_s^t F_0(x,Y^\eps_r)\,dr\;.
\end{equ}
Here, the first integral is interpreted as a Wiener integral which makes sense also when $\delta = 0$
and, when $\delta > 0$, coincides with the Riemann--Stieltjes integral.
Recall that the Wiener integral of a deterministic (or independent) integrand against any Gaussian
process $B$ is well-defined provided that the integrand belongs to the reproducing kernel Hilbert space 
$\CH_B$ of $B$ and provides an isometric embedding $\CH_B \ni f \mapsto \int f\,dB \in L^2(\Omega,\P)$.  
In the case of fractional Brownian motion, it is known that 
$L^2 \subset \CH_B$ when $H \ge \f12$ while for $H < \f12$ one has $\CC^\alpha \subset \CH_B$ for
every $\alpha > \f12-H$. The fact that for fixed $x$ and $\eps > 0$, 
$t \mapsto F(x,Y^\eps_t)$ belongs to $\CH_B$ for all $H > \f13$ is then a simple
consequence of Assumptions~\ref{ass:continuous} and~\ref{ass:integrability} combined with
Kolmogorov's continuity criterion (when $H < \f12$).

Write $\CB = \CC_b^3(\R^d,\R^d)$ and $\CB_k = \CC_b^3(\R^{d\cdot k},(\R^{d})^{\otimes k})$,   so that 
one has canonical inclusions of the algebraic tensor product $\CB_k \otimes_0 \CB_\ell \subset \CB_{k+\ell}$ with
the usual identification  $(f\otimes g)(x,y)= f(x) g(y)$ thanks to the fact that
$\|f\otimes g\|_{\CB_{k+\ell}}\le \|f\|_{\CB_\ell }\|g\|_{\CB_k}$.
Given a final time $T>0$ and $\alpha \in (\f13,\f12)$, we define the space 
$\Cr^\alpha([0,T], \CB \oplus \CB_2)$ of $\alpha$-Hölder rough paths in the
usual way \cite[Def.~2.1]{Book}, but with all norms of level-$2$ objects in $\CB_2$.
Recall that an $\alpha$-Hölder rough path $(X, \XX)$ is a pair of functions where  $X\in \Cr^\alpha([0,T], \CB)$
with $X_0=0$ and $\XX: \Delta_T \to \CB_2$ where $\Delta_T :=\{(s,t): 0\le s\le t \le T\}$ is the two-simplex  with
$|\XX|_{2 \alpha}:=\sup_{(s,t)\in\Delta_T}\frac{\|\XX_{s,t}\|_{\CB_2}}{|t-s|^{2 \alpha}}<\infty$.
In addition, Chen's relation is imposed, namely $\XX_{s,t}-\XX_{s,u}-\XX_{u,t}=X_{s,u}\otimes X_{u,t}$.

We define the second-order processes $\ZZ^{\eps,\delta}$ and $\bar \ZZ^\eps$ by
\begin{equ}
\ZZ^{\eps,\delta}_{s,t}(x,\bar x) = 
\int_s^t Z^{\eps,\delta}_{s,r}(x)\, dZ^{\eps,\delta}_{s,r}(\bar x)\;,\qquad
\bar\ZZ^{\eps}_{s,t}(x,\bar x) = 
\int_s^t \bar Z^{\eps}_{s,r}(x)\, d\bar Z^{\eps}_{s,r}(\bar x)\;,
\end{equ}
(the differentials are taken in the $r$ variable) and we define $\Z^{\eps,\delta} = (Z^{\eps,\delta},\ZZ^{\eps,\delta})$, $\bar \Z^\eps = (\bar Z^\eps,\bar \ZZ^\eps)$. Note here that 
$r \mapsto Z_{s,r}^{\eps,\delta}(x)$ is smooth and $r \mapsto \bar Z_{s,r}^{\eps}(x)$
is Hölder continuous  for any exponent strictly less than $1$, so these integrals
should be interpreted as regular Riemann--Stieltjes integrals.
In Section~\ref{sec:RD} below we will give a proof of the following result.

\begin{proposition}\label{prop:convDelta}
Let $H \in (\f13,1]$, let $\alpha \in (\f13,H\wedge \f12)$ and $\beta \in (1-\alpha,1)$, and let
Assumptions~\ref{ass:ergodic}--\ref{ass:integrability}, 
and~\ref{ass:C3} hold. Then, 
$\Z^{\epsilon,\delta}$ and $\bar \Z^\eps$ admit versions that are random elements in 
$\Cr^\alpha([0,T], \CB \oplus \CB_2)$ and $\Cr^\beta([0,T], \CB \oplus \CB_2)$ respectively.
Furthermore, $\Z^{\epsilon,\delta}$ converges in probability in 
$\Cr^\alpha([0,T], \CB \oplus \CB_2)$ as $\delta \to 0$ to the random rough path $\Z^{\epsilon}$
characterised in \Cref{prop:defZZeps} below. (In particular, the first order component $Z^{\epsilon}$
of $\Z^{\epsilon}$ is given, for any fixed $x$, by the Wiener integral \eref{e:defZ} with $\delta = 0$.)
\end{proposition}

For now, we take this result as granted.
With this result in place, we obtain the following convergence result as $\delta \to 0$.

\begin{proposition}\label{prop:convDeltaConsequence}
The second claim of Theorem~\ref{theo:mainReal} holds. 
\end{proposition}

\begin{proof}
With the space $\CB$ as above, let $\delta \colon \R^d \to L(\CB,\R^d)$ be the function 
given by $\delta(x)(f) = f(x)$.
We then claim that, for any $\eps, \delta > 0$, \eqref{e:deltaeps2} can be rewritten as the 
rough differential equation (RDE) driven by the infinite-dimensional rough paths
$\Z^{\epsilon,\delta}$ and $\bar \Z^{\epsilon}$ defined above and given by
\begin{equation}\label{e:infinite}
dX = \delta(X)\,d\Z^{\epsilon,\delta} + \delta(X)\,d\bar \Z^{\epsilon}\;.
\end{equation}
Note that since $\alpha + \beta > 1$, there is no need to specify
cross-integrals between $\Z^{\eps,\delta}$ and $\bar \Z^{\eps}$ since they can be defined in a canonical
way using Young integration \cite{Young}.

To check that this RDE is well-posed for any rough paths $\Z^{\epsilon,\delta}$ and $\bar \Z^\eps$
belonging to $\Cr^\alpha([0,T], \CB \oplus \CB_2)$ and $\Cr^\beta([0,T], \CB \oplus \CB_2)$ respectively,
we note first that one readily verifies that the map
$\delta$ is Fréchet differentiable, and actually even $\CC^3_b$. 
Its differential $D\delta$ at $x\in \R^d$ in the direction of $y\in \R^d$ is given by
$(D\delta)_x(y)(h) = (Dh)_x(y)$ where $h\in  L(\CB,\R^d)$.  Morerover $|(D\delta)_x(h)|\le |h|_\CB$.
 In particular we may consider the map 
$D\delta \cdot \delta \colon \R^d \to L(\CB \otimes \CB,\R^d)$ which for  $h = h_1 \otimes  h_2\in \CB_2$ is given by
\begin{equs}[e:Ddelta]
\bigl(D\delta \cdot \delta\bigr)(x) h &= (D\delta)_x \bigl(\delta(x)(h_1)\bigr)(h_2)=(D\delta)_x \bigl(h_1(x)\bigr)(h_2) \\
&= (Dh_2)_x(h_1(x)) = (\tr D^{(2)} h)(x,x) \;,
\end{equs}
for a suitable partial trace $\tr$.
This shows that $D\delta \cdot \delta$ extends continuously to a $\CC^2_b$ map from $\R^d$ into $L(\CB_2, \R^d)$.
(Since $\CB_2$ differs from the projective tensor product of $\CB$ with itself, this doesn't automatically
follow from the fact that $\delta$ itself is $\CC^3_b$.)
Retracing the standard existence and uniqueness proof for RDEs, \cite[Sec.~8.5]{Book} then shows that
\eqref{e:infinite} admits unique (global) solutions for every 
initial condition and every driving path.
Furthermore, if the sample path $t \mapsto Y(t)$ is given by any continuous 
$\CY$-valued function then, under the stated 
regularity conditions on $F, F_0$, it is straightforward to verify that the solutions to \eqref{e:infinite} 
coincide with those of \eqref{e:deltaeps2}.

Since the RDE solution is a jointly locally Lipschitz continuous function of both the
initial data $x_0$ and the driving paths $\Z^{\eps,\delta}$ and $\bar \Z^{\eps}$ into
$\CC^\alpha(\R_+,\R^d)$, the claim that $X^{\eps,\delta} \to X^\eps$ in probability
then follows immediately from \Cref{prop:convDelta}.
\end{proof}

\begin{remark}
This shows that it is consistent to \textit{define} solutions to \eqref{e:SDEbasic} in the general case
as simply being a shorthand for solutions to \eqref{e:infinite} driven by the 
pair of rough paths $\Z^\eps$ and $\bar \Z^\eps$. This is the interpretation that we will use
from now on. The fact that in the special case $H =\f12$ this coincides with the Stratonovich
interpretation of the equation follows as in \cite[Thm~9.1]{Book}.
\end{remark}

\subsection{Preliminary results}

In this section we present a few general results that will be used in the proof of 
\Cref{prop:convDelta}.
We start with the following elementary 
property of the second Wiener chaos.

\begin{proposition}\label{prop:contractWiener}
Let $\CH \subset \CE$ be an abstract Wiener space and let $B, \tilde B$ be two
i.i.d.\ 
 Gaussian random variables on $\CE$ with Cameron--Martin space $\CH$.
Let $K^\delta \colon \CE \times \CE \to \R$ be  continuous bilinear maps such that the limit
$\tilde K = \lim_{\delta \to 0} K^\delta(B,\tilde B)$ exists in $L^2$. Then, the limit
\begin{equ}[e:limK]
K = \lim_{\delta \to 0} \bigl(K^\delta(B,B)- \E K^\delta(B,B)\bigr)
\end{equ}
exists in $L^2$. Furthermore, the limit in \eqref{e:limK} depends only on the limit $\tilde K$
and not on the approximating sequence $K^\delta$ and
one has the bound $\E  K^2 \le 2 \E \tilde K^2$.
\end{proposition}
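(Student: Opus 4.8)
The plan is to use the Wiener chaos decomposition and the isometry relating the second homogeneous chaos generated by $(B,\tilde B)$ (with $B,\tilde B$ independent) to the one generated by $B$ alone. First I would note that, since $K^\delta$ is continuous and bilinear, $K^\delta(B,\tilde B)$ lives in the second Wiener chaos of the product space $\CE \oplus \CE$, and in fact in the ``mixed'' component $\CH^{(1)}\otimes \CH^{(2)}$ corresponding to one factor from $B$ and one from $\tilde B$; concretely it can be written as $\langle A^\delta, h_1\otimes h_2\rangle$ summed against an element $A^\delta$ of the (symmetric, but here we do not even need symmetry) Hilbert--Schmidt space $\CH\otimes\CH$, via $K^\delta(B,\tilde B) = I_1^B\otimes I_1^{\tilde B}(A^\delta)$ in the usual multiple-integral notation. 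The key algebraic observation is the following identity for the second Wiener chaos: if we write the same bilinear form evaluated on the diagonal, $K^\delta(B,B) - \E K^\delta(B,B)$, then it equals the second multiple Wiener--Itô integral $I_2^B(\widehat{A^\delta})$ of the symmetrisation of $A^\delta$. Thus on the diagonal we kill the trace (that is exactly the role of subtracting $\E K^\delta(B,B) = \Tr A^\delta$) and land in the second chaos of $B$.

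The second step is the isometry bookkeeping. One has
\begin{equ}
\E\bigl(K^\delta(B,\tilde B)\bigr)^2 = \|A^\delta\|_{\CH\otimes\CH}^2\;,
\end{equ}
because $B$ and $\tilde B$ are independent, whereas
\begin{equ}
\E\bigl(K^\delta(B,B)-\E K^\delta(B,B)\bigr)^2 = 2\,\|\widehat{A^\delta}\|_{\CH\otimes\CH}^2 \le 2\,\|A^\delta\|_{\CH\otimes\CH}^2\;,
\end{equ}
where $\widehat{A^\delta}$ denotes the symmetrisation and the inequality is just the fact that symmetrisation is a contraction on $\CH\otimes\CH$. More importantly, the same computation applied to differences gives
\begin{equ}
\E\bigl((K^\delta-K^{\delta'})(B,B) - \E(\cdots)\bigr)^2 \le 2\,\E\bigl((K^\delta-K^{\delta'})(B,\tilde B)\bigr)^2\;.
\end{equ}
Hence, since $K^\delta(B,\tilde B)$ converges in $L^2$ it is Cauchy in $L^2$, so $A^\delta$ is Cauchy in $\CH\otimes\CH$, so by the displayed difference bound $K^\delta(B,B)-\E K^\delta(B,B)$ is Cauchy in $L^2$ and therefore converges; this is \eqref{e:limK}. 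Passing to the limit in the two isometry identities gives $\E K^2 \le 2\,\E\tilde K^2$. For the independence-of-approximating-sequence claim, if $K^\delta$ and $L^\delta$ are two sequences with the same limit $\tilde K$, then $K^\delta - L^\delta \to 0$ in $L^2$, hence the associated kernels tend to $0$ in $\CH\otimes\CH$, hence by the difference bound the diagonal limits agree; so $K$ is a function of $\tilde K$ alone.

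The one genuine subtlety — and the step I would be most careful about — is the passage from ``$K^\delta$ is a continuous bilinear map on $\CE\times\CE$'' to ``$K^\delta(B,\tilde B) = \langle A^\delta, \cdot\rangle$ for a Hilbert--Schmidt kernel $A^\delta$ on $\CH\otimes\CH$'' with $\E(K^\delta(B,\tilde B))^2 = \|A^\delta\|^2_{\CH\otimes\CH}$. This is standard abstract Wiener space theory: a continuous bilinear functional on $\CE\times\CE$, restricted to $\CH\times\CH$, is continuous for the $\CH$-norms (since $\CH\hookrightarrow\CE$ is continuous), hence is given by a bounded bilinear form on $\CH$; the point needing the Gaussian structure is that $(B,\tilde B)\mapsto K^\delta(B,\tilde B)$, a priori only measurable, actually lies in the second chaos and its $L^2$ norm is the \emph{Hilbert--Schmidt} (not operator) norm of that form — equivalently, the form is automatically Hilbert--Schmidt because $K^\delta(B,\tilde B)\in L^2$. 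A clean way to see this without fuss is to expand $B = \sum_k \xi_k e_k$, $\tilde B = \sum_k \tilde\xi_k e_k$ in an orthonormal basis $(e_k)$ of $\CH$ (convergent in $\CE$), write $K^\delta(B,\tilde B) = \sum_{k,\ell} A^\delta_{k\ell}\xi_k\tilde\xi_\ell$ with $A^\delta_{k\ell} = K^\delta(e_k,e_\ell)$, observe the partial sums form an $L^2$-convergent martingale, and read off $\E(K^\delta(B,\tilde B))^2 = \sum_{k,\ell}(A^\delta_{k\ell})^2$ and $\E(K^\delta(B,B)-\E K^\delta(B,B))^2 = \sum_{k\ne\ell}(A^\delta_{k\ell})^2 + \sum_{k,\ell}A^\delta_{k\ell}A^\delta_{\ell k} \le 2\sum_{k,\ell}(A^\delta_{k\ell})^2$. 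This basis computation makes every identity above completely elementary and avoids invoking the full machinery of multiple Wiener--Itô integrals.
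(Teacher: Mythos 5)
Your proof is correct and is essentially the paper's own argument: you represent $K^\delta(B,\tilde B)$ by a Hilbert--Schmidt kernel $A^\delta\in\CH\otimes\CH$ (the paper phrases this as the off-diagonal block of the second-chaos kernel on $\CH\oplus\tilde\CH$), identify $K^\delta(B,B)-\E K^\delta(B,B)$ with the second chaos of $B$ applied to the symmetrised kernel, and use that symmetrisation is a contraction to get the factor-$2$ comparison, applied also to differences to obtain the Cauchy property, the bound $\E K^2\le 2\E\tilde K^2$, and independence of the approximating sequence. The only blemish is a harmless slip in your final basis computation: the first sum should run over all $k,\ell$, i.e.\ the correct value is $\sum_{k,\ell}(A^\delta_{k\ell})^2+\sum_{k,\ell}A^\delta_{k\ell}A^\delta_{\ell k}$, which of course still satisfies the stated bound $\le 2\sum_{k,\ell}(A^\delta_{k\ell})^2$.
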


\begin{proof}
The Gaussian probability space generated by the pair $(B,\tilde B)$ has Cameron--Martin space $\CH \oplus \tilde \CH$ where $\tilde \CH$ is a copy of $\CH$.
Since $K^\delta$ is bilinear and $K^\delta(B,\tilde B)$ has vanishing expectation, it 
belongs to the second homogeneous Wiener chaos, so that there exists $\hat \K^\delta \in (\CH \oplus \tilde \CH) \otimes_s (\CH \oplus \tilde \CH)$
with $K^\delta(B,\tilde B) = \CI_2(\hat \K^\delta)$, where $\CI_k$ denotes the usual isometry between $k$th symmetric tensor power
and $k$th homogeneous Wiener chaos, see \cite{NualartBook}.

Note now that, interpreting $\hat \K^\delta$ as a
Hilbert--Schmidt operator on $\CH \oplus \tilde\CH$, there exists
$\K^\delta \in \tilde \CH \otimes  \CH$ such that $\hat \K^\delta = \iota \K^\delta$, where
$\iota \colon  \tilde\CH \otimes \CH \to (\CH \oplus \tilde \CH) \otimes_s (\CH \oplus \tilde \CH)$ is given by
\begin{equ}
\iota \K = \f12 \begin{pmatrix} 0 & \tau  \K \cr  \K & 0 \end{pmatrix}\;, 
\end{equ}
with the obvious matrix notation and $\tau \colon \tilde\CH \otimes \CH \to \CH \otimes\tilde  \CH$ the
transposition operator. 

This is because the first diagonal block is obtained by testing against
 $\<k, (B, \tilde B)\>$ where $k=(f, 0) \otimes_s (g,0)$ with $f,g \in \CH$,   yielding 
\begin{equ}
\E \bigl(K^\delta(B,\tilde B) \bigl(\scal{f,B}\scal{g,B} - \scal{f,g}\bigr)\bigr) = 0\;.
\end{equ}
The second diagonal block vanishes for the same
reason with the roles of $B$ and $\tilde B$ exchanged.
(Here we denote by $x\mapsto \<x,f\>$ the unique element of $L^2(\CE)$ 
which is linear on a set of full measure containing $\CH$ and coincides with $\scal{f,\cdot}$ there. In fact,
$\<x,f\>=f^*(x)$ when $f^*\in\CE^*$ and $\<B,f\>=f^*\circ B$. )

On the other hand, one has 
\begin{equ}
K^\delta_\diamond(B,B) \eqdef K^\delta(B,B)- \E K^\delta(B,B) = \f12 \CI_2(\K^\delta + \tau \K^\delta)\;,
\end{equ} 
where this time $\CI_2$ refers to the isometry between $\CH\otimes_s\CH$ and the second chaos generated by $B$ only.
Since $\CI_2$ and $\tau$ are both isometries, it immediately follows that 
$\E (K^\delta_\diamond(B,B))^2  \le \|\K^\delta\|^2 = 2 \|\iota \K^\delta\|^2 = 2\E (K^\delta(B,\tilde B))^2$, and similarly for 
differences $K^\delta - K^{\delta'}$, so that the claim follows.
\end{proof}

\begin{remark}\label{rem:sym}
It follows immediately from \eqref{e:limK} that if we replace $K^\delta$ by the same sequence of
bilinear maps, but with their two arguments exchanged, the limit one obtains is the same.
\end{remark}
Before we turn to the precise statement,
we introduce the following notation which will
be used repeatedly in the sequel. We write $\eta$ for the distribution on $\R$ given by
\begin{equ}[e:defEta]
\scal{\eta,\phi} =\int_\R |t|^{2H}\phi(t)\,dt\;,
\end{equ}
and $\eta''$ for its second distributional derivative.
For $a < 0 < b$, 
we will then make the abuse of notation 
$\int_a^b \eta''(t)\,\phi(t)\,dt$ as a shorthand for 
$\lim_{\eps \to 0} \scal{\eta'', \phi \one^\eps_{[a,b]}}$, where $\one^\eps_{[a,b]}$ denotes a
mollification of the indicator function $\one_{[a,b]}$. The following is elementary.
\begin{lemma}\label{distributional-derivative}
Let $a<0<b$ and $H \in (\f13,\f12)$. 
Setting  $\alpha_H= H(1-2H)$ and $\phi_0 = \phi(0)$, the limit above is given by
\begin{equ}[e:ideneta]
\int_a^b \eta''(t)\,\phi(t)\,dt = -2\alpha_H \int_a^b |t|^{2H-2}\,\big(\phi(t) - \phi_0\big)\,dt 
+ 2H \phi_0 \bigl(|a|^{2H-1} + |b|^{2H-1}\bigr)\;,
\end{equ}
independently of the choice of mollifier, thus justifying the notation. For $a = 0$, we set
\begin{equ}
\int_0^b \eta''(t)\,\phi(t)\,dt = -2\alpha_H \int_0^b |t|^{2H-2}\,\big(\phi(t) - \phi_0\big)\,dt 
+ 2H \phi_0 |a|^{2H-1}\;,
\end{equ}
which can be justified in a similar way, provided that the mollifier one uses is symmetric.
(This in turn is the case if we view $\eta$ as the limit of covariances of smooth approximations
to fractional Brownian motion.) 

For $H = \f12$, one similarly has $\int_a^b \eta''(t)\,\phi(t)\,dt = \phi_0$ and $\int_0^b \eta''(t)\,\phi(t)\,dt = \f12 \phi_0$, while for  $H \in (\f12,1)$, $\eta''$ is given by the locally integrable function 
$t \mapsto -2\alpha_H |t|^{2H-2}$.
\qed
\end{lemma}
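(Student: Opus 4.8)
By definition of the distributional derivative, for any test function $\phi$ and any mollification $\one^\eps_{[a,b]}$ of the indicator, the quantity to evaluate is $\lim_{\eps\to 0}\int_\R |t|^{2H}\,\psi_\eps''(t)\,dt$, where $\psi_\eps=\phi\,\one^\eps_{[a,b]}\in C^\infty_c(\R)$. I would treat the genuinely new regime $H\in(\f13,\f12)$ first, and begin by recording the distributional identity $\int_\R|t|^{2H}\psi''(t)\,dt=-2\alpha_H\int_\R|t|^{2H-2}\bigl(\psi(t)-\psi(0)\bigr)\,dt$ for every $\psi\in C^\infty_c(\R)$, which I would obtain by integrating by parts twice on $\{|t|>\rho\}$ and letting $\rho\to 0$. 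On $(\rho,\infty)$ this produces, besides the interior term $2H(2H-1)\int_\rho^\infty t^{2H-2}\psi\,dt$, the boundary terms $-\rho^{2H}\psi'(\rho)$ and $2H\rho^{2H-1}\psi(\rho)$ (and their reflections on $(-\infty,-\rho)$); the first tends to $0$ since $2H>0$, while the second diverges on its own but combines with the $\psi(0)$-part of the interior term — using $\int_\rho^\infty t^{2H-2}\,dt=\rho^{2H-1}/(1-2H)$ and $(2H-1)/(1-2H)=-1$ — into $2H\rho^{2H-1}\bigl(\psi(\rho)-\psi(0)\bigr)=O(\rho^{2H})\to 0$. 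The surviving term is the claimed one (using $2H(2H-1)=-2\alpha_H$), and the integral converges absolutely since its integrand is $O(|t|^{2H-1})$ near $0$ (as $\psi$ is Lipschitz and $2H-1>-1$) and $\psi$ has compact support while $2H-2<-1$; in particular $\eta''$ has \emph{no} atom at the origin.

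It then remains to let $\eps\to 0$ in $-2\alpha_H\int_\R|t|^{2H-2}\bigl(\psi_\eps(t)-\psi_\eps(0)\bigr)\,dt$. When $a<0<b$ one has $\psi_\eps(0)=\phi_0$ for all small $\eps$; I would split the integral at a fixed small $\rho>0$, use the uniform-in-$\eps$ bound $|t|^{2H-2}|\psi_\eps(t)-\phi_0|\lesssim|t|^{2H-1}$ to dominate the part on $\{|t|<\rho\}$ (whose integrand converges pointwise to $|t|^{2H-2}(\phi(t)-\phi_0)$), and on $\{|t|\ge\rho\}$ note that $|t|^{2H-2}$ is bounded and $\psi_\eps\to\phi\,\one_{[a,b]}$ boundedly as the transition zones near $a,b$ shrink. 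The limit is thus
\[
-2\alpha_H\Bigl(\int_a^b |t|^{2H-2}\bigl(\phi(t)-\phi_0\bigr)\,dt-\phi_0\!\!\int_{\R\setminus[a,b]}\!\!|t|^{2H-2}\,dt\Bigr),
\]
and since $\int_{\R\setminus[a,b]}|t|^{2H-2}\,dt=\bigl(|a|^{2H-1}+|b|^{2H-1}\bigr)/(1-2H)$ and $2\alpha_H/(1-2H)=2H$, this is exactly \eqref{e:ideneta}; as the right-hand side no longer refers to the mollifier, independence of the approximation follows.

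For the endpoint case $a=0$ the singular point of $\eta''$ now lies on the boundary of the interval, which forces a symmetric mollifier: then $\psi_\eps(0)=\phi_0\,\one^\eps_{[0,b]}(0)=\tfrac12\phi_0$, and near $0$ I would split $\psi_\eps(t)-\tfrac12\phi_0=\phi_0\bigl(\one^\eps_{[0,b]}(t)-\tfrac12\bigr)+\bigl(\phi(t)-\phi_0\bigr)\one^\eps_{[0,b]}(t)$; the first summand is an odd function of $t$, hence integrates to zero against the even kernel $|t|^{2H-2}$ on any symmetric neighbourhood of $0$, while the second is again $O(|t|^{2H-1})$ and handled as before (a non‑symmetric mollifier would leave a spurious finite term here, which is the content of the parenthetical remark). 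The same bookkeeping then gives the half-interval formula. Finally, for $H=\f12$ one has $\eta'=\mathrm{sgn}$, so $\eta''$ is a multiple of $\delta_0$ and the assertions are immediate (the half-interval statement again relying on the symmetric mollifier so that $\delta_0$ is counted with weight $\tfrac12$), while for $H\in(\f12,1)$ the function $t\mapsto|t|^{2H-2}$ is locally integrable, $\eta''=-2\alpha_H|t|^{2H-2}$ in the ordinary sense, and \eqref{e:ideneta} holds with the regularisation removed.

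The only genuine subtlety — and the step I would be most careful with — is that for $H<\f12$ the singularity $|t|^{2H-2}$ at the origin is non-integrable \emph{and} the integration-by-parts boundary terms there diverge; the computation consists precisely in arranging the cancellation so that the outcome is finite, agrees with the finite-part regularisation, and is mollifier-independent, together with securing the uniform-in-$\eps$ domination needed to push $\lim_{\eps\to0}$ inside the integral near $0$. Everything else is routine.
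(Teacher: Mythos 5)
Your argument is correct, and it is essentially the computation the paper leaves implicit (the lemma is stated as ``elementary'' with no written proof): two integrations by parts on $\{|t|>\rho\}$ identifying $\eta''$ with the finite-part distribution $-2\alpha_H\,|t|^{2H-2}$ acting on $\psi-\psi(0)$, verification that the divergent boundary terms cancel (your combination $2H\rho^{2H-1}(\psi(\rho)-\psi(0))=O(\rho^{2H})$ is exactly the needed cancellation), and then a dominated-convergence passage to the limit in $\psi_\eps=\phi\,\one^\eps_{[a,b]}$, using $\int_{\R\setminus[a,b]}|t|^{2H-2}\,dt=(|a|^{2H-1}+|b|^{2H-1})/(1-2H)$ and $2\alpha_H/(1-2H)=2H$ to produce the boundary terms in \eqref{e:ideneta}, with the odd/even symmetry argument handling the endpoint case $a=0$ (where, for each fixed $\eps$, the piece $\phi_0(\one^\eps_{[0,b]}(t)-\f12)$ is indeed absolutely integrable against $|t|^{2H-2}$ because $\one^\eps_{[0,b]}$ is Lipschitz and equals $\f12$ at $0$, so invoking oddness is legitimate).

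Two reconciliations with the statement as literally printed, both of which your computation in fact settles correctly. First, your ``same bookkeeping'' for $a=0$ yields the boundary term $2H\phi_0|b|^{2H-1}$; the $|a|^{2H-1}$ in the displayed half-interval formula (which would be $0^{2H-1}$, not even finite for $H<\f12$) is a typo for $|b|^{2H-1}$ — this is also the version consistent with how the formula is used later, e.g.\ with $b\to\infty$ in the proof of \Cref{lem:convMean}. Second, at $H=\f12$ one has $\eta''=2\delta_0$, so your method gives $2\phi_0$ for $a<0<b$ and $\phi_0$ for $a=0$ (the latter is what produces the Stratonovich correction $\f12\int\scal{f,g}$ in the proof of \Cref{prop:boundRP}, and it is also the $H\uparrow\f12$ limit of \eqref{e:ideneta}); the stated values $\phi_0$ and $\f12\phi_0$ are off by a factor $2$ and again look like typos. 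So do not assert that the printed $H=\f12$ constants are ``immediate'' from $\eta''$ being a multiple of $\delta_0$ — state the constants your computation actually gives. With these caveats, the proof is complete.
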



We now show that for any fixed $\eps > 0$, the processes $Z^\eps$ satisfy a suitable form of Hölder
regularity. To keep notations shorter, we define the collection of processes
\begin{equ}[e:defZf]
Z^f_{s,t} \eqdef \int_s^t f(r)\,dB_i(r)=\sum_{i=1}^m\int_s^t\< f(r), e^i\>_{\R^m} dB_r^i\;,
\end{equ}
indexed by $\R^m$-valued functions $f$
that belong to the reproducing kernel space of the fractional Brownian motion $B$.  Here $\{e^i\}$ is an o.n.b. of $\R^m$ and $B_r=(B_r^1, \dots, B_r^m)$. We start our analysis with some preliminary result for the
irregular case $H < \f12$.

\begin{lemma}\label{prop:boundCovZ}
Let $H \in (\f13,\f12)$ and let $f\in \CC^\beta([0,1],\R^m)$ for some $\beta > (1-2H) \vee 0$. The processes $Z^f$ satisfy the Coutin--Qian conditions
\cite[Def.~14]{Coutin-Qian,FVGaussian} in the sense that 
\begin{equs}
\E \big(Z^f_{s,t}(x)^2\bigr) &\lesssim \|f\|_\infty\|f\|_{\CC^\beta} |t-s|^{2H}\;,\\ 
\big|\E \big(Z^f_{s,s+h}(x)\, Z^f_{t,t+h}(x)\bigr)\big| &\lesssim \|f\|_\infty^2 |t-s|^{2H-2} h^2\;, 
\end{equs}
for all $0 < s < t < 1$ and all $h \in (0, t-s]$.
\end{lemma}

\begin{proof}
The mixed second order distributional derivative of $ \E(B^\delta_sB_t^\delta)$  is given by $\f 12 \eta_\delta''(s-t)$,
the convolution of $\f12\eta''$ with a symmetric mollifier at scale $\delta$.
 Mollifying $B$ and taking limits shows that we have the identity
\begin{equs}
\E (Z^f_{s,t})^2 &=  \lim_{\delta\to 0}  \int_s^t\int_s^t \E (\dot B_u^\delta \dot B_v^\delta) f(u)f(v)\,du\,dv\;\\
&= \f 12\int_s^t\int_s^t \eta''(x-y)f(x)f(y)\,dx\,dy\;,
\end{equs}
(with summation over the components of $f$ implied).  
For $H < \f12$, this yields the bound
\begin{equs}
\E (Z^f_{s,t})^2 &=\f 14 \int_{s-t}^{t-s}\eta''(v) \int_{2s+|v|}^{2t-|v|}  \Bigl(f\Big(\f{u+v}2\Big)f\Big(\f{u-v}2\Big)\Bigr)\,du\,dv \\
&= -\f 12\alpha_H\int_{s-t}^{t-s} |v|^{2H-2}  \int_{2s+|v|}^{2t-|v|}  \Bigl(f\Big(\f{u+v}2\Big)f\Big(\f{u-v}2\Big) - f\Big(\f u2\Big)^2\Bigr)\,du\,dv \\
&\qquad  +\f{\alpha_H}{1-2H}\int_0^{t-s}u^{2H-1} \Bigl(f\Big(t-\f u2\Big)^2+ f\Big(s+\f u2\Big)^2\Bigr)\,du \label{e:covarIst}\\
&\lesssim \|f\|_{\CC^\beta} \|f\|_\infty |t-s|^{2H+\beta}
+ \|f\|_\infty^2 |t-s|^{2H}\;,
\end{equs}
as required. 

 Regarding the covariance,  we have 
\begin{equs}
\big|\E \bigl(Z^f_{s,s+h}Z^f_{t,t+h}\bigr)\bigr| 
&\lesssim \|f\|_\infty^2 \int_0^h \int_0^h|t-s+v-u|^{2H-2} \,du\,dv \\
&\lesssim \|f\|_\infty^2 h^2 |t-s|^{2H-2}\;,
\end{equs}
when $0<h\le t-s$ so the intervals overlap only at most one point, as required. \end{proof}

If $\tilde B$ is an independent copy of $B$, we can combine this result
with those of \cite{FVGaussian} to conclude that there is a canonical rough path associated with 
the path $\big(\int f(r)\,dB(r),  \int g(r)\,d\tilde B(r)\big)$ for any $f,g\in  \CC^\beta([0,1],\R^m)$. 
We now show that there also exists a canonical lift for $(Z^f, Z^g)$, where the integrals are 
defined with respect to the same fractional Brownian motion $B$.
With this notation, we then have the following result.

\begin{proposition}\label{prop:boundRP}
For $H \in (\f13,\f12)$, we set $\alpha = H$ and $U = \CC^\beta([0,1],\R^m)$ for some $\beta \in (\f13 , H)$.
For $H \in [\f12, 1)$, we fix $p > 3/(3H-1)$ and set $\alpha = H-\f1p$ and $U = L^p([0,1],\R^m)$.

Then, for any finite collection $\{f_i\}_{i \le N} \subset U$, there is a ``canonical lift'' of 
$$Z^f = (Z^{f_1},\ldots,Z^{f_N})$$ 
given by \eqref{e:defZf} to a geometric rough path 
$\Z^f = (Z^f, \ZZ^f)$ on $\R^N$ such that
\begin{equ}[e:wantedBoundZ]
\sup_{s \neq t} |t-s|^{-q\alpha} \E \Bigl(|Z^f_{s,t}|^q + |\ZZ^f_{s,t}|^{\f q2}\Bigr) \lesssim \sum_i\|f_i\|_{U}^{q}\;,
\end{equ}
for every $q \ge 1$. This is obtained by taking the limit as $\delta \to 0$
of the canonical lift of the smooth paths $Z^{\delta,f}$ defined as in \eqref{e:defZf} 
but with $B$ replaced by $B^\delta$.
\end{proposition}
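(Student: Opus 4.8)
The plan is to build the lift in the two cases separately, but in both cases the construction is driven by appealing to the canonical rough path theory for Gaussian processes and then using the pairing trick of Proposition~\ref{prop:contractWiener} to handle the diagonal. For $H \ge \f12$ everything is soft: the processes $Z^{f_i}$ are Young-regular (for $H > \f12$) or semimartingales (for $H = \f12$), the iterated integrals $\ZZ^{f_i f_j}_{s,t} = \int_s^t Z^{f_i}_{s,r}\,dZ^{f_j}(r)$ are defined directly (Young, resp.\ Stratonovich), and the moment bound \eqref{e:wantedBoundZ} follows from Kolmogorov-type estimates together with the $L^p \hookrightarrow \CH_B$ embedding and Gaussian hypercontractivity (all $L^q$ norms of a fixed chaos element are comparable, so it suffices to check $q = 2$). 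So the bulk of the work is for $H \in (\f13, \f12)$, which is what I describe next.

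For $H \in (\f13,\f12)$, the first step is to observe that Proposition~\ref{prop:boundCovZ} shows each $Z^{f_i}$ satisfies the Coutin--Qian conditions \cite[Def.~14]{Coutin-Qian}, and more is true: running the same computation for a \emph{pair} $(Z^{f_i}, Z^{f_j})$ with $f_i, f_j \in \CC^\beta$, the cross-covariance $\E(Z^{f_i}_{s,t} Z^{f_j}_{s,t})$ and its increments obey exactly the same bounds (the symmetrisation in \eqref{e:covarIst} goes through verbatim with $f^2$ replaced by $f_i f_j$). Hence the $\R^N$-valued Gaussian path $Z^f = (Z^{f_1},\dots,Z^{f_N})$ satisfies a multidimensional Coutin--Qian condition, and the theory of \cite{FVGaussian} (specifically their complementary-Young-regularity criterion, which holds since $2H > \f23$) produces a canonical geometric rough path lift $\Z^f = (Z^f, \ZZ^f)$ together with the second-moment bound $\E|\ZZ^f_{s,t}| \lesssim |t-s|^{2\alpha} \sum_i \|f_i\|_{\CC^\beta}^2$, with $\alpha = H$. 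The point of the ``canonical'' qualifier is that this lift is the $L^2$-limit of the iterated integrals of any smooth mollification $B^\delta$ of $B$; this is what makes it the correct object and what makes the geometricity (first Chen/shuffle relation) automatic.

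The subtlety is that \cite{FVGaussian} in the form one wants to cite is usually stated for the iterated integrals of a \emph{single} Gaussian path against \emph{itself}, whereas here the natural route is to first treat the case where the two fBms are independent copies $B, \tilde B$ — then $(\int f\,dB, \int g\,d\tilde B)$ has a canonical lift purely by the independent Coutin--Qian theory — and transfer to the diagonal $B = \tilde B$ via Proposition~\ref{prop:contractWiener}. Concretely, for each pair $(i,j)$ write the off-diagonal iterated integral $\int_s^t (\int_s^r f_i \,dB)\, g_j\,d\tilde B$ as $K^\delta_{s,t}(B,\tilde B)$ for the obvious bilinear functional built from the mollifier, note these converge in $L^2$ by the independent theory, and then Proposition~\ref{prop:contractWiener} gives convergence of the diagonalised, recentred version $K^\delta_\diamond(B,B)$ in $L^2$, with $\E(K_\diamond^2) \le 2\,\E(\tilde K^2)$; adding back the (deterministic, by Lemma~\ref{distributional-derivative} finite) expectation $\E K^\delta(B,B) \to \f12\eta$-type term yields $\ZZ^{f_i f_j}$. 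The bound $\E(\tilde K^2) \lesssim |t-s|^{4H}\|f_i\|^2\|f_j\|^2$ from the independent theory then transfers with a factor $2$, giving the $q=2$ case of \eqref{e:wantedBoundZ} for $\ZZ^f$; the general $q$ follows by hypercontractivity on the second chaos, and the level-one bound is Proposition~\ref{prop:boundCovZ}. Finally Chen's relation holds by construction (it holds for each $B^\delta$ and passes to the $L^2$-limit) and geometricity likewise, so $\Z^f$ is a genuine geometric rough path. The main obstacle I anticipate is purely bookkeeping: checking that the bilinear functionals $K^\delta$ really are of the product form required by Proposition~\ref{prop:contractWiener} (i.e.\ the ``$\hat\K^\delta = \iota\K^\delta$'' structure, which here amounts to the fact that the off-diagonal iterated integral of $B$ against $\tilde B$ has no $B$-$B$ or $\tilde B$-$\tilde B$ contractions), and confirming that the uniform-in-$\delta$ Coutin--Qian bounds of Proposition~\ref{prop:boundCovZ} are exactly the hypotheses \cite{FVGaussian} needs for the convergence — not just the existence — of the lift, so that the diagonal limit is independent of the mollifier.
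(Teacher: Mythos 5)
For $H \in (\f13,\f12)$ the route you eventually take is the paper's: verify the Coutin--Qian conditions via Proposition~\ref{prop:boundCovZ}, apply \cite[Thm~35]{FVGaussian} to the pair driven by independent copies $(B,\tilde B)$, transfer to the diagonal $B=\tilde B$ with Proposition~\ref{prop:contractWiener}, add back the expectation, and get general $q$ from equivalence of moments on a fixed chaos. Two remarks on that part. Your opening claim that the dependent vector $(Z^{f_1},\dots,Z^{f_N})$ satisfies a ``multidimensional Coutin--Qian condition'' to which \cite{FVGaussian} applies directly is precisely what is \emph{not} available — those existence criteria are for processes with independent components, which is the whole reason for the detour through $\tilde B$; you correct this yourself in the next paragraph, so no harm done. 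Also, for \eqref{e:wantedBoundZ} it is not enough that the recentring term $\E K^\delta(B,B)$ is finite: one needs the uniform-in-$\delta$ bound $|\E \ZZ^{\delta,f,g}_{s,t}| \lesssim \|f\|_{\CC^\beta}\|g\|_\infty|t-s|^{2H+\beta} + \|f\|_\infty\|g\|_\infty|t-s|^{2H}$, which is exactly where the Hölder regularity $\beta>1-2H$ and the identity \eqref{e:ideneta} enter (this is the computation \eqref{e:expZZdelta}--\eqref{e:boundZZ} in the paper); your sketch should assert this bound, not mere finiteness.

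The genuine gap is the case $H \in (\f12,1)$. With $f_i \in L^p$ and only $p > 3/(3H-1)$, the paths $Z^{f_i}$ are Hölder of exponent just under $H-\f1p$, which is guaranteed to exceed $\f13$ but not $\f12$: since $3/(3H-1) < 2/(2H-1)$, the admissible range of $p$ contains values with $2\bigl(H-\f1p\bigr) \le 1$, so the iterated integrals are \emph{not} ``defined directly'' as pathwise Young integrals, and you supply no substitute construction or second-level moment estimate. The paper's argument here is specific: using that the kernel $|t|^{2H-2}$ appearing in $\eta''$ is nonnegative for $H>\f12$, one may replace $f,g$ by $|f|,|g|$ and drop the indicator to obtain $\E(\ZZ^{f,g}_{s,t})^2 \le \E\bigl((Z^{|f|}_{s,t})^2 (Z^{|g|}_{s,t})^2\bigr) \le 3\,\E(Z^{|f|}_{s,t})^2\,\E(Z^{|g|}_{s,t})^2 \lesssim \|f\|_{L^p}^2\|g\|_{L^p}^2\,|t-s|^{4H-\f4p}$, together with $\E(Z^f_{s,t})^2 \lesssim \|f\|_{L^p}^2 |t-s|^{2H-\f2p}$ from Hölder's inequality; this yields exactly the exponent $\alpha = H-\f1p$ in \eqref{e:wantedBoundZ} over the whole stated range of $p$. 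Your appeal to Kolmogorov estimates, the $L^p\hookrightarrow\CH_B$ embedding and hypercontractivity reduces matters to $q=2$ (as in the paper) but does not produce this $q=2$ bound, so the $H>\f12$ case remains unproven as written. (Your $H=\f12$ treatment via the Stratonovich/martingale picture is fine.)
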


\begin{remark}\label{rem:mollifier}
As usual, ``geometric'' here means that $\Z^f$ is the limit of canonical lifts of smooth functions.
Indeed, for $B^\delta$ the convolution of $B$ with a mollifier at scale $\delta > 0$, $\ZZ^{f}$ is given by 
$$
(\ZZ^{f}_{s,t})_{ij} =\lim_{\delta\to 0}  \int_s^t \int_s^r   f_i(u)\, \dot B^\delta(u) du\, f_j(r)\, \dot B^\delta(r)dr\;,
$$
and this limit is independent of the choice of mollifier (and therefore ``canonical'').
\end{remark}

\begin{proof}
We only need to show \eqref{e:wantedBoundZ} for $q=2$ since $Z$ and $\ZZ$ belong to a Wiener chaos of fixed
order. (Recall that the $f_i$ are considered deterministic here.)

We start with the case $H \in (\f13,\f12)$.
Let $\tilde B$ denote an independent copy of the fractional Brownian motion $B$ and 
let $\tilde Z^{f,g} = (Z^f, \tilde Z^g)$
where $\tilde Z^g$ is defined like $Z^{g}$ but with $ B$ replaced by $\tilde B$. By \Cref{prop:boundCovZ}, for any $f,g \in \CC^\beta$, we
can then apply \cite[Thm~35]{FVGaussian} to construct a second-order 
process $\tilde \ZZ^{f,g}_{s,t}$  satisfying the Chen identity
\begin{equ}
\tilde \ZZ^{f,g}_{s,t} - \tilde \ZZ^{f,g}_{s,u} - \tilde \ZZ^{f,g}_{u,t} = Z^f_{s,u} \, \tilde Z^g_{u,t}\;.
\end{equ}
It furthermore coincides with the Wiener integral
\begin{equ}[e:interpretZZ]
\tilde \ZZ^{f,g}_{s,t} = \int_s^t Z^f_{s,r}\,d\tilde Z^g_{s,r} = \int_s^t \int_s^r   f(u)\,dB(u)\, g(r)\,d\tilde B(r)\;,
\end{equ}
which makes sense since the Coutin--Qian condition guarantees that $r \mapsto Z^f_{s,r}$ belongs to the 
reproducing kernel space of $Z^g$.
It is furthermore such that smooth approximations to \eqref{e:interpretZZ} 
(replace $B$ and $\tilde B$ by $B^\delta$ and 
$\tilde B^\delta$, obtained by convolution with a mollifier at scale $\delta \to 0$) converge to it in $L^2$.
In particular, $\tilde \ZZ^{f,g}$ belongs to the second Wiener chaos generated by $(B,\tilde B)$ and
is of the form of the limits considered in Proposition~\ref{prop:contractWiener}.

We now want to replace $\tilde B$ by $B$. 
For an approximation $B^\delta$ as mentioned above, setting
\begin{equ}[e:constructZZ]
\ZZ^{\delta,f,g}_{s,t} =  \int_s^t \int_s^r   f(u)\,dB^\delta(u)\, g(r)\,dB^\delta(r)\;,
\end{equ}
we have
\begin{equ}[e:expZZdelta]
\E \ZZ^{\delta,f,g}_{s,t} ={\f 12} \int_s^t \int_s^r   f(u) g(r) \, \eta_\delta''(u-r)\,du\,dr\;,
\end{equ}
where $\eta_\delta$ is an even $\delta$-mollification of $t \mapsto |t|^{2H}$, so that in particular
$\int_0^\infty \eta_\delta''(s)\,ds=0$, since $\eta_\delta'(0)=0$. Similarly to \eqref{e:covarIst},
we can then rewrite \eqref{e:expZZdelta} as
\begin{equs}
\E \ZZ^{\delta,f,g}_{s,t} &={\f 12} \int_{s}^{t} g(r) \int_{s}^{r} \bigl(f(u) - f(r)\bigr) \eta_\delta''(r-u)\,du\,dr \\
&\qquad -{\f 12}  \int_{s}^{t} g(r) \int_{-\infty}^{s} f(r) \eta_\delta''(r-u)\,du\,dr\;.
\end{equs}
%
It follows immediately that one has
\begin{equs}
\lim_{\delta \to 0} \E \ZZ^{\delta,f,g}_{s,t} &= H(2H-1)\int_{s}^{t} g(r) \int_{s}^{r} \bigl(f(u) - f(r)\bigr) |r-u|^{2H-2}
\,du\,dr \\
&\qquad - H\int_{s}^{t} g(r)f(r) |r-s|^{2H-1}\,dr\;, \label{e:limitExpZ}
\end{equs}
which is bounded by some multiple of
\begin{equ}
\|f\|_{\CC^\beta} \|g\|_\infty |t-s|^{2H+\beta} + \|f\|_\infty \|g\|_\infty |t-s|^{2H}\;.
\end{equ}
Combining this with Proposition~\ref{prop:contractWiener}, we conclude that 
\begin{equ}[e:convProb]
\ZZ^{f,g}_{s,t} = \lim_{\delta \to 0} \ZZ^{\delta,f,g}_{s,t}\;,
\end{equ}
exists in probability, is independent of the choice of mollification, and satisfies the bound
\begin{equ}[e:boundZZ]
|\E \ZZ^{f,g}_{s,t}| \lesssim \|f\|_{\CC^\beta} \|g\|_\infty |t-s|^{2H+\beta} + \|f\|_\infty \|g\|_\infty |t-s|^{2H}\;.
\end{equ}
It now suffices to set $\ZZ^{f,ij}_{s,t} = \ZZ^{f_i,f_j}_{s,t}$. Both the fact that Chen's 
relation holds and the fact that the resulting rough path is geometric follow at once from
the fact that these properties hold for the smooth approximations.

Combining \eqref{e:boundZZ} with the fact that the rough path obtained from 
\cite[Thm~35]{FVGaussian} satisfies the bound \eqref{e:wantedBoundZ} with $\alpha = H$ 
as a consequence of \Cref{prop:boundCovZ}, the claim follows.

We now turn to the case $H = \f12$ where it is well known that 
$\ZZ^{\delta,f,g}_{s,t}$ defined in \eqref{e:constructZZ}
converges to the Stratonovich integral  $ \int_s^t \int_s^r   f(u)\,dB(u)\, g(r)\, \circ dB(r)$, so that
\begin{equ}
\ZZ^{f,g}_{s,t} = \int_s^t \int_s^r   f(u)\,dB(u)\, g(r)\,dB(r)
+ \f12 \int_s^t \scal{f(u),g(u)}\,du\;.
\end{equ}
A simple consequence of Hölder's inequality then leads to the bounds
\begin{equ}
\E |Z_{s,t}^f|^2 \lesssim \|f\|_{L^p} |t-s|^{1-\f2p}\;,\qquad
\E |\ZZ_{s,t}^{f,g}|^2 \lesssim \|f\|_{L^p}\|g\|_{L^p}|t-s|^{2-\f4p}\;,
\end{equ}
where  $ \|f\|_{L^p}= \|f\|_{L^p([0,1])}$. This shows again that the bound \eqref{e:wantedBoundZ} holds, this time with 
$\alpha = \f12-\f1p$ (and $q$ arbitrary), and our condition on $p$ guarantees that this is greater than $\f13$.

For $H > \f12$, the first identity in \eqref{e:covarIst} above combined with the positivity of 
the distribution $-\eta''$ and Hölder's inequality yields the bound
\begin{equ}
\E (Z^f_{s,t})^2 \lesssim |t-s|^{1-\f2p}\|f\|_{L^p}^2 \Big| \int_{s-t}^{t-s}\eta''(v)\,dv\Big|
\lesssim \|f\|_{L^p}^2 |t-s|^{2H-\f2p}\;.
\end{equ}
Similarly, again as a consequence of the positivity of $-\eta''$, we have the bound
\begin{equ}
\E (\ZZ^{f,g}_{s,t})^2 \le \E \big((Z^{|f|}_{s,t})^2(Z^{|g|}_{s,t})^2\big)
\le 3\E (Z^{|f|}_{s,t})^2\E (Z^{|g|}_{s,t})^2\lesssim \|f\|_{L^p}^2\|g\|_{L^p}^2 |t-s|^{4H-\f4p}\;,
\end{equ}
which shows again that \eqref{e:wantedBoundZ} holds.
\end{proof}

\subsection{Construction and convergence of the rough driver as \TitleEquation{\delta \to 0}{delta goes to 0}}
\label{sec:RD}

The aim of this section is to construct the rough path $\Z^\eps$ (this is the content of \Cref{prop:defZZeps})
and to show that this construction enjoys good stability properties. 
This is done by  stitching together the ``canonical'' rough path lift for the collection 
$\{Z^\eps(x)\}_{x \in \R^d}$ obtained in \Cref{prop:boundRP}. For $H>\f 12$, this is just iterated Young integrals.
For $H=\f 12$ the iterated integrals are considered in the Stratonovich sense and, 
thanks to the independence of $Y$ and $B$, the first order process can be interpreted indifferently as
either an It\^o or a Stratonovich integral. 

In order to make use of \Cref{prop:boundRP}, we use the following lemma, where $Y^\eps_t$ denotes the 
Markov process from \Cref{sec:processY}.

\begin{lemma}\label{lem:boundCF}
Let $U$ be as in Proposition~\ref{prop:boundRP} and let Assumption~\ref{ass:continuous} hold for some $p_\star$.   When $H < \f12$, we further assume $E\subset L^{p_\star}$ and $\beta < H - \f1{p_\star}$, where $U = \CC^\beta$.
Then, given $f \in E$ and setting $\hat f(t) = f(Y^\eps_t)$, one has for every $p < p_\star$  the bound  
\begin{equ}
\E \|\hat f\|_U^p \lesssim \|f\|_E^p\;,
\end{equ}
uniformly over $E$. (Here we use the convention $p_\star = \infty$ when $H>\f12$.)
\end{lemma}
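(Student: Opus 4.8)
The plan is to treat the two regimes $H\ge\f12$ and $H<\f12$ separately, since the space $U$ is different in each.

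In the regime $H\ge\f12$ one has $U=L^p([0,1],\R^m)$, with $p_\star=\infty$. Here I would argue directly from stationarity: by Tonelli's theorem and the fact that $Y^\eps_t$ has law $\mu$ for every $t$,
\begin{equ}
\E\|\hat f\|_{L^p([0,1])}^p=\int_0^1\E|f(Y^\eps_t)|^p\,dt=\|f\|_{L^p(\mu)}^p\;,
\end{equ}
and $\|f\|_{L^p(\mu)}\lesssim\|f\|_E$ for every finite $p$ by Assumption~\ref{ass:integrability} together with the remark following it (which in this regime gives $E\subset\bigcap_{p\ge1}L^p(\mu)$). This already proves the claim, uniformly in $f\in E$.

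In the regime $H<\f12$ one has $U=\CC^\beta([0,1],\R^m)$ with $\f13<\beta<H-\f1{p_\star}$, and I would run a quantitative Kolmogorov argument. First I would feed $f$ into Assumption~\ref{ass:continuous}, using that the inclusion $E=E_0\hookrightarrow E_2$ is continuous so that $\|f\|_{E_2}\lesssim\|f\|_E$; combined with stationarity and $Y^\eps_t=Y_{t/\eps}$, \eqref{e:boundContY} gives, for $0\le s\le t\le1$,
\begin{equ}
\|\hat f(t)-\hat f(s)\|_{L^{p_\star}}\lesssim\|f\|_E\Bigl(\bigl((t-s)/\eps\bigr)^H\wedge1\Bigr)\le\eps^{-H}\|f\|_E\,|t-s|^H\;,
\end{equ}
together with $\|\hat f(0)\|_{L^{p_\star}}=\|f\|_{L^{p_\star}(\mu)}\lesssim\|f\|_E$ from Assumption~\ref{ass:integrability}. (The implicit constants are allowed to depend on $\eps$; no $\eps$-uniformity is asserted, and none is needed, since this lemma only serves to make sense of $Z^\eps$ for fixed $\eps$.) Next, for the given $p<p_\star$, the hypothesis $\f1{p_\star}<H-\beta$ lets me pick an exponent $q$ with $\max\{p,(H-\beta)^{-1}\}<q<p_\star$, so that $q>p$, $q\le p_\star$, and $\beta<H-\f1q$. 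On our probability space $\|\cdot\|_{L^q}\le\|\cdot\|_{L^{p_\star}}$, so the two displayed bounds persist with $p_\star$ replaced by $q$; in particular $\E|\hat f(t)-\hat f(s)|^q\lesssim\eps^{-Hq}\|f\|_E^q\,|t-s|^{Hq}$ for all $s,t\in[0,1]$, with $Hq>1+\beta q$. The quantitative version of Kolmogorov's continuity criterion (equivalently Garsia--Rodemich--Rumsey, cf.\ \cite{Book}) then yields $\E[\hat f]_{\CC^\beta([0,1])}^q\lesssim\eps^{-Hq}\|f\|_E^q$; combining with $\E|\hat f(0)|^q=\|f\|_{L^q(\mu)}^q\lesssim\|f\|_E^q$ and $\|g\|_{\CC^\beta([0,1])}\lesssim|g(0)|+[g]_{\CC^\beta([0,1])}$ gives $\E\|\hat f\|_{\CC^\beta}^q\lesssim\|f\|_E^q$, and Jensen's inequality (valid since $q>p$) finally produces $\E\|\hat f\|_{\CC^\beta}^p\le\bigl(\E\|\hat f\|_{\CC^\beta}^q\bigr)^{p/q}\lesssim\|f\|_E^p$, as required.

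I expect the only point that genuinely needs care to be the exponent bookkeeping in the last step: one must check that the window $\max\{p,(H-\beta)^{-1}\}<q<p_\star$ is non-empty, which is exactly where the strengthened hypothesis $\beta<H-\f1{p_\star}$ (rather than merely $\beta<H$, as carried by $U$ in Proposition~\ref{prop:boundRP}) is used; and one must remember that Assumption~\ref{ass:continuous} is phrased for functions in $E_2$ whereas the lemma concerns $f\in E_0$, so the continuity of $E_0\hookrightarrow E_2$ is what bridges the two. The remaining ingredients — stationarity, the $L^{p_\star}\!\to L^q$ and $L^q\!\to L^p$ passages, and the invocation of Kolmogorov's criterion — are routine.
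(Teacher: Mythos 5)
Your proof is correct and follows essentially the same route as the paper: for $H\ge\f12$ the bound is immediate from stationarity of $Y^\eps$, and for $H<\f12$ it is Kolmogorov's continuity criterion applied to the increment bound coming from Assumption~\ref{ass:continuous} together with $E\subset L^{p_\star}$. The extra bookkeeping you carry out (the window $\max\{p,(H-\beta)^{-1}\}<q<p_\star$, which is exactly where $\beta<H-\f1{p_\star}$ enters, and the embedding $E\hookrightarrow E_2$ via multiplication by constants) simply makes explicit what the paper's two-line proof leaves implicit.
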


\begin{proof}
For $H \in (\f13,\f12)$, the assertion follows immediately from Kolmogorov's continuity test,
using Assumption~\ref{ass:continuous} and $E\subset L^{p}$.
For $H \ge \f12$, it suffices to note that if $f \in L^p(\CY,\mu)$, then for any fixed $\eps > 0$ the map
$\hat f \colon t \mapsto f(Y^\eps_t)$ belongs to $L^p([0,1])$ almost surely and $\E \|\hat f\|_{L^p}^p \le \|f\|_{L^p}^p$.
\end{proof}

We now show how to collect these objects into one ``large'' Banach space-valued rough path. The process
itself will take values in $\CB = \CC_b^3(\R^d,\R^d)$, with
the second order processes taking value in $\CB_2 = \CC_b^3(\R^{d}\times \R^{d},(\R^{d})^{\otimes 2})$.
Our aim is then to define a $\CB\oplus \CB_2$-valued rough path $(Z^\epsilon,\ZZ^\epsilon)$ which is the 
canonical lift (in the sense of \Cref{prop:boundRP} for any finite collection of $x$'s) of
\begin{equ}[e:ZepsGeneral]
(Z^\eps_{s,t})(x) = \epsilon^{\f 12-H}\int_s^t F(x,Y^\eps_r) \,dB(r)\;.
\end{equ}
Let $\{f_{i,x}^\eps\}_{i\le d}$ be the collection of maps from $\R_+$ to $ \R^m$ determined by
\begin{equ}
\scal{f_{i,x}^\eps(t), e} = (F(x,Y^\eps_t)e)_i\;,\qquad \forall e \in \R^m\;.
\end{equ}  
With this  notation at hand and recalling the construction of $Z^f$ and $\ZZ^{f,g}$ as in the proof of 
Proposition~\ref{prop:boundRP}, 
it is then natural to look for a $\CB\oplus \CB_2$-valued rough path $(Z^\epsilon,\ZZ^\epsilon)$
such that, for every $x, \bar x \in \R^d$, the identities
\begin{equ}[e:defZZ]
\big(Z^\eps_{s,t}(x)\big)_i = \eps^{\f12 -H} Z^{f_{i,x}^\eps}_{s,t}\;,\quad
\big(\ZZ^\eps_{s,t}(x,\bar x)\bigr)_{ij} \eqdef \eps^{1 -2H} \ZZ^{f_{i,x}^\eps, f_{j,\bar x}^\eps}_{s,t}\;,
\end{equ}
hold almost surely. (Provided that $F(x,\cdot) \in E$ for every $x$, 
the right-hand sides make sense by combining 
Lemma~\ref{lem:boundCF} with Proposition~\ref{prop:boundRP}.)
We claim that this does indeed define a \textit{bona fide} infinite-dimensional rough path

%

Recall also that a rough path $\Z = (Z,\ZZ)$ is \textit{weakly geometric} if the identity
\begin{equ}[e:weakgeo]
Z_{s,t} \otimes Z_{s,t} = \ZZ_{s,t} +  \ZZ^\top_{s,t}\;,
\end{equ}
holds, where the transposition map 
$(\cdot)^\top \colon \CB \otimes \CB \to \CB \otimes \CB$ swapping the two factors is continuously
extended to $\CB_2$. With these notations, we have the following.

\begin{proposition}\label{prop:defZZeps}
Let Assumptions~\ref{ass:continuous}, \ref{ass:integrability}, and~\ref{ass:C3} hold and let $\alpha \in (\f13 ,H - \f1{p_\star})$ if $H < \f12$ and 
$\alpha \in (\f13,\f12)$ otherwise.
Then, for any $\eps > 0$, there exists a random rough path $\Z^\eps = (Z^\eps,\ZZ^\eps)$ in 
$\Cr^\alpha([0,T], \CB \oplus \CB_2)$ that is weakly geometric and such that, for every $x$, \eqref{e:defZZ} 
holds almost surely. 
\end{proposition}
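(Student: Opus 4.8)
The plan is to construct $\Z^\eps$ componentwise using the already-established scalar lifts $Z^{f}$, $\ZZ^{f,g}$ from Proposition~\ref{prop:boundRP}, and then to verify that the formulae \eqref{e:defZZ} assemble into a genuine $\CB\oplus\CB_2$-valued rough path with the required regularity. First I would fix a realisation of $Y$ and observe that, by Lemma~\ref{lem:boundCF}, for every $x$ the maps $f^\eps_{i,x}$ lie in $U$ almost surely, with $\E\|f^\eps_{i,x}\|_U^p \lesssim \|F(x,\cdot)\|_E^p$; together with Assumption~\ref{ass:C3} this gives the pointwise-in-$x$ bounds. The real content is to upgrade these pointwise bounds to bounds on the $\CB$- resp.\ $\CB_2$-valued norms, i.e.\ to control not just $Z^\eps_{s,t}(x)$ but its first three $x$-derivatives (and similarly $\ZZ^\eps_{s,t}(x,\bar x)$ in both arguments). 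Since differentiation in $x$ commutes with the Wiener integral, $\d_x^\ell Z^\eps_{s,t}(x)$ is again of the form $\eps^{\f12-H}Z^{g}_{s,t}$ with $g(t) = D_x^\ell F^*_i(x,Y^\eps_t)$, and $D_x^\ell F(x,\cdot)\in E$ for $|\ell|\le 4$ by Assumption~\ref{ass:C3}; hence the bounds of Propositions~\ref{prop:boundCovZ}--\ref{prop:boundRP} and Lemma~\ref{lem:boundCF} apply verbatim to each derivative. A Kolmogorov-type argument in the $x$-variable (using that $F$ is $\CC^4$ so there is one derivative to spare beyond the $\CC^3_b$ target, giving Hölder control of $D^3_x Z^\eps$) then shows $Z^\eps_{s,t}\in\CB$ and $\ZZ^\eps_{s,t}\in\CB_2$ with
\begin{equ}
\E\bigl(\|Z^\eps_{s,t}\|_{\CB}^q + \|\ZZ^\eps_{s,t}\|_{\CB_2}^{q/2}\bigr) \lesssim |t-s|^{q\alpha}\;,
\end{equ}
uniformly in $\eps\le 1$ (the powers of $\eps$ in \eqref{e:defZZ} cancel exactly against the $\eps^{1-2H}$ scaling of the covariance of the rescaled driver, as in the heuristic before Remark~\ref{remark-homogenisation}). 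A second Kolmogorov argument, now in the $(s,t)$ variables on $\Delta_T$ and using $q$ large, promotes this to almost-sure membership in $\Cr^\alpha([0,T],\CB\oplus\CB_2)$.

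Next I would check the two algebraic constraints. Chen's relation $\ZZ^\eps_{s,t}-\ZZ^\eps_{s,u}-\ZZ^\eps_{u,t} = Z^\eps_{s,u}\otimes Z^\eps_{u,t}$ and weak geometricity \eqref{e:weakgeo} both hold for each pair of scalar components $(\ZZ^{f^\eps_{i,x}, f^\eps_{j,\bar x}})$ by Proposition~\ref{prop:boundRP} (where they were inherited from the smooth mollified approximations), and since these identities are pointwise in $(x,\bar x)$ and the maps $x\mapsto Z^\eps_{s,t}(x)$, $(x,\bar x)\mapsto\ZZ^\eps_{s,t}(x,\bar x)$ are continuous, they pass to the $\CB$-valued statements — one only needs that the canonical inclusion $\CB\otimes_0\CB\subset\CB_2$ and the transposition map behave as stated, which was recorded before the proposition. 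The geometricity also follows from the fact that $\Z^\eps$ is, by construction, the $\delta\to 0$ limit of canonical lifts of the smooth drivers built from $B^\delta$ (Remark~\ref{rem:mollifier}).

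The main obstacle is the passage from finite collections $\{f_i\}_{i\le N}$ in Proposition~\ref{prop:boundRP} to the genuinely infinite-dimensional (Banach-space valued) object: Proposition~\ref{prop:boundRP} only produces, for each finite tuple $(x_1,\dots,x_N)$, a rough path on $\R^N$, and one must argue these are consistent and that the resulting $x$-indexed family has enough joint regularity to define a single $\CB_2$-valued map rather than merely a separately-measurable family. This is exactly where Assumption~\ref{ass:C3}'s requirement of $\CC^4$ regularity (with the decay exponent $\kappa>16d/p_\star$ controlling integrability at spatial infinity) is used: it provides the extra derivative and the moment bounds needed for the Kolmogorov continuity theorem in the $x$-variable to produce a version living in $\CC^3_b$, and the decay ensures the suprema over $x\in\R^d$ defining the $\CB$- and $\CB_2$-norms are finite. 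Once these spatial-regularity bounds are in hand, the rough-path axioms are routine, so I would concentrate the write-up on the two-parameter Kolmogorov estimate and on verifying that the scaling factors $\eps^{\f12-H}$, $\eps^{1-2H}$ render the bounds uniform in $\eps$.
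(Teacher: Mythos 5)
Your proposal is correct and follows essentially the same route as the paper: reduce everything to weighted pointwise moment bounds on the $x$-derivatives up to order four (via Proposition~\ref{prop:boundRP}, Lemma~\ref{lem:boundCF} and Assumption~\ref{ass:C3}), upgrade them by a Kolmogorov argument in $x$ — which the paper packages as Lemma~\ref{lem:compact} — and in the time variables via the Kolmogorov criterion for rough paths, while the algebraic identities (Chen and weak geometricity) are inherited from the smooth mollified approximations. The only cosmetic difference is that the paper cites Lemma~\ref{lem:compact} and \cite[Thm~3.1]{Book} where you spell out the two Kolmogorov steps by hand.
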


\begin{proof}
Since the Chen relations and \eqref{e:weakgeo} are obviously satisfied for 
 smooth approximations as in \eqref{e:constructZZ}, we 
 only need to show that the analytic constraints hold. In other words, for
any fixed $T >0$ and $\eps > 0$, 
we look for an almost surely finite random variable $C_\eps$ such that 
\begin{equ}
\|Z^\eps_{s,t}\|_{\CB} \le C_\eps |t-s|^\alpha\;, \qquad
\|\ZZ^\eps_{s,t}\|_{\CB_2} \le C_\eps |t-s|^{2\alpha}\;, 
\end{equ}
holds uniformly over all $0\le s < t \le T$.
By the Kolmogorov criterion for 
rough paths \cite[Thm~3.1]{Book}, it suffices to show that, for some
$\beta > 0$ and $p \ge 1$ such that $\gamma - \f1p > \alpha$, one has the bounds
\begin{equ}[e:wantedbound]
\E\|Z^\eps_{s,t}\|_{\CB}^p \le C_{\eps,p} |t-s|^{p\gamma}\;, \qquad
\E \|\ZZ^\eps_{s,t}\|_{\CB_2}^{p/2} \le C_{\eps,p} |t-s|^{p\gamma}\;.
\end{equ}

By Lemma~\ref{lem:compact} below, it suffices to show that 
\minilab{e:boundWanted}
\begin{equs}
\sup_{x\in \R^d} (1+|x|)^{\kappa p} \E |D^\ell Z^\eps_{s,t}(x)|^p &\lesssim |t-s|^{p\gamma}\;, \label{e:boundWanted1}\\
\sup_{x,\bar x\in \R^d} (1+|x|+|\bar x|)^{\kappa p \over 2} \E |D_x^k D_{\bar x}^\ell\ZZ^\eps_{s,t}(x,\bar x)|^{p\over 2}&\lesssim |t-s|^{p\gamma}\;,\label{e:boundWanted2}
\end{equs}
for $k + \ell \le 4$ and some $p$ such that $p > (4d/\kappa) \vee (\gamma-\alpha)^{-1}$.

Since for $\ell \le 4$ we have $D_x^\ell F_i^*(x,\cdot) \in E$ with $\|D_x^\ell F_i^*(x,\cdot)\|_E \lesssim (1+|x|)^{-\kappa}$ 
by Assumption~\ref{ass:C3}, it follows immediately from Proposition~\ref{prop:boundRP} combined
with Lemma~\ref{lem:boundCF} that the bound \eqref{e:boundWanted1} holds
for $\gamma = H$ and $p \le p_\star$ when $H < \f12$ and for any $\gamma < H$ and $p \ge 1$ when
$H \ge \f12$.
The bound \eqref{e:boundWanted2} follows in the same way.
(These arguments are somewhat formal, but can readily be justified by taking limits of smooth 
approximations.)
\end{proof}


In order to prove Proposition~\ref{prop:convDelta}, we make use of the following 
variant ``in probability'' of the usual tightness criterion for convergence in law.

\begin{proposition}\label{prop:tightProba}
Let $(\CZ,d)$ be a complete separable metric space and 
let $\{L_k\,:\, k \in \N\}$ be a countable collection
of continuous maps $L_k \colon \CZ \to \R$ that separate elements of $\CZ$ in the sense that,
for every $x,y \in \CZ $ with $x\neq y$ there exist $k$ such that $L_k(x) \neq L_k(y)$.

Let $\{Z_n\}_{n \ge 0}$ and $Z_\infty$ be $\CZ$-valued random variables such that 
the collection of their laws is tight and such that $L_k(Z_n) \to L_k(Z_\infty)$ in probability for
every $k$. Then, $Z_n \to Z_\infty$ in probability.
\end{proposition}

\begin{proof}
Let $\hat d\colon \CZ^2 \to \R_+$ be the continuous distance function given by
\begin{equ}
\hat d(x,y) = \sum_{k \ge 0} 2^{-k} \bigl(1\wedge |L_k(x) - L_k(y)|\bigr)\;,
\end{equ}
and note first that our assumption implies that $\hat d(Z_n, Z_\infty) \to 0$ in probability.
Given $\eps > 0$, tightness implies that there exists $K_\eps \subset \CZ$ compact such that 
$\P(Z_n \not \in K_\eps) \le \eps$ for every $n \in \N \cup \{\infty\}$. Furthermore, the set
$\{(x,y) \in K_\eps\times K_\eps\,:\, d(x,y) \ge \eps\}$ is compact, so that $\hat d$ attains
its infimum $\delta$ on it. Since $\hat d$ only vanishes on the diagonal, one has $\delta > 0$ and,
since $\hat d(Z_n, Z_\infty) \to 0$ in probability, we can find $N>0$ such that
$\P(\hat d(Z_n, Z_\infty) \ge \delta) \le \eps$ for every $n \ge N$.

It follows that, for every $n \ge N$ one has
\begin{equ}
\P(d(Z_n, Z_\infty) \ge \eps) \le \P(Z_n \not \in K_\eps) + \P(Z_\infty \not \in K_\eps) + 
\P(\hat d(Z_n, Z_\infty) \ge \delta)
\le 3\eps\;,
\end{equ}
which implies the claim.
\end{proof}

Regarding tightness itself, the following lemma is a slight variation of well known results.
\begin{lemma}\label{lem:tightBasic}
Let $\hat \CB \subset \CB$ and $\hat \CB_2 \subset \CB_2$
be compact embeddings of Banach spaces such that $\hat \CB \otimes \hat \CB \subset \hat \CB_2$
with $\|v\otimes w\|_{\hat \CB_2} \lesssim \|v\|_{\hat \CB}\|w\|_{\hat \CB}$.
Let $\CA$ be a collection
of random $\hat \CB \oplus \hat \CB_2$-valued rough paths such that for some $\alpha_0>\f 13$
and every $\Z = (Z,\ZZ)\in \CA$, 
\begin{equ}[e:condTight]
\E \big(\|Z_{s,t}\|_{\hat \CB}^p + \|\ZZ_{s,t}\|_{\hat \CB_2}^{p/2}\bigr) \le |t-s|^{p\alpha_0}\;,
\end{equ} 
for some $p > 3/(3\alpha_0-1)$. Then, the laws of the $\Z$'s in $\CA$ are tight 
in $\Cr^\alpha([0,T], \CB \oplus \CB_2)$ for any $T > 0$ and $\alpha \in(\f13, \alpha_0 - \f1p)$.
\end{lemma}

\begin{proof}
Write $\CG$ for the metric space given by $\CB \oplus \CB_2$ endowed with the metric 
\begin{equ}
d(a\oplus b, \bar a\oplus \bar b) = \|\bar a-a\|_{\CB} \vee \big\|\bar b- b -\textstyle{\f12} (\bar a + a)\otimes (\bar a-a)\big\|_{\CB_2}^{1/2}\;.
\end{equ}
Recall then that $\Cr^\alpha([0,T], \CB \oplus \CB_2)$ can be identified with the usual
space of $\alpha$-Hölder functions with values in $\CG$ by identifying $\Z = (Z,\ZZ)$ with
the function $t \mapsto \Z_t \eqdef Z_{0,t} \oplus \ZZ_{0,t}$ and noting that, thanks to Chen's relations,
\begin{equ}
d \bigl(\Z_s,\Z_t\bigr) = \|Z_{s,t}\|_{\CB} \vee \|\ZZ_{s,t}+ \textstyle{\f12} Z_{s,t}\otimes Z_{s,t}\|^{1/2}_{\CB_2}\;.
\end{equ}
(See \cite[Sec.~7.5]{FVBook} for more details and motivation.)
Since $d$ generates the same topology on $\CG$ as that given by the Banach space structure of
$\CB \oplus \CB_2$, balls of $\hat \CB \oplus \hat \CB_2$ are compact in $\CG$. 
The claim then follows at once from Kolmogorov's continuity test, combined with the fact 
that, given a compact metric space $(\CX, d)$ and a compact subset $\CK$ of
a Polish space $(\CY,\bar d)$, the set $\CC^\beta(\CX,\CK)$ is compact in $\CC^\alpha(\CX,\CY)$
for any $\beta > \alpha$. 
\end{proof}

\begin{proof}[of \Cref{prop:convDelta}]
We apply \Cref{prop:tightProba} with the metric space $\CZ$ given by 
$\Cr^\alpha([0,T], \CB \oplus \CB_2)$, $Z_n = \Z^{\eps,\delta_n}$ for
any given sequence $\delta_n \to 0$, and $Z_\infty = \Z^\eps$ as constructed in  
\Cref{prop:defZZeps}. The continuous maps $L_k$ appearing in the statement are given
by the collection of maps $(Z,\ZZ) \mapsto Z_t(x)$ and $(Z,\ZZ) \mapsto \ZZ_{s,t}(x,\bar x)$
for a countable dense set of times $s$ and $t$ and elements $x, \bar x \in \R^d$. 

It follows from \eqref{e:boundWanted} and \Cref{lem:compact} that the bound
\eqref{e:condTight} holds for $\Z^{\eps,\delta}$, uniformly in $\delta$ (but with $\eps$ fixed),
so that the required tightness condition holds by \Cref{lem:tightBasic}.
For any fixed $\eps > 0$, the convergences in probability $Z^{\eps,\delta}_{t}(x)_{i} \to Z^{\eps}_{t}(x)_{i}$
and $\ZZ^{\eps,\delta}_{s,t}(x,\bar x)_{ij} \to \ZZ^{\eps}_{s,t}(x,\bar x)_{ij}$ were shown in \Cref{prop:boundRP}. (It suffices to apply it with the choices  
$f = f^\eps_{i,x}$ and $g = f^\eps_{j,\bar x}$.)
\end{proof}

\subsection{Formulation of the main technical result}
\label{sec:formMain}

The main technical result of this article can then be formulated in the following way. 

\begin{theorem}\label{theo:mainRP} Let $H\in (\f 13,1)$, let 
Assumptions~\ref{ass:ergodic}--\ref{ass:integrability}, and~\ref{ass:C3} hold,
and let $\alpha$ and $\Z^\eps$ be as in Proposition~\ref{prop:defZZeps}.  Then, as $\eps \to 0$, $\Z^\eps$ converges weakly in the space of $\alpha$-Hölder continuous $(\CB,\CB_2)$-valued  
rough paths to a limit $\Z$. Furthermore, there is a Gaussian random field $W$ as in \eqref{e:defWGaussian} such that
\begin{equ}
Z_{s,t}(x) = W(x,t) - W(x,s)\;,\qquad 
\ZZ_{s,t}(x,\bar x) = \WW^\Ito_{s,t}(x,\bar x) + \Sigma(x,\bar x) (t-s)\;,
\end{equ}
where $\WW^\Ito_{s,t}(x,\bar x) = \int_s^t Z_{s,r}(x)\otimes W(\bar x,dr)$, interpreted as an Itô integral
and $\Sigma$ is as in \eqref{e:defSigma}.
\end{theorem}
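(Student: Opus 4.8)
The plan is to establish convergence in two stages---tightness plus identification of the limit---and then to read off the explicit form of $(Z,\ZZ)$ from the limiting covariance structure. For tightness, I would argue in the space $\Cr^\alpha([0,T],\CB\oplus\CB_2)$ by interpolation: it suffices to obtain, uniformly in $\eps\le 1$, moment bounds of the form $\E\|Z^\eps_{s,t}\|_\CB^p\lesssim|t-s|^{p\gamma}$ and $\E\|\ZZ^\eps_{s,t}\|_{\CB_2}^{p/2}\lesssim|t-s|^{p\gamma}$ for some $\gamma>\alpha$ and $p$ large, together with a compactness-in-$x$ statement (the weighted estimates \eqref{e:boundWanted1}--\eqref{e:boundWanted2} combined with Lemma~\ref{lem:compact}). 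These are exactly the bounds appearing in the proof of Proposition~\ref{prop:defZZeps}, but now the point is that the constants must not blow up as $\eps\to0$; this is where Assumption~\ref{ass:ergodic} (the spectral gap \eqref{e:spectralGap}) enters decisively, controlling the $\eps^{1-2H}$ prefactor in \eqref{e:defZZ} against the decay of correlations of $f(Y^\eps)$. For $H<\f12$ one also needs the path-regularity input of Assumption~\ref{ass:continuous}. This uniform tightness is the content of Section~\ref{sec:tight}, to which I would simply appeal.

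Next I would identify the limit (Section~\ref{sec:limit}). Working on the second Wiener chaos generated by $(B,Y)$ jointly---or more precisely, conditioning on $Y$ and using the Gaussianity in $B$---the first-level process $Z^\eps_{s,t}(x)$ is, conditionally on $Y$, a centred Gaussian, so it is enough to compute $\lim_{\eps\to0}\E\big(Z^\eps_{s,t}(x)\,Z^\eps_{s',t'}(\bar x)\big)$. Using the representation \eqref{e:defZZ} and the formula for $\E(Z^f_{s,t}Z^g_{s',t'})$ in terms of $\eta''$ from the proof of Proposition~\ref{prop:boundCovZ}, this covariance is $\eps^{1-2H}\int\int \eta''(u-r)\,\E\big(F(x,Y^\eps_u)\otimes F(\bar x,Y^\eps_r)\big)\,du\,dr$; the scaling identity $\int\eta''(t)g(t/\eps)\,dt=\eps^{2H-1}\int\eta''(t)g(t)\,dt$ (cf.\ the heuristic in the introduction) converts this to $(t\wedge t' - s\vee s')_+$ times $\tfrac12\Gamma(2H+1)\int_0^\infty t^{2H-2}\!\big(\text{correlation of }F(x,Y_0),F(\bar x,Y_t)\big)\,dt$ up to boundary terms; Definition~\ref{def:fracPower} rewrites the $t$-integral as $\int F_k(x,y)\otimes(\CL^{1-2H}F_k)(\bar x,y)\,\mu(dy)$, i.e.\ precisely $\Sigma(x,\bar x)+\Sigma(\bar x,x)$ evaluated at matching time increments, which matches \eqref{e:defWGaussian}. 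So $Z^\eps\Rightarrow W(x,\cdot)$. For the second level, I would pass to the limit in $\ZZ^\eps_{s,t}(x,\bar x)=\ZZ^{f^\eps_{i,x},f^\eps_{j,\bar x}}_{s,t}\eps^{1-2H}$ using Proposition~\ref{prop:contractWiener} to reduce to the independent-copy case, then split $\ZZ^\eps$ into its conditional mean given $Y$ (an ``area'' piece converging to $\int_s^t Z_{s,r}(x)\,W(\bar x,dr)$ in the Itô sense) plus the conditional expectation $\E(\ZZ^\eps\mid Y)$, whose limit is $\lim_\eps\eps^{1-2H}\E\ZZ^{\delta,f,g}_{s,t}$-type quantity computed via \eqref{e:limitExpZ}; the single time-integral there, after the $\eps$-rescaling, produces exactly $\Sigma(x,\bar x)(t-s)$, since the drift term in \eqref{e:limitExpZ} is the one-sided analogue of the covariance computation and integrates to $\int_0^\infty$ of the semigroup, i.e.\ $\CL^{1-2H}$. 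Establishing that these conditional decompositions converge jointly with the right cross-correlations is the technical heart.

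The main obstacle I anticipate is precisely this last point: showing that the whole pair $(Z^\eps,\ZZ^\eps)$ converges \emph{jointly in law as a rough path}, not merely in finite-dimensional distributions for fixed $x$, and with the correct Itô (rather than Stratonovich or anticipating) interpretation of the limiting iterated integral. Concretely, one must show the conditional-mean part $\E(\ZZ^\eps\mid Y)$ and the conditionally-centred part decorrelate in the limit in the right way so that $\WW^\Ito$ genuinely appears with an Itô convention---the asymmetry between $\Sigma(x,\bar x)$ and $\Sigma(\bar x,x)$ (absent when $\CL$ is self-adjoint, cf.\ Remark~\ref{rem:Strat}) is exactly the ``extra'' drift that an Itô-to-Stratonovich correction would carry, and keeping track of it requires care about which of the two time orderings in the double integral $\int_s^t\int_s^r$ survives the $\eps\to0$ limit. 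I would handle this by working with the mollified objects $\ZZ^{\delta,f,g}$ throughout, for which all integrals are classical, taking $\eps\to0$ first with $\delta$-uniform bounds (as in Remark~\ref{rem:main}) and only then $\delta\to0$, using Proposition~\ref{prop:contractWiener} and Remark~\ref{rem:sym} to transfer the $(B,\tilde B)$ computation to the diagonal $B=\tilde B$ case; the uniform-in-$\delta$ tightness makes the two limits commute. Finally, weak convergence of $\Z^\eps$ together with the convergence $D\delta\cdot\delta\in\CC^2_b$ of the RDE coefficients yields convergence of the solution flow, which is the route to Corollary~\ref{cor:main}.
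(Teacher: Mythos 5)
Your overall skeleton (tightness from Section~\ref{sec:tight}, then identification of the limit, with the drift constant produced by testing $\eta''$ against the correlation of $F(x,Y_\cdot)$ and rewriting the time integral via $\CL^{1-2H}$) matches the paper, but the step you yourself call the ``technical heart'' is genuinely missing, and the tools you propose for it do not supply it. You split $\ZZ^\eps$ into $\E(\ZZ^\eps\mid\CF^Y)$ plus a conditionally centred part and assert that the latter converges to $\WW^\Ito_{s,t}(x,\bar x)=\int_s^t Z_{s,r}(x)\,W(\bar x,dr)$. The conditional mean does converge to $\Sigma(x,\bar x)(t-s)$ (essentially the computation of Proposition~\ref{prop:boundExpectJJ} pushed to the limit), but nothing in your argument identifies the law of the centred part: it is a second-(conditional-)chaos object, so convergence of covariances does not determine its limit, and $\WW^\Ito$ is not a continuous functional of $Z$, so tightness plus first-level convergence cannot identify it either. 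Proposition~\ref{prop:contractWiener} and Remark~\ref{rem:sym} only compare second moments and control the $\delta\to0$ mollification limit at \emph{fixed} $\eps$; they cannot transfer a limiting \emph{distribution} from the independent-copy pair $(B,\tilde B)$ to the diagonal case, nor say anything about the $\eps\to0$ limit, so ``mollify, take $\eps\to0$ uniformly in $\delta$, then swap limits'' does not close the argument. The paper's mechanism is different and is the key missing idea: one decomposes $J^\eps_t(f)=M^f_t+R^f_t$ as in \eqref{e:defMart}--\eqref{e:defRem}, where $M^f$ is an honest $\CF^Y_t\vee\CF^B_t$-martingale obtained by adding the predicted future increment $\eps^{\f12-H}\int_t^\infty(P_{(r-t)/\eps}\tilde f)(Y^\eps_t)\,d\bar B^t_r$, writes $\JJ^\eps_{0,t}(f,g)=\int_0^t J^\eps_s(f)\,dM^g_s+J^\eps_t(f)R^g_t-\int_0^t R^g_s\,dJ^\eps_s(f)$ as in \eqref{r:idenJJ}, shows that the last term converges in probability to the deterministic drift (Lemma~\ref{lem:convMean} and Proposition~\ref{R-convergence}), and then obtains the Itô integral in the limit from the weak-convergence theorems for stochastic integrals against martingales (Jacod--Shiryaev\slash Kurtz--Protter), which is exactly what pins down the Itô (rather than Stratonovich or anticipating) interpretation jointly with $Z$; see Proposition~\ref{convergence-drivers}.

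There is a second, smaller but real, gap in your first-level identification. As written, ``it is enough to compute $\lim_\eps\E(Z^\eps_{s,t}(x)Z^\eps_{s',t'}(\bar x))$'' is insufficient: conditional Gaussianity only yields a Gaussian limit if the \emph{conditional} covariance converges in probability (or $L^2$) to a deterministic limit, which for $H<\f12$ is the content of Proposition~\ref{prop:convFirst} via the quantitative law of large numbers, Lemma~\ref{lem:LLN}; with only the unconditional covariance the limit could be a Gaussian mixture. Moreover, for $H>\f12$ this conditional-variance route is precisely the one the paper explains appears not to work (non-integrability of $\eta''$ at infinity, with LLN-type bounds insufficient for $H>\f34$); there one needs the centering $\int F(x,y)\,\mu(dy)=0$ and the cumulant\slash graph machinery --- Proposition~\ref{prop:boundCumul} together with Proposition~\ref{prop:largeScale} and Corollary~\ref{cor:defect} --- both to prove the uniform moment bounds for tightness (Corollary~\ref{tight-estimate-regular}) and to show that all joint cumulants of order at least three vanish in the limit (Lemma~\ref{lemma:convergence-high}). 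Your proposal contains no substitute for this part of the argument.
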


The proof of this result will be given in Sections~\ref{sec:tight} and~\ref{sec:limit} below, see
Proposition~\ref{prop3.1} which is just a slight reformulation of Theorem~\ref{theo:mainRP}. We first show
in Section~\ref{sec:tight} that the family $\{ \Z^\eps\}_{\eps \le 1}$ is tight in a suitable space
of rough paths and then identify its limit in Section~\ref{sec:limit}.

\begin{corollary}\label{cor:main} 
Under the assumptions of Theorem~\ref{theo:mainRP}, the solution flow  of  \eqref{e:SDEbasic} converges weakly to that of the
Kunita-type SDE \eqref{e:KunitaSDE}.
\end{corollary}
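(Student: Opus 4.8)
The plan is to deduce Corollary~\ref{cor:main} from Theorem~\ref{theo:mainRP} by invoking the continuity of the rough-path solution map. Recall that \eqref{e:SDE} has been reformulated (see Section~\ref{sec:precise}) as the RDE \eqref{e:infinite}, namely $dX = \delta(X)\,d\Z^\eps + \delta(X)\,d\Z^{0,\eps}$, where $\delta(x)(f) = f(x)$ is a $\CC^3_b$ map from $\R^d$ to $L(\CB,\R^d)$, and the full solution flow $\psi_{s,t}^\eps$ is recovered from the solution map of this RDE. The key external input is the universal limit theorem / continuity of the It\^o--Lyons map \cite[Sec.~8.5, Thm.~8.5]{Book}: for a fixed $\CC^3_b$ vector field, the solution to the RDE is a continuous function of the driving rough path in $\Cr^\alpha$. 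First I would note that $\Z^{0,\eps}$, being built from $Z^{0,\eps}_{s,t}(x) = \int_s^t F_0(x,Y^\eps_r)\,dr$, converges (after recentring $F_0$ by its average and using Assumption~\ref{ass:ergodic}) to the deterministic path $t\mapsto \bar F_0(\cdot)\,t$ with trivial second level; this is the standard law-of-large-numbers/averaging statement for an additive functional of an ergodic Markov process, and the $\alpha + \beta > 1$ condition means no cross-integrals between $\Z^\eps$ and $\Z^{0,\eps}$ need to be specified, so the two drivers can be handled as an independent pair.

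Next I would combine this with Theorem~\ref{theo:mainRP}, which gives joint weak convergence of $\Z^\eps$ to the limiting (random) rough path $\Z = (Z,\ZZ)$ with $Z_{s,t}(x) = W(x,t)-W(x,s)$ and $\ZZ_{s,t}(x,\bar x) = \WW^\Ito_{s,t}(x,\bar x) + \Sigma(x,\bar x)(t-s)$, for $W$ the Gaussian field of \eqref{e:defWGaussian}. By the Skorokhod representation theorem one may realise these convergences almost surely on a common probability space; then the continuity of the solution map applied to the pair $(\Z^\eps, \Z^{0,\eps})$ yields almost-sure convergence of the solution flows $\psi_{s,t}^\eps$ to the flow $\psi_{s,t}$ driven by $(\Z, t\bar F_0)$, hence weak convergence on the original space. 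Finally I would identify this limiting flow with the solution flow of the Kunita-type SDE \eqref{e:KunitaSDE}: since $\Z$ is a genuine rough path lift of the Gaussian field $W$ whose second level differs from the It\^o iterated integral $\WW^\Ito$ precisely by the drift correction $\Sigma(x,\bar x)(t-s)$, the RDE $dX = \delta(X)\,d\Z$ rewrites, via $D\delta\cdot\delta$ computed in \eqref{e:Ddelta} as $(\tr D^{(2)}h)(x,x)$, as the It\^o equation $dX_t = W(X_t,dt) + (\d^{(2)}_j\Sigma_{ji})(X_t,X_t)\,dt$; adding back the $\bar F_0$ drift from $\Z^{0}$ gives exactly \eqref{e:KunitaSDE}. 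The Markov (independent-increments-of-the-flow) property then follows because $W$ has independent increments over disjoint time intervals.

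The main obstacle I expect is not the passage through the continuity theorem itself but rather the bookkeeping needed to make that passage legitimate in infinite dimensions: the solution map of \cite[Sec.~8.5]{Book} must be applied with the $\CB_2$-valued second level (which, as already noted in Section~\ref{sec:precise}, does not coincide with the projective tensor square of $\CB$), so one must check that $D\delta\cdot\delta$ extends to the genuine coefficient space $L(\CB_2,\R^d)$ as a $\CC^2_b$ map — this was verified in the discussion following \eqref{e:Ddelta} — and that the a priori bounds on $\Z^\eps$ from Proposition~\ref{prop:defZZeps} are uniform enough that the solutions do not blow up. One also has to confirm that the growth/decay conditions in Assumption~\ref{ass:C3} (the $(1+|x|)^{-\kappa}$ bounds) are compatible with the global existence of the RDE flow on all of $\R^d$, but since $\delta$ is bounded this is routine. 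The identification of the limiting RDE with the It\^o SDE \eqref{e:KunitaSDE} is the one genuinely computational point, and it is exactly the content of Remark~\ref{rem:Strat} combined with \eqref{e:Ddelta}, so it reduces to a short and standard calculation.
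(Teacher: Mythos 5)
Your proposal is correct and follows essentially the same route as the paper's proof of Corollary~\ref{cor:main}: joint convergence of the pair $(\Z^\eps,\Z^{0,\eps})$ (which holds because the limit of $\Z^{0,\eps}$ is deterministic, not because the two drivers form an ``independent pair''), continuity of the RDE solution map for \eqref{e:infinite}, and identification of the limiting RDE with \eqref{e:KunitaSDE} by expanding the rough integral into compensated Riemann sums, where the $\Sigma(x,\bar x)(t-s)$ part of the second level produces the drift $G$ via \eqref{e:Ddelta} and the $\WW^\Ito$ part vanishes as a discrete martingale with summands of variance $\CO(|v-u|^2)$ --- this last martingale step, rather than Remark~\ref{rem:Strat} (which only concerns the self-adjoint/Stratonovich case), is the actual content of the identification. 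Apart from these two minor slips your argument matches the paper's.
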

\begin{proof} 
Define the $\CB$-valued process
\begin{equ}
Z^{0, \epsilon}_{s,t}(x)  = \int_s^t F_0(x,Y^\eps_r)\,dr\;.
\end{equ}
It follows from Assumption~\ref{ass:C3} that, for $k \le 4$ and $p \le p_\star$, \begin{equ}
\|D^k Z^{0, \epsilon}_{s,t}(x)\|_{L^p} \le \int_s^t \|D^k F_0(x,Y^\eps_r)\|_{L^p}\,dr
\lesssim |t-s| (1+|x|)^{-\kappa}\;,
\end{equ}
uniformly over $\eps$. This shows that the family $\{Z^{0, \epsilon}\}_{\eps \le 1}$
is tight in $\CC^\beta([0,1],\CB)$ for every $\beta < 1$.

Furthermore, by the ergodic theorem which holds under Assumption~\ref{ass:ergodic},  for every $x$,   
\begin{equ}
\lim_{\eps \to 0} Z^{0, \epsilon}_{s,t}(x) = (t-s)\int_{\CY} F_0(x,y)\,\mu(dy)\;,
\end{equ}
almost surely. Since we can choose $\beta$ and $\alpha$ such that $\alpha + \beta > 1$ and $2\beta > 1$,
it follows that there is no need to control any cross terms between $Z^{0,\epsilon}$ and either $Z^\epsilon$
or $Z^{0,\epsilon}$ itself 
in order to be able to solve equations driven by both \cite{Lyons,Mixed}. Furthermore, since the limit of $Z^{0,\epsilon}$
is deterministic, one deduces joint convergence from Theorem~\ref{theo:mainRP}.

By the continuity theorem for rough differential equations, the solutions of 
  \eqref{e:SDEbasic} written in the form \eqref{e:infinite} converge weakly to those 
of the rough differential equation
\begin{equ}[e:SDEfinal]
  dX_t=\delta(X_t)\, d\Z_t+ \delta(X_t)\, d\Z_t^0\;.
\end{equ}
It remains to identify solutions to this equation with those of \eqref{e:KunitaSDE}.

This is straightforward and follows as in \cite[Sec~5.1]{Book} for example. 
Since the Gubinelli derivative $x'$ of the solution $X = (x,x')$ to \eqref{e:SDEfinal} is given by $\delta (x)$,
the integral  $\int_0^t \delta(X_s) d\Z_s$ is obtained as limit of the compensated Riemann 
sum 
\begin{equs}
\sum_{[u,v]\subset {\mathcal P}} &\bigl(\delta(x_u) Z_{u,v}+  (D\delta \cdot \delta)(x_u)(\WW^{\textup{It\^o}}_{u,v} +  (u-v) \Sigma )\bigr)\\
&= \sum_{[u,v]\subset {\mathcal P}} \bigl(Z_{u,v}(x_u) +  (u-v)G(x_u)\bigr)
+ \sum_{[u,v]\subset {\mathcal P}} (D\delta \cdot \delta)(x_u)\WW^{\textup{It\^o}}_{u,v}\;,
\end{equs}
where ${\mathcal P}$ is a partition on $[0,t]$ and $D\delta \cdot \delta$ is as in \eqref{e:Ddelta}. 
Since $x$ is continuous and adapted to the filtration generated by $W$, the first term converges to
\begin{equ}
\int_0^t W(x_u,du) + \int_0^t G(x_u)\,du\;.
\end{equ}
The last term on the other hand converges to $0$ in probability since 
it is a discrete martingale and its summands are centred random variables 
of variance $\CO(|v-u|^2)$.
\end{proof}

\section{Tightness of the rough driver as \TitleEquation{\eps \to 0}{eps}}
\label{sec:tight}
The content of this section is the proof of the following tightness result. Let $\{\Z^\eps\}_{\eps \le 1}$  be given as in
\Cref{prop:defZZeps}.

\begin{proposition}\label{prop:tight}
Let  Assumptions~\ref{ass:ergodic}--\ref{ass:integrability}, and~\ref{ass:C3} hold. For $H\in (\f 13, \f 12]$, there
exists $ \alpha \in (\f13, H)$ such that the family $\{\Z^\eps\}_{\eps \le 1}$ is tight in 
$\Cr^\alpha([0,T], \CB \oplus \CB_2)$.

For $H \in (\f12,1)$,   if in addition $\int F(x,y)\,\mu(dy) = 0$ for every $x$, then
the family of rough paths $\Z^\eps$ is tight in $\CC^\alpha$ for every $\alpha \in (\f13,\f12)$.
\end{proposition}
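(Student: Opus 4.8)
The plan is to deduce tightness from the Kolmogorov criterion for rough paths \cite[Thm~3.1]{Book}, in precisely the form already used in the proof of \Cref{prop:defZZeps}: together with the compactness input of \Cref{lem:compact}, it suffices to establish the bounds \eqref{e:boundWanted1}--\eqref{e:boundWanted2} \emph{with constants independent of $\eps\le1$}, for some $\gamma>\alpha$ and some $p$ in the (nonempty) admissible range --- $\gamma=H$ when $H\in(\f13,\f12)$ and any $\gamma<\f12$ when $H\ge\f12$, with $p>4d/\kappa$, $p\le p_\star$ and $\gamma-\f1p>\alpha$, so $\alpha$ may be taken anywhere in a left neighbourhood of $H$ when $H\le\f12$ and anywhere in $(\f13,\f12)$ when $H>\f12$. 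Since $D^\ell_x F^*_i(x,\cdot)\in E$ with $\|D^\ell_x F^*_i(x,\cdot)\|_E\lesssim(1+|x|)^{-\kappa}$ by \Cref{ass:C3}, the $x$- and $\bar x$-derivatives play no role and I only discuss the undifferentiated bounds; throughout, $0\le s<t\le T$ and $\eps\le1$.

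For the level-one bound, note that for each realisation of the independent fast process $Y$ the variable $Z^\eps_{s,t}(x)_i=\eps^{\f12-H}\int_s^t F^*_i(x,Y^\eps_r)\,dB(r)$ is a centred Gaussian with conditional variance
\begin{equ}
\sigma^2_{i,x}(Y)=\f{\eps^{1-2H}}2\int_s^t\!\!\int_s^t\eta''(u-v)\,F^*_i(x,Y^\eps_u)\,F^*_i(x,Y^\eps_v)\,du\,dv\;,
\end{equ}
so $\E|Z^\eps_{s,t}(x)_i|^p=C_p\,\|\sigma^2_{i,x}(Y)\|_{L^{p/2}(\P_Y)}^{p/2}$ and it is enough to bound $\|\sigma^2_{i,x}(Y)\|_{L^{p/2}(\P_Y)}$. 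For $H<\f12$ I rewrite the double integral as in \eqref{e:covarIst}, i.e.\ I subtract the value of the integrand on the diagonal $u=v$, so that the singular kernel $|u-v|^{2H-2}$ is only ever paired with increments $F^*_i(x,Y^\eps_u)-F^*_i(x,Y^\eps_v)$; taking $L^{p/2}(\P_Y)$-norms, Minkowski against the kernel, Cauchy--Schwarz on the products, then $\|F^*_i(x,Y^\eps_u)-F^*_i(x,Y^\eps_v)\|_{L^p}\lesssim\|F^*_i(x,\cdot)\|_{E_2}\big((|u-v|/\eps)^H\wedge1\big)$ from \Cref{ass:continuous} and $\|F^*_i(x,Y^\eps_r)\|_{L^p}\lesssim\|F^*_i(x,\cdot)\|_E$ from \Cref{ass:integrability}, the remaining integrals are elementary and, using $\eps\le1$ to absorb the negative powers of $\eps$, give $\|\sigma^2_{i,x}(Y)\|_{L^{p/2}(\P_Y)}\lesssim(1+|x|)^{-2\kappa}|t-s|^{2H}$ for all $p\le p_\star$. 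For $H\ge\f12$, where $\eta''$ is the locally integrable function $-2\alpha_H|t|^{2H-2}$ (resp.\ the It\^o isometry at $H=\f12$), one instead uses that $F^*_i(x,\cdot)$ is centred and that \Cref{ass:ergodic} yields $|\scal{F^*_i(x,\cdot),P_\tau F^*_i(x,\cdot)}_\mu|\lesssim e^{-c\tau}\|F^*_i(x,\cdot)\|_E^2$, which makes $|t|^{2H-2}$ integrable at infinity after rescaling $\tau\mapsto\tau/\eps$; to reach general $p$ one expands $\|\sigma^2_{i,x}(Y)\|_{L^{p/2}}^{p/2}$ (for $p$ even) into a multi-point correlation of $Y$, bounded by a sum over clusterings of the ordered times into subintervals of products of gap-exponentials $e^{-c\,\cdot/\eps}$ --- which is why $N=\infty$ is required in that regime --- each integrating against $|\cdot|^{2H-2}$ to produce a factor $\eps^{2H-1}$ cancelling the prefactor. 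In both cases this gives $\E|Z^\eps_{s,t}(x)|^p\lesssim(1+|x|)^{-\kappa p}|t-s|^{p\gamma}$ uniformly in $\eps$.

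For the level-two bound, write $\ZZ^\eps_{s,t}(x,\bar x)_{ij}=\eps^{1-2H}\ZZ^{f,g}_{s,t}$ with $f=f^\eps_{i,x}$, $g=f^\eps_{j,\bar x}$; conditionally on $Y$, $\ZZ^{f,g}_{s,t}$ lies in the first two Wiener chaoses of $B$, and I split it as $m^{f,g}_{s,t}+M^{f,g}_{s,t}$ with $m^{f,g}_{s,t}=\E_B[\ZZ^{f,g}_{s,t}\mid Y]$ given by the bilinear formula \eqref{e:limitExpZ} and $M^{f,g}_{s,t}$ a centred element of the second $B$-chaos. The drift part $m^{f,g}_{s,t}$ is estimated exactly like $\sigma^2$ above (subtract the diagonal in the inner integral, use \Cref{ass:continuous}--\Cref{ass:integrability} and $\eps\le1$, resp.\ exponential mixing when $H\ge\f12$), giving $\eps^{1-2H}\|m^{f,g}_{s,t}\|_{L^{p/2}(\P_Y)}\lesssim(1+|x|)^{-\kappa}(1+|\bar x|)^{-\kappa}|t-s|^{2\gamma}$. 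For the martingale part, Gaussian hypercontractivity in $B$ gives $\E_B[|M^{f,g}_{s,t}|^p\mid Y]\lesssim_p\big(\mathrm{Var}_B(\ZZ^{f,g}_{s,t}\mid Y)\big)^{p/2}$, so it remains to bound $\|\mathrm{Var}_B(\ZZ^{f,g}_{s,t}\mid Y)\|_{L^{p/2}(\P_Y)}$. Here \Cref{prop:contractWiener}, applied conditionally on $Y$ with $K^\delta(B,B)=\ZZ^{\delta,f,g}_{s,t}$, bounds this conditional variance by twice $\E_{B,\tilde B}[(\tilde\ZZ^{f,g}_{s,t})^2\mid Y]$ for the iterated integral $\tilde\ZZ^{f,g}_{s,t}=\int_s^t\int_s^r f(u)\,dB(u)\,g(r)\,d\tilde B(r)$ built from an \emph{independent} copy $\tilde B$; this quantity is already centred and equals the clean four-fold integral
\begin{equ}
\tfrac14\int_{[s,t]^4}\eta''(u_1-u_2)\,\eta''(r_1-r_2)\,\one_{u_1<r_1}\one_{u_2<r_2}\,f(u_1)f(u_2)g(r_1)g(r_2)\,du_1\,du_2\,dr_1\,dr_2\;,
\end{equ}
only the ``direct'' Wick contraction surviving because $B\perp\tilde B$. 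Applying the \Cref{distributional-derivative} identity to \emph{each} of the two $\eta''$-factors --- subtracting, in the $u_1$- resp.\ $r_1$-integral, the value of the remaining integrand at $u_1=u_2$ resp.\ $r_1=r_2$, and handling the ordering indicators as in \cite[Thm~35]{FVGaussian} --- replaces each $|\cdot|^{2H-2}$ by an integrable kernel tested against an increment of $f$ resp.\ $g$, gaining a factor $\big((\cdot/\eps)^H\wedge1\big)$ via \Cref{ass:continuous}; the two resulting integrations each contribute a factor of order $\eps^{2H-1}$, which together exactly compensate the prefactor $\eps^{2(1-2H)}$. Taking $L^{p/2}(\P_Y)$-norms (Minkowski against the now-integrable kernels, Cauchy--Schwarz on the four-fold product) one gets $\eps^{2(1-2H)}\|\mathrm{Var}_B(\ZZ^{f,g}_{s,t}\mid Y)\|_{L^{p/2}(\P_Y)}\lesssim(1+|x|)^{-2\kappa}(1+|\bar x|)^{-2\kappa}|t-s|^{4\gamma}$ uniformly in $\eps$, hence $\E|\ZZ^\eps_{s,t}(x,\bar x)|^{p/2}\lesssim(1+|x|+|\bar x|)^{-\kappa p/2}|t-s|^{p\gamma}$, which is \eqref{e:boundWanted2}. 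For $H\ge\f12$ the diagonal-subtraction is replaced by exponential mixing and $\ZZ^{f,g}$ is an iterated Young (resp.\ It\^o/Stratonovich) integral; the case $H=\f12$ is entirely elementary, since $\E|Z^\eps_{s,t}(x)|^2=|t-s|\int|F(x,y)|^2\mu(dy)$ by stationarity and the iterated integral is handled by Burkholder--Davis--Gundy plus the conditional Gaussianity of the inner integral.

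The one genuinely delicate point is the uniform-in-$\eps$ bound on $\mathrm{Var}_B(\ZZ^{f,g}_{s,t}\mid Y)$: one must extract from the two $\eta''$'s \emph{exactly} two factors $\eps^{2H-1}$, while the singular kernels $|\cdot|^{2H-2}$ interact with the discontinuous ordering constraints $\one_{u_i<r_i}$, which cannot themselves be ``differenced'' against $\eta''$ and so must be kept and integrated out separately. For $H\ge\f12$ this additionally requires controlling the multi-point correlations of $Y$, which is why $N=\infty$ is assumed there; for $H<\f12$ no centring is available, so one works purely with the path regularity from \Cref{ass:continuous}, the borderline integrability of $|\cdot|^{2H-2}$ near the diagonal forcing the exponent $\alpha_0>1-2H$ (equivalently $\beta>1-2H$). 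Granted the above uniform moment bounds, tightness of $\{\Z^\eps\}_{\eps\le1}$ in $\Cr^\alpha([0,T],\CB\oplus\CB_2)$ for the stated ranges of $\alpha$ is immediate from the rough-path Kolmogorov criterion.
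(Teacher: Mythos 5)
For $H\in(\f13,\f12]$ your plan is essentially the paper's own: conditioning on $\CF^Y$, equivalence of moments on a fixed Wiener chaos, the diagonal-subtraction identity for $\eta''$ combined with Assumption~\ref{ass:continuous}, and Proposition~\ref{prop:contractWiener} to decouple the two copies of $B$ in the second-level variance; this is exactly how Propositions~\ref{prop:tight1st}, \ref{prop:boundExpectJJ} and~\ref{prop:tight2nd} proceed (the paper routes the second-level variance through the function $\phi_\eps$ of \eqref{e:VarianceJJtilde} and Lemma~\ref{lem:boundIntEta''} rather than your four-fold integral, but that is cosmetic), and tightness is then deduced from a weighted Kolmogorov criterion plus the compactness of Lemma~\ref{lem:compact}, i.e.\ Lemma~\ref{lemma:tight}. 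Minor bookkeeping aside (the admissible exponents are $2p\le p_\star$, in fact $p\le p_\star/4$, and $\kappa>8d/p$ as in Lemma~\ref{lemma:tight}, not quite the constants you quote), this half of your argument is sound and matches the paper.

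The regime $H\in(\f12,1)$ is where there is a genuine gap. You reduce everything to $\|\sigma^2_{i,x}(Y)\|_{L^{p/2}(\P_Y)}$ and then assert that the resulting multi-point correlation of $Y$ is ``bounded by a sum over clusterings of the ordered times into products of gap-exponentials'', each kernel $|\cdot|^{2H-2}$ then ``producing a factor $\eps^{2H-1}$''. Both steps fail as stated. First, $\E\prod_i f_i(Y_{t_i})$ is \emph{not} bounded by products of gap exponentials: writing it with the Markov semigroup, intermediate functions such as $f_3P_{t_4-t_3}f_4$ are no longer centred, so the spectral gap gives no decay across the corresponding gaps (this is precisely the discussion around \eqref{e:expression}); the correct decay statement is at the level of joint cumulants (Proposition~\ref{prop:boundCumul}), whose proof requires the decoupling\slash telescoping argument with an independent copy of $Y$, and even then the exponential decay lives only inside each cumulant block. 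Second, and more seriously, the power counting ``one factor $\eps^{2H-1}$ per kernel'' only works when the Wick pairing coming from $B$ is aligned with the cumulant partition coming from $Y$. Since $|t|^{2H-2}$ is not integrable at infinity for $H>\f12$, a kernel whose endpoints lie in different cumulant blocks gets integrated over the whole window of length $t/\eps$ and contributes $(t/\eps)^{2H-1}$ rather than $O(1)$; showing that every such mismatched configuration is nonetheless of lower order (a gain of only $L^{-\kappa}$ with $\kappa<2-2H$, a margin that degenerates as $H\to1$) is exactly the content of Proposition~\ref{prop:basicBound}, which rests on the graph-integrability machinery (Weinberg's condition, tight partitions, Corollary~\ref{cor:defect}, and the spanning-forest choice of the defect $\beta_0\in(2H-1,1)$). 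This is also why the paper explicitly abandons the conditional-variance\slash law-of-large-numbers route for $H>\f12$, noting that such bounds do not appear sufficient when $H>\f34$, and instead estimates the unconditional $2p$-th moments via cumulants. Your one-sentence treatment therefore does not deliver the uniform bound $\E|Z^\eps_{s,t}(x)|^{2p}\lesssim|t-s|^{p}$, and the same objection applies verbatim to your disposal of $\ZZ^{f,g}$ for $H>\f12$ (where, as in Corollary~\ref{tight-estimate-regular}, the iterated integrals are dominated by the hypercube bound but still require Proposition~\ref{prop:basicBound}). To repair the proposal you would need to supply, in some form, both the cumulant decay estimate and the combinatorial power-counting argument of Section~\ref{sec:tightHigh}.
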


It will be convenient to introduce the following notation.
Given $f,g \in E$, we use the shorthand
\begin{equ}
J^\eps_{s,t}(f) = \eps^{\f12 -H} Z_{s,t}^{f(Y^\eps_\cdot)}\;,\qquad
\JJ^\eps_{s,t}(f) = \eps^{1 -2H} \ZZ_{s,t}^{f(Y^\eps_\cdot),g(Y^\eps_\cdot)}\;.
\end{equ}
We then have the following tightness criterion.

\begin{lemma}\label{lemma:tight}
Let $p>d+1$.
Assume that for any $f,g\in E$,  $|s-t|\le 1$, and $\eps\in (0,1]$, 
\begin{equs}
  \|J^\eps_{s,t}(f)\|_{L^p}\le    C \|f\|_E |t-s|^{\alpha_0}\;, \qquad
 \big\|\JJ^\eps_{s,t}(f,g)\big\|_{L^p} \le C \|f\|_E\|g\|_E |t-s|^{2\alpha_0}\;,\end{equs}
 where $p > 3/(3\alpha_0-1)$.
Let furthermore Assumption \ref{ass:C3}  hold with  $\kappa > \f{8d} p$. 
 Then the family $\{\Z^\eps\}_{\eps \le 1}$ is tight in 
$\Cr^\alpha([0,T], \CB \oplus \CB_2)$ for any $ \alpha < \alpha_0-1/p$.
\end{lemma}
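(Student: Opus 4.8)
The plan is to reduce the hypothesised bounds on the ``scalar'' objects $J^\eps$ and $\JJ^\eps$ to pointwise-in-space estimates of the type \eqref{e:boundWanted1}--\eqref{e:boundWanted2}, and then to invoke Lemma~\ref{lem:compact} and Lemma~\ref{lem:tightBasic} exactly as in the proof of Proposition~\ref{prop:defZZeps}. Recall that, by construction, one has $\big(Z^\eps_{s,t}(x)\big)_i = J^\eps_{s,t}\big(F^*_i(x,\cdot)\big)$ and $\big(\ZZ^\eps_{s,t}(x,\bar x)\big)_{ij} = \JJ^\eps_{s,t}\big(F^*_i(x,\cdot),F^*_j(\bar x,\cdot)\big)$, where by Assumption~\ref{ass:C3} the maps $x \mapsto F^*_i(x,\cdot)$ are $\CC^4$ with values in $E$ and satisfy $\|D^\ell_x F^*_i(x,\cdot)\|_E \lesssim (1+|x|)^{-\kappa}$ for $|\ell| \le 4$. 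First I would differentiate these two identities in the slow variable: since $J^\eps_{s,t}$ is linear on $E$ and $\JJ^\eps_{s,t}$ is bilinear on $E \times E$ (and interchanging $D_x$ with $J^\eps, \JJ^\eps$ is legitimate, as one checks on the smooth approximations of Remark~\ref{rem:mollifier}), this gives $D^\ell_x Z^\eps_{s,t}(x)_i = J^\eps_{s,t}\big(D^\ell_x F^*_i(x,\cdot)\big)$ and, by the product structure, $D^k_x D^\ell_{\bar x} \ZZ^\eps_{s,t}(x,\bar x)_{ij} = \JJ^\eps_{s,t}\big(D^k_x F^*_i(x,\cdot), D^\ell_{\bar x} F^*_j(\bar x,\cdot)\big)$. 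Combining the hypothesised moment bounds with the decay of $F$ then yields
\begin{equs}
\|D^\ell_x Z^\eps_{s,t}(x)\|_{L^p} &\lesssim (1+|x|)^{-\kappa}\,|t-s|^{\alpha_0}\;,\\
\|D^k_x D^\ell_{\bar x} \ZZ^\eps_{s,t}(x,\bar x)\|_{L^p} &\lesssim (1+|x|)^{-\kappa}(1+|\bar x|)^{-\kappa}\,|t-s|^{2\alpha_0}\;,
\end{equs}
uniformly over $\eps \le 1$, $|t-s|\le 1$, and over $|\ell|\le 4$ (resp.\ $k,\ell\le 4$). Raising the second estimate to the power $p/2$ and using Jensen's inequality, these are precisely estimates of the form \eqref{e:boundWanted1}--\eqref{e:boundWanted2} with $\gamma = \alpha_0$ and polynomial weights $(1+|x|)^{\kappa p}$, $(1+|x|+|\bar x|)^{\kappa p/2}$.

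The second step is to upgrade these pointwise-in-space estimates to moment bounds valued in Banach spaces $\hat\CB$ and $\hat\CB_2$ that are compactly embedded in $\CB$ and $\CB_2$ respectively and satisfy $\|v\otimes w\|_{\hat\CB_2}\lesssim\|v\|_{\hat\CB}\|w\|_{\hat\CB}$. This is precisely what Lemma~\ref{lem:compact} does: the assumption $p > d+1$ makes the Sobolev embedding $W^{4,p}(\R^d)\hookrightarrow\CC^{3,\gamma}(\R^d)$ available for some $\gamma > 0$ (applied separately in each of the two space variables in the case of the level-two object, using that we control all mixed derivatives $D^k_x D^\ell_{\bar x}\ZZ^\eps$ with $k,\ell\le 4$, not just those with $k+\ell\le 4$), while $\kappa > 8d/p$ leaves enough polynomial decay, after integrating the space variables against the weights, to land in suitable \emph{weighted} $\CC^{3,\gamma}$ spaces which are then compactly embedded in the corresponding unweighted $\CC^3_b$ spaces. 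Applying Lemma~\ref{lem:compact} therefore yields
\begin{equ}
\E\big(\|Z^\eps_{s,t}\|_{\hat\CB}^p + \|\ZZ^\eps_{s,t}\|_{\hat\CB_2}^{p/2}\big) \lesssim |t-s|^{p\alpha_0}\;,\qquad |t-s|\le 1\;,\ \eps \le 1\;.
\end{equ}

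Finally, since $p > 3/(3\alpha_0-1)$ is equivalent to $\alpha_0 - \f1p > \f13$, Lemma~\ref{lem:tightBasic} applies with these spaces $\hat\CB, \hat\CB_2$ and this value of $p$, and gives tightness of $\{\Z^\eps\}_{\eps\le 1}$ in $\Cr^\alpha([0,T],\CB\oplus\CB_2)$ for every $\alpha \in (\f13, \alpha_0 - \f1p)$, which is the assertion. The only genuinely non-routine ingredient is the passage from pointwise-in-$x$ $L^p$ estimates to estimates valued in the compactly embedded spaces $\hat\CB, \hat\CB_2$ — and since that passage is already packaged into Lemma~\ref{lem:compact}, the argument above is essentially a short combination of that lemma with Lemma~\ref{lem:tightBasic}; everything else is linearity, the product structure, Assumption~\ref{ass:C3}, and bookkeeping of the exponents.
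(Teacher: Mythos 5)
Your proposal is correct and follows essentially the same route as the paper: reduce to moment bounds on $Z^\eps$, $\ZZ^\eps$ and their spatial derivatives via linearity\slash bilinearity of $J^\eps$, $\JJ^\eps$ together with Assumption~\ref{ass:C3}, then pass to $\hat\CB$, $\hat\CB_2$ via Lemma~\ref{lem:compact} and conclude with Lemma~\ref{lem:tightBasic}. The only cosmetic difference is that the paper bounds spatial \emph{increments} of derivatives up to order $3$ (using the mean value theorem on the $E$-valued map $x\mapsto D^\ell F(x,\cdot)$), matching the literal hypothesis of Lemma~\ref{lem:compact}, whereas you use pointwise bounds on derivatives up to order $4$; these are equivalent by Minkowski's integral inequality, and it is exactly how the paper itself invokes Lemma~\ref{lem:compact} in the proof of Proposition~\ref{prop:defZZeps}.
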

\begin{proof}
Recall that with the above notations, one has from \eqref{e:defZZ}
\begin{equ}
\bigl(Z^\eps_{s,t}(x)\bigr)_i = J^\eps_{s,t}\big(F_i^*(x,\cdot)\big)\;,\qquad
\bigl(\ZZ^\eps_{s,t}(x,\bar x)\bigr)_{ij} = \JJ^\eps_{s,t}\big(F_i^*(x,\cdot),F_j^*(\bar x,\cdot)\big)\;.
\end{equ}
By the assumption for $|\ell| \le 3$ one has for $|s-t|\le 1$ and $|x-x'| \le 1$,
\begin{equs}
\|D^\ell Z^\eps_{s,t}(x)-D^\ell & Z^\eps_{s,t}(x')\|_{L^p} 
\lesssim \sum_i \big\|J^\eps_{s,t}\big(D_x^\ell F_i^*(x,\cdot)-D_x^\ell F_i^*(x',\cdot)\big)\big\|_{L^p} \\
&\lesssim \|D_x^\ell F(x,\cdot)-D_x^\ell F(x',\cdot)\|_E|t-s|^{\alpha_0}\\
&\lesssim \sup_{y \in [x,x']} \|D_x^{\ell} F(y,\cdot)\|_E  |x-x'| |t-s|^{\alpha_0} \\
&\lesssim  (1+|x|)^{-\kappa}  |x-x'||t-s|^{\alpha_0}\;,
\end{equs}
where we wrote $[x,x']$ for the convex hull of $\{x,x'\}$.
Here, the last bound follows from Assumption~\ref{ass:C3}. 
It then follows from Lemma~\ref{lem:compact} that, for $\hat \CB$ as defined in the appendix,
$\E \|Z^\eps_{s,t}\|_{\hat \CB}^p
\lesssim |t-s|^{p \alpha_0}$. We choose $\zeta$ to be any number in $(0, 1-d/p)$.

It similarly follows 
that for $|k+\ell| \le 3$ and $|x-x'|\le 1$
\begin{equs}
\|D_x^k D_{\bar x}^\ell\ZZ^\eps_{s,t}&(x,\bar x)-D_x^k D_{\bar x}^\ell\ZZ^\eps_{s,t}(x',\bar x)\|_{L^{p/2}} \\
&\lesssim \sum_{i,j}  \|\JJ^\eps_{s,t}\big(D_x^k F_i^*(x,\cdot)-D_x^k F_i^*(x',\cdot), D_x^\ell F_j^*(\bar x,\cdot)\bigr)\|_{L^{p/2}}\\
&\lesssim \sup_{y \in [x,x']} \|D_x^{k} F(y,\cdot)\|_E\|D_x^{\ell} F(\bar x,\cdot)\|_E  |x-x'| |t-s|^{2\alpha_0}\\
&\lesssim  (1+|x|)^{-\kappa}(1+|\bar x|)^{-\kappa}  |x-x'||t-s|^{2\alpha_0}\;,
\end{equs}
(and analogously when varying $\bar x$).
Since $\kappa p /8 > d$ by assumption, we can again apply Lemma~\ref{lem:compact} with $p/2$, thus yielding
$\E \|\ZZ^\eps_{s,t}\|_{\hat \CB_2}^{p/2} \lesssim |t-s|^{p\alpha_0}$.
Since furthermore $p > 3/(3\alpha_0-1)$ by assumption, 
the conditions of Lemma~\ref{lem:tightBasic} are satisfied
and the claim follows for any $\alpha < \alpha_0 - 1/p$.
\end{proof}

\begin{proof}[of Proposition~\ref{prop:tight}]
The arguments are quite different for the different ranges of $H$, but they will always reduce to
verifying the assumptions of Lemma~\ref{lemma:tight}.  
 
First let $H\in (\f 13, \f 12)$. The first assumption of Lemma~\ref{lemma:tight}
follows from Proposition~\ref{prop:tight1st} below with $\alpha_0=H$ and from the trivial bound
\begin{equ}
\eps^{\f12-\alpha_0} t^\alpha_0 \vee \sqrt t \lesssim t^\alpha_0\;,\qquad \forall \eps \le 1\;,\quad t \le T\;,
\end{equ}
while the second assumption follows from Proposition~\ref{prop:tight2nd} below. Both hold for any 
$p\le p_\star/4$ where $p_\star>\max\{4d, \f{6} {(3H-1)}\}$, 
and the proofs of the propositions are  the content of Section~\ref{sec:tightLow}. 

The ingredients for showing tightness of $\Z^\eps$ where $H\in (\f 12, 1)$ are given in Section \ref{sec:tightHigh}, starting with a bound
on $J$ analogous to that of \Cref{prop:tight1st}. Unlike in the proof of that statement though,
we do \textit{not} show this by bounding the conditional variance $\E \bigl(|J_{s,t}^\eps(f)|^2\,|\,\CF^Y\bigr)$.
This is because, as a consequence of the lack of integrability at infinity of $\eta''$ when $H > \f12$, it 
appears difficult to obtain a sufficiently good bound on it, especially for $H$ close to $1$.
(In particular, the best bounds one can expect to obtain from a quantitative law of large numbers
don't appear to be sufficient when $H > \f 34$.) 
The required bounds are collected in \Cref{tight-estimate-regular}
which yields the assumptions of Lemma~\ref{lemma:tight} with $\alpha_0 = \f12$ and arbitrary $p$.

Finally we take $\alpha_0=\f 12$ when $H=\f 12$,  then
$  \|J^\eps_{s,t}(f)\|_{L^p}\le \sqrt{ \int_s^t |f(Y_r^\eps)|^2}  dr  \lesssim C \|f\|_E \sqrt{ |t-s|}\;$ by Burkholder-Davies-Gundy inequality,  and similarly the second order processes satisfies the bound:
\begin{equs}
 \big\|\JJ^\eps_{s,t}(f,g)\big\|_{L^p} &= \Big\| \int_0^t \int_0^{u} f(Y_v^\eps)g(Y_u^\eps) dB_vdB_u+\f 12 \int_0^t f(Y_r^\eps)g(Y_s^\eps)\Big\|_{L^p} \\
 & \lesssim
   \|f\|_E\|g\|_E |t-s|\;,\end{equs}
allowing us to again apply Lemma~\ref{lemma:tight} and concluding the proof.
\end{proof}

\subsection{The low regularity case}\label{sec:tightLow}

This section consists of a number of a priori moment bounds, which we then combine
at the end to provide the proof of Proposition~\ref{prop:tight2nd}. These uniform in $\eps$ moment bounds 
follow from the H\"older continuity of $Y$ in a subspace of $L^{p^*}$ for a sufficiently large $p^*$, in particular ergodicity of $Y$ does not play any role.

We will make repeated use of the following simple calculation, where we recall \eref{e:defEta}
for the definition of the distribution $\eta$.

\begin{lemma}\label{lem:boundIntEta''}
Given $t > 0$ and $H < \f12$, let $\Psi \colon [0,t]^2 \to \R$ be a continuous function such that 
for some numbers $\eps > 0$, $\beta > -2H$, $\gamma, \zeta > 1-2H$, and $C, \hat C, \bar C \ge 0$
it holds that
\begin{equ}[e:assumePsi]
|\Psi(r,r)| \le C|r|^\beta\;,\quad
|\Psi(s,r)-\Psi(r,r)| \le \hat C|r|^\beta \bigl(1 \wedge \textstyle{|s-r|^\gamma \over \eps^\gamma}\bigr)
+ \bar C \eps^{\beta-\zeta} |s-r|^\zeta\;,
\end{equ}
for all $s,r \in [0,t]$.
 Then, one has the bound
\begin{equ}[e:boundDInt]
\Big|\int_0^t \int_0^t \Psi(s,r) \eta''(r-s)\,ds\,dr\Big| \leq K\bigl(
C t^{2H + \beta} + \hat C t^{\beta+1} \eps^{2H-1} + \bar C t^{\zeta+2H} \eps^{\beta-\zeta}\big)\;,
\end{equ} 
with the proportionality constant $K$ depending only on $\beta$, $\gamma$ and $\zeta$.
The same bound holds if the upper limit of the inner integral
in \eqref{e:boundDInt} is given by $r$ instead of $t$. 
\end{lemma}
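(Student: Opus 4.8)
The plan is to integrate out $s$ first, for each fixed $r\in(0,t)$, by reducing the inner integral to the explicit identity \eqref{e:ideneta} of Lemma~\ref{distributional-derivative}, and then to integrate the resulting estimate over $r\in[0,t]$. Throughout we are in the regime $H\in(\f13,\f12)$ in which \eqref{e:ideneta} applies. Fixing $r$ and using that $\eta''$ is even, the substitution $w=s-r$ turns $\int_0^t\Psi(s,r)\,\eta''(r-s)\,ds$ into $\int_{-r}^{t-r}\Psi(r+w,r)\,\eta''(w)\,dw$, which is of the form in \eqref{e:ideneta} with endpoints $a=-r<0<t-r=b$, test function $\phi(w)=\Psi(r+w,r)$ and $\phi_0=\Psi(r,r)$. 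Lemma~\ref{distributional-derivative} then gives
\[
\int_0^t\Psi(s,r)\,\eta''(r-s)\,ds
= -2\alpha_H\int_{-r}^{t-r}|w|^{2H-2}\bigl(\Psi(r+w,r)-\Psi(r,r)\bigr)\,dw
+ 2H\,\Psi(r,r)\bigl(r^{2H-1}+(t-r)^{2H-1}\bigr),
\]
and it remains to bound the $r$-integral of each of the two terms on the right, which I call the \emph{regular part} and the \emph{boundary part}.

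For the regular part I would plug in \eqref{e:assumePsi} with $|s-r|=|w|$, so that $|\Psi(r+w,r)-\Psi(r,r)|\le\hat C|r|^\beta\bigl(1\wedge(|w|/\eps)^\gamma\bigr)+\bar C\eps^{\beta-\zeta}|w|^\zeta$. For the $\hat C$-piece I split the $w$-integral at $|w|=\eps$: on $\{|w|\le\eps\}$ the integrand is $\le|w|^{2H-2+\gamma}\eps^{-\gamma}$, integrable because $\gamma>1-2H$ and contributing $\lesssim\eps^{2H-1}$, while on $\{\eps<|w|\le t\}$ it is $\le|w|^{2H-2}$, again contributing $\lesssim\eps^{2H-1}$ since $2H-1<0$ and $\eps\le t$ (and if $\eps>t$ one bounds the whole $w$-integral at once by $t^{2H-1+\gamma}\eps^{-\gamma}\le\eps^{2H-1}$). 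Hence $\int_{-r}^{t-r}|w|^{2H-2}(1\wedge(|w|/\eps)^\gamma)\,dw\lesssim\eps^{2H-1}$ uniformly in $r$, so integrating $\hat C|r|^\beta\eps^{2H-1}$ over $r\in[0,t]$ produces $\lesssim\hat C\,t^{\beta+1}\eps^{2H-1}$. For the $\bar C$-piece, $\int_{-r}^{t-r}|w|^{2H-2+\zeta}\,dw\lesssim t^{2H-1+\zeta}$ since $\zeta>1-2H$, and the $r$-integral gives $\lesssim\bar C\,\eps^{\beta-\zeta}\,t^{2H+\zeta}$. These are precisely the last two terms of \eqref{e:boundDInt}.

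For the boundary part, $|\Psi(r,r)|\le C|r|^\beta$ gives $\int_0^t 2H|\Psi(r,r)|(r^{2H-1}+(t-r)^{2H-1})\,dr\lesssim C\int_0^t r^\beta(r^{2H-1}+(t-r)^{2H-1})\,dr\lesssim C\,t^{\beta+2H}$, both pieces being standard Beta-type integrals with exponents $>-1$ thanks to $\beta>-2H$ (which, since $H<\f12$, also gives $\beta>-1$). Summing the three contributions yields \eqref{e:boundDInt}. For the variant in which the inner upper limit is $r$ instead of $t$, the same substitution produces $\int_{-r}^0\Psi(r+w,r)\,\eta''(w)\,dw$, which is the endpoint ($b=0$) case of Lemma~\ref{distributional-derivative} (valid for a symmetric mollifier, as here); it has the same regular part over $w\in[-r,0]$ together with a single boundary term $2H\,\Psi(r,r)\,r^{2H-1}$, and the identical estimates apply.

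I expect the only genuine point of care to be pinning down the $\eps$-exponent in the singular integral $\int|w|^{2H-2}\bigl(1\wedge(|w|/\eps)^\gamma\bigr)\,dw$ and verifying that the bound $\lesssim\eps^{2H-1}$ holds uniformly in $r$ and for every $\eps>0$ (both $\eps\le t$ and $\eps>t$); the rest is bookkeeping, with the convergence of all $r$-integrals guaranteed by $\beta>-2H$ and $\gamma,\zeta>1-2H$.
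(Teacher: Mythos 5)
Your argument is correct and follows essentially the same route as the paper: both use the identity \eqref{e:ideneta} of Lemma~\ref{distributional-derivative} to split the integral into the regular part (the $|r-s|^{2H-2}$ kernel against $\Psi(s,r)-\Psi(r,r)$, bounded via the hypotheses by $\hat C\,t^{\beta+1}\eps^{2H-1}+\bar C\,t^{2H+\zeta}\eps^{\beta-\zeta}$) and the boundary part (a Beta-type integral giving $C\,t^{2H+\beta}$). The only cosmetic differences are that you apply the identity for each fixed $r$ and split the singular integral at $|w|=\eps$, whereas the paper applies it to the double integral and rescales $u=(s-r)/\eps$ to get the uniform $\eps^{2H-1}$ bound in one step; both yield the same estimates under $\beta>-2H$ and $\gamma,\zeta>1-2H$.
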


\begin{proof}
Let $I$ be the double integral appearing in \eqref{e:boundDInt}.
As a consequence of Lemma~\ref{distributional-derivative}, we can rewrite it as
\begin{equs}
I &= -2\alpha_H \int_0^t\int_0^t \bigl(\Psi(s,r) - \Psi(r,r)\bigr)|r-s|^{2H-2}\,ds\,dr \\
&\qquad + 2H \int_0^t \Psi(r,r) \bigl(|t-r|^{2H-1} + |r|^{2H-1}\bigr)\,dr \eqdef I_1 + I_2\;.
\end{equs}
We then have
\begin{equs}
|I_1| &\lesssim \hat C\int_0^t r^\beta\int_0^t  \bigl(1 \wedge (|s-r|/\eps)^\gamma\bigr)
|r-s|^{2H-2}\,ds\,dr 
\\&\qquad + \bar C \eps^{\beta-\zeta} \int_0^t \int_0^t |r-s|^{2H+\zeta-2}\,ds\,dr 
\\
&\lesssim \hat C \eps^{2H-1} \int_0^t r^\beta \int_{\R} (1\wedge |u|^\gamma) |u|^{2H-2}\,du \,dr 
+ \bar C \eps^{\beta-\zeta} t^{2H+\zeta}\\
&\lesssim \hat C \eps^{2H-1} t^{\beta+1}+ \bar C \eps^{\beta-\zeta} t^{2H+\zeta}\;.
\end{equs}
We used the conditioned imposed on $\beta, \gamma$ and $\zeta$. Regarding $I_2$, we have the bound
\begin{equs}
|I_2| &\lesssim C\int_0^t |r|^\beta \bigl(|t-r|^{2H-1} + |r|^{2H-1}\bigr)\,dr \\
&= C t^{2H+\beta} \int_0^1 |r|^\beta \bigl(|1-r|^{2H-1} + |r|^{2H-1}\bigr)\,dr\;,
\end{equs}
and the claim follows.
\end{proof}

Note that replacing $\Psi(r,s)$ by $\Psi_\tau(r,s) = \tau^{2H}\Psi(\tau r,\tau s)$ and
$t$ by $t/\tau$, the  left-hand side of (\ref{e:boundDInt}) is left unchanged. 
Regarding the bounds (\ref{e:assumePsi}), such a change leads to the substitutions $C \mapsto C \tau^{2H+\beta}$, 
$\eps \mapsto \eps/\tau$, $\hat C \mapsto \hat C \tau^{2H+\beta}$, and $\bar C \mapsto \bar C \tau^{2H+\beta}$. 
All three terms appearing in the right-hand side of (\ref{e:boundDInt}) are invariant under 
these substitutions.

\begin{remark}\label{rem:betterLemma}
The proof of Lemma~\ref{lem:boundIntEta''} works \textit{mutatis mutandis} for $\Psi$ taking values in
a Banach space, for example $L^p$. We also see that if $\Psi$ is upper bounded by a finite sum of terms of the type \eqref{e:assumePsi} with different exponents $\beta$ and $\gamma$, then 
the bound \eqref{e:boundDInt} still holds with the corresponding sum in the right-hand side.
\end{remark}

We perform a number of preliminary calculations. For this, it will be notationally
 convenient to introduce the shortcuts
\begin{equ}
I^\eps_{s,t}(f)
=  \int_s^t f(Y^\eps_r)\,dB_r\;,\qquad J^\eps_{s,t}(f) =  Z^{f(Y^\eps_\cdot)}_{s,t} = \eps^{\f12-H} I^\eps_{s,t}(f) \;,
\end{equ}
for $f \in E$ (with values in $\R^m$).

\begin{proposition}\label{prop:tight1st}
Let $H\in (\f 13, \f 12)$ and let  Assumptions~\ref{ass:continuous} and~\ref{ass:integrability} hold for some $p_\star\ge 2$. 
Then there exists a constant $C$ such that, uniformly over $s\ge 0$, $t\ge 0$, and $f\in E$,
\begin{equs}\label{e:Lp-bounds-J}
 \|J^\eps_{s,t}(f)\|_{L^{p_\star}}\le    C \|f\|_E \bigl(\eps^{{1\over 2}-H} |t-s|^{H}
\vee \sqrt{|t-s|}\bigr)\;.
\end{equs} \end{proposition}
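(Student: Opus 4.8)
The plan is to condition on the $\sigma$-algebra $\CF^Y$ generated by $Y$ and to exploit that, $B$ being independent of $Y$, the variable $I^\eps_{s,t}(f) = \int_s^t f(Y^\eps_r)\,dB_r$ is, conditionally on $\CF^Y$, a centred Gaussian: it is the Wiener integral against fBm of the path $r \mapsto f(Y^\eps_r)$, which by \Cref{ass:continuous} and \Cref{lem:boundCF} is almost surely of class $\CC^\beta$ for some $\beta > \f12 - H$ and hence lies in the reproducing kernel space of $B$. Writing $V_{s,t} = \E\bigl(|I^\eps_{s,t}(f)|^2 \mid \CF^Y\bigr)$, the usual comparison of Gaussian moments gives $\E\bigl(|I^\eps_{s,t}(f)|^{p_\star} \mid \CF^Y\bigr) \lesssim_{p_\star} V_{s,t}^{p_\star/2}$, so that taking expectations yields $\|I^\eps_{s,t}(f)\|_{L^{p_\star}}^2 \lesssim \|V_{s,t}\|_{L^{p_\star/2}}$. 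Since $J^\eps_{s,t}(f) = \eps^{\f12-H}I^\eps_{s,t}(f)$, it is therefore enough to show
\begin{equ}
\|V_{s,t}\|_{L^{p_\star/2}} \lesssim \|f\|_E^2\,\bigl(|t-s|^{2H} + |t-s|\,\eps^{2H-1}\bigr)\;,
\end{equ}
uniformly over $s \le t$, $f \in E$ and $\eps \le 1$; multiplying by $\eps^{1-2H}$ and taking a square root then converts the second summand into $\sqrt{|t-s|}$, which is the claimed bound.

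To identify $V_{s,t}$ I would run the computation from the proof of \Cref{prop:boundCovZ} conditionally on $\CF^Y$ (equivalently, mollify $B$ and pass to the limit), obtaining
\begin{equ}
V_{s,t} = \f12 \int_s^t\int_s^t \eta''(u-v)\,\<f(Y^\eps_u),f(Y^\eps_v)\>\,du\,dv\;,
\end{equ}
with $\eta$ as in \eqref{e:defEta} and the integral against $\eta''$ read as in \Cref{distributional-derivative}. By stationarity of $Y$ we may take $s = 0$. Setting $\Psi(u,v) = \<f(Y^\eps_u),f(Y^\eps_v)\>$, which we regard as a continuous $L^{p_\star/2}$-valued function on $[0,t]^2$ (its $L^{p_\star/2}$-continuity following from \Cref{ass:continuous}), the plan is to apply the Banach-valued version of \Cref{lem:boundIntEta''} recorded in \Cref{rem:betterLemma} to $\Psi$.

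Its hypotheses are checked as follows. On the diagonal, stationarity and the embedding $E \subset L^{p_\star}$ of \Cref{ass:integrability} give $\|\Psi(r,r)\|_{L^{p_\star/2}} = \|f\|_{L^{p_\star}(\mu)}^2 \lesssim \|f\|_E^2$, so one may take $\beta = 0$. Off the diagonal, the pointwise Cauchy--Schwarz inequality in $\R^m$ followed by Hölder's inequality, together with \Cref{ass:continuous} applied to $f$ viewed in $E_2$ (using $\|f\|_{E_2} \lesssim \|f\|_E$) and with stationarity, give
\begin{equ}
\|\Psi(u,v) - \Psi(v,v)\|_{L^{p_\star/2}} \le \|f(Y^\eps_u) - f(Y^\eps_v)\|_{L^{p_\star}}\,\|f(Y^\eps_v)\|_{L^{p_\star}} \lesssim \|f\|_E^2\,\Bigl(1 \wedge \f{|u-v|^H}{\eps^H}\Bigr)\;,
\end{equ}
which is of the required form with $\gamma = H$ and vanishing last term. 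The point to note is that \Cref{lem:boundIntEta''} demands $\gamma > 1 - 2H$, i.e.\ $H > \f13$, which is exactly the standing hypothesis --- it is what makes the singular integral $\int (1 \wedge |u|^\gamma)\,|u|^{2H-2}\,du$ converge at the origin. Applying \Cref{lem:boundIntEta''} with $\beta = 0$ and $\gamma = H$ then yields the displayed bound on $\|V_{s,t}\|_{L^{p_\star/2}}$, and combining it with the first paragraph gives
\begin{equ}
\|J^\eps_{s,t}(f)\|_{L^{p_\star}} \lesssim \|f\|_E\bigl(\eps^{\f12-H}|t-s|^H + \sqrt{|t-s|}\bigr) \asymp \|f\|_E\bigl(\eps^{\f12-H}|t-s|^H \vee \sqrt{|t-s|}\bigr)\;,
\end{equ}
with all constants uniform thanks to stationarity and to the uniformity of the constants in \Cref{ass:continuous}, \Cref{ass:integrability}, \Cref{lem:boundCF} and \Cref{lem:boundIntEta''}.

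The proof is essentially an assembly of ingredients already in place, so I do not anticipate a serious obstacle. The two steps requiring some care are (i) matching the off-diagonal modulus of $\Psi$ to the exact shape of the hypothesis of \Cref{lem:boundIntEta''} with an admissible exponent $\gamma$ --- which is precisely where the restriction $H > \f13$ is used --- and (ii) the (routine) justification of the conditional-variance identity for the \emph{random} integrand $f(Y^\eps_\cdot)$, handled by conditioning on $\CF^Y$ and invoking the almost-sure Hölder regularity of $r \mapsto f(Y^\eps_r)$.
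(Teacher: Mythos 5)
Your proposal is correct and follows essentially the same route as the paper: condition on $\CF^Y$, reduce the $L^{p_\star}$ norm to the $L^{p_\star/2}$ norm of the conditional variance (the paper phrases this via equivalence of moments on a fixed Wiener chaos, you via Gaussian moment comparison, which is the same thing here), express that variance as a double integral against $\eta''$, bound the diagonal and off-diagonal increments of $\<f(Y^\eps_u),f(Y^\eps_v)\>$ using Assumptions~\ref{ass:continuous} and~\ref{ass:integrability}, and conclude with Lemma~\ref{lem:boundIntEta''} (with $\beta=0$, $\gamma=H$) exactly as in the paper. No gaps.
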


\begin{proof} 
Let $\CF^Y$ denote the $\sigma$-algebra generated by all point evaluations of the process $Y$, and $\CF^Y_t$ the corresponding filtration. Write $p$ for $p_\star$ for brevity.
Since $B$ is independent of $\CF^Y$ and the $L^p$ norm of an element of a Wiener chaos of fixed degree
is controlled by its $L^2$ norm,  we have
\begin{equ}[e:boundLpChaos]
\|I^\eps_{s,t}(f)\|_{L^p}^2 = \big|\E \big(\E \bigl(I^\eps_{s,t}(f)^p\,|\, \CF^Y\bigr)\big)\big|^{2/p} 
\le c \|\E(I^\eps_{s,t}(f)^2\,|\, \CF^Y)\|_{L^{p/2}}\;,
\end{equ}
for some universal constant $c$ depending only on $p$ and on the degree of the Wiener chaos, so that
\begin{equ}[e:niceRHS]
\|I^\eps_{s,t}(f)\|_{L^p}^2 
\lesssim
 \Big\|  \int_s^t\int_s^t f(Y_r^\epsilon)f(Y_{r'}^\eps) \eta''(r-r')\, dr\, dr' \Big\|_{L^{p/2}}\;.
\end{equ}
Since $f\in E$  is in $L^{p}$ by Assumption~\ref{ass:integrability}, it follows from Assumption~\ref{ass:continuous}  that
\begin{equ}
\bigl\|f(Y^\eps_{u})f(Y^\eps_{u+v}) - f(Y^\eps_{u})^2\bigr\|_{L^{p/2}}
\lesssim \|f\|_E^2  \bigl({|v/\eps|^H} \wedge 1\bigr)\;.
\end{equ}
We can therefore apply Lemma~\ref{lem:boundIntEta''} with $\gamma = H$ and $\beta = 0$ so 
that, for $\|f\|_E \le 1$, one has
\begin{equ}
\|I^\eps_{s,t}(f)\|_{L^p}^2 \lesssim  \eps^{2H-1}|t-s|
+ |t-s|^{2H}\;,\label{e:Lp-bounds}
\end{equ}
whence the desired bound follows. (The condition $\gamma > 1-2H$ is satisfied since
$H > \f13$ by assumption.)
\end{proof}

We now consider the second-order process $\JJ$ given by 
\begin{equ}[e:defJJ]
\JJ^\eps_{s,t}(f,g) = \eps^{1-2H} \ZZ^{f(Y^\eps_\cdot), g(Y^\eps_\cdot)}_{s,t}
= \eps^{1-2H}\int_s^t\int_s^v f(Y^\eps_u)\, dB(u)\,  g(Y^\eps_v)\,dB(v)\;,
\end{equ}
and bound it in a similar way. 
Recalling that $ \scal{f,g}_\mu=\int_\CY \<f, g\>d\mu$, we first obtain a bound on its expectation.

\begin{proposition}\label{prop:boundExpectJJ}
Let $H\in (\f 13, \f 12)$, let  Assumptions~\ref{ass:continuous} and~\ref{ass:integrability} hold for some $p_\star\ge 2$, and let $f ,g\in E$. One has 
\begin{equ}
\| \E \big(\JJ^{\eps}_{s,t}(f,g)\,|\, \CF^Y\bigr)\|_{L^p}
\lesssim \|f\|_E\|g\|_E \bigl(  \epsilon^{1-2H} |t-s|^{2H} \vee |t-s| \bigr)\;,
\end{equ}
provided that $2p \le p_\star$.
\end{proposition}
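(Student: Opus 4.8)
The plan is to compute $\E(\JJ^\eps_{s,t}(f,g)\,|\,\CF^Y)$ explicitly as a double integral against $\eta''$ and then bound it by appealing to Lemma~\ref{lem:boundIntEta''}, exactly as was done for the first-order process in Proposition~\ref{prop:tight1st}. First I would recall that, conditionally on $\CF^Y$, the expression $\int_s^t\int_s^v f(Y^\eps_u)\,dB(u)\,g(Y^\eps_v)\,dB(v)$ is a (conditionally) deterministic-integrand iterated Wiener integral, whose conditional expectation is obtained by contracting the two copies of $dB$. Concretely, mollifying $B$ and passing to the limit as in the proof of Proposition~\ref{prop:boundRP} (see also \eqref{e:limitExpZ}), one gets
\begin{equ}
\E\big(\JJ^\eps_{s,t}(f,g)\,|\,\CF^Y\big) = \f{\eps^{1-2H}}{2}\int_s^t\int_s^v \scal{f(Y^\eps_u),g(Y^\eps_v)}\,\eta''(u-v)\,du\,dv\;,
\end{equ}
where the inner integral has upper limit $v$ rather than $t$; Lemma~\ref{lem:boundIntEta''} is stated to cover precisely this variant.

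Next I would verify the hypotheses of Lemma~\ref{lem:boundIntEta''} (in the Banach-space-valued form of Remark~\ref{rem:betterLemma}, with values in $L^{p}$) for the function $\Psi(u,v) = \scal{f(Y^\eps_u),g(Y^\eps_v)}$, viewed as an $L^p$-valued function of $(u,v)$. On the diagonal one has $\|\Psi(v,v)\|_{L^p} = \|\scal{f(Y^\eps_v),g(Y^\eps_v)}\|_{L^p} \le \|fg\|_{L^p}\lesssim \|f\|_E\|g\|_E$ by stationarity and the continuity of multiplication $E_0\times E_0\to E_1\subset L^p$ (using $2p\le p_\star$ and Assumption~\ref{ass:integrability}), so $\beta = 0$ works. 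For the off-diagonal estimate, write
\begin{equ}
\Psi(u,v)-\Psi(v,v) = \scal{f(Y^\eps_u)-f(Y^\eps_v),\,g(Y^\eps_v)}\;,
\end{equ}
and apply Cauchy--Schwarz together with Assumption~\ref{ass:continuous}, which gives $\|f(Y^\eps_u)-f(Y^\eps_v)\|_{L^{p_\star}}\lesssim \|f\|_{E_2}(|u-v|/\eps)^H\wedge 1$; combined with $\|g(Y^\eps_v)\|_{L^{p_\star}}\lesssim \|g\|_E$ and Hölder this yields a bound of the form $\hat C\,(1\wedge(|u-v|/\eps)^H)$ with $\hat C\lesssim \|f\|_E\|g\|_E$, i.e. the first term in \eqref{e:assumePsi} with $\gamma = H$ and $\bar C = 0$. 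The condition $\gamma > 1-2H$ holds since $H>\f13$.

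Feeding $\beta = 0$, $\gamma = H$, $\bar C = 0$ into the conclusion \eqref{e:boundDInt} of Lemma~\ref{lem:boundIntEta''} gives
\begin{equ}
\Big\|\int_s^t\int_s^v \scal{f(Y^\eps_u),g(Y^\eps_v)}\,\eta''(u-v)\,du\,dv\Big\|_{L^p} \lesssim \|f\|_E\|g\|_E\big(|t-s|^{2H} + |t-s|\,\eps^{2H-1}\big)\;,
\end{equ}
and multiplying by $\eps^{1-2H}$ produces $\|f\|_E\|g\|_E\big(\eps^{1-2H}|t-s|^{2H} + |t-s|\big)$, which is the claimed bound once one notes $a+b\lesssim a\vee b$ for $a,b\ge 0$. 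I do not expect a genuine obstacle here: the only point requiring mild care is the justification of the conditional-expectation formula — that one may condition on $\CF^Y$ and treat $f(Y^\eps_\cdot)$, $g(Y^\eps_\cdot)$ as frozen deterministic integrands, then identify the resulting object with the $\delta\to 0$ limit of the smooth approximations $\ZZ^{\delta,f,g}$ computed in \eqref{e:expZZdelta}. This is the same manoeuvre already used (unconditionally) in the proof of Proposition~\ref{prop:boundRP} and in \eqref{e:boundLpChaos}, so it can be dispatched by the same mollification-and-limit argument, now performed $\CF^Y$-almost surely and then combined with dominated convergence in $L^{p}$ using the uniform bounds from Proposition~\ref{prop:boundCovZ} and Lemma~\ref{lem:boundCF}.
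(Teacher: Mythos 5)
Your proposal is correct and follows essentially the same route as the paper: the paper likewise writes $\E\big(\JJ^{\eps}_{s,t}(f,g)\,|\,\CF^Y\big)$ as $\f{\eps^{1-2H}}{2}\int_s^t\int_s^v f(Y^\eps_u)g(Y^\eps_v)\,\eta''(u-v)\,du\,dv$ and then invokes Lemma~\ref{lem:boundIntEta''} with the bound $\bigl\|g(Y^\eps_{u})(f(Y^\eps_{u+w}) - f(Y^\eps_{u}))\bigr\|_{L^{p}}\lesssim \|f\|_E\|g\|_E\bigl(|w/\eps|^H \wedge 1\bigr)$, i.e.\ exactly your choice $\beta=0$, $\gamma=H$, $\bar C=0$, as in Proposition~\ref{prop:tight1st}. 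Your extra care about justifying the conditional-expectation identity via mollification is harmless but not needed beyond what the paper takes from \eqref{e:defJJ}.
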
 

\begin{proof}
It follows from \eqref{e:defJJ} that we have the identity
\begin{equ}
\E \big(\JJ^{\eps}_{s,t}(f,g)\,|\, \CF^Y\bigr)
= {\eps^{1-2H} \over 2} \int_s^t\int_s^v f(Y^\eps_u)  g(Y^\eps_v)\,\eta''(u-v)\, du\, dv\;,
\end{equ}
and we conclude from Lemma~\ref{lem:boundIntEta''} and the bound
$\bigl\|g(Y^\eps_{u})(f(Y^\eps_{u+w}) - f(Y^\eps_{u}))\bigr\|_{L^{p}}
\lesssim \|f\|_E\|g\|_E  \bigl({|w/\eps|^H} \wedge 1\bigr)$ exactly as above.
\end{proof}

\begin{proposition}\label{prop:tight2nd}
Let  $H\in (\f 13, \f 12)$, let Assumptions~\ref{ass:continuous} and~\ref{ass:integrability} hold for some $p_\star\ge 2$, 
let $f,g \in E$, and let $2p\le  p_\star$. Then there exists a constant $C$ such that
\begin{equ}
\big\|\JJ^\eps_{s,t}(f,g)\big\|_{L^p}
 \le C \|f\|_E\|g\|_E \bigl(  \epsilon^{1-2H} |t-s|^{2H} \vee |t-s| \bigr)\;.
\end{equ}
If $g$ is a constant, one obtains a stronger upper bound of the form $$\|f\|_E|g| \big(\eps^{{1\over 2}-H} |t-s|^{\f12 +H} \vee \eps^{1-2H}|t-s|^{2H}\big).$$
\end{proposition}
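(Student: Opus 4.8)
The plan is to split $\JJ^\eps_{s,t}(f,g)$ into its conditional mean given the fast process and a remainder lying in the second homogeneous Wiener chaos of $B$, and to estimate the two pieces separately. Write $\CF^Y$ for the $\sigma$-algebra generated by $Y$ and set $\hat f(r) = f(Y^\eps_r)$, $\hat g(r) = g(Y^\eps_r)$, so that by \eqref{e:defJJ} one has $\JJ^\eps_{s,t}(f,g) = \eps^{1-2H}\int_s^t Z^{\hat f}_{s,v}\hat g(v)\,dB(v)$ with $Z^{\hat f}_{s,v} = \int_s^v\hat f(u)\,dB(u)$. The conditional mean $\E\big(\JJ^\eps_{s,t}(f,g)\,|\,\CF^Y\big)$ is precisely the object bounded in Proposition~\ref{prop:boundExpectJJ}, giving the contribution $\|f\|_E\|g\|_E\big(\eps^{1-2H}|t-s|^{2H}\vee|t-s|\big)$. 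For the remainder $\JJ^\eps_\diamond := \JJ^\eps_{s,t}(f,g) - \E\big(\JJ^\eps_{s,t}(f,g)\,|\,\CF^Y\big)$ I would argue, exactly as in \eqref{e:boundLpChaos}, that conditionally on $\CF^Y$ it belongs to a fixed Wiener chaos of $B$, so that its conditional $L^p$-norm is controlled by its conditional $L^2$-norm; this reduces the first assertion to a bound on $\big\|\E\big((\JJ^\eps_\diamond)^2\,|\,\CF^Y\big)\big\|_{L^{p/2}}$.

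To estimate this conditional variance I would apply Proposition~\ref{prop:contractWiener} conditionally on $\CF^Y$ (which makes $\hat f$ and $\hat g$ deterministic), taking for $K^\delta$ the bilinear maps underlying the smooth approximations \eqref{e:constructZZ}; this yields $\E\big((\JJ^\eps_\diamond)^2\,|\,\CF^Y\big)\le 2\eps^{2-4H}\,\E\big((\tilde\ZZ^{\hat f,\hat g}_{s,t})^2\,|\,\CF^Y\big)$, where $\tilde\ZZ^{\hat f,\hat g}_{s,t} = \int_s^t Z^{\hat f}_{s,v}\hat g(v)\,d\tilde B(v)$ is the decoupled integral against an independent copy $\tilde B$ of $B$ (the $L^2$-existence of $\tilde K$ following, conditionally on $\CF^Y$, from the Coutin--Qian construction for $Z^{\hat f}$ against $\tilde B$ as in the proof of Proposition~\ref{prop:boundRP}). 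Conditioning further on $B$ turns $\tilde\ZZ^{\hat f,\hat g}_{s,t}$ into a Wiener integral in $\tilde B$, so that
\begin{equs}
\E\big((\tilde\ZZ^{\hat f,\hat g}_{s,t})^2\,|\,\CF^Y\big) &= \f12\int_s^t\int_s^t \Psi(v,v')\,\eta''(v-v')\,dv\,dv'\;,\\
\Psi(v,v') &= \E\big(Z^{\hat f}_{s,v}Z^{\hat f}_{s,v'}\,|\,\CF^Y\big)\,\hat g(v)\hat g(v')\;,
\end{equs}
and I would bound the right-hand side by the $L^{p/2}$-valued version of Lemma~\ref{lem:boundIntEta''} (see Remark~\ref{rem:betterLemma}), reading off $\Psi$ as a function with values in $L^{p/2}$.

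The diagonal input $\|\Psi(v,v)\|_{L^{p/2}}\lesssim\|f\|_E^2\|g\|_E^2\big(|v-s|^{2H}\vee\eps^{2H-1}|v-s|\big)$ follows from Proposition~\ref{prop:tight1st} applied to $Z^{\hat f} = \eps^{H-\f12}J^\eps(f)$, conditional Jensen, and Hölder's inequality together with $E\subset L^{2p}(\mu)$, which is where $2p\le p_\star$ enters. For the off-diagonal input one writes, for $v>v'$ (the integrand being symmetric),
\begin{equ}
\Psi(v,v') - \Psi(v',v') = \hat g(v)\hat g(v')\,\E\big(Z^{\hat f}_{v',v}\,Z^{\hat f}_{s,v'}\,|\,\CF^Y\big) + \E\big((Z^{\hat f}_{s,v'})^2\,|\,\CF^Y\big)\big(\hat g(v)-\hat g(v')\big)\hat g(v')\;.
\end{equ}
The second summand is controlled by the diagonal bound and Assumption~\ref{ass:continuous}, which gives $\|\hat g(v)-\hat g(v')\|_{L^{2p}}\lesssim\|g\|_{E_2}\big(1\wedge(|v-v'|/\eps)^H\big)$ with $\|g\|_{E_2}\lesssim\|g\|_E$, matching the form required by Lemma~\ref{lem:boundIntEta''} with $\gamma = H$ (admissible since $H>\f13$). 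The first summand is the cross-covariance of the adjacent fractional increments $Z^{\hat f}_{v',v}$ and $Z^{\hat f}_{s,v'}$; writing it as a double integral of $\hat f(r)\hat f(r')\eta''(r-r')$ over $\{v'\le r\le v,\ s\le r'\le v'\}$ and expanding $\hat f(r)\hat f(r')$ around the common endpoint $\hat f(v')$, the leading term is $\f12\hat f(v')^2\big((v-s)^{2H}-(v-v')^{2H}-(v'-s)^{2H}\big)$ (by Lemma~\ref{distributional-derivative}), which by subadditivity of $x\mapsto x^{2H}$ (valid as $H<\f12$) is $O(|v-v'|^{2H})$, while the remaining terms are estimated as in Lemma~\ref{lem:boundIntEta''}; altogether this summand is $\lesssim\|f\|_E^2\|g\|_E^2|v-v'|^{2H}$, admissible as a $\bar C$-type term since $2H>1-2H$.

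Feeding these into Lemma~\ref{lem:boundIntEta''} gives $\big\|\E\big((\tilde\ZZ^{\hat f,\hat g}_{s,t})^2\,|\,\CF^Y\big)\big\|_{L^{p/2}}\lesssim\|f\|_E^2\|g\|_E^2\big(|t-s|^{4H}+\eps^{2H-1}|t-s|^{2H+1}+\eps^{4H-2}|t-s|^2\big)$, hence $\|\JJ^\eps_\diamond\|_{L^p}\lesssim\|f\|_E\|g\|_E\big(\eps^{1-2H}|t-s|^{2H}+\eps^{1/2}|t-s|^{H+1/2}+|t-s|\big)$; a short case distinction between $|t-s|\le\eps$ and $|t-s|>\eps$ shows that the last two terms are dominated by $\eps^{1-2H}|t-s|^{2H}\vee|t-s|$, which combined with the bound on the conditional mean proves the first assertion. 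When $g$ is constant the $\hat g$-increment summand in the off-diagonal bound vanishes identically, so the off-diagonal bound carries no factor $\eps^{2H-1}$ and the $\eps^{4H-2}|t-s|^2$ term is absent, giving $\|\JJ^\eps_\diamond\|_{L^p}\lesssim\|f\|_E|g|\big(\eps^{1-2H}|t-s|^{2H}\vee\eps^{1/2-H}|t-s|^{H+1/2}\big)$; moreover one may then integrate $v$ out first in the conditional mean, using $\int_u^t\eta''(u-v)\,dv = 2H(t-u)^{2H-1}$ (Lemma~\ref{distributional-derivative}) to obtain $\E\big(\JJ^\eps_{s,t}(f,g)\,|\,\CF^Y\big) = H\eps^{1-2H}\int_s^t(\hat f(u)\cdot g)\,(t-u)^{2H-1}\,du$, whose $L^p$-norm is $\lesssim\eps^{1-2H}\|f\|_E|g|\,|t-s|^{2H}$ directly, which yields the stronger bound. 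I expect the main obstacle to be the off-diagonal estimate for $\Psi$ — and specifically the cross-covariance of adjacent fractional increments of $Z^{\hat f}$ — together with the bookkeeping needed to verify that the resulting powers of $\eps$ and $|t-s|$ assemble into the two stated bounds.
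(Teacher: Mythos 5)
Your proposal is correct and follows essentially the same route as the paper's proof: split off $\E\big(\JJ^\eps_{s,t}(f,g)\,|\,\CF^Y\big)$ (Proposition~\ref{prop:boundExpectJJ}), decouple the centred part via Proposition~\ref{prop:contractWiener} conditionally on $Y$, write the conditional variance of the decoupled integral as $\f12\int\int \Psi(v,v')\eta''(v-v')\,dv\,dv'$ with $\Psi(v,v')=\hat g(v)\hat g(v')\E\big(Z^{\hat f}_{s,v}Z^{\hat f}_{s,v'}\,|\,\CF^Y\big)$ (this is exactly the paper's $\eps^{4H-2}\phi_\eps$), and feed diagonal and off-diagonal bounds into Lemma~\ref{lem:boundIntEta''} with Remark~\ref{rem:betterLemma}. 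Two small differences are worth noting. First, for the cross-covariance term $\hat g(v)\hat g(v')\E\big(Z^{\hat f}_{v',v}Z^{\hat f}_{s,v'}\,|\,\CF^Y\big)$ you expand $\hat f$ around the common endpoint and invoke subadditivity of $x\mapsto x^{2H}$; the paper simply bounds $\hat f(r)\hat f(r')$ in $L^{p/2}$ by $\|f\|_E^2$ and integrates $|r-r'|^{2H-2}$ over the off-diagonal rectangle, which gives the same $|v-v'|^{2H}$ bound with less work -- your detour is valid but unnecessary. Second, and more substantively, in the constant-$g$ case you do not rely on Proposition~\ref{prop:boundExpectJJ} for the conditional mean but instead integrate the outer variable first, obtaining $\E\big(\JJ^\eps_{s,t}(f,g)\,|\,\CF^Y\big)=H\eps^{1-2H} g\int_s^t \hat f(u)(t-u)^{2H-1}\,du\lesssim \eps^{1-2H}|t-s|^{2H}$; this step is genuinely needed, since the general conditional-mean bound $\eps^{1-2H}|t-s|^{2H}\vee|t-s|$ is not dominated by the stronger constant-$g$ estimate when $|t-s|\ge\eps$, and the paper's own two-line treatment of the constant case (which only observes that the $g$-increment term in $\phi_\eps(s,s')-\phi_\eps(s,s)$ vanishes) leaves this point implicit. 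Finally, the middle term in your penultimate display should read $\eps^{\f12-H}|t-s|^{H+\f12}$ rather than $\eps^{\f12}|t-s|^{H+\f12}$; since this is the geometric mean of $\eps^{1-2H}|t-s|^{2H}$ and $|t-s|$, the domination you invoke still holds and the conclusion is unaffected.
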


\begin{proof}
By Proposition~\ref{prop:boundExpectJJ}, it suffices to obtain a bound on
\begin{equ}
\big\|\JJ^\eps_{s,t}(f,g) - \E(\JJ^\eps_{s,t}(f,g)\,|\,\CF^Y)\big\|_{L^p}\;.
\end{equ}
As a consequence of Proposition~\ref{prop:contractWiener} and \eqref{e:boundLpChaos}, we  have the bound
\begin{equ}
\big\|\JJ^\eps_{s,t}(f,g) - \E(\JJ^\eps_{s,t}(f,g)\,|\,\CF^Y)\big\|_{L^p}
\le \sqrt 2 \big\|\tilde \JJ^\eps_{s,t}(f,g)\|_{L^p}\;,
\end{equ}
where we set
\begin{equ}
\tilde \JJ^\eps_{s,t}(f,g) = \eps^{1-2H}\int_s^t\int_s^v f(Y^\eps_u)\, dB(u)\,  g(Y^\eps_v)\,d\tilde B(v)\;,
\end{equ}
for a fractional Brownian motion $\tilde B$ independent of $B$ (and $Y$).
We furthermore restrict ourselves to the case $s=0$ and $m=1$ without loss of generality.

At this point we note that for every $H > \f13$, one has the identity
\begin{equ}[e:VarianceJJtilde]
\E \Big(\big(\tilde \JJ^\eps_{0,t}(f,g)\big)^2\,\Big|\, \CF^Y\Big) = \f12 \int_0^t \!\int_0^t\phi_\eps(s,s') \eta''(s-s')\,ds\,ds'\;,
\end{equ}
where we have set
\begin{equ}
\phi_\eps(s,s') = {\eps^{2-4H}\over 2} g(Y_s^\eps)g(Y_{s'}^\eps) \int_0^s \int_0^{s'} f(Y_r^\eps) f(Y_{r'}^\eps)\,\eta''(r-r')\,dr\,dr'\;.
\end{equ}

As a consequence of \eqref{e:boundLpChaos}, we deduce from \eqref{e:VarianceJJtilde} the bound
\begin{equ}[e:LpJJtilde]
\|\tilde \JJ^\eps_{0,t}(f,g)\|_{L^p}^2 \lesssim  \Big\|\int_0^t \!\int_0^t\phi_\eps(s,s') \eta''(s-s')\,ds\,ds'\Big\|_{L^{p/2}}\;.
\end{equ}
We now bound $\phi_\eps$ in such a way that Lemma~\ref{lem:boundIntEta''} (combined with Remark~\ref{rem:betterLemma}) applies with $\Psi = \phi_\eps$.
In order to apply this result, we first verify that the first bound in \eref{e:assumePsi} holds.
Applying Hölder's inequality we obtain for $2p \le p_\star$,
\begin{equ}[e:boundPhi]
\big\| \phi_\eps(s,s) \big\|_{L^{p/2}}
\le    C\epsilon^{2-4H}  \|g(Y^\eps_s)\|_{L^{2p}}^2 \Big\|\int_0^s \int_0^{s} f(Y_r^\eps) f(Y_{r'}^\eps)\,\eta''(r-r')\,dr\,dr'\Big\|_{L^{p}}\;.
\end{equ}
Since the last factor is the same expression as the right-hand side of \eqref{e:niceRHS},
it is bounded as in \eqref{e:Lp-bounds}, thus yielding
\begin{equ}
\big\| \phi_\eps(s,s) \big\|_{L^{p/2}}
\lesssim  \epsilon^{2-4H}   \|f\|^2_E \|g\|^2_E  \bigl(s^{2H}\vee \eps^{2H-1} s \bigr)\;.
\end{equ}

Regarding the second bound in \eref{e:assumePsi}, we note that, for $s'\ge s$ and $\alpha>\f 13$, one has 
\begin{equs}
\phi_\eps(s,s') &- \phi_\eps(s,s) =
\eps^{2-4H} g(Y_s^\eps)g(Y_{s'}^\eps) \int_0^s \int_s^{s'} f(Y_r^\eps) f(Y_{r'}^\eps)\,|r-r'|^{2H-2}\,dr\,dr' \\
&\quad + \eps^{2-4H} g(Y_s^\eps)\bigl(g(Y_{s'}^\eps)-g(Y_{s}^\eps)\bigr) \int_0^s \int_0^{s} f(Y_r^\eps) f(Y_{r'}^\eps)\,\eta''(r-r')\,dr\,dr'\;.
\end{equs}
Since $2p\le p_\star$ and $\int_0^s \int_s^{s'} |r-r'|^{2H-2}\,dr\,dr' \lesssim |s'-s|^{2H}$,  the $L^{p/2}$ norm of the first term is of order 
$\eps^{2-4H} \|f\|^2_E\|g\|^2_E|s'-s|^{2H}$.
By Hölder's inequality, the second term is bounded similarly to before by
\begin{equ}
\epsilon^{2-4H}  \|g(Y^\eps_s)\|_{L^{2p}}\|g(Y^\eps_s)-g(Y^\eps_{s'})\|_{L^{2p}} \Big\|\int_0^s \int_0^{s} f(Y_r^\eps) f(Y_{r'}^\eps)\,\eta''(r-r')\,dr\,dr'\Big\|_{L^{p}}\;.
\end{equ}
By Assumptions~\ref{ass:continuous}
 and~\ref{ass:integrability}, the factors involving $g$ are bounded by
\begin{equ}
\|g\|_E^2 \big({|s'-s|^H\eps^{-H}}\wedge 1\big)\;,
\end{equ}
while the remaining factor is the same is in \eqref{e:boundPhi}, thus yielding a bound of the order 
$$ \|\phi_\eps(s,s') - \phi_\eps(s,s)\|_{L^{p/2}}\lesssim \epsilon^{2-4H}   \|f\|^2_E   \|g\|_E^2\bigl(s^{2H}\vee \eps^{2H-1} s \bigr)\big({|s'-s|^H\eps^{-H}}\wedge 1\big)\;.$$
Applying Lemma~\ref{lem:boundIntEta''} (and Remark~\ref{rem:betterLemma}) and inserting
the resulting bound into \eqref{e:LpJJtilde}, eventually  yields the bound
\begin{equ}[e:upperBound]
 \|\tilde \JJ^\eps_{0,t}(f,g)\|_{L^p}^2 \lesssim \|f\|^2_E\|g\|^2_E \bigl( \epsilon^{2-4H} |t|^{4H} + \eps^{1-2H} |t|^{1 +2H} + |t|^2\bigr)\;,
\end{equ}
as desired. (Note that the second term is bounded by the first and the last one which
is why it was omitted in the statement.)

In case $g$ is a constant, the second term in the expression for $\phi_\eps(s,s') - \phi_\eps(s,s)$
vanishes identically. Since this is the term responsible for the summand proportional to $|t|^2$ in 
\eqref{e:upperBound}, the claim follows.
\end{proof}

\subsection{The regular case \TitleEquation{H\in ( \f 12, 1)}{H in (1/2,1)}}
\label{sec:tightHigh}
For the case where  the slow variables are driven by a fractional Brownian motion of higher regularity, $H>\f 12$,
we exploit the ergodicity of the fast motion even for proving tightness for the first order processes.
 
  To prove the tightness of the processes $\Z_t^\epsilon$, we take a different strategy and estimate higher order
  moments of the $Z^\eps_{s,t}$ and $\ZZ^\eps_{s,t}$. This requires us to estimate the expectation of multiple integrals of the form
  \begin{equ}[e:basicIntegral]
 \int_0^t \dots \int_0^t \prod_{i=1}^{2p}  f_i(Y^\eps_{t_i} )\,dB_{t_1}\cdots dB_{t_{2p}}\;.
  \end{equ}
For the second order processes, half of the upper limits of the integrals are given by one of the $t_i$'s, 
but since we will not need to exploit any cancellations these integrals
 are controlled by the bounds on the hypercube. For $p=1$, it is easy to see that this integral is of order $\eps^{2H-1}t$,
but the case $p=2$ is already more complicated:
 \begin{equs}
 {}&\E\int_0^t\dots \int_0^t \prod_{i=1}^4  f_i(Y^\eps_{t_i} )\,dB_{t_1}\cdots dB_{t_{4}} \label{e:multipleInt}\\
 &=C \int_0^t\dots\int_0^t  \E \Big(\prod_{i=1}^4  f_i(Y^\eps_{t_i})\Big) |t_{2}-t_{1}|^{2H-2}|t_{4}-t_{3}|^{2H-2}\, dt_1\cdots dt_4\;.
 \end{equs}
If we look at the regime $t_1<t_2<t_3<t_4$ say and write $P^\eps_t = P_{t/\eps}$, the first factor is given by
$$ 
\E \Big(\prod_{i=1}^4  f_i(Y^\eps_{t_i})\Big)= \int f_1 P^\eps_{t_2-t_1}\big( f_2 P^\eps_{t_3-t_2} (f_3 P^\eps_{t_4-t_3} f_4) \big)\,d\mu\;.
$$
Since $f_{3,4} = f_3P^\eps_{t_4-t_3} f_4$ is no longer centred, we unfortunately do not have very good bounds on this expression. One can however do better than $\exp(-c|t_4-t_3|/\eps)$: subtracting and adding the mean of $f_{3,4}$, we can write the expression as
\begin{equ}[e:expression]
\int f_1 P^\eps_{t_2-t_1}\big( f_2 P^\eps_{t_3-t_2} (f_{3,4} - \bar f_{3,4}) \big)\,d\mu + \bar f_{3,4} \,\bar f_{1,2}\;,
\end{equ}
where now the first term is bounded by $\exp(-c|t_4 - t_2|/\eps)$ and the second term is bounded
by $\exp(-c|t_4 - t_3|/\eps-c|t_2 - t_1|/\eps)$. This is still not optimal: we note this time that we can 
recenter $f_{2,3,4} = f_2 P^\eps_{t_3-t_2} (f_{3,4} - \bar f_{3,4})$ ``for free'' since $f_1$ has mean zero,
so the first term is actually of order $\exp(-c|t_4 - t_1|/\eps)$.
It is then not too difficult to see that,  the contribution 
of the second term of \eqref{e:expression}  to the integral \eqref{e:multipleInt} is of order $\eps^{4H-2} t^2$, while the contribution of the first term is $\eps^{4H-1} t$, which is of lower order for $t \ge \eps$.  Our aim is to generalise such considerations to arbitrarily high moments.

In particular, the ``correct'' way of rewriting the factor $\E \big(\prod_{i=1}^{2p}  f_i(Y^\eps_{t_i})\big)$ so that it yields usable bounds is in terms of its cumulants.
Given a collection $\{X_i\}_{i \in I}$ of random variables and a subset $A \subset I$, we write $X_A$ as
a shorthand for the collection $\{X_i\}_{i \in A}$ and $X^A$ as a shorthand for $\prod_{i \in A} X_i$.
Given a finite set $A$, we write $\CP(A)$ for the set of partitions of $A$.
We also write $\E_c X_A$ for the joint cumulant, 
so that one has the identities
\begin{equ}[e:idenCumul]
\E X^I = \sum_{\Delta \in \CP(I)} \prod_{A \in \Delta} \E_c X_A\;,\qquad
\E_c X_I = \sum_{\Delta \in \CP(I)} C_\Delta \prod_{A \in \Delta} \E X^A\;,
\end{equ}
where $C_\Delta = (|\Delta|-1)! (-1)^{|\Delta|-1}$. 
 
\begin{proposition}\label{prop:boundCumul}
Let Assumptions~\ref{ass:ergodic} and~\ref{ass:integrability} hold for $H>\f 12$.
For any $k \ge 2$ and any $s_1, \dots, s_k\in [0,t]$,  there exist constants $c, C > 0$ such that the following holds.
Let $f_1,\ldots, f_k \in E $ with $\int_\CY f_i \,d\mu = 0$, let $s_1 < \ldots < s_k$, and set $X_i = f_i(Y_{s_i})$. 
Then, one has the bound
\begin{equ}
\E_c X_{[k]} \le C \exp\Big(-c \sum_{i,j\le k} |s_i-s_j|\Big)\prod_{i} \|f_i\|_E\;
\end{equ}
where $[k]$ denotes the set $\{1, \dots, k\}$.
\end{proposition}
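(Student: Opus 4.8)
The plan is to induct on $k$ using the recursive structure of cumulants, exploiting the spectral gap of $P_t$ together with the observation, already used in the heuristic discussion preceding the statement, that one can freely recenter a product of semigroup applications against any mean-zero factor sitting to its left. The key point is that a cumulant $\E_c X_{[k]}$, unlike a plain moment, genuinely ``feels'' the largest gap in the sequence $s_1 < \dots < s_k$, and more: by iterating the recentering it should feel \emph{all} the gaps, which is exactly what the factor $\exp(-c\sum_{i,j}|s_i - s_j|)$ encodes (note $\sum_{i,j\le k}|s_i - s_j|$ is comparable to $\sum_{i<k}(k-i)\,i\,(s_{i+1}-s_i)$, hence to $\sum_i (s_{i+1}-s_i)$ up to the constant $c$).

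First I would record the basic one-gap estimate: if $g_1, \dots, g_\ell$ are functions in $E$ and at least the first one has mean zero, then a telescoping argument using $P_t = e^{-\CL t}$, the inclusion $E_0 \cdot E_n \subset E_{n+1}$, and the spectral gap \eqref{e:spectralGap} on each $E_n$ for $n < N$ (here $N = \infty$ since $H > \f12$) gives
\begin{equ}
\Bigl|\int g_1\, P_{u_1}\bigl(g_2\, P_{u_2}(\cdots g_\ell)\bigr)\,d\mu\Bigr| \lesssim e^{-c(u_1 + \dots + u_{\ell-1})}\prod_i \|g_i\|_E\;,
\end{equ}
\emph{provided} every partial product $g_j P_{u_j}(\cdots)$ is recentered before applying the next semigroup; the mean-zero hypothesis on $g_1$ is what lets us recenter the outermost one for free. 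Rewritten in terms of the original time points, with $X_i = f_i(Y_{s_i})$ and $s_1 < \dots < s_\ell$, this reads $|\E X^{[\ell]}| \lesssim e^{-c(s_\ell - s_1)}\prod\|f_i\|_E$ whenever $\int f_1\,d\mu = 0$ — but crucially this is \emph{not yet} the full multi-gap bound.

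Next I would set up the induction on $k$ via the standard relation $\E_c X_{[k]} = \E X^{[k]} - \sum_{\Delta} \prod_{A \in \Delta}\E_c X_A$, the sum running over nontrivial partitions $\Delta$ of $[k]$, combined with the cumulant identity $\E X^{[k]} = \sum_\Delta \prod_A \E_c X_A$ from \eqref{e:idenCumul}. The mechanism is: let $j^\star$ be the index realising the \emph{largest} gap $s_{j^\star+1} - s_{j^\star}$, splitting $[k] = L \sqcup R$ with $L = \{1,\dots,j^\star\}$, $R = \{j^\star+1,\dots,k\}$. Using the ``recentering for free'' trick one shows that $\E_c X_{[k]}$ — and every cumulant of a block that straddles the cut — picks up a factor $e^{-c(s_{j^\star+1}-s_{j^\star})}$, because any block $A$ with $A \cap L \neq \emptyset$ and $A \cap R \neq \emptyset$ has its moment $\E X^A$ (hence, by Möbius inversion down, its cumulant) bounded by $e^{-c \cdot \mathrm{gap}(A)}$ where the gap at $j^\star$ is visible. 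Blocks entirely inside $L$ or inside $R$ are handled by the inductive hypothesis applied to the shorter sequences $s_L$ and $s_R$. Multiplying, one gets $\exp(-c(s_{j^\star+1}-s_{j^\star}) - c\sum_{i,j \in L}|s_i-s_j| - c\sum_{i,j\in R}|s_i-s_j|)$; since the largest gap times $k$ dominates $\sum_{i,j}|s_i-s_j|$ minus the within-$L$ and within-$R$ contributions, adjusting $c$ downward by a factor depending only on $k$ closes the induction.

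\textbf{The main obstacle} I anticipate is making the recentering argument fully rigorous for products of \emph{three or more} semigroup blocks straddling a cut, i.e.\ bookkeeping which intermediate products are genuinely mean-zero and which must be explicitly recentered (paying a $\bar f$ term that then factorises across the cut). One has to verify that at each recentering step the recentered function still lies in some $E_n$ with controlled norm — this is where the hypothesis $E_0 \cdot E_n \subset E_{n+1}$ continuously, together with $E$ containing constants and $N = \infty$, is essential, since the ``level'' $n$ increases by one with each nested multiplication and we need the spectral gap \eqref{e:spectralGap} to hold at every such level. A secondary technical point is tracking the $k$-dependence of the constants $c, C$ (the statement allows them to depend on $k$), which is harmless for the eventual application to a fixed moment $p$ but must be stated cleanly. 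The combinatorial identity relating $\sum_{i,j\le k}|s_i-s_j|$ to a single appropriately-weighted maximal gap is elementary and I would relegate it to a one-line remark.
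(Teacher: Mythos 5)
Your reduction to the single largest gap and your use of the inductive hypothesis for straddling blocks of size $<k$ are fine, but the argument rests on two claims that fail, and together they hide exactly the point of the proposition. First, the ``one-gap estimate'' $|\E X^{[\ell]}|\lesssim e^{-c(s_\ell-s_1)}\prod_i\|f_i\|_E$ is false for $\ell\ge 4$: recentering the intermediate products is \emph{not} free, and the correction terms you acknowledge (``paying a $\bar f$ term that then factorises across the cut'') are precisely what destroys decay across a gap in the middle of the block. Concretely, take all $f_i$ equal to a centred eigenfunction $\phi$ of $\CL$ and the four points in two tight pairs separated by a large middle gap $u$; then $\E X^{[4]}\to\bigl(\int\phi^2\,d\mu\bigr)^2\neq 0$ while $e^{-c(s_4-s_1)}\to 0$. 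For the same reason, the claim that every straddling block has its \emph{moment} bounded by $e^{-c\,\mathrm{gap}}$, and ``hence by M\"obius inversion'' its cumulant, is unfounded: moments simply do not decay when the configuration splits into two far-apart clusters, and the M\"obius sum contains partitions of $A$ refining the cut whose terms are not small, so no term-by-term bound can work.

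Second, and relatedly, your induction never addresses the partitions of $[k]$ all of whose blocks lie entirely in $L$ or entirely in $R$: for these, $\prod_{A}\E_c X_A$ carries no factor $e^{-c\,\mathrm{gap}}$ at all (e.g.\ $\E_c X_{\{1,2\}}\,\E_c X_{\{3,4\}}$ with the cut between $2$ and $3$), so ``multiplying'' the available bounds cannot yield $\exp(-c\,\mathrm{gap}-c\sum_{L}-c\sum_{R})$. What is needed is a cancellation: summing the moment\dash cumulant identity over all cut-refining partitions gives exactly $\E X^{L}\,\E X^{R}$, so the missing ingredient is the factorisation estimate $|\E X^{[k]}-\E X^{L}\,\E X^{R}|\lesssim e^{-c\,\mathrm{gap}}\prod_i\|f_i\|_E$, which does follow from a single application of \eqref{e:spectralGap} after recentering at the cut (the one recentering that really is free, and which does not even require the $f_i$ to be centred). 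With that estimate your induction closes (straddling blocks by the inductive hypothesis, cut-refining partitions regrouped as above, singleton blocks vanishing by centring) and becomes a genuine variant of the paper's argument; the paper instead avoids induction altogether by introducing an independent copy $\tilde Y$ of $Y$ beyond the largest gap, using that cumulants of collections split into two independent groups vanish, telescoping the difference of the two products of moments over each partition, and bounding each difference $\E X^A-\E\tilde X^A=\int T_1\cdots T_{j_\star}\bigl(g-\textstyle\int g\,d\mu\bigr)\,d\mu$ by the spectral gap.
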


\begin{proof}
Note first that since $c$ is allowed to depend on $k$, it actually  suffices to show that 
$\E_c X_{[k]} \le C \exp\big(-c \sup_{i < k} |s_{i+1}-s_i|\big)$. From now on we fix $i_\star \in \{1,\ldots,k\}$ to be the index
which realises that supremum. Let  $\tilde Y$ be an independent copy of $Y$ and set 
\begin{equ}
\tilde X_j = 
\left\{\begin{array}{cl}
	f_j(Y_{s_j})  & \text{if $j \le i_\star$,} \\
	f_j(\tilde Y_{s_j}) & \text{otherwise.}
\end{array}\right.
\end{equ}
The most important property of the joint cumulant of a collection of random variables is that if it can be broken into
two independent sub-collections, then the joint cumulant vanishes. As a consequence, we have
\begin{equ}
\E_c X_{[k]} = \E_c X_{[k]} - \E_c \tilde X_{[k]}
= \sum_{\Delta \in \CP(I)} C_\Delta \Big(\prod_{A \in \Delta} \E X^A - \prod_{A \in \Delta} \E \tilde X^A\Big)\;.
\end{equ}
We now put a total order on the elements of a partition $\Delta$ by postulating that $A_1 \le A_2$ whenever 
$\inf \{a \in A_1\} \le \inf \{a \in A_2\}$ (this is just for definiteness, the actual choice of order is unimportant).
We can then write the above as a telescoping sum, yielding
\begin{equ}\label{e:cumulant}
\E_c X_{[k]} = 
\sum_{\Delta \in \CP(I)} C_\Delta \sum_{A \in \Delta}  \Bigl(\E X^A - \E \tilde X^A\Bigr)  \Big(\prod_{B < A,\,B\in \Delta} \E X^B\Big)\Big(\prod_{B > A,\, B\in \Delta} \E \tilde X^B\Big)\;.
\end{equ}

We  fix $A \subset [k]$ such that $\E X^A \neq \E \tilde X^A$ and write $A = \{a_1,\ldots,a_\ell\}$ with 
$\ell = |A|$ and $i \mapsto a_i$ increasing.
We also write $j_\star < \ell$ for the index such that $a_{j_\star} \le i_\star$ and $a_{j_\star+1} > i_\star$.
(This necessarily exists since otherwise $\E X^A = \E \tilde X^A$.) For $i < \ell$ and $n \ge 1$, 
we also write $T_i \colon E_n \to E_{n+1}$ for the operator given by
\begin{equ}
T_i g = f_{a_i} P_{t_i} g\;,\qquad t_i \eqdef s_{a_{i+1}}-s_{a_i}\;,
\end{equ}
whose norm, as an operator from $E_n$ to $E_{n+1}$,  is bounded by a
(possibly $n$-dependent) multiple of $\|f_{a_i}\|_E$, since it is of order
$\|f_{a_i}\|_E \,e^{-c t_i}$ from $E \times E_n$ to $E_{n+1}$,
 when restricted to functions of vanishing mean, by \Cref{ass:ergodic}. We used the continuity of  multiplication of functions.
It then follows from the Markov property that 
\begin{equ}[e:idenXA]
\E X^A = \int_{\CY} T_1\ldots T_{\ell-1} f_{a_\ell} \,d\mu\;,
\end{equ}
(this is easily shown by induction over $\ell$) while we similarly have by the definition of $\tilde X$
\begin{equ}[e:idenXtilde]
\E \tilde X^A = \int_\CY T_1\ldots T_{i_\star-1} f_{a_{i_\star}} \,d\mu\, \int_\CY T_{i_\star+1}\ldots T_{\ell-1} f_{a_\ell} \,d\mu\;.
\end{equ}
This is because, setting $A_1 = \{a_1,\ldots, a_{i_\star}\}$ and $A_2 = A \setminus A_1$,
one has $\E \tilde X^A  = \E X^{A_1} \, \E X^{A_2}$ by the definition of $\tilde X$, so that 
\eqref{e:idenXtilde} follows from \eqref{e:idenXA}. 
Writing $g = T_{i_\star+1}\ldots T_{\ell-1} f_{a_\ell}\in E_{\ell-i_\star}$, 
it follows that
\begin{equ}
\E X^A - \E \tilde X^A = \int T_1\ldots T_{i_\star} \Bigl(g - \int g \,d\mu\Bigr)\,d\mu\;.
\end{equ}
The spectral gap assumption \eqref{e:spectralGap} and the definition of $i_\star$ then imply that 
\begin{equ}
\big|\E X^A - \E \tilde X^A\big| \le C \exp(- c |s_{i_\star+1}-s_{i_\star}|) \prod_{a \in A} \|f_a\|_E\;.
\end{equ}
Combining this with \eqref{e:cumulant} immediately leads to the claimed bound
on the corresponding cumulant.
\end{proof}

The first identity of \eqref{e:idenCumul} combined with Wick's formula for the moments of Gaussians now
suggest that we should rewrite the expectation of \eqref{e:basicIntegral} as a sum over terms 
indexed by pairs $(\Delta,\pi)$ where 
$\Delta$ is a partition of $[2p]$ arising from \eqref{e:idenCumul} and representing a product of 
cumulants of the $f(Y_{t_i})$ and $\pi$ is a pairing of 
$[2p]$ arising from Wick's formula.

Figure~\ref{fig1} for example represents the pairing $\pi$ and partition $\Delta$ given by
\begin{equs}[e:examplePart]
\pi &= \{(1,2), (3,4), (5, 6), (7,8), (9, 10)\}\;,\\
\Delta &= \{\{1,3\},\{2,4, 5\},\{6, 7, 8\},\{9,10\}\}
\end{equs}
Each pairing $(i,j)$ yields a factor $|s_i-s_j|^{2H-2}$ while each element $B$ of the partition 
yields an exponential factor of the form
$\prod_{i,j \in B}\exp\big(-c  |s_i-s_j|\big)$ thanks to Proposition~\ref{prop:boundCumul}.
Since we consider the case $H > \f12$, this yields a locally integrable function 
in the expression for the expectation of \eqref{e:basicIntegral}, so our 
analysis mainly focuses on the large-scale behaviour. 
We will show then that the terms with $\Delta = \pi$,  yield a contribution of order $\eps^{(2H-1)p} t^p$ 
which dominates our bound, while all other terms are of higher order in $\eps/t$.
We now proceed to formalising this.

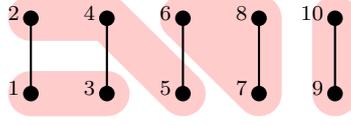
\begin{figure}
\begin{center}
\begin{tikzpicture}
\foreach \i in {1,...,10}
{
        \pgfmathtruncatemacro{\x}{(\i-1)/2};
        \pgfmathtruncatemacro{\y}{\i - 2*\x};
        \coordinate (\i) at (\x,\y);
}
\draw[partition] (1) -- (3);
\draw[partition] (2) -- (4) -- (5);
\draw[partition] (6) -- (7) -- (8) -- (6);
\draw[partition] (9) -- (10);
\foreach \i in {1,...,5}
{
    \pgfmathtruncatemacro{\pairstart}{2*\i-1};
    \pgfmathtruncatemacro{\pairend}{2*\i};
	\draw[pairing] (\pairstart) -- (\pairend);
}
\foreach \i in {1,...,10}
{
	\coordinate[label={[left]:${}^{\i}$}] (a) at (\i);
}
\end{tikzpicture}
\end{center}
\caption{Example of pairing and partition of $10$ elements.}\label{fig1}
\end{figure}

Let $\CG = (\CV,\CE)$ be a graph with vertex set $\CV$ and 
edge multiset $\CE$ (multiple edges are allowed). Edges $e \in \CE$ are oriented from $e_-$ to $e_+$
and we only consider graphs with $e_+ \neq e_-$. We also label the vertices by two exponents $\alpha_\pm\colon \CE \to \R_-$.
Finally, we assume that we have a ``kernel assignment'',
i.e.\ a collection of functions $K_e\colon \R \to \R$ (with $e \in \CE$) such that 
\begin{equ}\label{Ke}
|K_e(t)| \le C \bigl(|t|^{\alpha_-(e)}\one_{|t| \le 1} + |t|^{\alpha_+(e)}\one_{|t| \ge 1}\bigr)\;.
\end{equ}
We denote by $\|K_e\|_e$ the smallest possible constant $C$ appearing in the above expression.
For those who are not familiar with such graph representations, such a graph will be used to encode 
expressions of the type
\begin{equation}
I_\CG := \int_{\R^{nd}} \prod_{e \in \CE} K_e (s_{e_+} - s_{e_-}) \, \varphi(x_1, \dots, x_k) dx\;.
\end{equation}
 Each of its  nodes $u$ represents an integration variable $s_u$, each edge $e$ represents a
 factor $K_e$, and the resulting expression is integrated against some bounded 
function $\varphi$. The exponents $\alpha_-(e)$ and $\alpha_+(e)$ indicate the singularity 
of $K_e$ at $0$ and at infinity respectively.

\begin{definition}
We say that such a labelled graph is ``regular'' if, for every subset $\CV_0 \subset \CV$, we have
\begin{equ}[e:Weinberg]
\sum_{e \in \CE\,:\, e_\pm \in \CV_0} \alpha_-(e) + |\CV_0| > 1\;.
\end{equ}
\end{definition}The significance of this condition (also called Weinberg's condition) is that it guarantees that 
the function $K^\CG$ on $\R^\CV$ given by 
\begin{equ}\label{kernel}
K^\CG(s) = \prod_{e \in \CE} K_{e}(s_{e_+} - s_{e_-})
\end{equ}
is locally integrable \cite{Weinberg} where $s_{e_{\pm}}$ is the $e_{\pm}$ component of $s\in \R^\CV$.
 (See also \cite[Prop.~2.3]{Feynman} for a formulation closer to 
the one given here.)

We will be mainly interested in the large-scale behaviour here. To describe this, consider a partition
$\CP$ of $\CV$. We say that such a partition is \textit{tight} if there exists $A \in \CP$ such that 
$A\cap \CV_i \neq \emptyset$ for every connected component $\CV_i$ of $\CG$.  Given $\CP$, we then
also write $u \sim v$ if there exists $A \in \CP$ with $\{u,v\} \subset A$.
\begin{definition}\label{Wein}
We then say that a labelled graph as above is ``integrable'' if 
\begin{equ}[e:largeScale]
\sum_{e \in \CE\,:\, e_- \not \sim e_+} \alpha_+(e) + |\CP| < 1\;,
\end{equ}
for every tight partition $\CP$. (Note the similarity with Weinberg's condition.)
\end{definition}
The following is then an immediate consequence of \cite[Thm~4.3]{Feynman}.

\begin{proposition}\label{prop:largeScale}
Let $\CG$ be a regular and integrable graph with $m$ connected components.
Then, there exists $C$ depending on $\CG$ such that 
\begin{equ}
\int_{[-L,L]^{\CV}} |K^\CG(s)|\,ds \lesssim L^m \prod_{e \in \CE} \|K_{e}\|_{e}\;,
\end{equ} 
uniformly over $L \ge 1$, with proportionality constant depending only on the labelled graph $\CG$, where 
$ \|K_{e}\|_{e}$ is as defined by (\ref{Ke}).
\end{proposition}

\begin{remark}
Our bound on the large-scale behaviour of the kernels $K_\mft$ is weaker than the 
bound \cite[Eq.~4.1]{Feynman} since we assume no bounds on the derivatives. 
The reason why the result still holds is that we assume
local integrability, which avoids all renormalisation issues and therefore gets rid of regularity requirements.
\end{remark}

An immediate, but very useful, corollary is the following. 

\begin{corollary}\label{cor:defect}
Let $\CG$ be a regular graph with $m$ connected components and let $L \ge 1$. Let $\beta \colon \CE \to \R_+$ be such that  
\begin{equ}[e:largeScale2]
\sum_{e \in \CE\,:\, e_- \not \sim e_+} \bigl(\alpha_+(e) - \beta(e)\bigr) + |\CP| < 1\;,
\end{equ}
for every tight partition $\CP$. 
Then, there exists $C$ depending on $\CG$ and $\beta$ such that 
\begin{equ}
\int_{[-L,L]^{\CV}} |K^\CG(t)|\,dt \leq C L^m \prod_{e \in \CE} L^{\beta(e)}\|K_{e}\|_{e}\;. 
\end{equ} 
\end{corollary}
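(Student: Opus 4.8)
The plan is to deduce the statement from Proposition~\ref{prop:largeScale} by absorbing the factors $L^{\beta(e)}$ into a modified kernel assignment. First I would introduce a new labelled graph $\tilde\CG$ having the same vertex set $\CV$, the same edge multiset $\CE$, and the same lower exponents $\alpha_-$, but with upper exponents $\tilde\alpha_+(e) \eqdef \alpha_+(e) - \beta(e)$. Since $\alpha_+(e) \le 0$ and $\beta(e) \ge 0$, one still has $\tilde\alpha_+(e) \in \R_-$, so this is an admissible labelling. Weinberg's condition \eqref{e:Weinberg} involves only the lower exponents, so $\tilde\CG$ inherits regularity from $\CG$; and the integrability condition \eqref{e:largeScale} written for $\tilde\CG$ is \emph{verbatim} the hypothesis \eqref{e:largeScale2}, so $\tilde\CG$ is integrable. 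It has the same number $m$ of connected components as $\CG$.

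Next I would exhibit a kernel assignment for $\tilde\CG$ that dominates the original one on the cube. For $s \in [-L,L]^\CV$ and any $e \in \CE$ one has $|s_{e_+} - s_{e_-}| \le 2L$, hence on $\{|s_{e_+} - s_{e_-}| \ge 1\}$ one may write $|s_{e_+} - s_{e_-}|^{\alpha_+(e)} = |s_{e_+} - s_{e_-}|^{\beta(e)}\,|s_{e_+} - s_{e_-}|^{\tilde\alpha_+(e)} \le (2L)^{\beta(e)}|s_{e_+} - s_{e_-}|^{\tilde\alpha_+(e)}$, while on $\{|s_{e_+} - s_{e_-}| \le 1\}$ trivially $|s_{e_+} - s_{e_-}|^{\alpha_-(e)} \le (2L)^{\beta(e)}|s_{e_+} - s_{e_-}|^{\alpha_-(e)}$ because $L \ge 1$ and $\beta(e) \ge 0$. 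Thus the kernels $\hat K_e(t) \eqdef (2L)^{\beta(e)}\|K_e\|_e\bigl(|t|^{\alpha_-(e)}\one_{|t|\le 1} + |t|^{\tilde\alpha_+(e)}\one_{|t| \ge 1}\bigr)$ satisfy $|K_e(s_{e_+}-s_{e_-})| \le \hat K_e(s_{e_+}-s_{e_-})$ for all $s$ in the cube, and their norms relative to the exponent labels of $\tilde\CG$ are $\|\hat K_e\|_e = (2L)^{\beta(e)}\|K_e\|_e$.

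Finally, since $|K^\CG(s)| \le \hat K^{\tilde\CG}(s)$ pointwise on $[-L,L]^\CV$, applying Proposition~\ref{prop:largeScale} to the regular and integrable graph $\tilde\CG$ with the kernel assignment $\{\hat K_e\}$ gives $\int_{[-L,L]^\CV}|K^\CG(s)|\,ds \le \int_{[-L,L]^\CV}|\hat K^{\tilde\CG}(s)|\,ds \lesssim L^m \prod_{e\in\CE}\|\hat K_e\|_e = L^m \prod_{e\in\CE}(2L)^{\beta(e)}\|K_e\|_e$; bounding the finite product $\prod_{e\in\CE}2^{\beta(e)}$ by a constant depending only on $\CG$ and $\beta$ then yields the claim. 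I do not expect any genuine obstacle: all the content sits in Proposition~\ref{prop:largeScale}, and the only points to verify — that the modified upper exponents stay in $\R_-$, and that regularity and integrability of $\tilde\CG$ are precisely the standing hypotheses — are immediate from the definitions.
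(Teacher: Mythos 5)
Your proposal is correct and follows essentially the same route as the paper: replace $\alpha_+$ by $\alpha_+-\beta$ so that \eqref{e:largeScale2} becomes exactly the integrability condition (regularity being untouched since it only involves $\alpha_-$), and absorb the cost into the kernel norms via a factor $(2L)^{\beta(e)}$, which is legitimate because on the cube $[-L,L]^{\CV}$ all differences are bounded by $2L$. The paper phrases this last point by truncating the kernels to $[-2L,2L]$ rather than by your pointwise domination on the cube, but this is the same observation.
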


\begin{proof}
It suffices to note that we can assume that the kernels $K_e$ vanish outside of $[-2L,2L]$ since this does not affect
the value of the integral. If we then consider the graph identical to $\CG$, but with its 
labels replaced by 
$( \alpha_-, \alpha_+ - \beta)$, then
\eqref{e:largeScale2} implies integrability
for the new graph by \eref{e:largeScale}. The local integrability condition (regularity) still holds, so Proposition~\ref{prop:largeScale} applies. It remains to note that since $1 \le (L/|t|)^\beta\one_{1\le |t|\le L}$,
decreasing $\alpha_+(e)$ by $\beta(e)$ in \eqref{Ke} has the effect of 
increasing the norm $\|\cdot\|_e$ by (at most) a factor $(2L)^{\beta(e)}$,
 provided that we do consider functions supported in $[-2L,2L]$.
%
%
\end{proof}

We will make use of the following property.
\begin{lemma}\label{lem:deleteEdges}
Let $\tilde \CG$ be a
 graph obtained by deleting some of the edges of $\CG$ but without changing its connected
 components. If $\tilde \CG$ is integrable, then so is $\CG$ itself. 
\end{lemma}
\begin{proof}
This is immediate from Definition~\ref{Wein}, combined with the fact that the 
$\alpha_+(e)$ are negative by assumption.
\end{proof}

The following simple result will also be useful.
 
\begin{lemma}\label{lem:treeGraph}
If $\alpha_+(e) < -1$ for every edge $e$ of $\CG$, then it is integrable.
\end{lemma}

\begin{proof}
Let $\CP$ be a tight partition of the vertex set $\CV$ of $\CG$ and
let $\CG_\CP= (\CV_\CP,\CE_\CP)$ denote the graph obtained by removing self-loops 
from $\CG/{\sim}$, with 
$\sim$ obtained from $\CP$ as in \eqref{e:largeScale}.
Then $\CG_\CP$ is connected by the definition of tightness  
so that $|\CE_\CP| \ge |\CV_\CP|-1$,  which translates into
$|\{e\in \CE\,:\, e_- \not\sim e_+\}| \ge |\CP|-1$. Since $\alpha_+(e) < -1$ for every edge,
the bound \eqref{e:largeScale}, and therefore the
desired claim, then follow at once.
\end{proof}

We now use these preliminary results both to bound $J$ and $\JJ$ and to determine their limits
in the case $H > \f12$.

Our main technical result is the following bound. 

\begin{proposition}\label{prop:basicBound}
Let Assumptions~\ref{ass:ergodic} and~\ref{ass:integrability} hold for $H>\f 12$ and let
$\kappa \in (0,2-2H)$.
For $f,g \in E$ with $\int f d\mu=\int g d\mu=0$, 
set
$$C(f,g) = \Gamma(1-2H) \bigl(\scal{f,\CL^{1-2H} g}_\mu + \scal{g,\CL^{1-2H} f}_\mu\bigr).$$
Then, for every $p \ge 1$ and $f \in E^{2p}$ with $\int f_i \,d\mu = 0$ for every $i$, setting
$$I_{2p} (f) =\int_{L_1}^{M_1}\cdots \int_{L_{2p}}^{M_{2p}} \Big(\prod_{j=1}^{2p} f_j(Y_{t_j}) \Big)\Big( \prod_{k=1}^p |t_{2k}-t_{2k-1}|^{2H-2}\Big) \,dt_1\cdots dt_{2p},$$
there exists a constant $K>0$ such that
\begin{equ}[e:wantBoundProd]
\Big|I_{2p} (f) - \prod_{k=1}^p  |[L_{2k-1},M_{2k-1}]\cap [L_{2k},M_{2k}]| C(f_{2k},f_{2k-1}) \Big| \le K L^{p-\kappa}\;,
\end{equ}
where $L = \sup_{i} |M_i - L_i| \vee 1$.
\end{proposition}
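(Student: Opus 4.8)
The plan is to expand the expectation inside the integral into cumulants and isolate the leading contribution. By Fubini the left-hand side equals $\int_{L_1}^{M_1}\!\cdots\!\int_{L_{2p}}^{M_{2p}}\E\bigl(\prod_{j=1}^{2p}X_j\bigr)\prod_{k=1}^p|t_{2k}-t_{2k-1}|^{2H-2}\,dt$ with $X_j=f_j(Y_{t_j})$, and the first identity in \eqref{e:idenCumul} writes $\E\prod_j X_j=\sum_{\Delta\in\CP([2p])}\prod_{A\in\Delta}\E_c X_A$. Thus the quantity of interest is a finite sum $\sum_\Delta I_\Delta$ of integrals of $\prod_{A\in\Delta}\E_c X_A\cdot\prod_k|t_{2k}-t_{2k-1}|^{2H-2}$. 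Any $\Delta$ having a singleton block $\{j\}$ contributes $I_\Delta=0$, since $\E_c X_{\{j\}}=\E f_j(Y_{t_j})=\int f_j\,d\mu=0$; hence only partitions into blocks of size at least $2$ survive.

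Next I would handle the leading term $I_{\pi_0}$, for the diagonal partition $\pi_0=\{\{1,2\},\{3,4\},\dots,\{2p-1,2p\}\}$, which (via the identity $\E_c(X_{2k-1},X_{2k})=\E(X_{2k-1}X_{2k})$) factorises over the pairs: $I_{\pi_0}=\prod_{k=1}^p\mathcal F_k$ with $\mathcal F_k=\int_{L_{2k-1}}^{M_{2k-1}}\!\int_{L_{2k}}^{M_{2k}}\E\bigl(f_{2k-1}(Y_a)f_{2k}(Y_b)\bigr)|b-a|^{2H-2}\,da\,db$. Splitting $\mathcal F_k$ according to the sign of $b-a$, using stationarity in the form $\E(f(Y_a)g(Y_b))=\scal{f,P_{b-a}g}_\mu$ for $a<b$ (and the transpose for $b<a$), and substituting the time lag $r=|b-a|$, one gets $\mathcal F_k=|[L_{2k-1},M_{2k-1}]\cap[L_{2k},M_{2k}]|\,C(f_{2k},f_{2k-1})+O(1)$: the coefficient of the interval length is $\int_0^\infty r^{2H-2}\bigl(\scal{f_{2k-1},P_rf_{2k}}_\mu+\scal{f_{2k},P_rf_{2k-1}}_\mu\bigr)\,dr$, which converges absolutely by \eqref{e:spectralGap} (exponential decay at infinity) together with $2H-2>-1$ (integrability at the origin) and equals $C(f_{2k},f_{2k-1})$ by Definition~\ref{def:fracPower}; the $O(1)$ term is the cost of replacing $|[L_{2k-1},M_{2k-1}]\cap([L_{2k},M_{2k}]-r)|$ by $|[L_{2k-1},M_{2k-1}]\cap[L_{2k},M_{2k}]|$, bounded by $\int_0^\infty r^{2H-1}|\scal{\cdot,P_r\cdot}_\mu|\,dr<\infty$. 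Expanding the product, $I_{\pi_0}$ equals exactly the subtracted quantity plus a remainder of order $L^{p-1}$.

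For the surviving partitions $\Delta\ne\pi_0$ (all blocks of size $\ge2$) I would invoke the graphical large-scale estimates of Section~\ref{sec:tightHigh}. Attach to $[2p]$ the labelled graph $\CG_\Delta$ made of the $p$ ``pair edges'' $\{2k-1,2k\}$ carrying the kernel $|t|^{2H-2}$ (so $\alpha_\pm=2H-2$), together with, for each block $A\in\Delta$, a spanning tree of $A$ whose edges carry the exponentially decaying kernels supplied by Proposition~\ref{prop:boundCumul} (so $\alpha_\pm$ may be taken below $-1$); then $|I_\Delta|\le\int_{[-L,L]^{2p}}|K^{\CG_\Delta}|$. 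This graph is regular in the sense of \eqref{e:Weinberg} because $2H-2>-1$, and since $\Delta$ has no singleton while $\Delta\ne\pi_0$, some block of $\Delta$ meets at least two of the pairs, so the number $m$ of connected components of $\CG_\Delta$ satisfies $m\le p-1$. Applying Corollary~\ref{cor:defect} with a defect function $\beta$ supported on the pair edges — zero on any pair edge both of whose endpoints already lie in a common block of $\Delta$, and a small positive number on the others — gives $|I_\Delta|\lesssim L^{m+\sum_e\beta(e)}$, and a short combinatorial check of \eqref{e:largeScale2} together with $m+\sum_e\beta(e)\le p-\kappa$ for some $\kappa=\kappa(H,p)>0$ finishes the estimate: the point is that a pair edge responsible for large-scale growth occurs only inside a cycle closed up by decay edges, so its growth combines sub-additively with the drop in the number of connected components, and $0<2H-1<1$ then forces the total exponent strictly below $p$. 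Summing the finitely many $I_\Delta$ yields the claim.

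The main obstacle is precisely this last power-counting step. The pair kernels $|t|^{2H-2}$ are not integrable at infinity (as $2H-2>-1$), so each ``long'' pair edge contributes large-scale growth of order up to $L^{2H-1}$, which is almost $L$ when $H$ is near $1$; the heart of the argument is to organise $\CG_\Delta$ into cycles, exhibit the sub-additivity of these growths, and pick $\beta$ so that \eqref{e:largeScale2} holds for every tight partition while still keeping $m+\sum_e\beta(e)\le p-\kappa$. By comparison the leading-term computation of the second paragraph is elementary, resting only on the spectral gap \eqref{e:spectralGap} and the definition of the negative fractional power $\CL^{1-2H}$.
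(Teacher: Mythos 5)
Your cumulant expansion, the elimination of partitions containing singletons, and the treatment of the diagonal partition $\Delta_\star$ are correct and coincide with the paper's argument (per-pair error of order one, total error $\CO(L^{p-1})$; the constant your computation produces, $\Gamma(2H-1)\bigl(\scal{f,\CL^{1-2H}g}_\mu+\scal{g,\CL^{1-2H}f}_\mu\bigr)$, is indeed the one dictated by \Cref{def:fracPower}). The graphical strategy for $\Delta\neq\Delta_\star$ — regularity from $2H-2>-1$, exponentially decaying kernels on cumulant blocks via \Cref{prop:boundCumul}, and \Cref{cor:defect} with a defect $\beta$ supported on pair edges — is also exactly the paper's.

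However, the step you yourself call the main obstacle is genuinely missing, and the specific $\beta$ you propose does not work. Assigning a common value $\beta_0$ to \emph{every} cross-block pair edge is incompatible with the two requirements you need. If the block-level graph has a bridge, the tight partition $\CP$ given by the two sides of the bridge has that single pair edge as its only crossing edge, so \eqref{e:largeScale2} reads $(2H-2-\beta_0)+2<1$, forcing $\beta_0>2H-1$ (in particular a ``small positive number'' already fails here, so \Cref{cor:defect} is not even applicable). But with $\beta_0>2H-1$ on all $N$ cross-block pair edges your exponent is $m+\beta_0 N$, which can exceed $p$: take $p=5$, pairs $(2k-1,2k)$, and blocks $B_1=\{1,3\}$, $B_2=\{2,4,5\}$, $B_3=\{6,7,9\}$, $B_4=\{8,10\}$, so that the block-level graph is two double edges joined by the bridge $(5,6)$; then $m=1$, $N=5$, the bridge forces $\beta_0>2H-1$, while the budget requires $1+5\beta_0<5$, i.e.\ $\beta_0<\f45$, and for $H\ge\f9{10}$ these are contradictory. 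Note that this configuration is not covered by your ``cycles'' heuristic: bridges arise from blocks of size $\ge 3$. The paper's resolution is to put $\beta_0\in(2H-1,1)$ only on a set $\CT\subset\CE_B$ of pair edges whose image under $\pi_\Delta$ is a spanning forest of the quotient graph $\hat\CG_\Delta$. Then $\CT$ together with the intra-block decay edges contains a spanning forest of $\CG_\Delta$ made of edges with $\alpha_+-\beta<-1$, and since every other crossing edge still has $\alpha_+-\beta\le 2H-2<0$, condition \eqref{e:largeScale2} holds for every tight partition; moreover $\hat\CG_\Delta$ has at most $p$ vertices and $m$ components, so $|\CT|\le p-m$ and the exponent is $m+\beta_0(p-m)\le p-(1-\beta_0)$ as soon as $m<p$, which is the case for every $\Delta\neq\Delta_\star$. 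Without this (or an equivalent) choice of $\beta$ and the accompanying counting, the crucial bound $|I_\Delta|\lesssim L^{p-\kappa}$ is not established.
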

\begin{proof}
We fix $p$ and write $I$ as a shorthand for $I_{2p} (f)$.
The properties of cumulants show that, setting $X_i = f_i(Y_{t_i})$ as previously, 
$I$ is given by
\begin{equs}
I &= \sum_{\Delta \in \CP([2p])} I_\Delta\;,\\
I_\Delta &\eqdef \int_{L_1}^{M_1}\cdots \int_{L_{2p}}^{M_{2p}} \Big(\prod_{A \in \Delta} \E_c X_A \Big)\Big( \prod_{k=1}^p |t_{2k}-t_{2k-1}|^{2H-2}\Big) \,dt_1\cdots dt_{2p}\;.
\end{equs}
Note first that since the $f_i$ are centred, we have $I_\Delta = 0$ unless $|A| \ge 2$ for every $A \in \Delta$. 
There is furthermore one special partition, namely $\Delta_\star = \big\{\{2k-1,2k\}\,:\, k\in [p]\big\}$.
For the summand generated by this `base' partition we have $I_{\Delta_\star} = \prod_{k=1}^p I(2k-1,2k)$, where we set
\begin{equ}
I(k,\ell) = \int_{L_{k}}^{M_{k}} \int_{L_{\ell}}^{M_{\ell}} \E \big(f_{k}(Y_s)f_{\ell}(Y_t)\big)\,|t-s|^{2H-2}\,dt\,ds\;.
\end{equ}
We then note that, for $a < b$ and $f,g \in E$ centred, it follows from the spectral gap assumption
and the fact that $E, E_1 \subset L^2(\mu)$ by \Cref{ass:integrability}, that
\begin{equ}\label{covariance-large-H}
\Big|\int_a^b \E \big(f(Y_0)g(Y_t)\big)\,|t|^{2H-2}\,dt - C(f,g) \one_{0 \in [a,b]} \Big| \lesssim \|f\|_E\|g\|_E e^{-c(|a| \wedge |b|)}\;,
\end{equ}
for some fixed constant $c$. 
It follows from \eqref{covariance-large-H}  that 
\begin{equ}
\big|I(k,\ell) - |[L_{k},M_{k}]\cap [L_{\ell},M_{\ell}]| C(f_{k},f_{\ell}) \big|
\lesssim \int_{L_{k}}^{M_{k}} e^{-c(|L_\ell - s| \wedge |M_\ell - s|)} \,ds \lesssim 1\;,
\end{equ}
and  that $I_{\Delta_\star}$ differs from the desired expression in the statement
by an error of at most $\CO(L^{p-1})$. 

Since $I = \sum_{\Delta \in \CP([2p])} I_\Delta$ and we already obtained \eref{e:wantBoundProd}
for $I$ replaced by $I_{\Delta_\star}$,
it remains to show that  
$|I_\Delta| \lesssim L^{p-\kappa}$ for every partition $\Delta \neq \Delta_\star$ 
with $\kappa$ as in the statement. 
Fix such a partition $\Delta$ from now on and write again $\sim$ for the equivalence relation induced by $\Delta$ on $[2p]$.
We then define a graph $\CG_\Delta$ with vertex set $\CV = [2p]$ and edge set $\CE = \CE_B \cup \CE_\Delta$, where
\begin{equ}
\CE_B = \{(2k-1,2k)\,:\, k\in [p]\}\;,\qquad \CE_\Delta = \{(u,v)\,:\, u\sim v\}\;.
\end{equ}
We furthermore assign kernels to the edges of $\CG_\Delta$ by
\begin{equ}[e:kernelAssign]
K_e(t) = 
\left\{\begin{array}{cl}
	|t|^{2H-2} & \text{for $e \in \CE_B$,} \\
	e^{-c |t|} & \text{otherwise,}
\end{array}\right.
\end{equ}
so that \Cref{prop:boundCumul} yields the bound
\begin{equ}
|I_\Delta| \lesssim \int_{[0,L]^{2p}} \big|K^{\CG_\Delta}(t)\big|\,dt\;.
\end{equ}
The kernel assignment \eqref{e:kernelAssign} is consistent with the exponents given by
\begin{equ}
\alpha_-(e) = 
\left\{\begin{array}{cl}
	2H-2 & \text{for $e \in \CE_B$,} \\
	0 & \text{otherwise,}
\end{array}\right.\qquad 
\alpha_+(e) = 
\left\{\begin{array}{cl}
	2H-2 & \text{for $e \in \CE_B$,} \\
	-2 & \text{otherwise,}
\end{array}\right.
\end{equ}
whence it immediately follows that $\CG_\Delta$ is regular.

It now remains to find a function $\beta \colon \CE \to \R_+$ allowing us to apply
\Cref{cor:defect} to $\CG_\Delta$. 
For this, we construct a set $\CT$ and set $\beta(e) = 1-\kappa$ for $e \in \CT$ and $0$ otherwise.
To construct $\CT$,
consider the graph $\hat \CG_\Delta$ which has $\hat \CV: = \Delta$ as its
vertex set and such that its edge set $\hat \CE$ is given by
\begin{equ}
\hat \CE = \{(\pi_\Delta(2k-1),\pi_\Delta(2k))\,:\, \pi_\Delta(2k-1) \neq \pi_\Delta(2k)\}\;,
\end{equ}
where $\pi_\Delta \colon [2p] \to \Delta$ maps an element to the unique element of the partition $\Delta$
that contains it. In other words, $\hat \CG_\Delta$ is obtained by quotienting $\CG_\Delta$ by the 
partition $\Delta$ and then removing self-loops. 
Let now $\CT \subset \CE_B$ be such that $\hat \CT = \pi_\Delta \CT$ is a maximal spanning forest 
for $\hat \CG_\Delta$. In the case of \eqref{e:examplePart} for example,
one could take $\CT = \{(1,2),(5,6)\}$. 
With $\kappa$ as in the statement of the proposition, we now set 
$\beta(e) = 1-\kappa$ for $e \in \CT$ and $0$ otherwise.
The reason why this choice of $\beta$ satisfies \eqref{e:largeScale2} for the graph $\CG_\Delta$
 is that by construction the labelling $\gamma = \alpha_+ - \beta$
is such that $\CG_\Delta$ contains a spanning forest $\tilde \CT$ consisting of edges $e$ with $\gamma(e) = 2H-3+\kappa < -1$.  
(To build a reduced set of edges from $\CE = \CE_B \cup \CE_\Delta$, we start with $\CT$ and then connect its components using edges in $\CE_\Delta$.)
It then remains to first apply Lemma~\ref{lem:deleteEdges} to reduce ourselves to 
considering $\CG_\Delta$ and then apply Lemma~\ref{lem:treeGraph}.


Denote now by $m$ the number of connected components of $\hat \CG_\Delta$ and note that 
since every element of $\Delta$ is of size at least $2$, $\hat \CG_\Delta$ has at most $p$ vertices.
It follows that the number of elements in $\CT$ equals at most $p-m$, so that \Cref{cor:defect} yields
the bound
$I_\Delta \lesssim L^m L^{(1-\kappa)(p-m)} = L^{p -\kappa(p-m)}$, which is bounded
by $L^{p-\kappa}$ unless $m=p$.
Since the only
partition $\Delta$ yielding $m=p$ is the complete pairing $\Delta_\star$, the claim follows at once.
\end{proof}

\begin{corollary}\label{tight-estimate-regular}
Let the assumptions of \Cref{prop:basicBound} hold.
For $H \in (\f12,1)$ and $f\in E$ with $\int f(y)\,\mu(dy) = 0$, one has for every $p \ge 1$ the bounds
$$\|J_{s,t}^\eps(f)\|_{L^{2p}}
\lesssim t^{\f 12}\;, \quad 
\|\JJ_{s,t}^\eps(f)\|_{L^{p}}
\lesssim t\;,$$
uniformly over $t \le T$ (for any fixed $T \ge 1$) and $\eps \le 1$.
\end{corollary}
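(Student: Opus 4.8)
The plan is to derive both bounds from Proposition~\ref{prop:basicBound} (and, for the part of $\JJ$ that is genuinely a double stochastic integral, from the large-scale estimates of Proposition~\ref{prop:largeScale} and Corollary~\ref{cor:defect} together with the cumulant bound of Proposition~\ref{prop:boundCumul}), after conditioning on $\CF^Y$ and rescaling time by $1/\eps$. Recall that for $H>\f12$ the distribution $\eta''$ is the positive, locally integrable function $t\mapsto 2H(2H-1)|t|^{2H-2}$. It suffices to prove the estimates with $|t-s|$ in place of $t$ (the statement is the special case $s=0$) and with $2p$, resp.\ $p$, an even integer, the general case following from $\|\cdot\|_{L^p}\le\|\cdot\|_{L^{p'}}$. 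In each case I split according to whether $|t-s|\le\eps$ or $|t-s|>\eps$: in the first regime an elementary conditional second-moment bound already suffices, while in the second one exploits the large-scale cancellations encoded in the propositions above, with $L:=(t-s)/\eps>1$ as the large parameter.

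For $J^\eps_{s,t}(f)=\eps^{\f12-H}\int_s^t f(Y^\eps_r)\,dB_r$: since $B$ is independent of $\CF^Y$, this variable is, conditionally on $\CF^Y$, a centred Gaussian, so that $\E|J^\eps_{s,t}(f)|^{2p}=(2p-1)!!\,\E[(Q^\eps_{s,t})^p]$ with $Q^\eps_{s,t}:=\E(|J^\eps_{s,t}(f)|^2\,|\,\CF^Y)=\f12\eps^{1-2H}\int_{[s,t]^2}f(Y^\eps_u)f(Y^\eps_v)\,\eta''(u-v)\,du\,dv$. If $|t-s|\le\eps$, bounding $f(Y^\eps_u)f(Y^\eps_v)$ in $L^p(\mu)$ by Hölder and using $\int_{[s,t]^2}|u-v|^{2H-2}\lesssim|t-s|^{2H}$ gives $\|Q^\eps_{s,t}\|_{L^p}\lesssim\|f\|_E^2\,\eps^{1-2H}|t-s|^{2H}$, hence $\|J^\eps_{s,t}(f)\|_{L^{2p}}\lesssim\|f\|_E\,|t-s|^{\f12}(|t-s|/\eps)^{H-\f12}\le\|f\|_E\,|t-s|^{\f12}$. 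If $|t-s|>\eps$, expanding $(Q^\eps_{s,t})^p$ into a $2p$-fold integral and substituting $u_k,v_k\mapsto\eps u_k,\eps v_k$ turns $\E[(Q^\eps_{s,t})^p]$ into an explicit power of $\eps$ times exactly the quantity controlled by Proposition~\ref{prop:basicBound}, taken with all $f_j=f$ and all intervals equal to $[s/\eps,t/\eps]$. The accumulated power of $\eps$ is precisely the one cancelling the factor $L^p$ coming from the $p$ ``overlap'' lengths, so the leading term equals a constant multiple of $C(f,f)^p|t-s|^p$ and the remainder is $O(\eps^\kappa|t-s|^{p-\kappa})$, which is $\lesssim|t-s|^p$ because $\eps\le|t-s|$. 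This yields $\|J^\eps_{s,t}(f)\|_{L^{2p}}\lesssim\|f\|_E\,|t-s|^{\f12}$.

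For $\JJ^\eps_{s,t}(f)=\JJ^\eps_{s,t}(f,f)$ I write $\JJ^\eps_{s,t}(f)=\E(\JJ^\eps_{s,t}(f)\,|\,\CF^Y)+\bigl(\JJ^\eps_{s,t}(f)-\E(\JJ^\eps_{s,t}(f)\,|\,\CF^Y)\bigr)$ and treat the two terms separately. For the first, the symmetry of $(u,v)\mapsto f(Y^\eps_u)f(Y^\eps_v)\eta''(u-v)$ together with the formula $\E(\JJ^\eps_{s,t}(f,g)\,|\,\CF^Y)=\f12\eps^{1-2H}\int_s^t\!\int_s^v f(Y^\eps_u)g(Y^\eps_v)\eta''(u-v)\,du\,dv$ yields the identity $\E(\JJ^\eps_{s,t}(f)\,|\,\CF^Y)=\f12 Q^\eps_{s,t}$, so that by conditional Jensen and the bound just proved, $\|\E(\JJ^\eps_{s,t}(f)\,|\,\CF^Y)\|_{L^p}\lesssim\|J^\eps_{s,t}(f)\|_{L^{2p}}^2\lesssim\|f\|_E^2|t-s|$. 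For the centred term, Proposition~\ref{prop:contractWiener} applied conditionally on $\CF^Y$ (to the bilinear forms obtained by mollifying $B$), together with \eqref{e:boundLpChaos}, bounds it in $L^p$ by a constant times $\|\tilde\JJ^\eps_{s,t}(f,f)\|_{L^p}$, where $\tilde\JJ^\eps_{s,t}(f,f)=\eps^{1-2H}\int_s^t\!\int_s^v f(Y^\eps_u)f(Y^\eps_v)\,dB(u)\,d\tilde B(v)$ with $\tilde B$ an independent copy of $B$. Conditionally on $\CF^Y$ this lies in the second Wiener chaos of $(B,\tilde B)$, so $\|\tilde\JJ^\eps_{s,t}(f,f)\|_{L^p}^2\lesssim\|\E(|\tilde\JJ^\eps_{s,t}(f,f)|^2\,|\,\CF^Y)\|_{L^{p/2}}$, and since $B\perp\tilde B$ the conditional second moment equals the nested integral $c\,\eps^{2-4H}\int_{[s,t]^2}f(Y^\eps_v)f(Y^\eps_{v'})\eta''(v-v')\Bigl(\int_s^v\!\int_s^{v'}f(Y^\eps_u)f(Y^\eps_{u'})\eta''(u-u')\,du\,du'\Bigr)dv\,dv'$. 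Raising this to the power $q:=p/2$, taking the expectation over $Y$, enlarging each inner triangular domain $[s,v_k]\times[s,v'_k]$ to $[s,t]^2$, and rescaling time by $1/\eps$ produces an explicit power of $\eps$ times a $4q$-fold integral over $[s/\eps,t/\eps]^{4q}$ carrying $2q$ kernels of the form $|\cdot|^{2H-2}$ and a cumulant factor to which Proposition~\ref{prop:boundCumul} applies; the proof of Proposition~\ref{prop:basicBound} only uses upper bounds in absolute value (via Propositions~\ref{prop:boundCumul}, \ref{prop:largeScale} and Corollary~\ref{cor:defect}) and is therefore insensitive to the domain restriction, and since no fine cancellations are needed here (as noted at the start of Section~\ref{sec:tightHigh}) the very same graph/power-counting argument shows this integral is $\lesssim L^{2q}$. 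The accumulated power of $\eps$ again cancels $L^{2q}$ down to $|t-s|^{2q}$; together with the crude conditional bound for $|t-s|\le\eps$, this gives $\|\tilde\JJ^\eps_{s,t}(f,f)\|_{L^{2q}}\lesssim\|f\|_E^2|t-s|$, and hence $\|\JJ^\eps_{s,t}(f)\|_{L^p}\lesssim\|f\|_E^2|t-s|$.

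The $\eps$-exponent bookkeeping and the enlargement to hypercubes are routine. The substantive content is already contained in Propositions~\ref{prop:boundCumul}, \ref{prop:largeScale} and~\ref{prop:basicBound}: that among all partition/pairing contributions to the multiple integrals above only the ``diagonal'' one survives at top order (the power of $L$ equal to the number of kernels $|\cdot|^{2H-2}$), every other term being of strictly lower order in $L=(t-s)/\eps$. What needs care is the matching of combinatorics — checking that the moment expansion of $J$, that of $\E(\JJ\,|\,\CF^Y)$ (which, by symmetry, reduces to that of $J$), and that of the decoupled double integral $\tilde\JJ$ all fall, after conditioning and rescaling, within the scope of these propositions, and that in each case the surviving power of $L$ is exactly cancelled by the accumulated powers of $\eps$, leaving $|t-s|^{1/2}$ for $J$ and $|t-s|$ for $\JJ$.
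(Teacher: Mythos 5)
Your argument is correct, and it splits naturally into two halves of different character. For the first bound you do exactly what the paper does: conditionally on $\CF^Y$ the variable $J^\eps_{s,t}(f)$ is centred Gaussian, so its $2p$-th moment reduces, after rescaling time by $\eps$, to the multiple integral of \Cref{prop:basicBound}; the main term contributes $\eps^p L^p = |t-s|^p$ and the error $\eps^p L^{p-\kappa}$ is of lower order for $|t-s|\ge\eps$, while the crude conditional-variance bound covers $|t-s|\le\eps$. For the second bound your route differs from the paper's. The paper treats $\JJ^\eps$ directly: conditionally on $Y$ it is a (non-centred) second-chaos variable, so its $p$-th moment Wick-expands into integrals of the form \eqref{e:basicIntegral} in which half of the upper limits are triangular, and, as announced at the start of \Cref{sec:tightHigh}, these are dominated by the corresponding hypercube bounds since no cancellations are needed, so \Cref{prop:basicBound} applies ``similarly''. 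You instead split $\JJ^\eps_{s,t}(f)$ into its conditional mean, which by the $u\leftrightarrow v$ symmetry equals $\f12\E\bigl(|J^\eps_{s,t}(f)|^2\,|\,\CF^Y\bigr)$ and is therefore controlled by the first bound, plus the centred part, which you decouple via \Cref{prop:contractWiener} and the conditional chaos estimate \eqref{e:boundLpChaos}, reducing matters to a $4q$-fold integral again handled by the cumulant\slash graph machinery. This is precisely the strategy the paper uses in the low-regularity case (\Cref{prop:tight2nd}), transplanted to $H>\f12$: it buys you that only conditional second moments are ever needed (no Wick expansion of high moments of a second-chaos variable), at the cost of the extra decoupling step and of checking that the triangular domain restrictions are harmless — which they are, since, as you note, every partition term (including the base one, whose two-point cumulants decay exponentially by \Cref{prop:boundCumul}) is bounded in absolute value by its hypercube integral, itself $\lesssim L^{2q}$. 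Your $\eps$-power bookkeeping ($\eps^{2q}L^{2q}=|t-s|^{2q}$) and the treatment of the regime $|t-s|\le\eps$ are correct as well, so both bounds follow.
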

\begin{proof}
For integer $p \ge 1$, we note that as a consequence of Wick's theorem and the fact that $Y$ is independent of
$B$, one has the identity,
\begin{equ}[e:boundJ0t]
\E \bigl(J_{0,t}^\eps(f)\bigr)^{2p}
= C_p \eps^p \int_{[0,t/\eps]^{2p}}
\E \bigl(f(Y_{s_1})\cdots f(Y_{s_{2p}})\bigr)\prod_{k=1}^p \eta''(s_{2k}-s_{2k-1})\,ds\;.
\end{equ}
where $C_p = 2^{-p}(2p-1)!!$.  For $t\ge \eps$ we apply \Cref{prop:basicBound} with $L = t/\eps$, so that
\begin{equs}
\Big|\E \bigl(J_{0,t}^\eps(f)\bigr)^{2p}\Big|
&\lesssim \epsilon^p  (t/\eps)^{p-\kappa}+\epsilon^p  (t/\eps)^{p}\lesssim \eps^\kappa t^{p-\kappa} + t^p \lesssim t^p.
\end{equs}
For $t<\epsilon$, \eqref{e:boundJ0t} is bounded by $C\eps^p\|f\|_{L^{2p}}^{2p}|t/\eps|^{2Hp}\lesssim  t^p$.
One similarly obtains the bound
$\E \bigl(\JJ_{0,t}^\eps(f,g)\bigr)^{p} \lesssim  t^p$, completing the proof.
\end{proof}

\section{Identification of the limit}
\label{sec:limit}

In this section we complete the proof of Theorem~\ref{theo:mainRP} by identifying the
limit (in law) of $\Z^\eps$ as $\eps \to 0$. The proof proceeds in two steps. First, in 
Proposition~\ref{prop:convFirst}, we show that the first-order process $Z$ itself converges in law to
a limit $W$ with covariance given as in \eqref{e:defWGaussian}. In a second step, we then exploit martingale
techniques, and in particular \cite{Jacod-Shiryaev}, to obtain convergence of the second-order
process $\ZZ$ to the limit described in Theorem~\ref{theo:mainRP}. 
Recall that, by \eref{e:defZZ},
\begin{equ}
\big(Z^\eps_{s,t}(x)\big)_i = \eps^{\f12 -H} Z^{f_{i,x}^\eps}_{s,t}\;,\quad
\big(\ZZ^\eps_{s,t}(x,\bar x)\bigr)_{ij} \eqdef \eps^{1 -2H} \ZZ^{f_{i,x}^\eps, f_{j,\bar x}^\eps}_{s,t}\;.
\end{equ}

\begin{proposition}\label{prop3.1}
In the setting of Proposition~\ref{prop:tight}, 
the family of random rough paths $\Z^\eps$ converges in law, as $\eps \to 0$, 
to the unique (in law) random rough path $\Z$ such that the following hold.
The process $Z$ is a $\CB$-valued Wiener process with covariance given by
\begin{equ}[e:covar]
\E \big(Z_{s,t}(x)\otimes Z_{u,v}(\bar x)\big) = |[s,t] \cap [u,v]| \,
\bigl(\Sigma(x,\bar x) + \Sigma(\bar x,x)^\top \bigr) \;,
\end{equ}
with $\Sigma$ as defined in \eqref{e:defSigma}.
The ``second-order'' process $\ZZ$ is the $\CB_2$-valued process such that for any $x,\bar x$ in $\R^d$
\begin{equ}
\ZZ_{s,t}(x, \bar x) = \int_s^t Z_{s,r}(x) \otimes dZ_{s,r}(\bar x) +  (t-s) \Sigma(x,\bar x)\;,
\end{equ}
where the integral is interpreted in the Itô sense.
  \end{proposition}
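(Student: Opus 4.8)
\emph{Proof strategy.} Tightness of $\{\Z^\eps\}_{\eps\le 1}$ in $\Cr^\alpha([0,T],\CB\oplus\CB_2)$ being already provided by \Cref{prop:tight}, it suffices to identify every subsequential limit with the $\Z$ described in the statement. That $\Z$ is unique in law: $Z$ is a centred Gaussian process, hence determined by the covariance \eqref{e:covar}, and $\ZZ$ is then an (almost surely well-defined) It\^o functional of $Z$ plus the deterministic drift $(t-s)\Sigma$, so the joint law of $(Z,\ZZ)$ is fixed. Since elements of $\Cr^\alpha([0,T],\CB\oplus\CB_2)$ are determined by the continuous maps $(s,t,x,\bar x)\mapsto\bigl(Z_{s,t}(x),\ZZ_{s,t}(x,\bar x)\bigr)$ at rational arguments, it is enough to prove, for each finite collection $\{x_l\}\subset\R^d$, joint convergence in law of the $\R^d$- and $(\R^d)^{\otimes 2}$-valued processes $\bigl(Z^\eps_{\cdot,\cdot}(x_l),\ZZ^\eps_{\cdot,\cdot}(x_l,x_m)\bigr)_{l,m}$ to their claimed limits; combined with tightness this yields convergence in $\Cr^\alpha$.

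I would first treat the first-order process and show that $Z^\eps$ converges in law to the $\CB$-valued Gaussian $W$ with covariance \eqref{e:covar} (this is \Cref{prop:convFirst} below). For $H<\f12$ the cleanest route is to condition on $\CF^Y$: conditionally, $Z^\eps$ is a centred Gaussian process whose covariance
\begin{equ}
C^\eps_{s,t;u,v}(x,\bar x)=\eps^{1-2H}\,\tfrac12\sum_{k=1}^m\int_s^t\!\!\int_u^v F_k(x,Y^\eps_r)\otimes F_k(\bar x,Y^\eps_{r'})\,\eta''(r-r')\,dr\,dr'
\end{equ}
is $\CF^Y$-measurable. Taking expectations, using stationarity of $Y$, the homogeneity $\eta''(\lambda\,\cdot)=\lambda^{2H-2}\eta''(\cdot)$ together with the regularisation of \Cref{distributional-derivative}, and recognising the resulting kernel integral through \Cref{def:fracPower} as $\tfrac12\Gamma(2H+1)\scal{F_k(x,\cdot),\CL^{1-2H}F_k(\bar x,\cdot)}_\mu$ (and its transpose, both orientations $r>r'$ and $r<r'$ contributing), one obtains $\E C^\eps\to|[s,t]\cap[u,v]|\bigl(\Sigma(x,\bar x)+\Sigma(\bar x,x)^\top\bigr)$: when $[s,t]$ and $[u,v]$ are disjoint the integrand has no singularity and the vanishing prefactor $\eps^{1-2H}$ kills it, while when they overlap the near-diagonal part survives and produces the local term. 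One then shows that $C^\eps-\E C^\eps\to 0$ in probability by writing the variance of $C^\eps$ as an integral of the connected four-point function of $F(Y^\eps_\cdot)$ against $(\eta'')^{\otimes 2}$ and bounding it with \Cref{ass:continuous} (or \Cref{prop:boundCumul}). Hence the conditional characteristic function $\E\bigl(e^{\,i\scal{\lambda,Z^\eps}}\,\big|\,\CF^Y\bigr)=e^{-\f12\scal{\lambda,C^\eps\lambda}}$ converges in probability and, by dominated convergence, so does its expectation, giving convergence of finite-dimensional laws and hence (with tightness) of $Z^\eps$. For $H>\f12$ the conditional variance of $Z^\eps$ actually diverges (path-wise in $Y$ the kernel $|r-r'|^{2H-2}$ is integrable while $F(Y^\eps_\cdot)$ stays $O(1)$ on scales $\gg\eps$), so one cannot condition; instead one computes the joint moments of $\bigl(Z^\eps_{s_l,t_l}(x_l)\bigr)_l$ directly by Wick's theorem combined with \Cref{prop:boundCumul} and the Feynman-graph bound of \Cref{prop:basicBound}, the leading term reproducing exactly the Gaussian moments with covariance \eqref{e:covar} and all other terms being of lower order in $\eps$ by \Cref{cor:defect}.

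For the second-order process, decompose $\ZZ^\eps=\E(\ZZ^\eps\,|\,\CF^Y)+\tilde\ZZ^\eps$. The computation in the proof of \Cref{prop:boundExpectJJ} exhibits $\E(\ZZ^\eps_{s,t}(x,\bar x)\,|\,\CF^Y)$ as the ``triangular'' ($u<v$) analogue of $C^\eps$, and the same scaling/ergodic argument --- now yielding $\CL^{1-2H}$ \emph{without} its transpose, the iterated integral being one-sided --- shows, using \Cref{ass:ergodic}, that it converges in probability to $(t-s)\Sigma(x,\bar x)$. It remains to prove that the conditionally centred fluctuation $\tilde\ZZ^\eps$ converges, jointly with $Z^\eps$, to $\int_s^t Z_{s,r}(x)\otimes dZ_{s,r}(\bar x)$ interpreted in the It\^o sense. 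The plan here is to pass, conditionally on $\CF^Y$, to the Volterra representation of $B$ over a standard Wiener process $\beta$; then, for a fixed finite set of base points, $t\mapsto\bigl(Z^\eps_{s,t}(x_l),\tilde\ZZ^\eps_{s,t}(x_l,x_m)\bigr)_{l,m}$ is a vector of iterated $\beta$-It\^o integrals, hence a (conditional) continuous local martingale whose predictable covariation is an explicit functional of the $F(Y^\eps_\cdot)$'s and converges in probability --- by the same mixing/ergodic estimates, applied path-wise in $Y$ --- to the deterministic bracket of $\bigl(W,\int W\otimes dW\bigr)$. The functional martingale central limit theorem of \cite{Jacod-Shiryaev} then gives conditional convergence in law, and integrating out $\CF^Y$ yields the claim; the limit automatically satisfies Chen's relation and the weak-geometric identity \eqref{e:weakgeo}, since these hold along the prelimit. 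For $H>\f12$, where this quenched martingale structure is unavailable, one instead computes all joint moments of $\bigl(Z^\eps,\tilde\ZZ^\eps\bigr)$ by Wick's theorem, isolating the leading term with \Cref{prop:basicBound} and bounding the remainder with \Cref{cor:defect}, and checks agreement with the moments of $\bigl(W,\int W\otimes dW\bigr)$.

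The main obstacle is precisely this last step --- the joint convergence of the second-order fluctuation to an \emph{It\^o} integral. The difficulty is threefold: (i) for $H\neq\f12$ the limit $W$ is not a semimartingale in any natural filtration, so the martingale structure must be imported through the Volterra representation (and for $H>\f12$ is not available quenched at all, forcing the less robust method of moments); (ii) the ``spurious'' drift $(t-s)\Sigma$, which is of It\^o--Stratonovich type but generally asymmetric, must be tracked exactly through the mollifier-level identities of \Cref{distributional-derivative} and \Cref{prop:contractWiener}, as it is exactly what distinguishes the homogenised limit from the geometric one; and (iii) one must control the interaction of the singular kernel $\eta''$ with the slowly mixing factor $P_t F$, which for $H>\f12$ is genuinely delicate --- there is no integrability of $\eta''$ at infinity --- and is where the centering hypothesis $\bar F=0$ and the cumulant bounds of \Cref{prop:boundCumul} become essential.
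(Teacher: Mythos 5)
Your overall skeleton (tightness from \Cref{prop:tight} plus identification of finite-dimensional limits, conditional Gaussianity of the first level for $H<\f12$, cumulants\slash Wick for $H>\f12$) matches the paper, and your identification of the drift as the limit of the conditional mean $\E(\ZZ^\eps\,|\,\CF^Y)$ is correct in spirit. The genuine gap is in the key step, the convergence of the conditionally centred second-order fluctuation to the Itô integral. You claim that, conditionally on $\CF^Y$ and after passing to the Volterra representation $B_t=\int_0^t K(t,r)\,d\beta_r$, the processes $t\mapsto\bigl(Z^\eps_{s,t}(x_l),\tilde\ZZ^\eps_{s,t}(x_l,x_m)\bigr)$ are iterated $\beta$-Itô integrals and hence continuous local martingales, to which the functional martingale CLT of \cite{Jacod-Shiryaev} applies. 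This is false for $H\neq\f12$: writing $\int_s^t f(Y^\eps_r)\,dB_r=\int_0^t g_t(r)\,d\beta_r$ produces an integrand $g_t$ that depends on the upper limit $t$ through the kernel $K$, so the process is adapted but is \emph{not} a (local) martingale in $t$ — fBm itself is not a semimartingale, and conditioning on $Y$ does not change this. Consequently there is no predictable covariation to compute and no martingale CLT to invoke; the very difficulty you list as obstacle (i) is not resolved by the Volterra representation, and your argument for $\tilde\ZZ^\eps\to\int Z\otimes dZ$ does not go through as written.

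The paper's resolution is precisely the device your proposal lacks: the decomposition $J^\eps_t(f)=M^f_t+R^f_t$ of \eqref{e:defMart}--\eqref{e:defRem}, in which $M^f_t$ is a \emph{bona fide} $(\CF^Y_t\vee\CF^B_t)$-martingale obtained by compensating $\eps^{\f12-H}\int_0^t\tilde f(Y^\eps_r)\,dB_r$ with the predicted future increment $\eps^{\f12-H}\int_t^\infty(P_{(r-t)/\eps}\tilde f)(Y^\eps_t)\,d\bar B^t_r$ (Lemma~\ref{lemma:martingale-limit}), while $R^f_t\to0$ (Lemma~\ref{lem:boundR}). One then writes $\JJ^\eps$ through \eqref{r:idenJJ}: the term $\int_0^t J^\eps_s\,dM^g_s$ converges to the Itô integral $\int W\,dW$ by the standard weak convergence theorems for stochastic integrals against martingales (Kurtz--Protter, Jacod--Shiryaev), and the drift $(t-s)\Sigma$ is produced not by the conditional mean of $\ZZ^\eps$ but by the limit of $\int_0^t R^g_s\,dJ^\eps_s(f)$ (Lemma~\ref{lem:convMean} and Proposition~\ref{R-convergence}); this treatment covers all $H\in(\f13,1)\setminus\{\f12\}$ uniformly. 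Your fallback for $H>\f12$ — matching all joint moments of $(Z^\eps,\tilde\ZZ^\eps)$ with those of $(W,\int W\otimes dW)$ — is also substantially underdeveloped: the limit at the second level is not Gaussian, \Cref{prop:basicBound} only bounds hypercube integrals and does not identify the leading constants for simplex integrals, so this would require a new combinatorial analysis that the proposal does not supply. To repair the proof you should either import the paper's $M+R$ compensation argument or provide a genuinely different mechanism for the second-level limit; as it stands the central step is missing.
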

 \begin{proof}
 The convergence in distribution of any finite collections of the stochastic processes follows from \Cref{convergence-drivers} below. By \Cref{prop:tight}, $(Z^\eps, \ZZ^\eps)$ is tight in  $\Cr^\alpha([0,T], \CB \oplus \CB_2)$ for suitable $\alpha\in (\f 13, H)$,  so the weak convergence holds with respect to the rough path norm on $\Cr^\alpha([0,T], \CB \oplus \CB_2)$.
 \end{proof}
 
\subsection{Law of large numbers}

We will need the following quantitative version of the law of large numbers. 
Let $E \subset E_1\subset E_2$ be Banach spaces of functions $\CY\to \R$ containing constants and such that
pointwise multiplication from $E \times E_1$ into $E_2$ is continuous.  

\begin{lemma}\label{lem:LLN}
Let  $E\subset L^4$ and $E_2 \subset L^2$,
let the spectral gap condition (\ref{e:spectralGap}) hold for $n=1,2$, and let $f, g \in E$. 
Then, the bound
\begin{equ}[e:boundLLN]
\Big\|\int_0^T f(Y_s) g(Y_{s+t})\,ds - T \scal{f, P_t g}_\mu\Big\|_{L^2}
\lesssim \sqrt{(1+t)T}\|f\|_E \|g\|_E\;,
\end{equ}
holds uniformly over $t, T \in \R_+$ with a proportionality constant depending only on the constants 
appearing in the two assumptions.
\end{lemma}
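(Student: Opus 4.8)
The plan is to expand the second moment of the centred functional as a double integral of covariances and to reduce the whole estimate to a one‑dimensional integrability bound. Write $h_s \eqdef f(Y_s)\,g(Y_{s+t})$; by stationarity of $Y$ one has $\E h_s = \scal{f, P_t g}_\mu$ (the claimed mean), and, using stationarity again to write $\mathrm{Cov}(h_s,h_{s'}) = \mathrm{Cov}(h_0, h_{s'-s})$,
\begin{equ}
\Big\|\int_0^T \big(h_s - \E h_s\big)\,ds\Big\|_{L^2}^2
= 2\int_0^T\!\!\int_s^T \mathrm{Cov}(h_s,h_{s'})\,ds'\,ds
\le 2T\int_0^\infty \big|\mathrm{Cov}(h_0,h_u)\big|\,du\;.
\end{equ}
It therefore suffices to show $\int_0^\infty |\mathrm{Cov}(h_0,h_u)|\,du \lesssim (1+t)\|f\|_E^2\|g\|_E^2$, which I would prove by splitting the integral at $u = t$.

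For the short‑range part $0 \le u \le t$ I would use no decay at all: by the generalised Hölder inequality and stationarity, $|\E(h_0 h_u)| \le \|f\|_{L^4(\mu)}^2\|g\|_{L^4(\mu)}^2$ and $|\E h_0| \le \|f\|_{L^2(\mu)}\|g\|_{L^2(\mu)}$, so (using $E \subset L^4$) $|\mathrm{Cov}(h_0,h_u)| \lesssim \|f\|_E^2\|g\|_E^2$, contributing $\lesssim t\,\|f\|_E^2\|g\|_E^2$ after integrating over $[0,t]$.

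For the long‑range part $u > t$ the four time points are ordered $0 \le t \le u \le u+t$, and I would peel off conditional expectations along this ordering by the Markov property to obtain
\begin{equ}
\E(h_0 h_u) = \scal{f,\; P_t\big(g\cdot P_{u-t}\psi\big)}_\mu\;,\qquad \psi \eqdef f\cdot P_t g\;.
\end{equ}
The crucial observation is that one should \emph{not} centre $g$ (which need not be centred) but rather $\psi$: writing $\psi = \bar\psi + \mathring\psi$ with $\bar\psi = \scal{f,P_tg}_\mu = \E h_0$, the constant part contributes exactly $\bar\psi\,\scal{f,P_tg}_\mu = (\E h_0)^2$, which cancels against $\E h_0\,\E h_u$, leaving $\mathrm{Cov}(h_0,h_u) = \scal{f, P_t(g\cdot P_{u-t}\mathring\psi)}_\mu$. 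For the bookkeeping: continuity of multiplication $E \times E_1 \to E_2$ and the spectral gap on $E_1$ give $\psi \in E_2$ with $\|\psi\|_{E_2} \lesssim \|f\|_E\|P_t g\|_{E_1} \lesssim \|f\|_E\|g\|_E$, so $\mathring\psi$ is mean‑zero in $E_2$ with the same bound; the spectral gap on $E_2$ together with $E_2 \subset L^2$ then gives $\|P_{u-t}\mathring\psi\|_{L^2} \lesssim e^{-c(u-t)}\|f\|_E\|g\|_E$. Since $P_t$ is an $L^q(\mu)$‑contraction for every $q$, two applications of Hölder (exponents $(4,4/3)$ and then $(4,2)$) give
\begin{equ}
|\mathrm{Cov}(h_0,h_u)| \le \|f\|_{L^4}\|g\|_{L^4}\,\|P_{u-t}\mathring\psi\|_{L^2} \lesssim e^{-c(u-t)}\|f\|_E^2\|g\|_E^2\;,
\end{equ}
which integrates to $\lesssim \|f\|_E^2\|g\|_E^2$ over $[t,\infty)$. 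Summing the two regimes yields the claim.

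The only genuine obstacle is the long‑range regime: because $f,g$ are not centred, naive Markov peeling does not produce an exponentially small correlation, and the point is to isolate the non‑decaying part as $(\E h_0)^2$ by expanding $\psi = f\cdot P_t g$ rather than $g$ itself. I would also remark that in the regime $u < t$ there is generically \emph{no} decay — the clusters $\{Y_0,Y_u\}$ and $\{Y_t,Y_{u+t}\}$ decorrelate, but the product of their means need not equal $\scal{f,P_tg}_\mu^2$ — which is precisely why the statement carries a factor $(1+t)$ rather than a constant.
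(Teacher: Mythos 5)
Your proof is correct and follows essentially the same route as the paper: expand the variance as a double integral of covariances, use the crude Hölder bound (via $E\subset L^4$) on the region where the two time windows are within distance $t$ (yielding the $tT$ contribution), and on the far region apply the Markov property together with the $E_2$ spectral gap to the centred function $f\,P_tg$ to get exponential decay and the $T$ contribution. The only differences are cosmetic: the paper stops the conditioning at $Y_{s+t}$ and handles $t\ge T$ as a separate trivial case, whereas you condition all the way back to time $0$ using the $L^q(\mu)$-contractivity of $P_t$ and avoid the case split.
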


\begin{proof}
Writing $f_s$ as a shorthand for $f(Y_s)$ and similarly for $g$, the 
square of the left-hand side of \eqref{e:boundLLN} is given by
\begin{equ}
2\int_0^T\int_0^r\big( \E \bigl(f_sg_{s+t}f_rg_{r+t}\bigr) - \E \bigl(f_sg_{s+t}\bigr)\E\bigl(f_rg_{r+t}\bigr)\big)\,ds\,dr\;.
\end{equ}
 Since $E\subset L^4$,  H\"older's inequality  shows that the integrand is bounded by some multiple of $\|f\|_E^2 \|g\|_E^2$. It thus follows from the triangle inequality that 
  the required bound follows  for $t \ge T$ so we assume $t \le T$ from now on.
Using the same bound on the integrand, we can further restrict the inner integral to impose
$s + t \le r$ at an additive cost of order at most $tT\|f\|_E^2 \|g\|_E^2$. On that smaller domain, we can then rewrite 
the integrand as
\begin{equ}
\E \big(f_s g_{s+t} \big(P_{r-s-t} (f P_t g) - \scal{f,P_t g}_\mu\big)(Y_{s+t})\big)\;.
\end{equ}
By the spectral gap assumption applied to $fP_tg\in E_2$, we have the bound
\begin{equ}
\|P_{r-s-t} (f P_t g) - \scal{f,P_t g}_\mu\|_{E_2}
\lesssim e^{-c(r-s-t)} \|f P_t g\|_{E_2} 
\lesssim e^{-c(r-s-t)} \|f\|_E \|g\|_{E_1}\;.
\end{equ}
Combining this again with Hölder's inequality, $E\subset L^4$, and $E_2 \subset L^2$,
we conclude that the integrand is of order $e^{-c(r-s-t)} \|f\|_E^2 \|g\|_E^2$, thus 
yielding a contribution to the integral of order $T\|f\|_E^2 \|g\|_E^2$ as desired.
\end{proof}

\subsection{Identification of the first-order process }


We treat separately the cases $H < \f12$ and $H>\f12$, while the case
$H = \f12$ is straightforward and will be considered when we put both cases 
together in \Cref{convergence-drivers} below.

\subsubsection{The low regularity case}

Let $H\in (\f 13, \f12)$.
Conditional on $Y$, the process $J_{s,t}^\epsilon (f)=\eps^{\f 12 -H}\int_s^t f(Y_r^\epsilon)dB(r)$ 
is centred Gaussian. In order to identify its limiting distribution, it 
thus suffices to show that its conditional covariances converge to a limit
that is independent of $Y$. This is the content of the following result. 

\begin{proposition}\label{prop:convFirst}
Let Assumptions~\ref{ass:ergodic} and ~\ref{ass:integrability} hold for $H\in (\f 13, \f 12)$, and  let \Cref{ass:continuous} hold for some $p_\star\ge 4$.
Let $f, g \in  E$    and let $u<v$ and $s < t$. Then, we have
$$ 
\lim_{\eps \to 0}\E\bigl( J^\eps_{s,t}(f)J^\eps_{u,v}(g)\,|\,\CF^Y \bigr) =  
 |[s,t] \cap [u,v]| \,C(f,g)\;,
$$
in $L^2$, where 
$C(f,g) =\f 12 \Gamma(2H+1)\bigl(\scal{f, \CL^{1-2H} g} _\mu+ \scal{\CL^{1-2H} f, g}_\mu\bigr)$.  
\end{proposition}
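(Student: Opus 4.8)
The plan is to compute the conditional covariance $A^\eps := \E\bigl(J^\eps_{s,t}(f)\,J^\eps_{u,v}(g)\mid\CF^Y\bigr)$ explicitly and show that it converges in $L^2$ \emph{directly} to the asserted constant, rather than splitting off a mean and a fluctuation. Since $B$ is independent of $\CF^Y$, conditionally on $\CF^Y$ the pair $\bigl(J^\eps_{s,t}(f),J^\eps_{u,v}(g)\bigr)$ is centred Gaussian, and exactly as in the computation of $\E\bigl(I^\eps_{s,t}(f)^2\mid\CF^Y\bigr)$ in the proof of \Cref{prop:tight1st} (use that the mixed second distributional derivative of the covariance of a fractional Brownian motion is $\tfrac12\eta''$, with $\eta$ as in \eqref{e:defEta}), the substitution $r=\eps\sigma$ together with the $(2H-2)$-homogeneity of $\eta''$ gives
\[
A^\eps = \tfrac{\eps^{1-2H}}{2}\int_s^t\!\!\int_u^v f(Y^\eps_r)g(Y^\eps_{r'})\,\eta''(r-r')\,dr\,dr'
= \tfrac{\eps}{2}\int_{s/\eps}^{t/\eps}\!\!\int_{u/\eps}^{v/\eps} f(Y_\sigma)g(Y_{\sigma'})\,\eta''(\sigma-\sigma')\,d\sigma\,d\sigma'\;.
\]

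Next I would resolve the diagonal singularity of $\eta''$ by applying \Cref{distributional-derivative} to the inner $\sigma'$-integral at fixed $\sigma$. Writing $\alpha_H=H(1-2H)$ and $\Delta^\eps=[\tfrac{s\vee u}{\eps},\tfrac{t\wedge v}{\eps}]$ for the (rescaled) overlap, this yields
\[
A^\eps = -\alpha_H\,\eps\int_{\Delta^\eps}\!\int_{\R}|w|^{2H-2}\,f(Y_\sigma)\bigl(g(Y_{\sigma+w})-g(Y_\sigma)\bigr)\,dw\,d\sigma + \mathcal R^\eps\;,
\]
where $\mathcal R^\eps$ collects (i) the boundary terms $\eps\int_{\Delta^\eps} f(Y_\sigma)g(Y_\sigma)\bigl(|\tfrac u\eps-\sigma|^{2H-1}+|\tfrac v\eps-\sigma|^{2H-1}\bigr)d\sigma$, (ii) the contribution of $\sigma\in[\tfrac s\eps,\tfrac t\eps]\setminus[\tfrac u\eps,\tfrac v\eps]$ (where $\eta''$ is a genuine function and no subtraction occurs), and (iii) the error from replacing the $w$-range $[\tfrac u\eps-\sigma,\tfrac v\eps-\sigma]$ by $\R$. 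Bounding each of (i)--(iii) crudely, using $\|f(Y_\sigma)\|_{L^{p_\star}},\|g(Y_\sigma)\|_{L^{p_\star}}\lesssim\|f\|_E,\|g\|_E$ and $\int_b^\infty|w|^{2H-2}dw\lesssim b^{2H-1}$, gives $\|\mathcal R^\eps\|_{L^2}\lesssim\eps^{1-2H}\to0$, where $2H-1>0$ is used decisively.

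For the main term, \Cref{ass:continuous} gives, by stationarity, $\|g(Y_{\sigma+w})-g(Y_\sigma)\|_{L^{p_\star}}=\|g(Y_{|w|})-g(Y_0)\|_{L^{p_\star}}\lesssim\|g\|_{E_2}(|w|^H\wedge1)$, so that (by Hölder and $E\subset L^4$) the inner $\sigma$-average obeys, uniformly in $\eps$,
\[
\Bigl\|\eps\int_{\Delta^\eps} f(Y_\sigma)\bigl(g(Y_{\sigma+w})-g(Y_\sigma)\bigr)d\sigma\Bigr\|_{L^2}\lesssim |[s,t]\cap[u,v]|\,\|f\|_E\|g\|_E\,(|w|^H\wedge1)\;.
\]
Hence the $dw$-integrand in the main term is dominated by the integrable function $|w|^{2H-2}(|w|^H\wedge1)$ (here $3H-2>-1$ at the origin and $2H-2<-1$ at infinity), which legitimises passing to the limit under $\int_\R(\cdot)\,dw$. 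For each fixed $w$, the ergodic theorem --- \Cref{ass:ergodic} ensures that $\mu$ is the unique invariant measure, and the $L^4$-bound from \Cref{ass:integrability} upgrades the convergence to $L^2$ (alternatively invoke the quantitative \Cref{lem:LLN}) --- gives $\eps\int_{\Delta^\eps}f(Y_\sigma)g(Y_{\sigma+w})\,d\sigma\to|[s,t]\cap[u,v]|\,R_\star(w)$ in $L^2$, with $R_\star(w)=\langle f,P_wg\rangle_\mu$ for $w\ge0$ and $R_\star(w)=\langle g,P_{|w|}f\rangle_\mu$ for $w<0$. Therefore $A^\eps\to-\alpha_H\,|[s,t]\cap[u,v]|\int_\R|w|^{2H-2}\bigl(R_\star(w)-R_\star(0)\bigr)dw$ in $L^2$.

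It remains to identify the constant. Splitting at $w=0$ and recalling from \Cref{def:fracPower} that $\langle f,\CL^{1-2H}g\rangle_\mu=\tfrac1{\Gamma(2H-1)}\int_0^\infty w^{2H-2}\langle f,P_wg-g\rangle_\mu\,dw$ --- the integral converging since $E\subset E_2\subset\Dom(\CL^{1-2H})$ by \Cref{rem:fracL} (as $1-2H<H$ when $H>\tfrac13$) --- one gets
\[
\int_\R|w|^{2H-2}\bigl(R_\star(w)-R_\star(0)\bigr)dw=\Gamma(2H-1)\bigl(\langle f,\CL^{1-2H}g\rangle_\mu+\langle g,\CL^{1-2H}f\rangle_\mu\bigr)\;,
\]
and since $-\alpha_H\,\Gamma(2H-1)=H(2H-1)\Gamma(2H-1)=H\,\Gamma(2H)=\tfrac12\Gamma(2H+1)$, the limit equals $|[s,t]\cap[u,v]|\cdot\tfrac12\Gamma(2H+1)\bigl(\langle f,\CL^{1-2H}g\rangle_\mu+\langle\CL^{1-2H}f,g\rangle_\mu\bigr)=|[s,t]\cap[u,v]|\,C(f,g)$, as claimed. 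The main obstacle is the bookkeeping in the second step: controlling the distributional part of $\eta''$ on the diagonal together with the boundary effects of the finite integration windows, and making the passage to the limit under $\int_\R dw$ rigorous --- which is precisely where \Cref{ass:continuous}, with its sharp Hölder exponent forcing $H>\tfrac13$, enters.
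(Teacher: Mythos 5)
Your argument is correct and rests on the same core ingredients as the paper's proof: the representation of the conditional covariance through $\eta''$, \Cref{distributional-derivative} to subtract the diagonal, \Cref{ass:continuous} to tame the singularity at zero lag, ergodic averaging (\Cref{lem:LLN} or the mean ergodic theorem) at each fixed lag, and \Cref{def:fracPower} to identify the constant. The organisation is genuinely different, though: the paper reduces to the two cases $[u,v]=[s,t]$ and $[s,t)\cap[u,v)=\emptyset$, uses the symmetric change of variables and an explicit splitting of the lag integral at $\eps^\kappa$ with quantitative error estimates, whereas you treat general intervals in one go and replace the splitting by dominated convergence in the lag variable $w$, with the uniform-in-$\eps$ dominating function $|w|^{2H-2}(|w|^H\wedge 1)$ supplied by \Cref{ass:continuous}, H\"older's inequality and $E\subset L^4$; your route buys a single unified limit argument (and makes the role of $3H>1$ transparent), at the price of the boundary\slash off-overlap\slash tail remainder bookkeeping, all of which you correctly estimate as $O(\eps^{1-2H})$ just as the paper does for its boundary term and disjoint-interval case. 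Two small repairs: (i) the parenthetical ``where $2H-1>0$ is used decisively'' has the inequality backwards --- the remainders vanish precisely because $1-2H>0$, i.e.\ $H<\f12$, which is also what makes $\int_b^\infty |w|^{2H-2}\,dw\lesssim b^{2H-1}$ legitimate; (ii) when you extend the $w$-range to all of $\R$, the integrand involves $Y_{\sigma+w}$ with $\sigma+w$ possibly negative, so you should either pass to the two-sided stationary extension of $Y$, or keep the finite window and observe that for each fixed $w$ the excluded set of $\sigma$'s is a boundary layer of length at most $|w|$, whose contribution after the $\eps$-normalisation is $O(\eps|w|)$ and hence negligible.
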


\begin{proof} 
We work in components it is therefore again sufficient to assume that $m=1$. 
We first consider the case $[u,v] = [s,t]$.
A straightforward calculation similar to the one given in \eqref{e:limitExpZ} shows that, setting $\alpha_H = H(1-2H)$, 
one has the identity
\begin{equation}\label{distributional-derivative2} \begin{aligned}
\E &\bigl(J^\eps_{s,t}(f) J^\eps_{s,t}(g) \,|\, \CF^{Y}  \bigr)\\
&=\f {-\alpha_H}2 \eps^{1-2H}\!\!\! \int_{s-t}^{t-s}\! |v|^{2H-2}\!\!\!  \int_{2s+|v|}^{2t-|v|}  \bigl(f(Y^\eps_{u+v\over 2})g(Y^\eps_{u-v \over 2}) - f(Y^\eps_{\f u 2})g(Y^\eps_{\f u 2})\bigr)\,du\,dv \\
&\qquad  +H\eps^{1-2H}  \int_0^{t-s}u^{2H-1} \bigl((fg)(Y^\eps_{(2t-u)/2}) + (fg)(Y^\eps_{(2s+u)/2})\bigr)\,du\;.
\end{aligned}
\end{equation}
Provided that $2p \le p_\star$, the size of the $L^p$-norm of the last term is at most of order
${\|f\|_E\|g\|_E|t-s|^{2H}\eps^{1-2H}}$, so that it does not contribute to the limit
considered in the statement.
(Note however that in the special case $H = {1\over 2}$ which we do not consider here,
this term is the only surviving one,
which contributes to the fact that the conclusion of the lemma still holds in this case.)

Regarding the first term, we note that changing the sign of $v$ is the same as swapping $f$ and $g$.
Taking $s=0$ (by stationarity) and performing a change of variables, 
it remains to show that, for any fixed $t > 0$,
\begin{equs}\label{in-case}
\epsilon\int_{0}^{\f t\epsilon } 
 \int_{v}^{{2t\over \eps}-v} v^{2H-2} &\bigl(f(Y_{(u+v)/2})g(Y_{(u-v)/2}) - f(Y_{\f u 2})g(Y_{\f u 2})\bigr)\,du \,dv\\
&\to -t \f{ \Gamma(2H+1)}{ H(1-2H)}  \<f, \CL^{1-2H} g\> _\mu
\end{equs}
 in $L^2$ as $\eps \to 0$.
 We set
$$\hat I_\eps = -\f 12 {\alpha_H} \epsilon\int_{0}^{\f t\epsilon } v^{2H-2} G_v\,dv\;,$$
 with
$$
G_{v} = \int_{v}^{{2t\over \eps}-v} \bigl(f(Y_{(u+v)/2})g(Y_{(u-v)/2}) - f(Y_{\f u 2})g(Y_{\f u 2})\bigr)\,du\;.
$$
To show that $
\lim_{\eps \to 0} \hat I_\eps =  \f{t}{2} \Gamma(2H+1)\scal{f, \CL^{1-2H} g} _\mu\;$
in $L^2$, we treat the values of $v$ close to the singularity 
separately from the others, so we fix some (eventually sufficiently small) exponent $\kappa$.
For ``small'' values of $v$, we then have the bound
\begin{equs}
\epsilon \Big\|\int_{0}^{\epsilon^\kappa} v^{2H-2} G_v\,dv\Big\|_{L^2}
&\lesssim t \|f\|_E\|g\|_E \int_{0}^{\epsilon^\kappa}v^{2H-2} \bigl(1 \wedge v^H\bigr)\,dv \\
&\lesssim \eps^{(3H-1)\kappa} t \|f\|_E\|g\|_E\;,
\end{equs}
which converges to $0$ as desired for any fixed $\kappa > 0$ since $H > \f13$.

For the remaining values of $v$, we apply Lemma~\ref{lem:LLN}, which yields the bound
\begin{equ}
\Big \|G_{v} - 2\Big({t\over \eps}-|v|\Big)\bigl(\scal{P_v f,g}_\mu-\scal{f,g}_\mu\bigr)\Big\|_{L^2}
\lesssim \sqrt{\eps^{-1}(1+|v|)(t-\eps |v|)} \|f\|_E\|g\|_E\;.
\end{equ}
For $\kappa<\f 1{ 2(1-2H)}$, we furthermore have the bound 
\begin{equs}
\eps \int_{\eps^\kappa}^{\f{t}{\eps}} & v^{2H-2}\sqrt{\eps^{-1}(1+|v|)(t-\eps |v|)}\,dv
\lesssim \sqrt{\eps t} \int_{\eps^\kappa}^{\f{t}{\eps}} \bigl(v^{2H-2} + v^{2H-\f32}\bigr)\,dv \\
&\lesssim \sqrt{\eps t} \Bigl(\int_{\eps^\kappa}^{\infty} v^{2H-2}\,dv + \int_{0}^{\f{t}{\eps}} v^{2H-\f32}\,dv\Bigr)
\lesssim \eps^{\f12 -\kappa(1-2H)}\sqrt t  + \eps^{1-2H} t^{2H}\;,
\end{equs}
which converges to $0$ as $\eps \to 0$ for every fixed $t$. 
 We conclude that 
\begin{equs}
\lim_{\eps \to 0}\hat I_\eps 
&= -\alpha_H \lim_{\eps \to 0} \int_{\eps^\kappa}^{\f t\eps} v^{2H-2} \big( t- \eps |v|\big)\bigl(\scal{P_v f,g}_\mu-\scal{f,g}_\mu\bigr)\,dv \\
&= -\alpha_H \lim_{\eps \to 0} \int_{0}^{\infty} v^{2H-2} t \bigl(\scal{P_v f,g}_\mu-\scal{f,g}_\mu\bigr)\,dv \\
&=  {t\over 2}\Gamma(2H+1)\scal{f, \CL^{1-2H} g} _\mu\;,
\end{equs} 
holds in $L^2$, as claimed.  

We now consider the case when $[s,t) \cap [u,v) = \emptyset$ and assume without loss of generality
that $t \le u$. We then have
\begin{equs}
\big|\E \bigl(I^\eps_{s,t}(f) I^\eps_{u,v}(g) \,|\, \CF^{Y} \bigr) \big|
&= \alpha_H\Big|\int_s^t \int_{u}^{v} |r-\bar r|^{2H-2} f(Y^\eps_r)g(Y^\eps_{\bar r})\,d\bar r\,dr\Big|\\
&\lesssim \int_s^t \int_{u}^{v} |r-\bar r|^{2H-2}\,d\bar r\,dr < \infty\;.
\end{equs}
Since this is multiplied by $\eps^{1-2H}$, it follows that 
$\E \bigl(J^\eps_{s,t}(f) J^\eps_{u,v}(g) \,|\,\CF^Y \bigr) \to 0$ in that case.
The general case then follows immediately since we have 
$J^\eps_{s,t}(f) = J^\eps_{s,u}(f) + J^\eps_{u,t}(f)$
for any $s\le u\le t$, so that it can be reduced to the two cases we just treated.
\end{proof}

\subsubsection{The high regularity case}
Let $H>\f 12$. We first show that the process $Z$ is Gaussian with covariance given by \eref{e:covar}.
For this we recall relations between cumulants and expectations. Fix a finite set $A$ as well as elements $f_a \in E$ and intervals $[s_a, t_a] \subset \R$
for every $a \in A$.
Given a subset $B \subset A$, we write $\CG(B)$ for the set of pairs $(\Delta,p)$ where 
$\Delta \in  \CP(B)$ is a partition of $B$ without singletons and $p$ is a pairing of $B$ 
(i.e.\ $p\in \CP(B)$ contains only sets of size two).
We also write $[s,t]_B \subset \R^B$ for the domain $\bigtimes_{a \in B} [s_a,t_a]$. 
Given $G = (\Delta, p) \in \CG(B)$, we then set
\begin{equ}
J_G^\eps = (-\eps^{\f 12-H}\alpha_H)^{|B|}\int_{[s,t]_B} \prod_{B' \in \Delta} \E_c \big(f_a(Y_{r_a}^\eps)\,:\, a\in B'\big)  \prod_{\{a,b\} \in p} |r_a - r_b|^{2H-2}\,dr\;.
\end{equ}
In order to extract a formula for the joint cumulants of the $J^\eps_{s,t}$'s, 
we note that if $B_1 \cap B_2 = \emptyset$ and
$G_i \in \CG(B_i)$, one has
\begin{equ}[e:multiplicative]
J_{G_1 \sqcup G_2}^\eps  = J_{G_1}^\eps\cdot J_{G_2}^\eps\;,
\end{equ}
where $G_1 \sqcup G_2 \in \CG(B_1 \sqcup B_2)$ denotes the natural concatenation of $G_1$ and $G_2$.
We furthermore write $\CG_c(B) \subset \CG(B)$ for the set of ``connected'' elements, namely
those pairs $G = (\Delta,p)$ such that $G^\vee \eqdef \Delta \vee p = \{B\}$,
 where we use the usual 
lattice structure of the set of partitions of $B$. 

\begin{lemma} Let \Cref{ass:integrability} holds for $H>\f 12$ and $f_a\in E$, then for every $B\subset A$,
\begin{equ}
\E_c \big(J^\eps_{s_a,t_a}(f_a)\,:\, a \in B\big) = \sum_{G \in \CG_c(B)} J_G^\eps\;.
\end{equ}
\end{lemma}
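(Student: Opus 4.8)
The plan is to derive the cumulant formula from the product formula \eqref{e:multiplicative} together with the moment-cumulant relations \eqref{e:idenCumul}, exploiting the fact that both the Gaussian structure of $B$ and the product structure of the $J_G^\eps$ are compatible with the partition lattice. First I would fix the set $B$ and note that, since $B$ is independent of $\CF^Y$ and $J^\eps_{s_a,t_a}(f_a) = \eps^{\f12-H}\int_{s_a}^{t_a} f_a(Y^\eps_r)\,dB(r)$ is, conditionally on $Y$, a centred Gaussian, Wick's formula gives
\begin{equ}
\E\Big(\prod_{a \in B} J^\eps_{s_a,t_a}(f_a)\,\Big|\,\CF^Y\Big) = \sum_{p \in \CP_2(B)} \prod_{\{a,b\}\in p} \E\big(J^\eps_{s_a,t_a}(f_a) J^\eps_{s_b,t_b}(f_b)\,\big|\,\CF^Y\big)\;,
\end{equ}
where $\CP_2(B)$ is the set of pairings (zero unless $|B|$ is even). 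Each conditional covariance, by the computation underlying \eqref{e:limitExpZ} and \eqref{distributional-derivative2}, equals $-\eps^{1-2H}\alpha_H\int_{[s,t]_{\{a,b\}}} f_a(Y^\eps_{r_a}) f_b(Y^\eps_{r_b}) |r_a-r_b|^{2H-2}\,dr$ (up to the boundary terms, which I would absorb into the same notation as in the excerpt, or handle by the same mollification convention). Taking the expectation over $Y$ and expanding $\E\big(\prod_{a} f_a(Y^\eps_{r_a})\big)$ via the first identity in \eqref{e:idenCumul} as a sum over partitions $\Delta$ of $B$ into a product of cumulants, and combining with the sum over pairings $p$, yields exactly $\E\big(\prod_{a\in B} J^\eps_{s_a,t_a}(f_a)\big) = \sum_{(\Delta,p)\in\CG(B)} J^\eps_{(\Delta,p)}$, where the singleton-free condition on $\Delta$ comes for free since any singleton block contributes a factor $\E f_a(Y^\eps) = \int f_a\,d\mu = 0$.

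Next I would invert this moment formula to get the cumulant formula. The key structural observation is the following: the map $(\Delta,p)\mapsto G^\vee = \Delta\vee p$ partitions $\CG(B)$ according to the partition of $B$ it generates, and if $G^\vee = \{B_1,\dots,B_\ell\}$ then $G$ decomposes as a disjoint concatenation $G_1\sqcup\cdots\sqcup G_\ell$ with $G_i\in\CG_c(B_i)$, so by \eqref{e:multiplicative} one has $J_G^\eps = \prod_i J_{G_i}^\eps$. Therefore
\begin{equ}
\E\Big(\prod_{a\in B} J^\eps_{s_a,t_a}(f_a)\Big) = \sum_{\Pi \in \CP(B)} \prod_{\bar B \in \Pi} \Big(\sum_{G\in\CG_c(\bar B)} J_G^\eps\Big)\;.
\end{equ}
Comparing this with the first identity of \eqref{e:idenCumul}, namely $\E X^B = \sum_{\Pi\in\CP(B)} \prod_{\bar B\in\Pi} \E_c X_{\bar B}$ with $X_a = J^\eps_{s_a,t_a}(f_a)$, and using the fact that the moment-cumulant correspondence is a bijection (the Möbius inversion on the partition lattice, equivalently the second identity in \eqref{e:idenCumul}), one concludes by induction on $|B|$ that $\E_c\big(J^\eps_{s_a,t_a}(f_a) : a\in B\big) = \sum_{G\in\CG_c(B)} J_G^\eps$ for every $B$.

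The routine parts are: checking that the mollified covariance identity for the conditional second moment passes to the limit $\delta\to 0$ with the boundary-term convention of Lemma~\ref{distributional-derivative} (this is just a restatement of the computation already done for \eqref{e:limitExpZ}), and verifying that all the integrals converge absolutely so that Fubini is legitimate — here $H>\f12$ makes $|r_a-r_b|^{2H-2}$ locally integrable and the cumulant decay from Proposition~\ref{prop:boundCumul} controls the large scales, while the integrability assumptions in Assumption~\ref{ass:integrability} give the needed moments of the $f_a(Y^\eps)$. The one point requiring genuine care, and the main obstacle, is the combinatorial bookkeeping in the second step: one must check that the decomposition of a connected-pair-partition pair under refinement by $G^\vee$ is precisely into $\CG_c$ pieces and that this decomposition is compatible with \eqref{e:multiplicative} — i.e. that $\CG(B) = \bigsqcup_{\Pi\in\CP(B)}\prod_{\bar B\in\Pi}\CG_c(\bar B)$ as a "partition-graded" set. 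Once that identification is in hand, the cumulant formula is a formal consequence of Möbius inversion and there is nothing further to prove.
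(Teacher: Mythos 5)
Your proposal is correct and follows essentially the same route as the paper: conditional Wick formula plus the cumulant expansion of $\E\prod_a f_a(Y^\eps_{r_a})$ to get the moment identity $\E\prod_{a\in B} J^\eps_{s_a,t_a}(f_a)=\sum_{G\in\CG(B)}J_G^\eps$, then grouping by $G^\vee$, factorising via \eqref{e:multiplicative}, and comparing with the first identity in \eqref{e:idenCumul} (Möbius inversion) to identify the connected sum as the joint cumulant. The only remark is that for $H>\f12$ no boundary terms arise (as $\eta''$ is locally integrable), and the centering $\int f_a\,d\mu=0$, which you invoke to discard singleton blocks, is indeed implicitly in force here as in the surrounding context.
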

\begin{proof}
As a consequence of \eqref{e:multiplicative}, we can write
\begin{equs}
\E \prod_{a \in A} J^\eps_{s_a,t_a}(f_a) &= \sum_{G \in \CG(A)} J_G^\eps
= \sum_{\Delta' \in \CP(A)} \sum_{G \in \CG(A)\,:\, G^\vee = \Delta'} J_G^\eps \\
&= \sum_{\Delta' \in \CP(A)} \prod_{B \in \Delta'} \sum_{G \in \CG_c(B)} J_G^\eps\;.
\end{equs}
Comparing this to the first identity in \eqref{e:idenCumul}, we conclude the proof.
\end{proof}

This allows to conclude:
\begin{lemma}\label{lemma:convergence-high}
Let Assumptions \ref{ass:ergodic} and \ref{ass:integrability} hold for $H > \f12$. 
Let $f_a\in E$ with $\int f_a d\mu=0$. Then, for any index set $B$ with more than two elements,
\begin{equ}[e:convZeroCumul]
\E_c \big(\lim_{\eps\to 0} J^\eps_{s_a,t_a}(f_a)\,:\, a \in B\big) =0\;.
\end{equ}
Consequently, the limits $\{J_{s_a,t_a}(f_a)\}_{a\in B}$ are jointly Gaussian with covariance  
$|[s_a,t_a] \cap [s_b,t_b]| \,C(f_a,f_b)$.
\end{lemma}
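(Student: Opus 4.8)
The plan is to feed the formula $\E_c\big(J^\eps_{s_a,t_a}(f_a):a\in B\big)=\sum_{G\in\CG_c(B)}J_G^\eps$ from the preceding lemma into the graph power-counting machinery of Section~\ref{sec:tightHigh}. If $|B|$ is odd there is no pairing of $B$, hence $\CG_c(B)=\emptyset$ and the cumulant vanishes identically; so one may assume $|B|=2q$ with $q\ge 2$, and it suffices to prove $J_G^\eps\to0$ for each fixed $G=(\Delta,p)\in\CG_c(B)$.

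First I would rescale $r_a=\eps\sigma_a$ in the integral defining $J_G^\eps$. Using $Y^\eps_r=Y_{r/\eps}$, the $2q$ factors of $\eps^{1/2-H}$, the Jacobian $\eps^{2q}$ and the $q$ identities $|r_a-r_b|^{2H-2}=\eps^{2H-2}|\sigma_a-\sigma_b|^{2H-2}$ combine into a single overall factor $\eps^q$, so that
\[
J_G^\eps=\alpha_H^{2q}\,\eps^q\int_{[s/\eps,t/\eps]_B}\ \prod_{\bar A\in\Delta}\E_c\big(f_a(Y_{\sigma_a}):a\in\bar A\big)\ \prod_{\{a,b\}\in p}|\sigma_a-\sigma_b|^{2H-2}\,d\sigma\;.
\]
Then I would bound the integral exactly as in the proof of Proposition~\ref{prop:basicBound}: by Proposition~\ref{prop:boundCumul} the integrand is dominated by $K^{\CG_G}$, where $\CG_G$ is the labelled graph on vertex set $B$ with pairing edges $\CE_p$ (kernel $|\cdot|^{2H-2}$, exponents $\alpha_\pm=2H-2$) and within-block edges $\CE_\Delta=\{(u,v):u\sim v\}$ (kernel $\lesssim e^{-c|\cdot|}$, exponents $\alpha_-=0$, $\alpha_+=-2$); this graph is regular, and the connectedness hypothesis $\Delta\vee p=\{B\}$ makes it connected, so it has a single connected component. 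Choosing $\CT\subset\CE_p$ to be a lift of a spanning tree of the quotient $\CG_G/\Delta$ (whence $|\CT|=|\Delta|-1$), fixing $\beta_0\in(2H-1,1)$, and setting $\beta=\beta_0$ on $\CT$ and $0$ elsewhere, the subgraph $\CT\cup\CE_\Delta$ contains a spanning tree all of whose edges $e$ satisfy $\alpha_+(e)-\beta(e)<-1$; by Lemma~\ref{lem:spanningTree} and Corollary~\ref{cor:defect} with $L\asymp\eps^{-1}$ this yields $\big|\int_{[s/\eps,t/\eps]_B}\cdots\big|\lesssim\eps^{-1-\beta_0(|\Delta|-1)}$, hence $|J_G^\eps|\lesssim\eps^{\,q-1-\beta_0(|\Delta|-1)}$. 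Since $\Delta$ has no singletons, $|\Delta|\le q$, and since $\beta_0<1$ and $q\ge2$ the exponent is bounded below by $(q-1)(1-\beta_0)>0$, so $J_G^\eps\to0$ and \eqref{e:convZeroCumul} follows.

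For the "consequently" part: tightness of $\{J^\eps_{s_a,t_a}(f_a)\}$ is Proposition~\ref{prop:tight}, and Corollary~\ref{tight-estimate-regular} together with the conditional Gaussianity of $J^\eps_{s,t}(f)$ given $Y$ bounds all its moments, uniformly in $\eps$, by those of a centred Gaussian. Hence along any convergent subsequence the joint moments, and so the joint cumulants, pass to the limit; by the first part all cumulants of the limit of order at least three vanish, while for two indices the preceding lemma together with Proposition~\ref{prop:basicBound} applied with $p=1$ (after the same rescaling, using \eqref{covariance-large-H}) identifies $\lim_\eps\E_c\big(J^\eps_{s_a,t_a}(f_a),J^\eps_{s_b,t_b}(f_b)\big)$ with $|[s_a,t_a]\cap[s_b,t_b]|\,C(f_a,f_b)$. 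A random vector whose moments grow no faster than Gaussian and whose cumulants of order $\ge3$ all vanish is the centred Gaussian determined by its covariance, so the limit is as claimed and, being subsequence-independent, is the full limit.

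I expect the graph power-counting in the second paragraph to be the main obstacle: one has to exhibit a single defect function $\beta$, supported on a spanning tree, that is simultaneously admissible for Corollary~\ref{cor:defect} (i.e. $\alpha_+-\beta<-1$ along that tree, forcing $\beta_0>2H-1$) and small enough that the leftover power $q-1-\sum_e\beta(e)=q-1-\beta_0(|\Delta|-1)$ stays strictly positive (forcing $\beta_0<1$, using $q\ge2$ and $|\Delta|\le q$). This is essentially a rerun of the argument of Proposition~\ref{prop:basicBound}, the only change being that the fixed pairing there is replaced by an arbitrary pairing $p$; the point to check carefully is that the weaker connectedness $\Delta\vee p=\{B\}$ (rather than connectedness of the full graph on all pairs) still produces the required spanning tree $\CT$ inside $\CE_p$.
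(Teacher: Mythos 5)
Correct, and essentially the same approach as the paper: you use the cumulant expansion $\E_c\big(J^\eps_{s_a,t_a}(f_a):a\in B\big)=\sum_{G\in\CG_c(B)}J_G^\eps$ together with the graph power-counting machinery of Section~\ref{sec:tightHigh} (Proposition~\ref{prop:boundCumul}, the spanning-tree defect $\beta_0\in(2H-1,1)$, Corollary~\ref{cor:defect}), which is exactly the paper's strategy. The only differences are cosmetic: where the paper observes that connectedness with $|B|>2$ forces $\Delta\neq p$ and then quotes the bound established in Proposition~\ref{prop:basicBound}, you re-run the power counting directly and extract the decay $\eps^{(q-1)(1-\beta_0)}$ from connectedness of the quotient graph (which is a valid rerun of the same argument), and for the Gaussian identification you make explicit the tightness and uniform moment bounds that justify passing cumulants to the limit, a step the paper leaves terse.
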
 
\begin{proof}
Let $G = (\Delta,p) \in  \CG_c(B)$ with $|B|$ even and note that $J_G^\eps$ can be written as
\begin{equ}
J_G^\eps
 = (-\alpha_H)^{|B|}\eps^{|B|/2}\int_{[\f{s}{\epsilon},\f{t}{\epsilon}]_B} \prod_{\bar A \in \Delta} \E_c \big(f_a(Y_{r_a})\,:\, a\in \bar A\big)  \prod_{\{a,b\} \in p} |r_a - r_b|^{2H-2}\,dr\;.
\end{equ}
We then note that if $|B| > 2$, elements $(\Delta,p) \in \CG_c(B)$ are always such that 
$\Delta \neq p$.
We can therefore apply  \Cref{prop:basicBound} with $2p=|B|$, which shows that
\begin{equ}
|J_G^\eps| \lesssim \eps^{|B|/2}\eps^{\kappa -\f{|B|}2}\;,
\end{equ}
which converges to $0$, thus yielding \eqref{e:convZeroCumul} as claimed. 
By \Cref{prop:basicBound},
for any  $f_i \in  E$, $u<v$ and $s < t$ we have
$$ 
\lim_{\eps \to 0}\Big|\E\bigl ( J^\eps_{s,t}(f_i)J^\eps_{u,v}(f_j)\bigr) -
 |[s,t] \cap [u,v]| \,C(f_i,f_j)\Big| = 0\;.
$$
Since Gaussian processes are characterised by the fact that their joint cumulants of order three or higher 
all vanish, the last claim follows.
\end{proof}

\subsection{Convergence of the second-order process }
\label{subsection-convergence}

In this section we assume that $H\in (\f 13,1)$. If $H=\f 12$, $J_t^\epsilon= \int_0^t f(Y_r^\epsilon) dB_r$ is already a local martingale, otherwise we make a
decomposition as follows.
Write $\bar B^t_r = \E \bigl(B_r - B_t \,|\, \CF_t^B\bigr)$ for $r > t$ and write
\begin{equ}
J_t^\eps(f) =\eps^{\f12-H} \int_0^t f(Y_r^\epsilon) dB_r=M_t^f + R_t^f\;,
\end{equ}
where, setting $\bar f = \int_\CY f(x)\,\mu(dx)$ and $\tilde f = f - \bar f$,
\begin{equs}
M_t^f &= \eps^{\f12-H}\int_0^t \tilde f(Y_r^\eps)\,dB_r + \eps^{\f12-H}\int_t^\infty \bigl(P_{r-t \over \eps}\tilde f\bigr)(Y_t^\eps)\,d\bar B^t_r\;, \label{e:defMart}\\
R_t^f &= \eps^{\f12-H}\bar f B_t - \eps^{\f12-H}\int_t^\infty \bigl(P_{r-t \over \eps}\tilde f\bigr)(Y_t^\eps)\,d\bar B^t_r\;.\label{e:defRem}
\end{equs}
The convergence of the integral is guaranteed by the fact that $|\dot {\bar B}_t^r|\lesssim {(r-t)^{(H-1)-}}$ 
and  $P_t \tilde f \to 0$
exponentially fast, thanks to the centering condition on $\tilde f$. 
This clearly illustrates why have no need to assume that the coefficient $F(x,\cdot)$ itself is centred for $H < \f12$
since the first term in $R_t^f$ obviously converges to $0$ in that case. For $H=\f 12$, we 
write $M_t^f=J_t^\epsilon(f)$ and $R_t^f=0$.

\begin{lemma}\label{lem:boundR}
Let $H\in (\f 13, 1)$, let $p>1$ and $E\subset L^{2p}$, and let (\ref{e:spectralGap}) holds for $n=1$
For every $f \in E$ and and every $t \ge 0$, one has $\lim_{\eps \to 0} R_t^f = 0$ in $L^q$ for any $q\in [1,p)$.
 Furthermore,   $\bigl \| \eps^{-H}\int_t^\infty \bigl(P_{r-t \over \eps}\tilde f\bigr)(Y_t^\eps)\,d\bar B^t_r\|_{L^q} \lesssim  \|\tilde f\|_E$.
\end{lemma}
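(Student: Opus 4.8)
This statement has two parts, and since the convergence $R_t^f\to0$ is an easy consequence of the quantitative estimate, the plan is to prove the estimate first and deduce the convergence at the end. Recalling \eqref{e:defRem}, write
\begin{equ}
R_t^f=\eps^{\f12-H}\bar f\,B_t-\eps^{\f12-H}X_\eps\;,\qquad X_\eps=\int_t^\infty\bigl(P_{\frac{r-t}{\eps}}\tilde f\bigr)(Y_t^\eps)\,d\bar B^t_r\;.
\end{equ}
For the first summand: when $H=\f12$ one has $R_t^f=0$ by convention; when $H\in(\f13,\f12)$ the prefactor $\eps^{\f12-H}\to0$ while $\|\bar f\,B_t\|_{L^q}\le|\bar f|\,\|B_t\|_{L^q}<\infty$ (with $|\bar f|\le\|f\|_{L^1(\mu)}\lesssim\|f\|_E$); and when $H>\f12$ the standing centering assumption forces $\bar f=0$. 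In every case it tends to $0$ in $L^q$. For the second summand, $\eps^{\f12-H}X_\eps=\eps^{1/2}\cdot\eps^{-H}X_\eps$, so the ``Furthermore'' bound $\|\eps^{-H}X_\eps\|_{L^q}\lesssim\|\tilde f\|_E$ simultaneously gives the remaining part of the convergence and the second assertion. Note $\|\tilde f\|_E\lesssim\|f\|_E$ since $E$ contains the constants.

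To bound $\eps^{-H}X_\eps$ I would use the representation $X_\eps=\int_t^\infty g_\eps(r)\,\dot{\bar B}^t_r\,dr$ with $g_\eps(r)=(P_{\frac{r-t}{\eps}}\tilde f)(Y_t^\eps)$ and $\dot{\bar B}^t_r=\partial_r\bar B^t_r$, which is exactly how the integral is defined above: $r\mapsto\bar B^t_r$ is $\CF^B_t$-measurable and smooth on $(t,\infty)$, with $\|\dot{\bar B}^t_r\|_{L^2}\lesssim(r-t)^{H-1}$ near $r=t$ (integrable since $H>\f13$) and at most polynomial growth as $r\to\infty$, while $g_\eps$ decays exponentially. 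Since $g_\eps$ is $\CF^Y$-measurable and $B$ is independent of $\CF^Y$, conditionally on $\CF^Y$ the variable $X_\eps$ is centred Gaussian, so $\E(|X_\eps|^q\mid\CF^Y)=\gamma_q\,v_\eps^{q/2}$ with $\gamma_q$ the $q$-th absolute Gaussian moment and $v_\eps\eqdef\E(X_\eps^2\mid\CF^Y)$; applying Cauchy--Schwarz to $\dot{\bar B}^t$ yields the pointwise bound $0\le v_\eps\le\bigl(\int_t^\infty|g_\eps(r)|\,\|\dot{\bar B}^t_r\|_{L^2}\,dr\bigr)^2$.

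To control moments, note that since $Y^\eps_t\sim\mu$ one has $\|g_\eps(r)\|_{L^p}=\|P_{\frac{r-t}{\eps}}\tilde f\|_{L^p(\mu)}$; interpolating between the exponential bound $\|P_s\tilde f\|_{L^1(\mu)}\lesssim e^{-cs}\|\tilde f\|_E$ (from the spectral gap \eqref{e:spectralGap} on $E_1$ and the continuous inclusions $E\subset E_1\subset L^1(\mu)$, using that $\tilde f$ has mean zero) and the contraction bound $\|P_s\tilde f\|_{L^{2p}(\mu)}\le\|\tilde f\|_{L^{2p}(\mu)}\lesssim\|\tilde f\|_E$ (contractivity of $P_s$ on $L^{2p}(\mu)$ together with $E\subset L^{2p}$ from \Cref{ass:integrability}) gives $\|g_\eps(r)\|_{L^p}\lesssim e^{-c'(r-t)/\eps}\|\tilde f\|_E$ for some $c'>0$. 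Using $v_\eps^{p/2}\le\bigl(\int_t^\infty|g_\eps(r)|\|\dot{\bar B}^t_r\|_{L^2}dr\bigr)^p$, Minkowski's integral inequality, and the substitution $v=r-t$, one gets $\E v_\eps^{p/2}\le\bigl(\int_t^\infty\|g_\eps(r)\|_{L^p}\|\dot{\bar B}^t_r\|_{L^2}dr\bigr)^p\lesssim\|\tilde f\|_E^p\bigl(\int_0^\infty e^{-c'v/\eps}v^{H-1}dv\bigr)^p\lesssim(\eps^{H}\|\tilde f\|_E)^p$, the contribution of $v\ge1$ being exponentially small. By Jensen, since $q<p$, $\E v_\eps^{q/2}\le(\E v_\eps^{p/2})^{q/p}$, whence $\E|X_\eps|^q=\gamma_q\E v_\eps^{q/2}\lesssim\eps^{Hq}\|\tilde f\|_E^q$, i.e.\ $\|\eps^{-H}X_\eps\|_{L^q}\lesssim\|\tilde f\|_E$, which together with the first paragraph completes the proof.

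The step I expect to require the most care is supplying the two properties of $\bar B^t$ used above, namely that $r\mapsto\bar B^t_r$ is differentiable on $(t,\infty)$ with $\|\dot{\bar B}^t_r\|_{L^2}\lesssim(r-t)^{H-1}$ near the diagonal (and polynomial growth at infinity, which is harmless thanks to the exponential decay of $g_\eps$, so only the near-diagonal behaviour drives the rate $\eps^H$). If one only uses the weaker bound $\|\dot{\bar B}^t_r\|_{L^2}\lesssim(r-t)^{(H-1)-}$ as literally stated before the lemma, the argument instead yields $\|\eps^{-H}X_\eps\|_{L^q}\lesssim\eps^{-\delta}\|\tilde f\|_E$ for every $\delta>0$, which still suffices for $R_t^f\to0$ after multiplication by $\eps^{1/2}$; recovering the clean bound in the statement then relies on the sharp diagonal estimate for $\dot{\bar B}^t$.
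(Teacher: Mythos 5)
Your proof is correct and takes essentially the same approach as the paper: the paper rescales the integral using the self-similarity and shift invariance of fractional Brownian motion and then combines the exact bound $\E|\dot{\bar B}^t_r|^2\propto (r-t)^{2H-2}$ (which it does use, exactly the sharp diagonal estimate you anticipated needing) with the exponential decay of $\|P_s\tilde f\|$ obtained from the spectral gap together with $E\subset L^{2p}$. Your direct change of variables inside the conditional-variance estimate, instead of the scaling-in-law reduction, is only a cosmetic difference.
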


\begin{proof}
We only need to prove if for $H\not = \f 12$. The first term in the definition of $R_t^f$ obviously converges to $0$. The scale and shift
invariance of 
fractional Brownian motion shows that, in law, the second term $\eps^{\f 12-H}\int_t^\infty \bigl(P_{r-t \over \eps}\tilde f\bigr)(Y_t^\eps)\,d\bar B^t_r$ equals
\begin{equ}
\eps^{\f12}\int_0^\infty \bigl(P_{r}\tilde f\bigr)(Y)
d\bar B^0_r\;,
\end{equ}
with $Y$ a random variable with law $\mu$, independent of $\bar B^0$. 
Note now that $\dot {\bar B}^0$ is Gaussian and
 \begin{equ}
 \E |\dot {\bar B}^0_r|^2 \propto |r|^{2H-2}\;.
 \end{equ}
Furthermore since $f\in E$, $\|P_tf\|_{L^p}\lesssim |f|_E$ and $\|P_tf\|_{L^1}\to 0$, $\|P_tf\|_{L^q}\to 0$ for any $q\in [1,p)$, so that
by Cauchy--Schwarz, 
\begin{equs}
\Bigl\|\int_0^\infty \bigl(P_{r}\tilde f\bigr)(Y)
d\bar B^0_r\Bigr\|_{L^p} &\lesssim 
\int_0^\infty \|\bigl(P_{r}\tilde f\bigr)(Y)\|_{L^{2p}}|r|^{H-1}\,dr \\
&\lesssim
\|\tilde f\|_E \int_0^\infty e^{-cr} |r|^{H-1}\,dr < \infty\;,
\end{equs}
and the claim follows.
\end{proof}

\begin{lemma}\label{lemma:martingale-limit}
Let $H\in (\f 13, 1) \setminus \{\f12\}$. Let Assumption~\ref{ass:ergodic} hold for $n=1$,
let Assumption~\ref{ass:continuous} hold for some $p_\star> 2$, and let Assumption~\ref{ass:integrability} hold. 
Let $f\in E$, then the process $M_t^f$ as defined above is an $L^p$ bounded  $\CF_t^Y\vee \CF_t^B$-martingale for every $p<p_\star$ with the convention that $p_\star=\infty$ for  $H>\f 12$).
\end{lemma}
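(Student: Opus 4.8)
Since the statement excludes $H=\f12$ (the complementary case, where $M^f_t=J^\eps_t(f)$ is a genuine It\^o martingale, is trivial), the plan is to treat $H\in(\f13,\f12)\cup(\f12,1)$ simultaneously, exploiting the Volterra structure of fractional Brownian motion. I would start from Molchan's representation $B_r=\int_0^r K_H(r,s)\,dW_s$, with $W$ a standard Wiener process generating the same filtration as $B$ (so $\CF^B_t=\CF^W_t$ for every $t$), together with the fact that for a deterministic integrand $g$ in the reproducing kernel space of $B$ the Wiener integral $\int_0^u g\,dB$ equals a Wiener integral $\int_0^u(\CK_H g)\,dW$ against $W$ whose transfer operator $\CK_H$ preserves ``supported in $[0,u]$''. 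Conditioning on the whole of $Y$, this gives at once that $\int_0^u\tilde f(Y^\eps_r)\,dB_r$ is $\CF^Y_u\vee\CF^B_u$-measurable, that $r\mapsto\bar B^t_r=\int_0^t\bigl(K_H(r,s)-K_H(t,s)\bigr)\,dW_s$ is $\CF^B_t$-measurable (the integrability of $\dot{\bar B}^t$ near the diagonal being already recorded after \eqref{e:defRem}), and that, conditionally on $\CF^Y_\infty$, each summand of \eqref{e:defMart} lies in the first Wiener chaos of $W$ and hence is a centred Gaussian. In particular $M^f$ is adapted to $(\CF^Y_t\vee\CF^B_t)_{t\ge0}$.

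For integrability I would use the decomposition $M^f_t=J^\eps_t(f)-R^f_t$ together with the fact that conditionally on $\CF^Y_\infty$ the random variable $M^f_t$ is centred Gaussian, so that $\E|M^f_t|^p\asymp_p\E\bigl(V_t^{p/2}\bigr)$ with $V_t=\E(|M^f_t|^2\mid\CF^Y_\infty)$. The conditional variance of the first summand is $\f{\eps^{1-2H}}2\int_0^t\!\!\int_0^t f(Y^\eps_r)f(Y^\eps_{r'})\,\eta''(r-r')\,dr\,dr'$, whose $L^{p/2}(\Omega)$-norm is finite for $p<p_\star$ by the estimate established inside the proof of \Cref{prop:tight1st} (resp.\ by \Cref{tight-estimate-regular} when $H>\f12$, where $p_\star=\infty$ and $E\subset\bigcap_q L^q$); the conditional variance of the second summand is controlled in $L^{p/2}(\Omega)$ for $p<p_\star$ by exactly the computation in the proof of \Cref{lem:boundR}, using $\|P_u\tilde f\|_E\lesssim e^{-cu}\|\tilde f\|_E$ and the bound on $\dot{\bar B}^t$. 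Hence $M^f_t\in L^p$ for every $t\ge0$ and every $p<p_\star$; once $M^f$ is shown to be a martingale, $t\mapsto\|M^f_t\|_{L^p}$ is non-decreasing, which is the ``$L^p$-bounded'' assertion on compact time-intervals.

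The heart of the argument is the identity $\E\bigl(M^f_{t+h}\mid\CF^Y_t\vee\CF^B_t\bigr)=M^f_t$ for $h\ge0$. I would split the first summand of $M^f_{t+h}$ as $\eps^{\f12-H}\int_0^t\tilde f(Y^\eps)\,dB+\eps^{\f12-H}\bigl(\int_0^{t+h}-\int_0^t\bigr)\tilde f(Y^\eps)\,dB$, the first piece being $\CF^Y_t\vee\CF^B_t$-measurable, and then condition in two stages. Conditioning first on $\CF^Y_\infty\vee\CF^B_t$, all integrands against $B$ become deterministic, and the standard identity $\E\bigl(\int_t^{t+h}g\,dB\mid\CF^B_t\bigr)=\int_t^{t+h}g\,d\bar B^t$ — immediate from the representation above, since $\bar B^t_r-\bar B^t_t$ is the $\CF^B_t$-conditional mean of $B_r-B_t$ — replaces the increment of the first summand by $\eps^{\f12-H}\int_t^{t+h}\tilde f(Y^\eps_r)\,d\bar B^t_r$, while the tower property $\E\bigl(\bar B^{t+h}_r\mid\CF^B_t\bigr)=\bar B^t_r-\bar B^t_{t+h}$, applied to the ($\CF^Y_\infty$-measurable) integrand $(P_{(r-t-h)/\eps}\tilde f)(Y^\eps_{t+h})$, replaces $d\bar B^{t+h}_r$ by $d\bar B^t_r$ in the second summand of $M^f_{t+h}$. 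Conditioning then down to $\CF^Y_t\vee\CF^B_t$ leaves $\bar B^t$ untouched (it is $\CF^B_t$-measurable and independent of $Y$) and, by the Markov property of $Y$ and the semigroup law, turns both remaining integrands into $(P_{(r-t)/\eps}\tilde f)(Y^\eps_t)$; the integrals over $[t,t+h]$ and $[t+h,\infty)$ against $d\bar B^t$ then recombine into $\eps^{\f12-H}\int_t^\infty(P_{(r-t)/\eps}\tilde f)(Y^\eps_t)\,d\bar B^t_r$, which together with the surviving $\eps^{\f12-H}\int_0^t\tilde f(Y^\eps)\,dB$ is precisely $M^f_t$.

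The main obstacle I expect is the first conditioning step: justifying both the $\CF^B_t$-measurability of $\int_0^t g\,dB$ and the identity $\E\bigl(\int_t^{t+h}g\,dB\mid\CF^B_t\bigr)=\int_t^{t+h}g\,d\bar B^t$ for the (conditionally) deterministic but only H\"older-regular integrand $g=\tilde f(Y^\eps_\cdot)$ when $H<\f12$, which is where the ``fractional'' technicalities concentrate (structure of the reproducing kernel space, singularity of $\dot{\bar B}^t$ at the diagonal, the coincidence $\CF^B=\CF^W$). As an alternative to invoking Molchan's representation one can derive these two facts directly from Riemann-sum approximations, using the $L^2$-continuity of conditional expectation and the smoothness of $r\mapsto\bar B^t_r$ on $(t,\infty)$ with its integrable singularity at $t^+$; everything else is routine bookkeeping with the Markov property and Gaussian moment comparisons.
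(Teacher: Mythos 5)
Your proposal is correct, but it is organised differently from the paper's argument. The paper does not verify the martingale identity for $M^f$ directly: it introduces the truncated Doob martingales
\begin{equ}
M_t^{f,T} = \eps^{\f12-H}\E \Bigl(\int_0^T \tilde f(Y_r^\eps)\,dB_r\,\Big|\, \CF_t^Y\vee \CF_t^B\Bigr)
= \eps^{\f12-H}\int_0^{t} \tilde f(Y_r^\eps)\,dB_r + \eps^{\f12-H}\int_{t}^T \bigl(P_{r-t \over \eps}\tilde f\bigr)(Y_t^\eps)\,d\bar B^t_r\;,
\end{equ}
whose martingale property is automatic, and then shows $M_t^{f,T}\to M_t^f$ in $L^p$ as $T\to\infty$ (exponential decay of $P_t\tilde f$ plus \Cref{lem:boundR}), so that the martingale property is inherited in the limit; the $L^p$ bound comes from \Cref{prop:tight1st} (resp.\ \Cref{tight-estimate-regular} for $H>\f12$) for the first term of \eqref{e:defMart} and \Cref{lem:boundR} for the second. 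You instead check $\E\bigl(M^f_{t+h}\mid \CF^Y_t\vee\CF^B_t\bigr)=M^f_t$ head-on by two-stage conditioning, which forces you to handle the infinite-horizon correction term directly: you need the tower identity $\E\bigl(\bar B^{t+h}_r\mid\CF^B_t\bigr)=\bar B^t_r-\bar B^t_{t+h}$, the interchange of conditional expectation with the (improper) integral against $d\bar B^{t+h}$, and the measurability\slash conditional-expectation identities for Wiener integrals with H\"older integrands when $H<\f12$ (via Molchan's representation or Riemann sums). These facts are all true and your sketch of them is sound — note that the paper's displayed formula for $M_t^{f,T}$ encodes exactly the same finite-horizon conditioning, so the ingredients coincide; the truncation simply confines the delicate infinite-horizon manipulations to a single $L^p$ limit, whereas your route avoids the auxiliary family $\{M^{f,T}\}_{T>0}$ at the price of those extra interchange arguments. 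Your treatment of the $L^p$ bounds (conditional Gaussianity plus the same three external estimates) matches the paper's.
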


\begin{proof}
For $T> 0$, we define the $\CF_t^Y\vee \CF_t^B$-martingale $M_t^{f,T}$ by
\begin{equ}
M_t^{f,T} = \eps^{\f12-H}\E \Bigl(\int_0^T \tilde f(Y_r^\eps)\,dB_r\,\Big|\, \CF_t^Y\vee \CF_t^B\Bigr)\;,
\end{equ}
and we note that for $T > t$ one has
\begin{equ}
M_t^{f,T} = \eps^{\f12-H}\int_0^{t} \tilde f(Y_r^\eps)\,dB_r + \eps^{\f12-H}\int_{t}^T \bigl(P_{r-t \over \eps}\tilde f\bigr)(Y_t^\eps)
d\bar B^t_r\;.
\end{equ}
Since $\|P_t \tilde f\|_{E_1} \to 0$ exponentially fast, it follows  from Lemma~\ref{lem:boundR} that
$M_t^{f} = \lim_{T \to \infty} M_t^{f,T}$ in $L^p$, so that $M_t^{f}$ is a local martingale.
Since the first term of \eqref{e:defMart} is bounded in $L^p$ (by Proposition~\ref{prop:tight1st} for $H\in (\f 13, \f 12)$, and for $H>\f 12$ from 
Corollary~\ref{tight-estimate-regular} which applies since $\tilde f$ is centred), and the second term converges in $L^p$ by Lemma~\ref{lem:boundR}, the claim follows.
\end{proof}

\begin{remark}
For $H<\f 12$, we only used the integrability condition $E\subset L^{p_\star}$ in Lemma~\ref{lemma:martingale-limit},
 the condition $E_2\subset L^2$ from Assumption~\ref{ass:integrability} is not needed (nor is it needed in Proposition~\ref{prop:tight1st}).
\end{remark}

For $H\not =\f 12$, we can then rewrite $\JJ$ as
\begin{equ}[r:idenJJ]
\JJ_{0,t}^\eps(f,g) = \int_0^t J^\eps_s(f)\, dM_s^g + J_t^\eps(f) R_t^g - \int_0^t R_s^g\,dJ_s^\eps(f)\;.
\end{equ}
Here, the first integral is an Itô integral, while the integral  $\int_0^t J_s^\epsilon(f) dR_s^f$  should be interpreted as
the limit, as $\delta \to 0$, of the corresponding expression with $B$ replaced by a mollified
version $B^\delta$, to which integration by parts is applied.

\begin{lemma}\label{lem:convMean} Let $H\in(\f 13, \f 12)\cup(\f 12, 1)$.
Let Assumptions~\ref{ass:ergodic}--\ref{ass:integrability} hold and 
let  $f \in E$. 
Then, one has 
\begin{equ}
\lim_{\eps \to 0}\E\Big( \int_0^t R_s^g\,dJ_s^\eps(f)\;\Big|\; \CF^Y \Big) = -\f t2  \Gamma(2H+1)\scal{\CL^{1-2H}g, f}_\mu\;,
\end{equ}
in probability.
\end{lemma}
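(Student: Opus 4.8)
The plan is to compute everything conditionally on $\CF^Y$, where, since $B$ is independent of $Y$, the process $B$ still has the law of a fractional Brownian motion, and to use the defining mollification of the integral; as in \Cref{prop:convFirst} we may work componentwise and take $m=1$. Fix a smooth \emph{causal} mollifier $\phi$ supported in $[0,\delta]$, write $B^\delta = \phi_\delta * B$, and let $R^{g,\delta}_s$ and $J^{\eps,\delta}_s$ denote the corresponding approximations, so in particular $\bar B^{s,\delta}_r = \E\big(B^\delta_r - B^\delta_s\mid\CF^{B^\delta}_s\big)$. Since $f(Y^\eps_s)$ is $\CF^Y$-measurable, pulling it out gives
\begin{equ}
\E\Big(\int_0^t R^g_s\,dJ^\eps_s(f)\,\Big|\,\CF^Y\Big) = \lim_{\delta \to 0}\eps^{\f12-H}\int_0^t f(Y^\eps_s)\,\E\big(R^{g,\delta}_s\,\dot B^\delta_s\,\big|\,\CF^Y\big)\,ds\;,
\end{equ}
so it suffices to evaluate the conditional covariance $\E\big(R^{g,\delta}_s\dot B^\delta_s\,\big|\,\CF^Y\big)$ and then control both limits.

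This covariance is a purely Gaussian computation. Recall $R^{g,\delta}_s = \eps^{\f12-H}\bar g B^\delta_s - \eps^{\f12-H}\int_s^\infty (P_{(r-s)/\eps}\tilde g)(Y^\eps_s)\,\dot{\bar B}^{s,\delta}_r\,dr$ with $\tilde g = g - \bar g$, and that the factor $(P_{(r-s)/\eps}\tilde g)(Y^\eps_s)$ is $\CF^Y$-measurable. Since $\phi$ is causal, $\dot B^\delta_s$ is $\CF^{B^\delta}_s$-measurable, so by the tower property $\E\big(\dot{\bar B}^{s,\delta}_r\dot B^\delta_s\big) = \E\big(\dot B^\delta_r\dot B^\delta_s\big) = \f12\eta_\delta''(r-s)$, with $\eta_\delta''$ the convolution of $\eta''$ with the \emph{symmetric} mollifier $\phi * \check\phi$, while $\E\big(B^\delta_s\dot B^\delta_s\big)\to Hs^{2H-1}$. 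Letting $\delta\to0$, applying \Cref{distributional-derivative} (its $a=0$ endpoint formula applies precisely because $\phi*\check\phi$ is symmetric), substituting $r = s+\eps w$, and recognising the integral representation of $\CL^{1-2H}$ from \Cref{def:fracPower} — legitimate in both the regimes $1-2H\in(0,1)$ and $1-2H\in(-1,0)$, using that $\CL^{1-2H}$ annihilates constants so $\CL^{1-2H}\tilde g = \CL^{1-2H}g$, and that $E_2\subset\Dom(\CL^{1-2H})$ by \Cref{rem:fracL} (as $1-2H<H$) — one obtains
\begin{equ}
\int_s^\infty (P_{(r-s)/\eps}\tilde g)(Y^\eps_s)\,\eta''(r-s)\,dr = -2\alpha_H\,\eps^{2H-1}\,\Gamma(2H-1)\,(\CL^{1-2H}g)(Y^\eps_s)\;,
\end{equ}
so that $\E\big(R^{g,\delta}_s\dot B^\delta_s\,\big|\,\CF^Y\big)\to\eps^{\f12-H}\bar g H s^{2H-1} + \eps^{H-\f12}\alpha_H\Gamma(2H-1)(\CL^{1-2H}g)(Y^\eps_s)$. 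The passage $\delta\to0$, both under the conditional expectation and under the $ds$-integral, is justified by uniform-in-$\delta$ $L^p$ bounds obtained just as in \Cref{prop:tight1st}: for $H<\f12$, \Cref{ass:continuous} together with the Markov property gives $\big\|(P_w\tilde g)(Y^\eps_s) - \tilde g(Y^\eps_s)\big\|_{L^p}\lesssim\|g\|_E(w^H\wedge1)$, which places us in the scope of \Cref{lem:boundIntEta''}; for $H>\f12$ the kernel $\eta''$ is locally integrable and the bounds follow immediately from the exponential decay of $P_w\tilde g$.

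Substituting back and using $\eps^{\f12-H}\eps^{H-\f12}=1$ gives
\begin{equ}
\E\Big(\int_0^t R^g_s\,dJ^\eps_s(f)\,\Big|\,\CF^Y\Big) = \eps^{1-2H}\bar g H\int_0^t s^{2H-1}f(Y^\eps_s)\,ds + \alpha_H\Gamma(2H-1)\int_0^t f(Y^\eps_s)(\CL^{1-2H}g)(Y^\eps_s)\,ds\;.
\end{equ}
The first term tends to $0$ in $L^1$: when $H>\f12$ one has $\bar g=0$ (which is assumed throughout in that range), and when $H<\f12$ the prefactor $\eps^{1-2H}\to0$ while $\int_0^t s^{2H-1}\,\E|f(Y^\eps_s)|\,ds<\infty$. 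For the second term, $f\cdot\CL^{1-2H}g\in L^1(\mu)$ by Cauchy--Schwarz (since $f\in E\subset L^2(\mu)$ and $\CL^{1-2H}g\in L^2(\mu)$), so the ergodic theorem, valid under \Cref{ass:ergodic}, gives $\int_0^t f(Y^\eps_s)(\CL^{1-2H}g)(Y^\eps_s)\,ds\to t\,\scal{f,\CL^{1-2H}g}_\mu$ in probability. Finally $\Gamma(2H+1)=2H(2H-1)\Gamma(2H-1)=-2\alpha_H\Gamma(2H-1)$, so $\alpha_H\Gamma(2H-1)=-\f12\Gamma(2H+1)$, and, using the symmetry of $\scal{\cdot,\cdot}_\mu$, the claimed limit follows.

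The one genuinely delicate point in the above is the interchange of the $\delta\to0$ limit with the conditional expectation and the $ds$-integral — that is, the uniform moment bounds together with the displayed distributional identity — which is nonetheless routine given \Cref{lem:boundIntEta''} and \Cref{ass:continuous}. (An alternative organisation would be to first split $J^\eps(f)=M^f+R^f$ and observe that $\int_0^t R^g_s\,dM^f_s$ has vanishing $\CF^Y$-conditional mean, because conditionally on $Y$ the process $M^f$ is an $\CF^B$-martingale while $R^g$ is adapted and $L^2$-bounded by \Cref{lem:boundR}, thereby reducing the statement to $\lim_\eps\E\big(\int_0^t R^g_s\,dR^f_s\,\big|\,\CF^Y\big)$ and the same Gaussian/$\eta''$ computation.)
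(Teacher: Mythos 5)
Your proof is correct and reaches the limit through the same core computation as the paper: the conditional covariance of $d\bar B^s_\cdot$ against $dB_s$ is $\f12\eta''$, a change of variables turns the inner integral into the representation of $\CL^{1-2H}$ from \Cref{def:fracPower}, and the ergodic theorem (or \Cref{lem:LLN}) finishes. You package it differently in two respects. First, you keep $R^g$ whole and dispose of the $\eps^{\f12-H}\bar g B_s$ contribution by computing its conditional mean directly ($\E(B^\delta_s\dot B^\delta_s)\to Hs^{2H-1}$, prefactor $\eps^{1-2H}\to 0$ for $H<\f12$, and $\bar g=0$ for $H>\f12$); the paper instead writes $R^g=J^\eps(\bar g)-\tilde R^g$ and kills $\int_0^t J^\eps_s(\bar g)\,dJ^\eps_s(f)$ in probability via the identity \eqref{e:idenJJ} together with \Cref{prop:tight1st} and the refined constant-$g$ bound of \Cref{prop:tight2nd}. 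Your lighter treatment suffices here because only the conditional mean is needed (the paper's stronger statement is what gets reused in \Cref{R-convergence}). Second, you justify the covariance identity by explicit causal mollification plus the tower property, where the paper simply uses that $B_r-B_s-\bar B^s_r$ is independent of $\CF^B_s$; both work, but note that with a causal mollifier $\dot B^\delta_s$ is $\CF^B_s$-measurable while not obviously $\CF^{B^\delta}_s$-measurable, so the conditioning in your tower-property step should be taken with respect to $\CF^B_s$. Two small imprecisions to repair: the assertion that ``$\CL^{1-2H}$ annihilates constants'' is only literally true for $1-2H\in(0,1)$ — for $H>\f12$ the negative power is not even defined on constants, and you should instead just invoke $\bar g=0$ (which you do assume in that regime), so $\tilde g=g$ there. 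Finally, the interchange of the $\delta\to0$ limit with the conditional expectation and the $ds$-integral, which you rightly flag as the delicate point, is only sketched; the bound $\|(P_w\tilde g)(Y^\eps_s)-\tilde g(Y^\eps_s)\|_{L^p}\lesssim\|g\|_E(w^H\wedge 1)$ combined with \Cref{lem:boundIntEta''} is indeed the same machinery the paper deploys in \Cref{R-convergence}, so this is a matter of writing it out rather than a gap, and it puts your argument at the same level of rigour as the paper's own one-line Gaussian computation.
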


\begin{proof}
Let us first write $R_t^g = J_t^\eps(\bar g) - \tilde R_t^g$ where
$$\tilde R_t^g=\eps^{\f12-H}\int_t^\infty \bigl(P_{r-t \over \eps}\tilde g\bigr)(Y_t^\eps)\,d\bar B^t_r\;,
$$
and note that 
\begin{equ}[e:idenJJ]
 \int_0^t J_s^\eps(\bar g)\,dJ_s^\eps(f)
 = J_t^\eps(\bar g)\,J_t^\eps(f) - \JJ_{0,t}^\eps(f,\bar g)\;,
\end{equ}
for $H<\f 12$ (vanishes for $H>\f 12$ since then $\bar g = 0$). Since $J_t^\eps(\bar g) = \eps^{\f12-H} \bar g\, B_t$, we conclude from Proposition~\ref{prop:tight1st}
that the first term on the right hand side converges to $0$ in probability.
Since $\bar g$ is constant, the second part of Proposition~\ref{prop:tight2nd} 
implies that the second term also converges to $0$ 
in probability for $H\in (\f 13, \f12)$, so that it remains to obtain the limit of $\int_0^t \tilde R_s^g\,dJ_s^\eps(f)$.
For $H \neq \f12$ we have the identity
\begin{equs}
\E\Big( \int_0^t \tilde R_s^g\,dJ_s^\eps(f)\,&\Big|\, \CF^Y \Big)
= 
\eps^{1-2H}\E\Bigl( \int_0^t \int_s^\infty \bigl(P_{r-s\over \eps} \tilde g\bigr)(Y_s^\eps)d\bar B_r^s  f(Y_s^\eps)\,dB_s\;\Big|\; \CF^Y\Bigr)\\
&= 
{\eps^{1-2H} \over 2} \int_0^t \int_s^\infty \bigl(P_{r-s\over \eps} \tilde g\bigr)(Y_s^\eps) f(Y_s^\eps)\,\eta''(s-r)\,dr\,ds\\
&=H(2H-1)\int_0^t \int_0^\infty \bigl(P_{r} \tilde g - \one_{H < \f12}\tilde g\bigr)(Y_s^\eps)\,r^{2H-2}\,dr f(Y_s^\eps)\,ds \\
&=\f 12 \Gamma(2H+1)
\int_0^t \big\<\CL^{1-2H}   g, f\>_\mu (Y_s^\eps)\,ds \;.
\end{equs}
We have used the fact that the difference between $B_r-B_s$ and $\bar B_r^s$ is independent of $B_s$. The claim now follows from Birkhoff's ergodic theorem (or the quantitative version given in Lemma~\ref{lem:LLN}).
\end{proof}

\begin{proposition}\label{R-convergence}
Let $H\in (\f13, \f 12)\cup (\f 12, 1)$, let Assumptions~\ref{ass:ergodic}--\ref{ass:integrability}  hold, 
and let $f\in E$. One has  
\begin{equ}
\lim_{\eps \to 0} \int_0^t R_s^g\,dJ_s^\eps(f) = - {t\over 2}\Gamma(2H-1) \scal{\CL^{1-2H}g, f}\;,
\end{equ}
in probability.
\end{proposition}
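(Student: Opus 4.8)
The plan is to upgrade \Cref{lem:convMean}, which already identifies the limit of the conditional expectation $\E\bigl(\int_0^t R_s^g\,dJ_s^\eps(f)\,|\,\CF^Y\bigr)$, to convergence of $\int_0^t R_s^g\,dJ_s^\eps(f)$ itself, by showing in addition that the conditional variance tends to $0$. Since $B$ is independent of $\CF^Y$, and, conditionally on $\CF^Y$, $R_s^g$ is an element of the first Wiener chaos of $B$ for each $s$ (it is a linear combination of $B_s$ and of a $\dot{\bar B}$-integral, both first chaos) while $dJ_s^\eps(f)=\eps^{\f12-H}f(Y_s^\eps)\,dB_s$ is also first chaos, the random variable $X_\eps\eqdef\int_0^t R_s^g\,dJ_s^\eps(f)$ lies, conditionally on $\CF^Y$, in the sum of the zeroth and second homogeneous chaoses of $B$. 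Consequently, if one establishes
\begin{equ}
\lim_{\eps\to0}\E\Bigl(\mathrm{Var}\bigl(X_\eps\,|\,\CF^Y\bigr)\Bigr)=0\;,
\end{equ}
then $X_\eps-\E(X_\eps\,|\,\CF^Y)\to0$ in $L^2$, and combining this with the convergence of the conditional expectation from \Cref{lem:convMean} yields that $X_\eps$ converges in probability to the deterministic constant identified there, which is the limit asserted in the statement.

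First I would reduce the variance estimate to a cleaner object. Exactly as in the proof of \Cref{lem:convMean}, write $R_s^g=J_s^\eps(\bar g)-\tilde R_s^g$ with $\tilde R_s^g=\eps^{\f12-H}\int_s^\infty\bigl(P_{(r-s)/\eps}\tilde g\bigr)(Y_s^\eps)\,d\bar B_r^s$, so that $X_\eps=\int_0^t J_s^\eps(\bar g)\,dJ_s^\eps(f)-\int_0^t\tilde R_s^g\,dJ_s^\eps(f)$. The first summand equals $J_t^\eps(\bar g)J_t^\eps(f)-\JJ_{0,t}^\eps(f,\bar g)$ by the weakly geometric Chen relation for the smooth approximations, and using $J_t^\eps(\bar g)=\eps^{\f12-H}\bar g\,B_t$, \Cref{prop:tight1st} together with the ``$g$ constant'' bound of \Cref{prop:tight2nd} show that it tends to $0$ in $L^2$ when $H<\f12$, while it vanishes identically when $H>\f12$ since then $\bar g=0$. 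In particular its contribution to $\E(\mathrm{Var}(X_\eps\,|\,\CF^Y))$ is negligible, and it remains to prove $\E\bigl(\mathrm{Var}(\tilde X_\eps\,|\,\CF^Y)\bigr)\to0$ for $\tilde X_\eps\eqdef\int_0^t\tilde R_s^g\,dJ_s^\eps(f)=\eps^{1-2H}\int_0^t h_\eps(s)\bigl(\int_s^\infty k_\eps(s,r)\,d\bar B_r^s\bigr)\,dB_s$, where $h_\eps(s)=f(Y_s^\eps)$ and $k_\eps(s,r)=(P_{(r-s)/\eps}\tilde g)(Y_s^\eps)$ are $\CF^Y$-measurable.

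For the conditional variance of $\tilde X_\eps$ I would expand by Wick's formula: conditionally on $\CF^Y$, $\mathrm{Var}(\tilde X_\eps\,|\,\CF^Y)$ is a sum of four-fold integrals over $(s,s',r,r')$ with $r\ge s$, $r'\ge s'$, of $h_\eps(s)h_\eps(s')k_\eps(s,r)k_\eps(s',r')$ against products of the kernel $\eta''$ (arising from pairings of the two $dB$-factors, behaving like $|\cdot|^{2H-2}$ near the diagonal) and the covariance kernels of $\dot{\bar B}$. The essential point is that in every Wick pairing contributing to the variance at least one $\int k\,d\bar B$-factor is paired with a $dB$-factor, so that each integrand carries a factor $\|P_{(r-s)/\eps}\tilde g\|_{E_1}\lesssim e^{-c(r-s)/\eps}\|g\|_E$ coming from the centering of $\tilde g$; this is what makes the final power of $\eps$ strictly positive. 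Taking expectations over $Y$ and bounding $h_\eps,k_\eps$ by $\|f\|_E,\|g\|_E$ via \Cref{ass:continuous,ass:integrability}, the estimate reduces to exactly the type of deterministic singular-integral bound already carried out in the paper: for $H\in(\f13,\f12)$ one applies \Cref{lem:boundIntEta''} together with \Cref{rem:betterLemma} as in the proof of \Cref{prop:tight2nd}, while for $H\in(\f12,1)$ one uses the cumulant expansion of \Cref{prop:boundCumul} together with the power counting of \Cref{prop:largeScale} and \Cref{cor:defect} as in \Cref{sec:tightHigh}. In both regimes one obtains $\E(\mathrm{Var}(\tilde X_\eps\,|\,\CF^Y))\lesssim\eps^{\theta}$ for some $\theta>0$, which completes the argument.

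The step I expect to be the main obstacle is precisely this conditional-variance estimate. Unlike the conditional mean, which is a single four-fold integral that collapses to a product through one application of \Cref{lem:boundIntEta''} and the ergodic theorem, the variance is a genuinely multi-kernel singular integral; moreover for $H>\f12$ the kernel $\eta''$ fails to be integrable at infinity, so naive absolute-value bounds do not suffice and one must route the estimate through the cumulant/power-counting machinery of \Cref{sec:tightHigh}, carefully tracking the exponentially decaying factors supplied by the centering of $\tilde g$ so that the accounting of powers of $\eps$ closes with a strictly positive exponent.
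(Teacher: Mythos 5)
Your overall strategy coincides with the paper's: use Lemma~\ref{lem:convMean} for the conditional mean, dispose of the $\int_0^t J_s^\eps(\bar g)\,dJ_s^\eps(f)$ part via Propositions~\ref{prop:tight1st} and~\ref{prop:tight2nd}, and then show that the conditional variance of $\int_0^t \tilde R_s^g\,dJ_s^\eps(f)$ vanishes. The gap is that this variance estimate --- which you yourself flag as the main obstacle --- is only asserted, not carried out, and the sketch of it contains two inaccuracies. First, the structural claim that every Wick pairing contributing to the variance pairs a $d\bar B$-factor with a $dB$-factor is false: the pairing matching the two $dB$-factors with each other and the two $\bar B$-factors with each other survives (after the contraction of Proposition~\ref{prop:contractWiener} it is in fact the only one), so the mechanism you invoke for gaining a positive power of $\eps$ is misidentified; the decay actually comes from the deterministic factor $P_{(r-s)/\eps}\tilde g$, present in the integrand regardless of the pairing. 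Moreover, expanding directly with the same noise $B$, rather than first invoking Proposition~\ref{prop:contractWiener} to pass to independent copies, forces you to control the cross-covariances $\E\bigl(\dot B_s\,\dot{\bar B}^{s'}_{r'}\bigr)$, which your sketch never addresses; the paper's proof avoids them entirely and reduces the conditional variance to the single explicit integral $\f12\int_0^t\int_0^t\phi_\eps(s,s')\,\eta''(s-s')\,ds\,ds'$ with $\phi_\eps$ as in \eqref{e:defphieps}.

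Second, the quantitative core is missing. For $H<\f12$, applying Lemma~\ref{lem:boundIntEta''} to $\phi_\eps$ requires verifying the increment hypothesis \eqref{e:assumePsi}, and this is where essentially all the work lies: the paper uses $|C_s(r,r')|\lesssim|r-s|^{H-1}|r'-s|^{H-1}$, proves the diagonal bound $\|\phi_\eps(s,s)\|_{L^1}\lesssim\eps^{2-2H}$, and then splits $\phi_\eps(s,s')-\phi_\eps(s,s)$ into seven terms, each estimated separately using Assumption~\ref{ass:continuous} and the semigroup decay, before Lemma~\ref{lem:boundIntEta''} yields a bound of order $\eps^{2-2H}t^{2H}+\eps^{2-4H}t^{4H}$. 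None of this is reproduced or replaced by an alternative argument in your proposal. For $H>\f12$, the detour you propose through the cumulant\slash power-counting machinery of Section~\ref{sec:tightHigh} is both unnecessary and unavailable: unnecessary because the integration domain $[0,t]^2$ is bounded and $\eta''$ is locally integrable there, so the uniform bound $\|\phi_\eps(s,s')\|_{L^1}\lesssim\eps^{2-2H}$ already closes the argument (this is exactly what the paper does); unavailable because Proposition~\ref{prop:boundCumul} requires all functions involved to be centred, whereas in this proposition only $\tilde g$ is centred and $f\in E$ is arbitrary.
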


\begin{proof}
Since the  expectation converges by Lemma~\ref{lem:convMean}, it remains to show that the
 variance vanishes as $\eps \to 0$. As in the proof of Proposition~\ref{prop:tight2nd}, we can fix a realisation of
$Y$ and use Proposition~\ref{prop:contractWiener} to reduce ourselves to the case where, conditional on $Y$, $R_t^g$ and $J_t^\eps(f)$
are independent. As a consequence of the proof of \Cref{lem:convMean}, $ \int_0^t J_s^\eps(\bar g)\,dJ_s^\eps(f)\to 0$ in probability, so it suffices to bound the conditional variance of the
term with $R_t^g$ replaced by $\tilde R_t^g$.

Writing $A_t = \int_0^t \tilde R_s^g\,dJ_s^\eps(f)$ (and assuming that $\tilde R^g$ and $J^\eps(f)$ are driven by
independent fractional Brownian motions), we then have as in \eqref{e:VarianceJJtilde}
the identity
\begin{equ}
\E \big(A_t^2\,|\, \CF^Y\big) = {1\over 2} \int_0^t \int_0^t \phi_\eps(s,s') \eta''(s-s')\,ds'\,ds
\end{equ}
but this time we have
\begin{equ}[e:defphieps]
\phi_\eps(s,s') = \eps^{2-4H}
f_{s'}^\eps f_s^\eps\int_s^\infty \!\!\!\int_{s'}^\infty  \bigl(P_{r'-s'\over \eps} \tilde g\bigr)_{s'}^\eps
 \bigl(P_{r-s\over \eps} \tilde g\bigr)_s^\eps C_{s\wedge s'}(r,r')\, dr'\,dr\;,
\end{equ}
where we use the shorthand $f_s^\eps = f(Y_s^\eps)$ and where
\begin{equ}
C_{s\wedge s'}(r,r') = \E \dot{\bar B}^s_r\dot{\bar B}^{s'}_{r'}
= \int_{-\infty}^{s\wedge s'} (r-u)^{H-\f32}(r'-u)^{H-\f32}\,du\;.
\end{equ}
Note that this holds for any $ (\f13, \f 12)\cup (\f 12, 1)$ and for $r, r'\ge s$, we have the bound
$|C_s(r,r')| \lesssim |r-s|^{H-1}|r'-s|^{H-1}$,
which will be used repeatedly below. As a consequence of this, the $s \leftrightarrow s'$ symmetry
of the integrand in \eqref{e:defphieps} and Assumptions~\ref{ass:ergodic}, we obtain for $p_\star \ge 4$ the upper bound
\begin{equs}
\|\phi_\eps(s,s)\|_{L^1} &\lesssim 
\eps^{2-4H} \int_s^\infty e^{-c(r-s)/\eps} \int_{s}^r  C_{s}(r,r')\, dr'\,dr\\
&\lesssim 
\eps^{2-4H} \int_s^\infty e^{-c(r-s)/\eps} \int_{s}^r |r'-s|^{H-1} |r-s|^{H-1}\, dr'\,dr\\
&\lesssim 
\eps^{2-4H} \int_s^\infty e^{-c(r-s)/\eps} |r-s|^{2H-1} \,dr\\
&= 
\eps^{2-2H} \int_0^\infty e^{-cr} r^{2H-1} \,dr
\lesssim \eps^{2-2H}\;, \label{e:boundphi}
\end{equs}
with constant proportional to $\|f\|_E^2 \|g\|_E^2$.
When $H > \f12$, using the local integrability of $\eta''$, 
this is sufficient to conclude that 
\begin{equ}
\|\E \big(A_t^2\,|\, \CF^Y\big)\|_{L^1}
\lesssim \eps^{2-2H} \Big|\int_0^t \int_0^t \eta''(s-s')\,ds'\,ds\Big|
\lesssim \eps^{2-2H} t^{2H}\;,
\end{equ}
which does indeed converge to $0$ as $\eps \to 0$ as desired.

It remains to  consider the case $H < \f12$, so we restrict ourselves to this case from now on.
Regarding $\delta\phi_\eps(s,s') = \phi_\eps(s,s')- \phi_\eps(s,s)$ for $s' > s$ (the case $s < s'$ is analogous), 
we write
$\delta \phi_\eps = \sum \delta \phi_\eps^{(i)}$ with
\begin{equs}
\delta \phi_\eps^{(1)}(s,s') &= 
\eps^{2-4H}
f_{s'}^\eps f_s^\eps\int_s^{s'} \int_{s'}^{\infty}  \bigl(P_{r'-s'\over \eps} \tilde g\bigr)_{s'}^\eps
 \bigl(P_{r-s\over \eps} \tilde g\bigr)_s^\eps C_{s}(r,r')\, dr'\,dr\;, \\
\delta \phi_\eps^{(2)}(s,s') &= 
-\eps^{2-4H}
f_{s}^\eps f_s^\eps\int_s^{s'} \int_{s}^{s'}  \bigl(P_{r'-s\over \eps} \tilde g\bigr)_{s}^\eps
 \bigl(P_{r-s\over \eps} \tilde g\bigr)_s^\eps C_{s}(r,r')\, dr'\,dr\;, \\
\delta \phi_\eps^{(3)}(s,s') &= 
-\eps^{2-4H}
f_{s}^\eps f_s^\eps\int_s^{s'} \int_{s'}^{\infty}  \bigl(P_{r'-s\over \eps} \tilde g\bigr)_{s}^\eps
 \bigl(P_{r-s\over \eps} \tilde g\bigr)_s^\eps C_{s}(r,r')\, dr'\,dr\;, \\ 
\delta \phi_\eps^{(4)}(s,s') &= 
-\eps^{2-4H}
f_{s}^\eps f_s^\eps\int_{s'}^\infty \int_{s}^{s'}  \bigl(P_{r'-s\over \eps} \tilde g\bigr)_s^\eps
 \bigl(P_{r-s\over \eps} \tilde g\bigr)_s^\eps C_{s}(r,r')\, dr'\,dr\;, \\
\delta \phi_\eps^{(5)}(s,s') &= 
\eps^{2-4H}
\bigl(f_{s'}^\eps-f_{s}^\eps\bigr) f_s^\eps\int_{s'}^\infty \int_{s'}^{\infty}  \bigl(P_{r'-s'\over \eps} \tilde g\bigr)_{s'}^\eps
 \bigl(P_{r-s\over \eps} \tilde g\bigr)_s^\eps C_{s}(r,r')\, dr'\,dr\;, \\
\delta \phi_\eps^{(6)}(s,s') &= 
\eps^{2-4H}
f_s^\eps f_s^\eps\int_{s'}^\infty \int_{s'}^{\infty}  \bigl(P_{r'-s'\over \eps} \tilde g-P_{r'-s\over \eps} \tilde g\bigr)_{s'}^\eps
 \bigl(P_{r-s\over \eps} \tilde g\bigr)_s^\eps C_{s}(r,r')\, dr'\,dr\;, \\
\delta \phi_\eps^{(7)}(s,s') &= 
\eps^{2-4H}
f_s^\eps f_s^\eps\int_{s'}^\infty \int_{s'}^{\infty}  \big(\bigl(P_{r'-s\over \eps} \tilde g\bigr)_{s'}^\eps-\bigl(P_{r'-s\over \eps} \tilde g\bigr)_{s}^\eps\big)
 \bigl(P_{r-s\over \eps} \tilde g\bigr)_s^\eps C_{0}(r,r')\, dr'\,dr\;.
\end{equs}
We obtain the bound
\begin{equs}
\|\delta \phi_\eps^{(1)}(s,s')\|_{L^1}
&\lesssim 
\eps^{2-4H}\int_{s'}^{\infty}   e^{-c(r'-s')/\eps}|r'-s|^{H-1} \int_s^{s'} |r-s|^{H-1}\, dr\,dr' \\
&\lesssim 
\eps^{2-4H}|s-s'|^H \int_{s'}^{\infty}  e^{-c(r'-s')/\eps} |r'-s'|^{H-1}\, dr' \\
&\lesssim 
\eps^{2-3H}|s-s'|^H\;,
\end{equs}
and similarly for $\delta \phi_\eps^{(3)}$ and $\delta \phi_\eps^{(4)}$.
Regarding $\delta \phi_\eps^{(2)}$, we obtain 
\begin{equ}
\|\delta \phi_\eps^{(2)}(s,s')\|_{L^1} \lesssim  
\eps^{2-4H} \Big(\int_s^{s'} e^{-c(r-s)/\eps} |r-s|^{H-1} \,dr\Big)^2
\lesssim \eps^{2-4H} |s-s'|^{2H}\;.
\end{equ}
In view of \eqref{e:boundphi} and Assumption~\ref{ass:continuous}, we obtain for $\delta \phi_\eps^{(5)}$ the bound
\begin{equs}
\|\delta \phi_\eps^{(5)}(s,s')\|_{L^1} \lesssim \eps^{2-3H}|s'-s|^H\;,
\end{equs}
using  $\|f^\epsilon_{s'}-f^\epsilon_s\|_{L^p}\lesssim(|s-s'|/\eps)^H$ to obtain the increment in time.

In order to bound $\delta \phi_\eps^{(6)}$, we note that one has the bound
\begin{equs}
\|(P_t \tilde g-\tilde g)(Y_s^\eps)\|_{L^p} &= \bigl\|\E \bigl(\tilde g(Y_{t+s}^\eps) - \tilde g(Y_s^\eps)\,|\,\CG_s\bigr)\bigr\|_{L^p}\\
&\le  \bigl\|\tilde g(Y_{t+s}^\eps) - \tilde g(Y_s^\eps)\bigr\|_{L^p}
\lesssim \|\tilde g\|_E \bigl( 1\wedge t^H \eps^{-H} \bigr)\;.
\end{equs}
As a consequence
of Assumption~\ref{ass:ergodic}, we thus obtain the bound 
\begin{equ}
\|\delta \phi_\eps^{(6)}(s,s')\|_{L^1} \lesssim
\eps^{2-3H}|s'-s|^H \;,
\end{equ}
and similarly for $\delta \phi_\eps^{(7)}$. Collecting all of these bounds, we conclude that 
\begin{equ}
\|\delta \phi_\eps(s,s')\|_{L^1}  \lesssim \eps^{2-3H}|s'-s|^H + \eps^{2-4H} |s-s'|^{2H}\;.
\end{equ}
It suffices then to apply Lemma~\ref{lem:boundIntEta''} to $\phi_\epsilon$ with $\beta = 0$, $\hat C = 0$, and $\zeta \in \{H,2H\}$ to
conclude that $\|\E (A_t^2\,|\, \CF^Y)\|_{L^1} \lesssim \eps^{2-2H} t^{2H} + \eps^{2-4H}t^{4H} $, which converges to $0$ as $\eps \to 0$, thus concluding the proof.
\end{proof}

Collecting all of these results, we conclude that the following holds.

\begin{proposition}\label{convergence-drivers}
Let Assumptions~\ref{ass:ergodic}--\ref{ass:integrability}  hold and let 
$f_1,\ldots f_N \in E$ for some $N \ge 1$. 
The processes $\big(J_t^\epsilon(f_i),  \JJ_{s,t}^\epsilon(f_i,f_j)\big)_{i,j\le N}$ converge jointly in distribution  to
$$
\Big(W_t^{(i)} , \int_s^t W_r^{(i)}\,dW^{(j)}(r) + \f 12(t-s)\Gamma(2H+1) \scal{f_i, \CL^{1-2H} f_j}_\mu\Big)_{i,j\le N}\;,
$$
where the $W^{(i)}$ are Wiener processes with covariance 
\begin{equation}
\E W^{(i)}_s W^{(j)}_t =\f 12  (s\wedge t) \Gamma(2H+1) \bigl(\scal{\CL^{1-2H}f_i, f_j}_\mu + \scal{\CL^{1-2H}f_j, f_i}_\mu\bigr)\;.
\end{equation}
\end{proposition}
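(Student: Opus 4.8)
The strategy is to first establish joint convergence in law of the first‑order family $\{J^\eps_{s,t}(f_i)\}$ to the Gaussian family $\{W^{(i)}\}$, and then to upgrade this, via a martingale decomposition of $J^\eps$ together with a functional limit theorem for stochastic integrals driven by martingales, to joint convergence with the second‑order family $\{\JJ^\eps_{s,t}(f_i,f_j)\}$. Three regimes must be treated. For $H\in(\tfrac13,\tfrac12)$, conditionally on $Y$ the vector $\{J^\eps_{s,t}(f_i)\}$ is centred Gaussian, and \Cref{prop:convFirst} shows that its conditional covariances converge in $L^2$ to the \emph{deterministic} limits $|[s,t]\cap[u,v]|\,C(f_i,f_j)$; since the limit is non‑random this yields convergence in law of the unconditioned vector to a Gaussian family with the symmetrised covariance in the statement. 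For $H\in(\tfrac12,1)$, \Cref{lemma:convergence-high} shows that all joint cumulants of the limit of order $\ge 3$ vanish while the covariance converges, whence joint Gaussianity with the same covariance. For $H=\tfrac12$, $J^\eps_t(f)=\int_0^t f(Y^\eps_r)\,dB_r$ is already an $\CF^Y_t\vee\CF^B_t$‑martingale whose bracket $\int_0^t\langle f_i,f_j\rangle(Y^\eps_r)\,dr$ converges in probability to $t\langle f_i,f_j\rangle_\mu$ by ergodicity (\Cref{lem:LLN}), so the classical martingale CLT applies; this is consistent with the formulae since $\CL^0=\id$ and $\Gamma(2)=1$.

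For the second‑order process we use, for $H\ne\tfrac12$, the decomposition $J^\eps_t(f)=M^{f,\eps}_t+R^{f,\eps}_t$ of \eqref{e:defMart}--\eqref{e:defRem}, where $M^{f,\eps}$ is an $L^p$‑bounded c\`adl\`ag $\CF^Y_t\vee\CF^B_t$‑martingale for every $p<p_\star$ (\Cref{lemma:martingale-limit}, resting on \Cref{prop:tight1st} resp.\ \Cref{tight-estimate-regular}), while $R^{f,\eps}_t\to0$ in $L^q$ for each fixed $t$, at a polynomial rate in $\eps$ (\Cref{lem:boundR}); in particular $M^{f_i,\eps}\to W^{(i)}$ jointly over $i$. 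Substituting into the identity \eqref{r:idenJJ},
\begin{equ}
\JJ^\eps_{0,t}(f_i,f_j)=\int_0^t J^\eps_s(f_i)\,dM^{f_j,\eps}_s + J^\eps_t(f_i)\,R^{f_j,\eps}_t - \int_0^t R^{f_j,\eps}_s\,dJ^\eps_s(f_i)\;,
\end{equ}
the middle term tends to $0$ in probability (tightness of $J^\eps$ from \Cref{prop:tight} combined with $R^{f_j,\eps}_t\to0$) and the last term tends in probability to the \emph{deterministic} constant $-\tfrac t2\Gamma(2H+1)\langle\CL^{1-2H}f_j,f_i\rangle_\mu$ by \Cref{R-convergence}. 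It therefore remains to prove, jointly over $i,j$,
\begin{equ}
\Bigl(J^\eps(f_i),\ \int_0^\cdot J^\eps_s(f_i)\,dM^{f_j,\eps}_s\Bigr)_{i,j}\ \longrightarrow\ \Bigl(W^{(i)},\ \int_0^\cdot W^{(i)}_s\,dW^{(j)}_s\Bigr)_{i,j}
\end{equ}
in law, with It\^o integrals on the right, and then to add up the two deterministic drifts to recover $\tfrac12 t\,\Gamma(2H+1)\langle f_i,\CL^{1-2H}f_j\rangle_\mu$. (When $H=\tfrac12$ one has $R=0$, $M^f=J^\eps(f)$, and $\JJ^\eps_{0,t}(f,g)=\int_0^t J^\eps_s(f)\,dM^g_s+\tfrac12\int_0^t\langle f,g\rangle(Y^\eps_r)\,dr$, so the same argument applies verbatim.)

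The displayed convergence of stochastic integrals is exactly of the type covered by the functional (stable) limit theory for semimartingale integrals \cite{Jacod-Shiryaev}: $M^\eps=(M^{f_1,\eps},\dots,M^{f_N,\eps})$ is a sequence of c\`adl\`ag martingales converging in law to the Gaussian martingale $W=(W^{(1)},\dots,W^{(N)})$; the a priori bound $\sup_\eps\E\,\sup_{t\le T}|M^\eps_t|^p<\infty$ for some $p>2$ supplies the uniformly‑controlled‑variations / $C$‑tightness hypotheses; the jumps of $M^{f,\eps}$ are, up to sign, those of $R^{f,\eps}$ and hence are of order $\eps^{1/2}$ in $L^q$, so that the truncation‑of‑jumps condition holds; and the integrands $J^\eps(f_i)$ are adapted and converge jointly with $M^\eps$ to $W^{(i)}$, with the limiting pair $(W^{(i)},W^{(j)})$ ``good'' since $W^{(i)}$ is adapted to the (Brownian) filtration of $W$ with respect to which $W^{(j)}$ is a martingale. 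The theorem then delivers stable convergence of $(J^\eps,M^\eps,\int J^\eps\,dM^\eps)$ to $(W,W,\int W\,dW)$, the limiting integral being automatically the It\^o one.

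I expect the bookkeeping around this last theorem to be the main obstacle: because fractional Brownian motion is not a semimartingale in its own filtration when $H\ne\tfrac12$, $M^{f,\eps}$ genuinely contains the ``predictable tail'' $\eps^{1/2-H}\int_t^\infty(P_{(r-t)/\eps}\tilde f)(Y^\eps_t)\,d\bar B^t_r$ of \eqref{e:defMart}, so one must be careful about its quadratic variation (which is \emph{not} $\eps^{1-2H}\int_0^t f(Y^\eps_s)^2\,ds$), about the small jumps inherited from $Y$, and about which filtration carries both $J^\eps$ and $M^\eps$. A robust way to handle the one quantity that is not ``free'' — the convergence of the brackets $\langle M^{f_i,\eps},M^{f_j,\eps}\rangle_t$ to the deterministic limit $t\,\E(W^{(i)}_1W^{(j)}_1)$, which Kurtz--Protter type theorems actually produce as a by‑product of the UCV condition — is to expand $M^{f,\eps}$ through the Doob martingales $M^{f,T,\eps}_t=\eps^{1/2-H}\E(\int_0^T\tilde f(Y^\eps_r)\,dB_r\mid\CF^Y_t\vee\CF^B_t)$ and compute as in \eqref{e:VarianceJJtilde}--\eqref{e:defphieps}, passing to the limit with \Cref{lem:LLN} along the lines of \Cref{prop:convFirst} and \Cref{lem:convMean}. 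An alternative and arguably lighter route avoids brackets entirely: by tightness of $\JJ^\eps$ (\Cref{prop:tight}) every subsequential limit $N^{(i,j)}$ of $\int_0^\cdot J^\eps_s(f_i)\,dM^{f_j,\eps}_s$ is a continuous martingale with $N^{(i,j)}_0=0$ (the $L^p$ bounds giving uniform integrability), and it is identified as $\int_0^\cdot W^{(i)}_s\,dW^{(j)}_s$ by verifying $\langle N^{(i,j)},W^{(k)}\rangle_\cdot=\int_0^\cdot W^{(i)}_s\,d\langle W^{(j)},W^{(k)}\rangle_s$, the covariations on the left coming from limits of the conditional‑covariance expressions already evaluated in \Cref{sec:limit}.
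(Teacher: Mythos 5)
Your proposal follows essentially the same route as the paper's proof: joint Gaussian convergence of the first-order processes via \Cref{prop:convFirst} (resp.\ \Cref{lemma:convergence-high}, and the martingale argument at $H=\f12$), then the decomposition $J^\eps=M+R$ with the identity \eqref{r:idenJJ}, the deterministic limit of $\int_0^t R_s\,dJ_s^\eps$ from \Cref{R-convergence}, and the Jacod--Shiryaev\slash Kurtz--Protter convergence theorem for the stochastic integrals $\int J^\eps\,dM^\eps$. Your additional discussion of the UCV\slash jump hypotheses and the alternative identification via martingale characterisation are sensible elaborations of details the paper leaves to the cited references, but they do not change the argument.
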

\begin{proof}
If $H<\f 12$,  by \Cref{prop:convFirst} and \Cref{lem:boundR}, $(J_t^\epsilon(f_i), M_t^\epsilon(f_i), R_t^{f_i})_{i\le N}$ all converge jointly to $(W_t^{(i)}, W_t^{(j)},0)$.
This holds similarly for $H>\f 12$, using \Cref{lemma:convergence-high}.
Since by \Cref{R-convergence} below, $\int_0^t R_s^{f_i}\,dJ_s^\eps(f_j) $ converges to a deterministic limit,
and $\int_0^t J_s^\epsilon(f_i)dM_t^\epsilon(f_j)\to \int_0^tW_r^{(i)}\,dW_r^{(j)}$,
the desired  convergence in distribution follows by combining \eqref{r:idenJJ} with the  standard convergence theorem of 
stochastic integrals, c.f.   \cite[Theorem 6.22]{Jacod-Shiryaev} and  \cite[Theorem 2.7]{Kurtz-Protter}.

The case $H=\f 12$ is straightforward. Firstly, we see that conditional on $\CF^Y$, the $J^\epsilon_{s,t}(f_i) =\int_s^t f_i(Y^\eps_r) \,dB(r)$ are  $L^2$ bounded martingales with respect to the filtration generated by $B$. By Lemma~\ref{lem:LLN}, their covariances $ \int_s^t \E (f_i f_j)(Y^\eps_{r})\, dr$ converge to $(t-s)\<f_i,f_j\>_\mu$ in $L^2$. Then,
\begin{equ}
\JJ_{s.t}^\epsilon = \int_s^t J^\eps_r(f_i)\,\circ  dJ_r^\epsilon(f_j) 
 =\int_s^t \int_s^u f(Y_r^\epsilon) g(Y_u^\epsilon) dB_r  dB_u  
+\f 12 \int_s^t \big(fg\big) (Y_r^\epsilon) dr\;,
\end{equ}
which converges in $L^2$.  Since $J^\epsilon_{s,t}(f_i)$ converge, they converge together with their integrals,
  concluding the convergence of $(J_t^\epsilon(f_i),  \JJ_{s,t}^\epsilon(f_i,f_j))_{i,j\le N}$ for $H=\f 12$. 
 \end{proof}


\appendix

\section{A compactness criterion}

Recall the definitions of $\CB$ and $\CB_2$ from Section~\ref{sec:precise}, fix $\zeta \in (0,1)$ and $\kappa > 0$, 
and define $\hat \CB \subset \CB$ as the space of 
$\CC^3$ functions $X$ on $\R^d$ such that 
\begin{equ}[e:defhatB]
\|Z\|_{\hat \CB} \eqdef \sup_{x \in \R^d }\bigl( (1+|x|)^{\kappa/2} \|Z\|_{\CC^{3+\zeta}_x}\bigr) < \infty\;,
\end{equ}
where 
\begin{equ}
\|Z\|_{\CC^{3+\zeta}_x} = \sup_{x'\,:\, |x' - x|\le 1}\sup_{|\ell|\le 3} \Big(|D^\ell Z(x)| + \f{ |D^\ell Z(x)-D^\ell  Z(x')|}{|x-x'|^\zeta} \Big)\;.
\end{equ}
Similarly, we define $\hat \CB_2$ as the space of functions on $\R^d\times \R^d$ such that
\begin{equ}
\|Z\|_{\hat \CB_2} \eqdef \sup_{x, \bar x \in \R^d }\bigl( (1+|x|+|\bar x|)^{\kappa/2} \|Z\|_{\CC^{3+\zeta}_{(x,\bar x)}}\bigr) < \infty\;.
\end{equ}
We then have the following.

\begin{lemma}\label{lem:compact}
The embeddings $\hat \CB \subset \CB$ and $\hat \CB_2\subset \CB_2$ are compact for any $\zeta \in (0,1)$ and
$\kappa > 0$.
Furthermore, there exists a constant $C$ such that for any random $\CC^3$ functions $Z$ and $\bar Z$ one has the bound
\begin{equs}
\E \|Z\|_{\hat \CB}^p &\le C  \sup_{|x-x'| \le 1}\sup_{|\ell| \le 3} (1+|x|)^{\kappa p}{\E |D^\ell Z(x)-D^\ell Z(x')|^p\over |x-x'|^p}\;, \label{e:boundZZZ1}\\
\E \|\bar Z\|_{\hat \CB_2}^p &\le C  \sup_{|x-x'| \le 1 \atop |\bar x-\bar x'| \le 1} \sup_{|k+\ell| \le 3} (1+|x|+|\bar x|)^{\kappa p}{\E |D_1^kD_2^\ell \bar Z(x,\bar x)-D_1^kD_2^\ell \bar Z(x', \bar x')|^p\over |x-x'|^p + |\bar x-\bar x'|^p}\;,
\end{equs}
provided that $p \ge d$, $\zeta < 1-d/p$,  and $\kappa > 4d/p$.
\end{lemma}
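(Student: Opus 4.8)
The statement splits into two independent parts: compactness of the embeddings, which I would obtain from a weighted Arzelà–Ascoli argument, and the moment bounds, which I would obtain from a weighted version of Kolmogorov's continuity theorem applied over a lattice covering of the (unbounded) domain.

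For compactness, consider $\hat\CB \subset \CB$ — the case $\hat\CB_2 \subset \CB_2$ being identical after replacing $\R^d$ by $\R^d\times\R^d$ throughout — and let $(Z_n)_{n\ge 1}$ satisfy $\sup_n\|Z_n\|_{\hat\CB}\le M$. On each closed ball $B_R = \{|x|\le R\}$ the restrictions $Z_n|_{B_R}$ are bounded in $\CC^{3+\zeta}(B_R)$, and this space embeds compactly into $\CC^3(B_R)$ by Arzelà–Ascoli, the uniform $\zeta$-Hölder control of the third derivatives supplying the required equicontinuity; a diagonal extraction over $R\in\{1,2,\dots\}$ then yields a subsequence converging in $\CC^3$ on every ball to some $Z$. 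The definition of $\|\cdot\|_{\hat\CB}$ forces $\sup_{|x|\ge R}\|Z_n\|_{\CC^3_x}\le (1+R)^{-\kappa/2}M$ uniformly in $n$, so the tail outside $B_R$ is uniformly small; combined with the convergence on $B_R$ this upgrades the convergence to $\CC^3_b(\R^d,\R^d)=\CB$, proving precompactness.

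For the first moment bound, write $A$ for its right-hand side, so that $\E|D^\ell Z(x)-D^\ell Z(x')|^p\le A(1+|x|)^{-\kappa p}|x-x'|^p$ for every $|\ell|\le 3$ and $|x-x'|\le 1$. Cover $\R^d$ by the unit cubes $Q_m=m+[0,1]^d$, $m\in\ZZ^d$; on the enlarged cube $Q_m^*=m+[-1,2]^d$ the weight $(1+|x|)$ is comparable to $(1+|m|)$, and since $p\ge d$ and $\zeta<1-d/p$ we have $p\ge d+\zeta p$, hence $|x-x'|^p\le|x-x'|^{d+\zeta p}$ for $|x-x'|\le1$. Thus the random field $x\mapsto D^\ell Z(x)$ on $Q_m^*$ satisfies the hypothesis of a suitable multi-parameter version of Kolmogorov's continuity theorem with Hölder exponent $\zeta$, which — together with the anchoring of the value $D^\ell Z(m)$, possible because a random element of $\CB$ is in particular bounded while the increment bound then propagates the weighted decay — yields $\E\bigl(\sup_{x\in Q_m}\|Z\|_{\CC^{3+\zeta}_x}^p\bigr)\lesssim A(1+|m|)^{-\kappa p}$, with a constant depending only on $d$, $p$, $\zeta$. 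Summing over the finitely many $\ell$ and over $m\in\ZZ^d$, using $\|Z\|_{\hat\CB}^p\lesssim\sum_m(1+|m|)^{\kappa p/2}\sup_{x\in Q_m}\|Z\|_{\CC^{3+\zeta}_x}^p$, gives $\E\|Z\|_{\hat\CB}^p\lesssim A\sum_{m\in\ZZ^d}(1+|m|)^{-\kappa p/2}$, and the series converges since $\kappa p/2>d$ by $\kappa>4d/p$. The bound for $\bar Z$ follows by the identical scheme with cubes $Q_m\times Q_{m'}$ in $\R^d\times\R^d$ — so that $d$ is effectively replaced by $2d$ where the Kolmogorov exponent and the dimension enter — the weighted sum then running over $\ZZ^{2d}$ and converging precisely because $\kappa>4d/p$, which is why this is the binding constraint.

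The only genuinely delicate point is the anchoring step in the Kolmogorov argument, namely recovering control of the pointwise values $|D^\ell Z(x)|$ (rather than only of their increments) on an unbounded domain; this, together with the need for the Kolmogorov–Chentsov criterion to apply with Hölder exponent $\zeta$ and for the weighted lattice sum to converge, is exactly what dictates the precise form of the hypotheses $p\ge d$, $\zeta<1-d/p$, $\kappa>4d/p$. Everything else is routine bookkeeping.
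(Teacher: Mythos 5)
Your strategy is the same as the paper's: Arzelà--Ascoli on balls with the weight killing the tails for compactness, and for the moment bounds a local application of Kolmogorov's continuity criterion (with the exponent bookkeeping $\zeta<1-d/p$), a covering of $\R^d$ by unit cells on which the weight is comparable to its value at the centre, and summability of $(1+|m|)^{-\kappa p/2}$, with $d$ replaced by $2d$ for $\hat\CB_2$, which is where $\kappa>4d/p$ becomes binding. Up to the level of detail, this is exactly the published argument.

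The one point where you add something of your own is the ``anchoring'' step, and your justification of it does not work. You claim the pointwise values $D^\ell Z(m)$ are controlled ``because a random element of $\CB$ is in particular bounded while the increment bound then propagates the weighted decay''. Almost sure boundedness of $Z$ in $\CC^3_b$ gives neither a moment bound nor any weighted decay, and increments alone cannot control $\sup_x(1+|x|)^{\kappa/2}|D^\ell Z(x)|$: for the deterministic constant $Z\equiv c\neq0$ the right-hand side of \eqref{e:boundZZZ1} vanishes while $\|Z\|_{\hat\CB}=\infty$, and the rescaled bumps $Z_n(x)=a_n\phi(x/n)$ (smooth, compactly supported, hence in $\hat\CB$) show that no uniform constant $C$ can exist when $\kappa<2$, which is precisely the relevant regime since $\kappa$ is only required to exceed $4d/p$ with $p$ large. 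So your local estimate $\E\sup_{x\in Q_m}\|Z\|_{\CC^{3+\zeta}_x}^p\lesssim A(1+|m|)^{-\kappa p}$ cannot follow from the increment hypothesis alone; one genuinely needs, in addition, a pointwise weighted moment bound of the form $\sup_{x}(1+|x|)^{\kappa p}\,\E|D^\ell Z(x)|^p$ on the right-hand side (or an a priori decay of $D^\ell Z$ at infinity allowing one to telescope increments from infinity). To be fair, the paper's own statement and one-line Kolmogorov step share this blemish, and it is harmless in the applications because the processes fed into the lemma do satisfy such pointwise bounds, cf.\ \eqref{e:boundWanted1}--\eqref{e:boundWanted2}; but your proposed repair via a.s.\ boundedness in $\CB$ is not a repair, and the hypothesis should be strengthened accordingly.
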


\begin{proof}
The compactness statement is a routine modification of Arzelà--Ascoli. 
Regarding the first bound, it follows from Kolmogorov's continuity criterion \cite[Thm 2.1, p26]{RevuzYor} that, writing $K$
for the right-hand-side of \eqref{e:boundZZZ1}, there is $C > 0$ such that
\begin{equ}
\E \|Z\|_{\CC^{3+\zeta}_x}^p \le C (1+|x|)^{-\kappa p} K\;,
\end{equ}
provided that $0<\zeta < 1-d/p$. We then cover $\R^d$ with balls of diameter $1$ and 
note that a norm equivalent to that of $\hat \CB$ is obtained by restricting the supremum in
\eqref{e:defhatB} over the centres of these balls. Since $\kappa > 2d/p$,
we can then simply bound the supremum by the sum, yielding
\begin{equ}
\E \|Z\|_{\hat \CB}^p
\le \sum_x \E \bigl( (1+|x|)^{\kappa p/2} \|Z\|_{\CC^{3+\zeta}_x}^p\bigr) 
\le C \sum_x (1+|x|)^{-\kappa p/2} K \le C' K\;,
\end{equ}
as claimed. The second bound is identical, except that $d$ is replaced by $2d$.
\end{proof}

\endappendix

\bibliographystyle{Martin}
\bibliography{refs-averaging-low}

\newcommand{\etalchar}[1]{$^{#1}$}
\begin{thebibliography}{BHVW17}
\def\myhref#1#2{\href{#2}{\nolinkurl{#1}}}

\bibitem[BC17]{Bail-Cat}
\textsc{I.~Bailleul} and \textsc{R.~Catellier}.
\newblock Rough flows and homogenization in stochastic turbulence.
\newblock \emph{J. Differential Equations} \textbf{263}, no.~8, (2017),
  4894--4928.
\newblock
  \myhref{doi:10.1016/j.jde.2017.06.006}{https://dx.doi.org/10.1016/j.jde.2017.06.006}.

\bibitem[BGS19]{Bourguin-Gailus-Spiliopoulos-typical}
\textsc{S.~Bourguin}, \textsc{S.~Gailus}, and \textsc{K.~Spiliopoulos}.
\newblock Typical dynamics and fluctuation analysis of slow-fast systems driven
  by fractional brownian motion, 2019.
\newblock \myhref{arXiv:1906.02131}{https://arxiv.org/abs/1906.02131}.

\bibitem[BH02]{BenHariz}
\textsc{S.~Ben~Hariz}.
\newblock Limit theorems for the non-linear functional of stationary {G}aussian
  processes.
\newblock \emph{J. Multivariate Anal.} \textbf{80}, no.~2, (2002), 191--216.
\newblock
  \myhref{doi:10.1006/jmva.2001.1986}{https://dx.doi.org/10.1006/jmva.2001.1986}.

\bibitem[BHVW17]{Birrell-Hottovy-Volpe-Wehr}
\textsc{J.~Birrell}, \textsc{S.~Hottovy}, \textsc{G.~Volpe}, and
  \textsc{J.~Wehr}.
\newblock Small mass limit of a {L}angevin equation on a manifold.
\newblock \emph{Ann. Henri Poincar\'e} \textbf{18}, no.~2, (2017), 707--755.
\newblock
  \myhref{doi:10.1007/s00023-016-0508-3}{https://dx.doi.org/10.1007/s00023-016-0508-3}.

\bibitem[BM83]{Breuer-Major}
\textsc{P.~Breuer} and \textsc{P.~Major}.
\newblock Central limit theorems for nonlinear functionals of {G}aussian
  fields.
\newblock \emph{J. Multivariate Anal.} \textbf{13}, no.~3, (1983), 425--441.
\newblock
  \myhref{doi:10.1016/0047-259X(83)90019-2}{https://dx.doi.org/10.1016/0047-259X(83)90019-2}.

\bibitem[BT13]{Bai-Taqqu}
\textsc{S.~Bai} and \textsc{M.~S. Taqqu}.
\newblock Multivariate limit theorems in the context of long-range dependence.
\newblock \emph{J. Time Series Anal.} \textbf{34}, no.~6, (2013), 717--743.
\newblock
  \myhref{doi:10.1111/jtsa.12046}{https://dx.doi.org/10.1111/jtsa.12046}.

\bibitem[CFK{\etalchar{+}}19]{Chevyrev-Friz-Korepanov-Melbourne-Zhang}
\textsc{I.~Chevyrev}, \textsc{P.~K. Friz}, \textsc{A.~Korepanov},
  \textsc{I.~Melbourne}, and \textsc{H.~Zhang}.
\newblock Multiscale systems, homogenization, and rough paths.
\newblock In \emph{Probability and Analysis in Interacting Physical Systems}.
  2019.

\bibitem[CH73]{Cogburn-Hersh}
\textsc{R.~Cogburn} and \textsc{R.~Hersh}.
\newblock Two limit theorems for random differential equations.
\newblock \emph{Indiana Univ. Math. J.} \textbf{22}, (1972/73), 1067--1089.

\bibitem[CQ02]{Coutin-Qian}
\textsc{L.~Coutin} and \textsc{Z.~Qian}.
\newblock Stochastic analysis, rough path analysis and fractional {B}rownian
  motions.
\newblock \emph{Probab. Theory Related Fields} \textbf{122}, no.~1, (2002),
  108--140.
\newblock
  \myhref{doi:10.1007/s004400100158}{https://dx.doi.org/10.1007/s004400100158}.

\bibitem[DM79]{Dobrushin-Major}
\textsc{R.~L. Dobrushin} and \textsc{P.~Major}.
\newblock Non-central limit theorems for nonlinear functionals of {G}aussian
  fields.
\newblock \emph{Z. Wahrsch. Verw. Gebiete} \textbf{50}, no.~1, (1979), 27--52.
\newblock
  \myhref{doi:10.1007/BF00535673}{https://dx.doi.org/10.1007/BF00535673}.

\bibitem[Dob79]{Dobrushin}
\textsc{R.~L. Dobrushin}.
\newblock Gaussian and their subordinated self-similar random generalized
  fields.
\newblock \emph{Ann. Probab.} \textbf{7}, no.~1, (1979), 1--28.

\bibitem[DOP21]{Deu-Ore-Per}
\textsc{J.-D. Deuschel}, \textsc{T.~Orenshtein}, and \textsc{N.~Perkowski}.
\newblock Additive functionals as rough paths.
\newblock \emph{Ann. Probab.} \textbf{49}, no.~3, (2021), 1450--1479.
\newblock
  \myhref{doi:10.1214/20-aop1488}{https://dx.doi.org/10.1214/20-aop1488}.

\bibitem[E11]{E}
\textsc{W.~E}.
\newblock \emph{Principles of multiscale modeling}.
\newblock Cambridge University Press, Cambridge, 2011,  xviii+466.

\bibitem[FGL15]{Friz-Gassiat-Lyons}
\textsc{P.~Friz}, \textsc{P.~Gassiat}, and \textsc{T.~Lyons}.
\newblock Physical {B}rownian motion in a magnetic field as a rough path.
\newblock \emph{Trans. Amer. Math. Soc.} \textbf{367}, no.~11, (2015),
  7939--7955.
\newblock
  \myhref{doi:10.1090/S0002-9947-2015-06272-2}{https://dx.doi.org/10.1090/S0002-9947-2015-06272-2}.

\bibitem[FH20]{Book}
\textsc{P.~K. Friz} and \textsc{M.~Hairer}.
\newblock \emph{A course on rough paths}.
\newblock Universitext. Springer, Cham, second ed., [2020] \copyright 2020,
  xvi + 346.
\newblock With an introduction to regularity structures.
\newblock
  \myhref{doi:10.1007/978-3-030-41556-3}{https://dx.doi.org/10.1007/978-3-030-41556-3}.

\bibitem[FK00]{Fannjiang-Komorowski}
\textsc{A.~Fannjiang} and \textsc{T.~Komorowski}.
\newblock Fractional {B}rownian motions in a limit of turbulent transport.
\newblock \emph{Ann. Appl. Probab.} \textbf{10}, no.~4, (2000), 1100--1120.
\newblock
  \myhref{doi:10.1214/aoap/1019487608}{https://dx.doi.org/10.1214/aoap/1019487608}.

\bibitem[Fre76]{Freidlin-76}
\textsc{M.~I. Fredlin}.
\newblock Fluctuations in dynamical systems with averaging.
\newblock \emph{Dokl. Akad. Nauk SSSR} \textbf{226}, no.~2, (1976), 273--276.

\bibitem[FV10a]{FVGaussian}
\textsc{P.~Friz} and \textsc{N.~Victoir}.
\newblock Differential equations driven by {G}aussian signals.
\newblock \emph{Ann. Inst. Henri Poincar\'{e} Probab. Stat.} \textbf{46},
  no.~2, (2010), 369--413.
\newblock \myhref{arXiv:0707.0313}{https://arxiv.org/abs/0707.0313}.
\newblock
  \myhref{doi:10.1214/09-AIHP202}{https://dx.doi.org/10.1214/09-AIHP202}.

\bibitem[FV10b]{FVBook}
\textsc{P.~K. Friz} and \textsc{N.~B. Victoir}.
\newblock \emph{Multidimensional stochastic processes as rough paths}, vol. 120
  of \emph{Cambridge Studies in Advanced Mathematics}.
\newblock Cambridge University Press, Cambridge, 2010,  xiv+656.
\newblock Theory and applications.
\newblock
  \myhref{doi:10.1017/CBO9780511845079}{https://dx.doi.org/10.1017/CBO9780511845079}.

\bibitem[FW12]{FW}
\textsc{M.~I. Freidlin} and \textsc{A.~D. Wentzell}.
\newblock \emph{Random perturbations of dynamical systems}, vol. 260 of
  \emph{Grundlehren der Mathematischen Wissenschaften [Fundamental Principles
  of Mathematical Sciences]}.
\newblock Springer, Heidelberg, third ed., 2012,  xxviii+458.
\newblock Translated from the 1979 Russian original by Joseph Sz\"{u}cs.
\newblock
  \myhref{doi:10.1007/978-3-642-25847-3}{https://dx.doi.org/10.1007/978-3-642-25847-3}.

\bibitem[GGR16]{GIR}
\textsc{I.~I. Gonzales-Gargate} and \textsc{P.~R. Ruffino}.
\newblock An averaging principle for diffusions in foliated spaces.
\newblock \emph{Ann. Probab.} \textbf{44}, no.~1, (2016), 567--588.
\newblock \myhref{doi:10.1214/14-AOP982}{https://dx.doi.org/10.1214/14-AOP982}.

\bibitem[GILN21]{Gautam}
\textsc{C.~Gomez}, \textsc{G.~Iyer}, \textsc{H.~Le}, and \textsc{A.~Novikov}.
\newblock An oscillator driven by algebraically decorrelating noise, 2021.
\newblock \myhref{arXiv:2107.07363}{https://arxiv.org/abs/2107.07363}.

\bibitem[GL19]{Gehringer-Li-homo}
\textsc{J.~Gehringer} and \textsc{X.-M. Li}.
\newblock Homogenization with fractional random fields, 2019.
\newblock \myhref{arXiv:1911.12600}{https://arxiv.org/abs/1911.12600}.

\bibitem[GL20]{Gehringer-Li-tagged}
\textsc{J.~Gehringer} and \textsc{X.-M. Li}.
\newblock Diffusive and rough homogenisation in fractional noise field, 2020.
\newblock \myhref{arXiv:2006.11544}{https://arxiv.org/abs/2006.11544}.

\bibitem[GL21]{Gehringer-Li-fOU}
\textsc{J.~Gehringer} and \textsc{X.-M. Li}.
\newblock Functional limit theorems for the fractional {O}rnstein-{U}hlenbeck
  process.
\newblock \emph{J. Theor. Probab.} (2021).
\newblock \myhref{arXiv:2006.11540}{https://arxiv.org/abs/2006.11540}.
\newblock
  \myhref{doi:10.1007/s10959-020-01044-7}{https://dx.doi.org/10.1007/s10959-020-01044-7}.

\bibitem[GLS21]{Gehringer-Li-Sieber}
\textsc{J.~Gehringer}, \textsc{X.-M. Li}, and \textsc{J.~Sieber}.
\newblock Functional limit theorems for {V}olterra processes and applications
  to homogenization, 2021.
\newblock \myhref{arXiv:2104.06364}{https://arxiv.org/abs/2104.06364}.
\newblock To appear in Nonlinearity.

\bibitem[Gyu16]{Mixed}
\textsc{L.~G. Gyurk\'{o}}.
\newblock Differential equations driven by {$\Pi$}-rough paths.
\newblock \emph{Proc. Edinb. Math. Soc. (2)} \textbf{59}, no.~3, (2016),
  741--758.
\newblock
  \myhref{doi:10.1017/S0013091515000474}{https://dx.doi.org/10.1017/S0013091515000474}.

\bibitem[Hai18]{Feynman}
\textsc{M.~Hairer}.
\newblock An analyst's take on the {BPHZ} theorem.
\newblock In \emph{Computation and combinatorics in dynamics, stochastics and
  control}, vol.~13 of \emph{Abel Symp.},  429--476. Springer, Cham, 2018.
\newblock \myhref{arXiv:1704.08634}{https://arxiv.org/abs/1704.08634}.

\bibitem[Har56]{HarrisOrig}
\textsc{T.~E. Harris}.
\newblock The existence of stationary measures for certain {M}arkov processes.
\newblock In \emph{Proceedings of the {T}hird {B}erkeley {S}ymposium on
  {M}athematical {S}tatistics and {P}robability, 1954--1955, vol. {II}},
  113--124. University of California Press, Berkeley and Los Angeles, Calif.,
  1956.

\bibitem[Has68]{Has68}
\textsc{R.~Z. Has'minskii}.
\newblock On the principle of averaging the {I}t\^{o}'s stochastic differential
  equations.
\newblock \emph{Kybernetika (Prague)} \textbf{4}, (1968), 260--279.

\bibitem[HL20]{AveragingHigh}
\textsc{M.~Hairer} and \textsc{X.-M. Li}.
\newblock Averaging dynamics driven by fractional {B}rownian motion.
\newblock \emph{Ann. Probab.} \textbf{48}, no.~4, (2020), 1826--1860.
\newblock \myhref{arXiv:1902.11251}{https://arxiv.org/abs/1902.11251}.
\newblock
  \myhref{doi:10.1214/19-AOP1408}{https://dx.doi.org/10.1214/19-AOP1408}.

\bibitem[HM08]{HM08}
\textsc{M.~Hairer} and \textsc{J.~C. Mattingly}.
\newblock Spectral gaps in {W}asserstein distances and the 2{D} stochastic
  {N}avier-{S}tokes equations.
\newblock \emph{Ann. Probab.} \textbf{36}, no.~6, (2008), 2050--2091.
\newblock \myhref{arXiv:math/0602479}{https://arxiv.org/abs/math/0602479}.
\newblock \myhref{doi:10.1214/08-AOP392}{https://dx.doi.org/10.1214/08-AOP392}.

\bibitem[HM10]{Andy}
\textsc{M.~Hairer} and \textsc{A.~J. Majda}.
\newblock A simple framework to justify linear response theory.
\newblock \emph{Nonlinearity} \textbf{23}, no.~4, (2010), 909--922.
\newblock \myhref{arXiv:0909.4313}{https://arxiv.org/abs/0909.4313}.
\newblock
  \myhref{doi:10.1088/0951-7715/23/4/008}{https://dx.doi.org/10.1088/0951-7715/23/4/008}.

\bibitem[HM11]{Harris}
\textsc{M.~Hairer} and \textsc{J.~C. Mattingly}.
\newblock Yet another look at {H}arris' ergodic theorem for {M}arkov chains.
\newblock In \emph{Seminar on {S}tochastic {A}nalysis, {R}andom {F}ields and
  {A}pplications {VI}}, vol.~63 of \emph{Progr. Probab.},  109--117.
  Birkh\"{a}user/Springer Basel AG, Basel, 2011.
\newblock \myhref{arXiv:0810.2777}{https://arxiv.org/abs/0810.2777}.
\newblock
  \myhref{doi:10.1007/978-3-0348-0021-1_7}{https://dx.doi.org/10.1007/978-3-0348-0021-1_7}.

\bibitem[HM18]{Jonathan}
\textsc{M.~Hairer} and \textsc{J.~Mattingly}.
\newblock The strong {F}eller property for singular stochastic {PDE}s.
\newblock \emph{Ann. Inst. Henri Poincar\'{e} Probab. Stat.} \textbf{54},
  no.~3, (2018), 1314--1340.
\newblock
  \myhref{doi:10.1214/17-AIHP840}{https://dx.doi.org/10.1214/17-AIHP840}.

\bibitem[HMS11]{Michael}
\textsc{M.~Hairer}, \textsc{J.~C. Mattingly}, and \textsc{M.~Scheutzow}.
\newblock Asymptotic coupling and a general form of {H}arris' theorem with
  applications to stochastic delay equations.
\newblock \emph{Probab. Theory Related Fields} \textbf{149}, no. 1-2, (2011),
  223--259.
\newblock \myhref{arXiv:0902.4495}{https://arxiv.org/abs/0902.4495}.
\newblock
  \myhref{doi:10.1007/s00440-009-0250-6}{https://dx.doi.org/10.1007/s00440-009-0250-6}.

\bibitem[HS22]{Philipp}
\textsc{M.~Hairer} and \textsc{P.~Sch\"{o}nbauer}.
\newblock The support of singular stochastic partial differential equations.
\newblock \emph{Forum Math. Pi} \textbf{10}, (2022), Paper No. e1, 127.
\newblock
  \myhref{doi:10.1017/fmp.2021.18}{https://dx.doi.org/10.1017/fmp.2021.18}.

\bibitem[JL77]{Jona-Lasinio}
\textsc{G.~Jona-Lasinio}.
\newblock Probabilistic approach to critical behavior.
\newblock In \emph{New developments in quantum field theory and statistical
  mechanics ({P}roc. {C}arg\`ese {S}ummer {I}nst., {C}arg\`ese, 1976)},
  419--446. NATO Adv. Study Inst. Ser., Ser. B: Physics, 26. 1977.

\bibitem[JS03]{Jacod-Shiryaev}
\textsc{J.~Jacod} and \textsc{A.~N. Shiryaev}.
\newblock \emph{Limit theorems for stochastic processes}, vol. 288 of
  \emph{Grundlehren der Mathematischen Wissenschaften [Fundamental Principles
  of Mathematical Sciences]}.
\newblock Springer-Verlag, Berlin, second ed., 2003,  xx+661.
\newblock
  \myhref{doi:10.1007/978-3-662-05265-5}{https://dx.doi.org/10.1007/978-3-662-05265-5}.

\bibitem[KM17]{KellyMelb}
\textsc{D.~Kelly} and \textsc{I.~Melbourne}.
\newblock Deterministic homogenization for fast-slow systems with chaotic
  noise.
\newblock \emph{J. Funct. Anal.} \textbf{272}, no.~10, (2017), 4063--4102.
\newblock \myhref{arXiv:1409.5748}{https://arxiv.org/abs/1409.5748}.
\newblock
  \myhref{doi:10.1016/j.jfa.2017.01.015}{https://dx.doi.org/10.1016/j.jfa.2017.01.015}.

\bibitem[KNR14]{Komorowski-Novikov-Ryzhik-14}
\textsc{T.~Komorowski}, \textsc{A.~Novikov}, and \textsc{L.~Ryzhik}.
\newblock Homogenization driven by a fractional {B}rownian motion: the shear
  layer case.
\newblock \emph{Multiscale Model. Simul.} \textbf{12}, no.~2, (2014), 440--457.
\newblock \myhref{doi:10.1137/13092068X}{https://dx.doi.org/10.1137/13092068X}.

\bibitem[KP79]{KestenPap}
\textsc{H.~Kesten} and \textsc{G.~C. Papanicolaou}.
\newblock A limit theorem for turbulent diffusion.
\newblock \emph{Comm. Math. Phys.} \textbf{65}, no.~2, (1979), 97--128.

\bibitem[KP91]{Kurtz-Protter}
\textsc{T.~G. Kurtz} and \textsc{P.~Protter}.
\newblock Weak limit theorems for stochastic integrals and stochastic
  differential equations.
\newblock \emph{Ann. Probab.} \textbf{19}, no.~3, (1991), 1035--1070.

\bibitem[Kra40]{Kramers}
\textsc{H.~A. Kramers}.
\newblock Brownian motion in a field of force and the diffusion model of
  chemical reactions.
\newblock \emph{Physica} \textbf{7}, (1940), 284--304.

\bibitem[Kub63]{Kubo}
\textsc{R.~Kubo}.
\newblock Stochastic {L}iouville equations.
\newblock \emph{J. Mathematical Phys.} \textbf{4}, (1963), 174--183.
\newblock \myhref{doi:10.1063/1.1703941}{https://dx.doi.org/10.1063/1.1703941}.

\bibitem[Kun97]{Kunita}
\textsc{H.~Kunita}.
\newblock \emph{Stochastic flows and stochastic differential equations},
  vol.~24 of \emph{Cambridge Studies in Advanced Mathematics}.
\newblock Cambridge University Press, Cambridge, 1997,  xiv+346.
\newblock Reprint of the 1990 original.

\bibitem[Kur92]{Kurtz92}
\textsc{T.~G. Kurtz}.
\newblock Averaging for martingale problems and stochastic approximation,
  lecture notes in control and inform. sci., 177.
\newblock In \emph{Applied stochastic analysis (New Brunswick, NJ, 1991)},
  186--209. Springer, Berlin, 1992.

\bibitem[LBdS20]{LB}
\textsc{D.~S. Ledesma} and \textsc{F.~Borges~da Silva}.
\newblock Decomposition of stochastic flow and an averaging principle for slow
  perturbations.
\newblock \emph{Dyn. Syst.} \textbf{35}, no.~4, (2020), 625--654.
\newblock
  \myhref{doi:10.1080/14689367.2020.1769031}{https://dx.doi.org/10.1080/14689367.2020.1769031}.

\bibitem[Li08]{averaging}
\textsc{X.-M. Li}.
\newblock An averaging principle for a completely integrable stochastic
  {H}amiltonian system.
\newblock \emph{Nonlinearity} \textbf{21}, no.~4, (2008), 803--822.
\newblock
  \myhref{doi:10.1088/0951-7715/21/4/008}{https://dx.doi.org/10.1088/0951-7715/21/4/008}.

\bibitem[Li16]{geodesics}
\textsc{X.-M. Li}.
\newblock Random perturbation to the geodesic equation.
\newblock \emph{Ann. Probab.} \textbf{44}, no.~1, (2016), 544--566.
\newblock \myhref{doi:10.1214/14-AOP981}{https://dx.doi.org/10.1214/14-AOP981}.

\bibitem[LS21a]{Li-Sieber-2}
\textsc{X.-M. Li} and \textsc{J.~Sieber}.
\newblock Mild stochastic sewing lemma, spde in random environment, and
  fractional averaging, 2021.

\bibitem[LS21b]{Li-Sieber}
\textsc{X.-M. Li} and \textsc{J.~Sieber}.
\newblock Slow-fast systems with fractional environment and dynamics, 2021.
\newblock \myhref{arXiv:2012.01910}{https://arxiv.org/abs/2012.01910}.
\newblock To appear in Annals of Applied Probability.

\bibitem[Lyo98]{Lyons}
\textsc{T.~J. Lyons}.
\newblock Differential equations driven by rough signals.
\newblock \emph{Rev. Mat. Iberoamericana} \textbf{14}, no.~2, (1998), 215--310.
\newblock \myhref{doi:10.4171/RMI/240}{https://dx.doi.org/10.4171/RMI/240}.

\bibitem[MT07]{Maejima-Ciprian}
\textsc{M.~Maejima} and \textsc{C.~A. Tudor}.
\newblock Wiener integrals with respect to the {H}ermite process and a
  non-central limit theorem.
\newblock \emph{Stoch. Anal. Appl.} \textbf{25}, no.~5, (2007), 1043--1056.
\newblock
  \myhref{doi:10.1080/07362990701540519}{https://dx.doi.org/10.1080/07362990701540519}.

\bibitem[MT09]{MeynTweedie}
\textsc{S.~Meyn} and \textsc{R.~L. Tweedie}.
\newblock \emph{Markov chains and stochastic stability}.
\newblock Cambridge University Press, Cambridge, second ed., 2009,  xxviii+594.
\newblock With a prologue by Peter W. Glynn.
\newblock
  \myhref{doi:10.1017/CBO9780511626630}{https://dx.doi.org/10.1017/CBO9780511626630}.

\bibitem[MW17]{Mourrat}
\textsc{J.-C. Mourrat} and \textsc{H.~Weber}.
\newblock The dynamic {$\Phi^4_3$} model comes down from infinity.
\newblock \emph{Comm. Math. Phys.} \textbf{356}, no.~3, (2017), 673--753.
\newblock
  \myhref{doi:10.1007/s00220-017-2997-4}{https://dx.doi.org/10.1007/s00220-017-2997-4}.

\bibitem[Nua06]{NualartBook}
\textsc{D.~Nualart}.
\newblock \emph{The {M}alliavin calculus and related topics}.
\newblock Probability and its Applications (New York). Springer-Verlag, Berlin,
  second ed., 2006,  xiv+382.
\newblock
  \myhref{doi:10.1007/3-540-28329-3}{https://dx.doi.org/10.1007/3-540-28329-3}.

\bibitem[NX20]{NonLin}
\textsc{D.~Nualart} and \textsc{P.~Xia}.
\newblock On nonlinear rough paths.
\newblock \emph{ALEA Lat. Am. J. Probab. Math. Stat.} \textbf{17}, no.~1,
  (2020), 545--587.
\newblock \myhref{arXiv:1904.11526}{https://arxiv.org/abs/1904.11526}.
\newblock
  \myhref{doi:10.30757/alea.v17-22}{https://dx.doi.org/10.30757/alea.v17-22}.

\bibitem[Per19]{Perruchaud}
\textsc{P.~Perruchaud}.
\newblock Homog{\'e}n{\'e}isation pour le mouvement {B}rownien cin{\'e}tique et
  quelques r{\'e}sultats sur son noyau, 2019.
\newblock PhD thesis, Universit\'e de Rennes~1.

\bibitem[PIX21]{Pei-Inahama-Xu}
\textsc{B.~Pei}, \textsc{Y.~Inahama}, and \textsc{Y.~Xu}.
\newblock Averaging principle for fast-slow system driven by mixed fractional
  {B}rownian rough path.
\newblock \emph{J. Differential Equations} \textbf{301}, (2021), 202--235.
\newblock \myhref{arXiv:2010.06788}{https://arxiv.org/abs/2010.06788}.
\newblock
  \myhref{doi:10.1016/j.jde.2021.08.006}{https://dx.doi.org/10.1016/j.jde.2021.08.006}.

\bibitem[PK74]{PapaKo}
\textsc{G.~C. Papanicolaou} and \textsc{W.~Kohler}.
\newblock Asymptotic theory of mixing stochastic ordinary differential
  equations.
\newblock \emph{Comm. Pure Appl. Math.} \textbf{27}, (1974), 641--668.
\newblock
  \myhref{doi:10.1002/cpa.3160270503}{https://dx.doi.org/10.1002/cpa.3160270503}.

\bibitem[PT00]{Pipiras-Taqqu}
\textsc{V.~Pipiras} and \textsc{M.~S. Taqqu}.
\newblock Integration questions related to fractional {B}rownian motion.
\newblock \emph{Probab. Theory Related Fields} \textbf{118}, no.~2, (2000),
  251--291.
\newblock
  \myhref{doi:10.1007/s440-000-8016-7}{https://dx.doi.org/10.1007/s440-000-8016-7}.

\bibitem[Ros56]{Rosenblatt1956}
\textsc{M.~Rosenblatt}.
\newblock A central limit theorem and a strong mixing condition.
\newblock \emph{Proceedings of the National Academy of Sciences of the United
  States of America} \textbf{42}, no.~1, (1956), 43--47.

\bibitem[RY99]{RevuzYor}
\textsc{D.~Revuz} and \textsc{M.~Yor}.
\newblock \emph{Continuous martingales and {B}rownian motion}, vol. 293 of
  \emph{Grundlehren der Mathematischen Wissenschaften [Fundamental Principles
  of Mathematical Sciences]}.
\newblock Springer-Verlag, Berlin, third ed., 1999,  xiv+602.
\newblock
  \myhref{doi:10.1007/978-3-662-06400-9}{https://dx.doi.org/10.1007/978-3-662-06400-9}.

\bibitem[Sin76]{Sinai}
\textsc{J.~G. Sina{\i}}.
\newblock Self-similar probability distributions.
\newblock \emph{Teor. Verojatnost. i Primenen.} \textbf{21}, no.~1, (1976),
  63--80.

\bibitem[Sko89]{Skorohod}
\textsc{A.~V. Skorokhod}.
\newblock \emph{Asymptotic methods in the theory of stochastic differential
  equations}, vol.~78 of \emph{Translations of Mathematical Monographs}.
\newblock American Mathematical Society, Providence, RI, 1989,  xvi+339.
\newblock Translated from the Russian by H. H. McFaden.
\newblock \myhref{doi:10.1090/mmono/078}{https://dx.doi.org/10.1090/mmono/078}.

\bibitem[Str60]{Stratonovich-rhs}
\textsc{R.~L. Stratonovich}.
\newblock A limit theorem for solutions of differential equations with random
  right-hand side.
\newblock \emph{Theory Prob. Appl.,} \textbf{11}(1960).
\newblock In Russian.

\bibitem[SVY00]{Sethuraman-Varadhan-Yau}
\textsc{S.~Sethuraman}, \textsc{S.~R.~S. Varadhan}, and \textsc{H.-T. Yau}.
\newblock Diffusive limit of a tagged particle in asymmetric simple exclusion
  processes.
\newblock \emph{Comm. Pure Appl. Math.} \textbf{53}, no.~8, (2000), 972--1006.
\newblock
  \myhref{doi:10.1002/1097-0312(200008)53:8<972::AID-CPA2>3.0.CO;2-\#}{https://dx.doi.org/10.1002/1097-0312(200008)53:8\%3C972::AID-CPA2\%3E3.0.CO;2-\%23}.

\bibitem[TW18]{Pavlos}
\textsc{P.~Tsatsoulis} and \textsc{H.~Weber}.
\newblock Spectral gap for the stochastic quantization equation on the
  2-dimensional torus.
\newblock \emph{Ann. Inst. Henri Poincar\'{e} Probab. Stat.} \textbf{54},
  no.~3, (2018), 1204--1249.
\newblock
  \myhref{doi:10.1214/17-AIHP837}{https://dx.doi.org/10.1214/17-AIHP837}.

\bibitem[Ver90]{Veretennikov}
\textsc{A.~Y. Veretennikov}.
\newblock On an averaging principle for systems of stochastic differential
  equations.
\newblock \emph{Mat. Sb.} \textbf{181}, no.~2, (1990), 256--268.

\bibitem[Wei60]{Weinberg}
\textsc{S.~Weinberg}.
\newblock High-energy behavior in quantum field theory.
\newblock \emph{Phys. Rev. (2)} \textbf{118}, (1960), 838--849.
\newblock
  \myhref{doi:10.1103/PhysRev.118.838}{https://dx.doi.org/10.1103/PhysRev.118.838}.

\bibitem[You36]{Young}
\textsc{L.~C. Young}.
\newblock An inequality of the {H}\"older type, connected with {S}tieltjes
  integration.
\newblock \emph{Acta Math.} \textbf{67}, no.~1, (1936), 251--282.
\newblock
  \myhref{doi:10.1007/BF02401743}{https://dx.doi.org/10.1007/BF02401743}.

\end{thebibliography}

\end{document}